\documentclass[a4paper,11pt]{article}

\usepackage[T1]{fontenc}
\usepackage{lmodern}

\usepackage{dsfont}

\usepackage[latin1]{inputenc}
\usepackage{amsmath}
\usepackage{amsthm}
\usepackage{amssymb}
\usepackage{mathrsfs}
\usepackage{graphicx}
\usepackage[all]{xy}
\usepackage{hyperref}

\usepackage{makeidx}

\usepackage{stmaryrd}
\usepackage{caption}

\usepackage{abstract} 

\newtheorem{thm}{Theorem}[section]
\newtheorem{cor}[thm]{Corollary}
\newtheorem{claim}[thm]{Claim}
\newtheorem{fact}[thm]{Fact}

\newtheorem{lemma}[thm]{Lemma}
\newtheorem{prop}[thm]{Proposition}

\theoremstyle{definition}
\newtheorem{definition}[thm]{Definition}

\newtheorem{question}[thm]{Question}

\def\rquotient#1#2{%
	\makeatletter
	\raise.3ex\hbox{$#1$}/\lower.3ex\hbox{$#2$}%
	\makeatother
}	

\makeatletter
\newcommand{\subjclass}[2][2010]{%
	\let\@oldtitle\@title%
	\gdef\@title{\@oldtitle\footnotetext{#1 \emph{Mathematics subject classification.} #2}}%
}
\newcommand{\keywords}[1]{%
	\let\@@oldtitle\@title%
	\gdef\@title{\@@oldtitle\footnotetext{\emph{Key words and phrases.} #1.}}%
}
\makeatother

\newcommand{\Address}{{
		\bigskip
		\small
		
\noindent\textsc{Universit\'e de Montpellier\\ 
Institut Math\'ematiques Alexander Grothendieck\\
Place Eug\`ene Bataillon\\
34090 Montpellier (France)}\par\nopagebreak
\noindent \textit{E-mail address}: \texttt{anthony.genevois@umontpellier.fr}
		
}}

\makeindex

\title{Relative numbers of ends and quasi-median graphs}
\date{\today}
\author{Anthony Genevois}

\subjclass{Primary 20F65. Secondary 20F69.}
\keywords{Number of ends, codimension-one subgroups, coarse separation, CAT(0) cube complexes, median graphs, quasi-median graphs}

\begin{document}

\maketitle

\begin{abstract}
Given a finitely generated $G$ and a subgraph $H \leq G$, the \emph{relative number of ends} $e(G,H)$ is the number of ends of a Schreier graph $\mathrm{Sch}(G,H)$ and the \emph{number of coends} $\tilde{e}(G,H)$ is the maximal number of $H$-infinite components of the complement of a neighbourhood of $H$ in $G$. Generalising Sageev's characterisation of codimension-one subgroups in terms of actions on CAT(0) cube complexes, we characterise the number of relative ends and the number of coends of a pair $(G,H)$ in terms of actions on quasi-median graphs.
\end{abstract}

\tableofcontents

\section{Introduction}

\noindent
After Stallings' seminal theorem about ends of groups, there have been an interest in generalising the the number of ends of a finitely generated group to a number of ends relative to a given subgroup and to relative this notion to splittings. See the survey \cite[Chapter~6.1]{MR2031877} for more information on the subject. 

\medskip \noindent
In this article, given a finitely generated group $G$ and a subgroup $H \leq G$, we focus on two specific notions of relative ends. The first one, initially introduced in \cite{MR357679} (see also \cite{MR487104}), compute the classical number of ends $e(G,H)$ of a Schreier graph $\mathrm{Sch}(G,H)$. The second one, introduced independently in several places and under different names \cite{MR1025923,MR2365352,MR1867372}, compute the maximal (possibly infinite) number $\tilde{e}(G,H)$ of $H$-infinite connected components in the complement of a neighbourhood of $H$ in $G$. It is known that the inequality $\tilde{e}(G,H) \geq e(G,H)$ always holds, but otherwise the two quantities can be quite different. 

\medskip \noindent
If $e(G,H) \geq 2$, one says that $H$ is a \emph{codimension-one subgroup}; and, if $\tilde{e}(G,H) \geq 2$, one says that $H$ is a coarsely separating subgroup. It is known that, if our group $G$ splits non-trivially over $H$, then $H$ is a codimension-one subgroup, and a fortiori a coarsely separating subgroup. A problem that received a lot of attention is to which extent the converse may be satisfied.

\medskip \noindent
Geometrically speaking, it follows from Stallings' theorem that $G$ has $\geq 2$ ends if and only if it admits an action on a tree with unbounded orbits, without edge-inversions, and with finite edge-stabilisers. In \cite{MR1347406}, Sageev generalised this statement by proving that $e(G,H) \geq 2$ if and only if $G$ admits a specific action on a median graph (or equivalently, a CAT(0) cube complex\footnote{Despite the fact that median graphs and CAT(0) cube complexes define the same objects, we choose to speak about median graphs, as justified in \cite{WhyMedian}.}) with $H$ as a hyperplane-stabiliser. Thus, it is possible to detect geometrically when a subgroup has codimension one. But, then, a natural question is: can we detect geometrically the number of relative ends $e(G,H)$? And what about the number of coends $\tilde{e}(G,H)$?

\medskip \noindent
In this article, we show how the geometry of quasi-median graphs can be used in order to answer these questions. More precisely, the main result of this paper is:

\begin{thm}\label{thm:BigIntro}
Let $G$ be a finitely generated group and $H \leq G$ a subgroup. For all $p \in \mathbb{N}$ and $q \in \mathbb{N}_{\geq 2} \cup \{ \omega \}$, the following assertions are equivalent:
\begin{itemize}
	\item $e(G,H) \geq p$ and $\tilde{e}(G,H) \geq q$;
	\item $G$ admits an $H$-monohyp action on a quasi-median graph such that a hyperplane stabilised by $H$ delimits $\geq q$ sectors and $\geq p$ $H$-orbits of sectors;
	\item $G$ admits an $H$-monohyp action on a Hamming graph such that a hyperplane stabilised by $H$ delimits $\geq q$ sectors and $\geq p$ $H$-orbits of sectors.
\end{itemize}
\end{thm}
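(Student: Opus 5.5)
The implication from the third assertion to the second is immediate, since a Hamming graph is a quasi-median graph. It therefore remains to prove that the first assertion implies the third, and that the second implies the first. Throughout I use the interpretation that is implicit in the statement: the sectors delimited by a hyperplane $J$ stabilised by $H$ are permuted by $H$. The number of sectors should then correspond to the number of $H$-infinite complementary components of a neighbourhood of $H$, which is what $\tilde{e}(G,H)$ counts. The number of $H$-orbits of sectors should correspond to the number of ends of $\mathrm{Sch}(G,H)$, which is $e(G,H)$. I am not certain of the precise definition of an $H$-monohyp action, so I read it as an action with a single $G$-orbit of hyperplanes in which $H$ is the stabiliser of a hyperplane, possibly only up to commensurability.

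\emph{From the first assertion to the third (Sageev-type construction).} Fix a Cayley graph of $G$ and an $R$-neighbourhood $N$ of $H$ whose complement has $\geq q$ components that are $H$-infinite. I would build from them an $H$-invariant partition $\mathcal{P}$ of $G$ into $\geq q$ pieces. The finite (non-$H$-infinite) components and $N$ itself are absorbed into one chosen piece, and the absorption is done $H$-equivariantly. On the quotient $\mathrm{Sch}(G,H)$ these pieces project to regions containing $\geq p$ distinct ends, so $H$ permutes the pieces with $\geq p$ orbits.
\begin{itemize}
\item Each piece has $H$-finite coboundary, because its coboundary lies in the $1$-neighbourhood of $N$.
\item Consequently, for fixed $x,y \in G$ only finitely many translates $g\mathcal{P}$ with $gH \in G/H$ separate $x$ from $y$. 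This follows from the standard finiteness argument in Sageev's theorem: an edge lies in the coboundary of $g\mathcal{P}$ for only finitely many cosets $gH$.
\end{itemize}
Next consider the Hamming graph whose vertices are choice functions $\sigma$ assigning to each coset $gH$ a piece of $g\mathcal{P}$. Two such functions are adjacent when they differ in exactly one coordinate. I restrict to the component containing the principal functions $\sigma_x$, where $\sigma_x$ picks the piece containing $x$. By the finiteness property, all principal functions lie in a single component. The group $G$ acts on this component by permuting coordinates. Its hyperplanes are the coordinates, which form a single $G$-orbit, so the action is monohyp. The hyperplane indexed by $H$ is stabilised by $H$, and its sectors are exactly the pieces of $\mathcal{P}$. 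This gives $\geq q$ sectors and $\geq p$ $H$-orbits of sectors.

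\emph{From the second assertion to the first.} Let $J$ be a hyperplane stabilised by $H$, and fix a vertex $o$. The orbit map $g \mapsto go$ is coarsely Lipschitz from the Cayley graph, since $G$ is finitely generated. Pulling back the sectors of $J$ yields an $H$-invariant partition of $G$. The coboundary of each piece consists of edges $(g,gs)$ such that $J$ separates $go$ from $gso$. Since only finitely many hyperplanes separate $o$ from $so$ and all hyperplanes are $G$-translates of $J$, this coboundary is $H$-finite. It therefore lies in a bounded neighbourhood of $H$, possibly after passing to a finite-index subgroup of $\mathrm{stab}(J)$ and comparing with $H$.

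The main obstacle I expect is showing that each of the $\geq q$ preimage pieces is genuinely $H$-infinite, that is deep and not contained in a neighbourhood of $H$. This is needed to obtain $\tilde{e}(G,H) \geq q$, and also $e(G,H) \geq p$ from the $p$ orbits, since distinct $H$-orbits of deep pieces project to distinct ends of $\mathrm{Sch}(G,H)$. My first attempt would be the following. Using that there is a single orbit of hyperplanes, I would find, for each sector $S$, a translate $gJ$ lying deep inside $S$ whose own sectors are nested in $S$. Iterating this gives points $g^n o \in S$ at distance tending to infinity from $J$, and hence orbit points in the preimage of $S$ that are arbitrarily far from $H$. If this fails for a sector, I would instead replace the graph by an essential core: discard sectors whose preimage is shallow by collapsing them. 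I would then check that the hypotheses of the statement force at least $q$ sectors and $p$ orbits to survive this collapse.
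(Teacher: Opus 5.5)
\paragraph{The main gap: convex-minimality is never used.} It comes from how you read $H$-monohyp. The paper defines it right after the statement: unbounded orbits, a single $G$-orbit of hyperplanes, \emph{convex-minimality} ($X$ is the only non-empty $G$-invariant convex subgraph), and $H$ equal to a hyperplane-stabiliser, so no commensurability is needed.

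Without convex-minimality the second assertion does not imply the first. Let $\mathbb{Z}$ act by translations on the chain of triangles $\{n,n+1,t_n\}$. There is one orbit of hyperplanes, stabilisers are trivial, orbits are unbounded, and each hyperplane delimits $3$ sectors in $3$ orbits. Yet $e(\mathbb{Z},\{1\})=\tilde e(\mathbb{Z},\{1\})=2$.

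So the step you single out, that every $A_S=\{g \mid go\in S\}$ is $H$-infinite, is exactly where convex-minimality must enter, and neither of your routes uses it.
\begin{itemize}
\item Iterating one element with $gS\subsetneq S$ cannot work in general. No torsion element can do this, and finitely generated torsion groups such as Grigorchuk's group do have codimension-one subgroups.
\item The ``essential core'' fallback is circular: showing that no sector has to be discarded is precisely what must be proved.
\end{itemize}

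The ingredient you are missing is the median graph of prisms $\mathbb{P}(X)$, whose hyperplanes are labelled by the sectors of $X$ (Proposition~\ref{prop:HypPrism}). Theorem~\ref{thm:MonoHypConvMin} shows that a monohyp action on $X$ induces a convex-minimal action on $\mathbb{P}(X)$. Convex-minimality on $X$ alone does not give this (think of a clique with a transitive action); hyperplane-transitivity and unbounded orbits are used here. The argument then runs as follows.
\begin{itemize}
\item If $A_S$ were $H$-finite, it would be $\mathrm{stab}(S)$-finite.
\item By Claim~\ref{claim:DistPPi}, the orbit of $\pi_i(o)$ in the hyperplane-collapse $\mathbb{P}_i(X)$ would then be bounded.
\item Since there are no hyperplane-inversions (Corollary~\ref{cor:NoHypInvPrism}), $G$ fixes a vertex of $\mathbb{P}_i(X)$.
\item The preimage of that vertex is a proper $G$-invariant convex subgraph of $\mathbb{P}(X)$ (Lemma~\ref{lem:PreimageConvex}), a contradiction.
\end{itemize}
Your coboundary argument for the $H$-finiteness of $A_S\cap A_S^cg$ is correct, and more direct than the paper's.

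\paragraph{The other direction: the absorption step fails.} Absorbing $N$ ``$H$-equivariantly into one chosen piece'' forces that piece to be $H$-invariant, so it must swallow whole $H$-orbits of deep components. Take $G=D_\infty=\langle a,b\mid a^2,b^2\rangle$ and $H=\langle a\rangle$. Here $a$ swaps the two deep components, $e=1$ and $\tilde e=2$, so your partition becomes trivial.

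The paper instead keeps the non-deep part as separate clades, $H$ and $H_+\backslash H$, so that every deep component remains a sector, at the cost of possibly losing convex-minimality. It then:
\begin{itemize}
\item passes to a convex-minimal subgraph (Proposition~\ref{prop:ConvexMinimal});
\item checks that every deep sector meets every orbit (Claim~\ref{claim:ConvexMin}): a deep component contains infinitely many elements distinct modulo $H$, while a vertex differs from the pointed selector $\sigma_1$ on only finitely many characters;
\item checks that orbits are unbounded (Claim~\ref{claim:UnboundedOrbits}).
\end{itemize}
You verify neither convex-minimality nor unboundedness. With these steps added, your product of choice functions over $G/H$ does work. It even gives the Hamming-graph assertion directly, because a convex subgraph of a Hamming graph is a product of sub-cliques and hence again a Hamming graph. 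Indexing by $G/H$ also pins the stabiliser to $H$ without Claim~\ref{claim:StabC}. By contrast, the paper's Theorem~\ref{thm:FromCoarseSep} only produces the quasi-median graph $\mathrm{QM}_-$.

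\paragraph{The case $q=\omega$.} Your first step, like the paper's, needs a single $L$ with infinitely many deep components, which means $\tilde e(G,H)=\omega+1$. For $G=F_2$ and $H=\{1\}$ one has $\tilde e=\omega$. However, in any $\{1\}$-monohyp action, the sets $A_S$ attached to the sectors of a fixed hyperplane are non-empty, pairwise disjoint, and have boundaries in one fixed finite set, so the hyperplane has only finitely many sectors. Hence this case of the statement fails as written.
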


\noindent
Loosely speaking, a quasi-median graph is a median graph in which each hyperplane disconnects the graph into at least two, but possibly more, connected components, called \emph{sectors}. Beyond this difference, the geometry of quasi-median graphs is basically the same as the geometry of median graphs. Formally, quasi-median graphs can be defined as retracts of Hamming graphs (i.e.\ product of complete graphs), mimicking the characterisation of median graphs as retracts of hypercubes; or as one-skeleta of CAT(0) prism complexes (where prisms refer to products of simplices). 

\medskip \noindent
Since Theorem~\ref{thm:BigIntro} does not only characterise codimension-one and coarsely separating subgroups, but also computes the number of relative ends and the number of coends, it is not sufficient to deal with ``non-trivial'' actions on quasi-median graphs. The actions has to be ``minimal'' in some sense. This justifies the introduction of \emph{monohyp} actions.

\begin{definition}
Let $G$ be a group acting on a quasi-median graph $X$. The action is \emph{monohyp} if 
\begin{itemize}
	\item it has unbounded orbits,
	\item it is \emph{hyperplane-transitive}, i.e.\ $X$ contains a single $G$-orbit of hyperplanes, and
	\item it is \emph{convex-minimal}, i.e.\ $X$ is the only non-empty $G$-invariant convex subgraph.
\end{itemize}
Given a subgroup $H \leq G$, the action is \emph{$H$-monohyp} if it is monohyp and $H$ is a hyperplane-stabiliser. 
\end{definition}

\noindent
We emphasize that convex-minimality is not a strong restriction since every action of a finitely generated group on a quasi-median graph can be easily turned into a convex-minimal action. See Proposition~\ref{prop:ConvexMinimal} below. 

\medskip \noindent
As particular cases of Theorem~\ref{thm:BigIntro}, one deduces that:

\begin{cor}\label{cor:BigIntro}
Let $G$ be a finitely generated group and $H \leq G$ a subgroup. The following assertions hold:
\begin{itemize}
	\item $e(G,H) \geq p$ if and only if $G$ admits an $H$-monohyp action on a quasi-median graph such that a hyperplane stabilised by $H$ delimits $\geq p$ $H$-orbits of sectors. 
	\item $H$ has codimension one if and only if $G$ admits an action on a median graph with unbounded orbits, without hyperplane-inversion, with a single orbit of hyperplanes, with hyperplane-stabilisers conjugate to $H$.
	\item $\tilde{e}(G,H) \geq q$ if and only if $G$ admits an $H$-monohyp action on a quasi-median graph such that a hyperplane stabilised by $H$ delimits $\geq q$ sectors. 
	\item $H$ is coarsely separating if and only if $G$ if and only if $G$ admits an action on a quasi-median graph with unbounded orbits, with a single orbit of hyperplanes, with hyperplane-stabilisers conjugate to $H$.
\end{itemize}
\end{cor}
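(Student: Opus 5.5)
The plan is to derive each item of Corollary~\ref{cor:BigIntro} from Theorem~\ref{thm:BigIntro} by choosing the parameters $p,q$ suitably and using the standard inequality $\tilde{e}(G,H) \geq e(G,H)$.

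\textbf{First item.} Fix $p$. If $e(G,H) \geq p$ with $p \geq 2$, then $\tilde{e}(G,H) \geq 2$. Applying Theorem~\ref{thm:BigIntro} with $q=2$ gives an $H$-monohyp action in which a hyperplane stabilised by $H$ delimits $\geq p$ $H$-orbits of sectors. Conversely, suppose such an action exists. Every hyperplane of a quasi-median graph delimits at least two sectors, so the hyperplane delimits $\geq 2$ sectors. Theorem~\ref{thm:BigIntro} with $q=2$ then yields $e(G,H)\geq p$. For $p \leq 1$ the statement carries no real content, apart from existence of some monohyp action; I would treat this edge case separately, or read the statement for $p \geq 2$, which is the interesting range.

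\textbf{Third item.} Fix $q$ and take $p=0$ in Theorem~\ref{thm:BigIntro}. The condition $e(G,H)\geq 0$ is vacuous, and so is the condition on the number of $H$-orbits of sectors. The theorem therefore reads exactly as the desired equivalence.

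\textbf{Second and fourth items.} These are the cases $p=2$ and $q=2$ respectively, and each requires translating between the two descriptions of the action.

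\emph{Fourth item.} One direction is the third item with $q=2$: an $H$-monohyp action has unbounded orbits, a single orbit of hyperplanes, and hyperplane-stabilisers conjugate to $H$, the last because of hyperplane-transitivity. For the converse, start from an action with these three properties and apply Proposition~\ref{prop:ConvexMinimal} to pass to a convex-minimal action on a $G$-invariant convex subgraph $Y$. I must check that this preserves the remaining properties:
\begin{itemize}
	\item the orbits are still unbounded, since $Y$ contains an orbit;
	\item hyperplanes of $Y$ are traces of hyperplanes of $X$, so the action on $Y$ still has a single orbit of hyperplanes;
	\item the stabiliser in $G$ of a hyperplane of $Y$ coincides with that of the corresponding hyperplane of $X$.
\end{itemize}
The third point needs the fact that distinct hyperplanes of $X$ crossing $Y$ induce distinct hyperplanes of $Y$. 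This gives an $H$-monohyp action, and the third item with $q=2$ concludes.

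\emph{Second item.} Use the first item with $p=2$: a hyperplane $J$ stabilised by $H$ delimits $\geq 2$ $H$-orbits of sectors. From this I need a median graph. In the forward direction I would pick one sector $S$ delimited by $J$ and split the sectors of $J$ into the two classes $\{ hS : h\in H\}$ and the rest. Each class is $H$-invariant, and this choice is propagated $G$-equivariantly to all hyperplanes. The corresponding halfspace system gives, via Sageev's cubulation construction, a median graph. In this median graph the hyperplane stabilisers are $H$. There is no inversion, because $H$ preserves each class, and there is a single orbit of hyperplanes.

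I expect the main obstacle to be ensuring the orbits stay unbounded after this coarsening. Alternatively, I would note that Sageev's theorem already gives this second item directly. For the converse, a median graph is a quasi-median graph in which each hyperplane has exactly two sectors. The absence of inversions means $H$ fixes both sectors, so there are $2$ $H$-orbits of sectors; after passing to a convex-minimal subgraph as above, the first item gives $e(G,H)\geq 2$.
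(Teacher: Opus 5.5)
Your treatment of the first, third and fourth items follows the paper's argument: Theorem~\ref{thm:BigIntro} with $q=2$, resp.\ with a trivial value of $p$, resp.\ combined with Proposition~\ref{prop:ConvexMinimal}. Two small corrections there.
\begin{itemize}
\item For $p\le 1$ the first item is not contentless but false. Take $G=\mathbb{Z}^2$ and $H=\{1\}$: then $e(G,H)=1$, yet any $H$-monohyp action would give $\tilde{e}(G,H)\ge 2$ by Theorem~\ref{thm:FromQM}. So $p\ge 2$ is genuinely needed, just as the paper's choice $q=2$ requires.
\item In the fourth item, injectivity of $J\mapsto J\cap Y$ is automatic, since hyperplanes are disjoint sets of edges. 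What you actually need is that the trace on the convex subgraph $Y$ of a single hyperplane of $X$ is a single hyperplane of $Y$. This holds by a ladder argument inside $Y$.
\end{itemize}

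The genuine gap is in the forward direction of the second item. Your wall $\{\bigcup_{h\in H}hS_0,\ \text{its complement}\}$, propagated by $G$, is exactly the paper's, and your stabiliser and no-inversion claims are fine. But you leave open the one non-formal point: that $G$ acts on the resulting median graph $\mathrm{M}$ with unbounded orbits. This does not follow from unboundedness on $X$. Indeed, $d_{\mathrm{M}}(\sigma_x,\sigma_y)$ counts only those hyperplanes $gJ$ separating $x,y$ for which $x$ and $y$ also lie in different clades of $g\mathscr{C}$. Citing Sageev instead does not close the gap. This item is what the corollary is meant to recover from Theorem~\ref{thm:BigIntro}. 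You would also have to check that the published statement gives exactly stabilisers conjugate to $H$, no inversions and unbounded orbits. Unboundedness is the substantive point there, not a formality.

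The missing idea is to use the convex-minimality of $G\curvearrowright X$. Suppose orbits in $\mathrm{M}$ were bounded. Since there are no inversions, Proposition~\ref{prop:FixedPointMedian} gives a fixed vertex $\sigma$. Then $I=\bigcap_{\mathscr{D}\in\mathfrak{C}}\sigma(\mathscr{D})$ is a proper $G$-invariant intersection of multisectors, hence convex by Proposition~\ref{prop:MultiSectorConvex}. So it suffices to show $I\neq\emptyset$, i.e.\ that $\sigma$ is pointed. The paper does this by comparing $\sigma$ with a pointed selector $\sigma_x$:
\begin{itemize}
\item $\sigma$ and $\sigma_x$ differ on finitely many characters $g_1\mathscr{C},\ldots,g_n\mathscr{C}$.
\item The clades $\sigma(g_i\mathscr{C})$ pairwise intersect, so by Helly their gated hulls have a common point; let $y$ be the gate of $x$ in that intersection.
\item By Proposition~\ref{prop:GatedHullMulti}, $y$ lies either in $\sigma(g_i\mathscr{C})$ or in $N(g_iJ)$, and the hyperplanes of the second kind are pairwise transverse.
\item Lemma~\ref{lem:ConstructPrism} then gives a prism at $y$ crossed exactly by those hyperplanes.
\item A vertex $z$ of that prism lying in the corresponding $\sigma(g_i\mathscr{C})$ satisfies $\sigma=\sigma_z$, so $z\in I$, contradicting convex-minimality.
\end{itemize}
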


\noindent
Notice that the second item recovers \cite{MR1347406,MR1459140}.

\medskip \noindent
We conclude the article with a discussion on the difference between codimension-one subgroups and coarsely separating subgroups, which we summarise in our next statement:

\begin{thm}\label{thm:IntroSep}
Let $G$ be a finitely generated group and $H \leq G$ a subgroup. 
\begin{itemize}
	\item If $H$ is codimension-one then it is coarsely separating.
	\item If $2 \leq \tilde{e}(G,H) \leq \omega$ is finite, then $H$ is virtually codimension-one.
	\item If $H$ is an ERF group, then $H$ is coarsely separating if and only if it is virtually codimension-one.
	\item If $H$ is coarsely separating, then it contains a codimension-one subgroup of $G$.
	\item If $H$ is coarsely separating, then it may have no finite-index subgroup that has codimension-one in $G$.
\end{itemize}
\end{thm}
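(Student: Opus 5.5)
We prove the five items separately, using Theorem~\ref{thm:BigIntro} and Corollary~\ref{cor:BigIntro} as the main tools. Throughout, "virtually codimension-one" means that some finite-index subgroup of $H$ has codimension one in $G$.

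\textbf{First item.} It is the known inequality $\tilde{e}(G,H) \geq e(G,H)$. It can also be read off Theorem~\ref{thm:BigIntro}. If $e(G,H)\geq 2$, take an $H$-monohyp action on a quasi-median graph in which a hyperplane $J$ stabilised by $H$ delimits at least two $H$-orbits of sectors. Then $J$ delimits at least two sectors, so Corollary~\ref{cor:BigIntro} gives $\tilde{e}(G,H)\geq 2$.

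\textbf{Second and third items.} Suppose $2 \leq q=\tilde{e}(G,H)<\omega$. Take an $H$-monohyp action on a quasi-median graph $X$ with a hyperplane $J$, stabilised by $H$, delimiting exactly $q$ sectors $S_1,\dots,S_q$. The group $H$ permutes these finitely many sectors. Let $K$ be the kernel of this permutation action, which has finite index in $H$.

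We then build a median graph on which $G$ acts with hyperplane-stabilisers conjugate to $K$. Consider the wallspace on $G$ (or on the vertices of $X$) whose walls are the translates $g\{S_1, S_1^c\}$. The stabiliser of this wall contains $K$, and $K$ has finite index in it. Sageev's cubulation then gives a median graph. We must check that the resulting action has unbounded orbits: this should follow from the unbounded orbits of $G$ on $X$, because every hyperplane separation in $X$ is detected by some sector wall. By Corollary~\ref{cor:BigIntro} (second item), $K$ or the wall stabiliser, a finite-index overgroup of $K$ in $H$, has codimension one; passing to $K$ preserves this, as codimension-one passes to finite-index subgroups of a codimension-one subgroup of the correct kind. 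Alternatively, directly: $e(G,K)\geq 2$ because $K$ fixes each sector, so $\mathrm{Sch}(G,K)$ separates $S_1$ from its complement.

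For the third item, $H$ is ERF. If $\tilde{e}=\omega$, $H$ permutes infinitely many sectors. Choose a sector $S$ and a finite $H$-invariant "deep" configuration witnessing a separation. ERF lets us find a finite-index subgroup $H'$ of $H$ whose orbit of $S$ avoids the finitely many other sectors that must be kept apart. The union $H'S$ then gives an $H'$-invariant halfspace-like set that is $H'$-almost-invariant and not almost-equal to $\emptyset$ or $G$, so $e(G,H')\geq 2$. The converse is the first item applied to the finite-index subgroup, since $\tilde{e}$ is invariant under passing to finite index.

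\textbf{Fourth item.} Take the stabiliser in $H$ of a single sector $S$ delimited by $J$. Call it $H_S$; it is a subgroup of $H$. The set $S$ is then $H_S$-almost-invariant, and it is $H_S$-infinite with $H_S$-infinite complement. We expect this to be the hard point: the complement contains other sectors, and $S$ itself must be shown to be not $H_S$-finite. We would prove it using convex-minimality and unbounded orbits, arguing that otherwise $G$ would preserve a proper convex subgraph. This gives $e(G,H_S)\geq 2$.

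\textbf{Fifth item.} We would exhibit a counterexample, for instance an infinite simple group $H$, or one with no proper finite-index subgroups, embedded as a vertex group of a graph of groups $G$ with an edge group $H$. Choose the configuration so that the splitting makes $H$ coarsely separating, via the Hamming graph built from the Bass--Serre tree with sectors as branches, but every codimension-one subgroup inside $H$ is infinite-index. The concrete example, and the verification that $H$ itself has $e(G,H)=1$, is the main obstacle of the whole theorem. A natural candidate is $G=H\times\mathbb{Z}$ amalgamated appropriately, so that $\mathrm{Sch}(G,H)$ is one-ended while $H$ coarsely separates $G$.
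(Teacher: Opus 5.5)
\textbf{Main gap: the fifth item.} You do not construct an example. The one candidate you name also points the wrong way. In $H\times\mathbb{Z}$ the Schreier graph of $H$ is a line, so $H$ is codimension-one. The same happens whenever $H$ is an edge group of a non-trivial splitting.

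What works (Proposition~\ref{prop:Example}) is to make the subgroup a \emph{vertex} group $S$ of $G=H\ast_Z S$, with $S$ having no proper finite-index subgroup. Then ``virtually codimension-one'' just means $e(G,S)\geq 2$. One proves that for every $L$ the deep components of $G\backslash S^{+L}$ are, up to finite Hausdorff distance, exactly the branches $T_s^+$ of the Bass--Serre tree at the vertex $S$. Since $S$ permutes these branches transitively, $e(G,S)=1$ while $\tilde{e}(G,S)=\omega+1$.

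The substance lies in excluding that $S^{+L}$ cuts a branch into several deep pieces, which could produce several $S$-orbits. This is where the hypotheses are used:
\begin{itemize}
\item $Z$ is infinite and almost malnormal in $H$, so $S^{+L}$ has bounded coarse intersection with edge-spaces not attached to $S$;
\item $Z$ does not coarsely separate $H$;
\item the factors are one-ended;
\item the criterion from \cite{CoarseSepRAAG}: coarse separation of $T_s^+$ would force $S$ to coarsely contain an edge-space of $T_s^+$.
\end{itemize}
None of this appears in your sketch.

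\textbf{Second item.} In your wallspace route, the claim that every hyperplane separation in $X$ is detected by a wall $g\{S_1,S_1^c\}$ is false when $H$ does not act transitively on the sectors of $J$. Two sectors outside $H\cdot S_1$ are never separated by a wall coming from $J$. Unboundedness also does not follow from unbounded orbits on $X$. The number of walls separating $o$ and $go$ is governed by $A_{S_1}=\{g\mid go\in S_1\}$ modulo $\mathrm{stab}(S_1)$. Showing that $A_{S_1}$ is $\mathrm{stab}(S_1)$-infinite is exactly what the proof of Theorem~\ref{thm:FromQM} extracts from Theorem~\ref{thm:MonoHypConvMin}.

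Once you cite that, your ``direct'' alternative is correct. The $A_{S_i}$ are $K$-invariant, $K$-infinite and $K$-almost invariant, so Proposition~\ref{prop:AlmostInvCodim} gives $e(G,K)\geq q$. The paper needs none of this machinery. $H$ permutes the finitely many deep components of $G\backslash H^{+L}$, and the kernel of that action does the job. This argument also covers $\tilde{e}=\omega$.

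\textbf{Third item.} The case $\tilde{e}(G,H)=\omega$ is mishandled. It means each $G\backslash H^{+L}$ has finitely many deep components, not that $J$ has infinitely many sectors. The ERF argument needs no case split. If $e(G,H)=1$, pick $C$ and $h_0\in H$ with $h_0C\neq C$. Use separability of $\mathrm{stab}_H(C)$ to get a finite-index $H_0\supseteq\mathrm{stab}_H(C)$ with $h_0\notin H_0$. The inclusion $\mathrm{stab}_H(C)\leq H_0$ is what forces $h_0C\notin H_0C$; you should state it explicitly.

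\textbf{Fourth item.} Taking $\mathrm{stab}_H(S)$ with the sets $A_S$ and $A_S^c$ is a valid alternative to the paper's argument, which shows $\mathrm{stab}_H(C)$ acts cofinitely on $\partial C$. It is complete once you cite the proof of Theorem~\ref{thm:FromQM}. It also avoids the paper's split into $\tilde{e}\leq\omega$ versus $\tilde{e}=\omega+1$.
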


\noindent
Recall that a subgroup $H \leq G$ is \emph{separable} if, for every $g \in G \backslash H$, there exists a finite-index subgroup in $G$ that contains $H$ but not $g$. A group all of whose subgroups are separable is \emph{ERF} (for \emph{Extended Residually Finite}). Examples of ERF groups include virtually polycyclic groups (see \cite[Statement~1.3.10]{MR2093872}).

\paragraph{A few words about the proof of Theorem~\ref{thm:BigIntro}.} Let $G$ be a finitely generated group and $H \leq G$ a subgroup. First, assume that $G$ admits an $H$-monohyp action on a quasi-median graph $X$. In order to estimate the relative numbers of ends of $H$, we fix a hyperplane $J$ of $X$ stabilised by $H$, and, for every sector delimited by $J$, we consider the set
$$A_S:= \{ g \in G \mid g \cdot o \in V(S)\}$$
where $o \in V(X)$ is an arbitrary basepoint. Following \cite{MR1347406}, we know that, when $X$ is a median graph, $A_S$ is an \emph{almost invariant subset}, which allows us to conclude that $H$ is a codimension-one subgroup. When $X$ is a quasi-median graph, one can show similarly that at least one $A_S$ is an almost invariant subset. But The difficult here is to show that all the $A_S$ are almost invariant subsets. Once this is known, then we can conclude easily as the relative numbers of ends are related to the number of pairwise disjoint almost invariants subsets; see Propositions~\ref{prop:AlmostInCoarseSep} and~\ref{prop:AlmostInvCodim}. 

\medskip \noindent
In order to overcome the difficulty, we need the action $G \curvearrowright X$ to be more than just convex-minimal. In Section~\ref{section:GraphPrisms}, we show that every quasi-median graph $X$ has a median model $\mathbb{P}(X)$, its \emph{graph of prisms}. What we need, is that the action $G \curvearrowright X$ as well as its induced action $G \curvearrowright \mathbb{P}(X)$ are both convex-minimal. According to Proposition~\ref{prop:WhenStrongConvMin}, this amounts to saying that every prism in $X$ has a $G$-translate in every sector of $X$. In general, the convex-minimality of $G \curvearrowright X$ does not guarantee the convex-minimality of $G \curvearrowright \mathbb{P}(X)$. However, we prove in Section~\ref{section:StrongConvMin} that $G \curvearrowright \mathbb{P}(X)$ is convex-minimal whenever $G \curvearrowright \mathbb{P}(X)$ is monohyp. 

\medskip \noindent
Conversely, assume that $H$ is a coarsely separating subgroup. In other words, there exists an $L \geq 0$ such that $G\backslash H^{+L}$ has $H$-infinite components. Consider 
$$\mathscr{C}:= \{H\text{-infinite components of } G\backslash H^{+L} \} \cup \{H, H_+\backslash H\}$$
where $H_+$ denotes the complement of the union of the $H$-infinite components of $G\backslash H^{+L}$. This defines a partition of $G$ whose stabiliser is $H$, from which we can define a $G$-invariant collection of partitions $\mathfrak{C}:=\{g \mathscr{C}, g \in G\}$. Generalising the cubulation of wallspaces, which produces median graphs, we show in Section~\ref{section:QuasiCubulation} how to construct quasi-median graphs from such a data: the \emph{quasi-cubulation} of \emph{spaces with characters}. (In Appendix~\ref{section:Buneman}, we give some background on this construction, mentioning some other constructions one can find, for instance, in phylogenetics.) Thus, from the data $(G,\mathfrak{C})$, we obtain a quasi-median graph $\mathrm{QM}(G,\mathfrak{C})$ on which $G$ acts. By turning this action into a convex-minimal action, we obtain an $H$-monohyp action of $G$ on some quasi-median graph.

\paragraph{Organisation of the article.} In Section~\ref{section:Prel}, we record some basic definitions and properties related to quasi-median graphs (Section~\ref{section:QM}) and relative numbers of ends (Section~\ref{section:Ends}). 

\medskip \noindent
In Section~\ref{section:QMtoM}, we introduce and study two median graphs associated to quasi-median graphs: the graph of polytopes and the graph of prisms. The graph of polytopes is a median graph that contains the graph of prisms, and which we use as a tool in this article but which is of independent interest. In Section~\ref{section:BigMin}, we study convex-minimal actions on quasi-median graphs (Section~\ref{section:Min}) and when the induced action on the graph of prisms remains convex-minimal (Section~\ref{section:StrongConvMin}). The main result there is that monohyp actions on quasi-median graphs induce convex-minimal actions on the graph of prisms (Theorem~\ref{thm:MonoHypConvMin}). With these tools in hand, we prove one implication of Theorem~\ref{thm:BigIntro} in Section~\ref{section:FromQM} by estimating the numbers of relative ends from a monohyp action on a quasi-median graph (Theorem~\ref{thm:FromQM}). 

\medskip \noindent
In Section~\ref{section:QuasiCubulation}, we introduce \emph{spaces with characters}, which are essentially sets endowed with collections of partitions, and we show how construct quasi-median graphs from such a data. This allows us to prove the other implication of Theorem~\ref{thm:BigIntro} in Section~\ref{section:FromCoarseSep} by contructing a monohyp action on a quasi-median graph from a coarsely separating subgroup (Theorem~\ref{thm:FromCoarseSep}).

\medskip \noindent
We compile the proof of Theorem~\ref{thm:BigIntro} and its consequences in Section~\ref{section:Proof}. In Section~\ref{section:CodimVSsep}, we discuss the difference between codimension-one subgroup and coarsely separating subgroups, and we prove Theorem~\ref{thm:IntroSep}. Finally, in Appendix~\ref{section:Buneman}, we give some background on the quasi-cubulation of spaces with characters, mentioning some other constructions one can find, for instance, in phylogenetics.


\section{Preliminaries}\label{section:Prel}

\subsection{Quasi-median graphs}\label{section:QM}

\noindent
Quasi-median graphs can be thought of as defining the smallest reasonable family of median-like graphs that includes median graphs and Hamming graphs (i.e.\ product of complete graphs). Formally, it is possible to define quasi-median graphs as retracts of Hamming graphs (referring to the characterisation of median graphs as retracts of hypercubes). But there exist many possible equivalent definitions, including a local-to-global characterisation as given below (Theorem~\ref{thm:LocalGlobal}). We refer the reader to \cite{MR1297190} for more information. 

\medskip \noindent
In this article, we define quasi-median graphs as particular \emph{weakly modular} graphs, as defined below. This is not the most intuitive definition, nor the easiest to digest, but it is rather efficient to recognise some basic examples and to extract some useful information about quasi-median graphs. 

\begin{definition}
A connected graph $X$ is \emph{weakly modular} if it satisfies the following condition:
\begin{description}
	\item[(Triangle Condition)] for every vertex $o \in V(X)$ and all adjacent vertices $x,y \in V(X)$ such that $d(o,x)=d(o,y)$, there exists a common neighbour $w \in V(X)$ of $x$ and $y$ such that $d(o,w)=d(o,x)-1$;
	\item[(Quadrangle Condition)] for all vertices $o,x,y \in V(X)$ and every common neighbour $z \in V(X)$ of $x$ and $y$ such that $d(o,z)=d(o,x)+1=d(o,y)+1$ and $d(x,y)=2$, there exists a common neighbour $w \in V(X)$ of $x$ and $y$ such that $d(o,w)=d(o,z)-2$. 
\end{description}
A \emph{quasi-median} graph is a weakly modular graph with no induced copy of the complete bipartite graph $K_{2,3}$ (= two $4$-cycles glued along two consecutive vertices) and $K_4^-$ (= the complete graph $K_4$ minus an edge, or equivalent two $3$-cycles glued along a common edge). 
\end{definition}
\begin{figure}[h!]
\begin{center}
\includegraphics[width=0.7\linewidth]{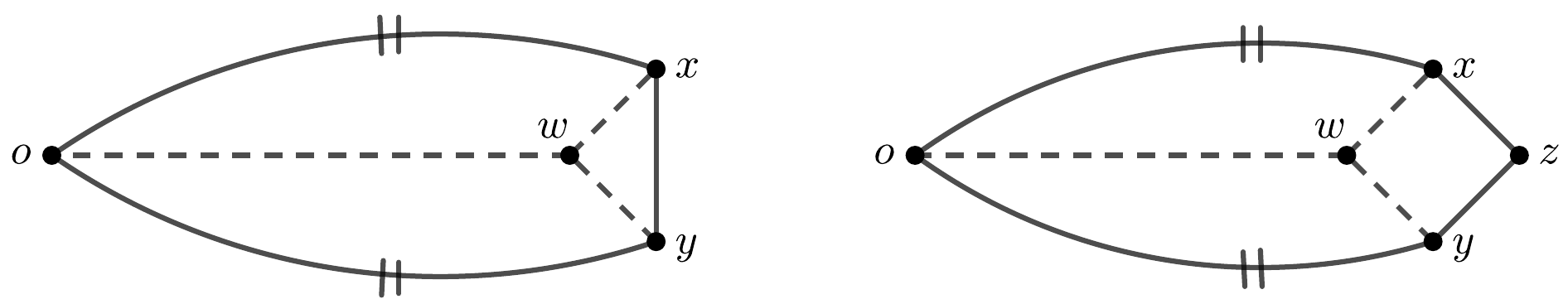}
\caption{Triangle and Quadrangle Conditions defining weakly modular graphs.}
\label{TQConditions}
\end{center}
\end{figure}

\noindent
Examples of quasi-median graphs include, as already said, median graphs and Hamming graphs, but also block graphs for instance. On the other hand, cycles of length $\geq 5$ are never quasi-median.

\paragraph{Gated subsets.} Recall that, given a graph $X$, a subgraph $Y \leq X$ is \emph{gated} if, for every $x \in V(X)$, there exists a vertex $y \in V(Y)$, referred to as the \emph{gate}, such that $x$ can be connected to every vertex in $Y$ by some geodesic that passes through $y$; or, for short, $Y \cap \bigcap_{y \in V(Y)} I(x,y) \neq \emptyset$, where $I(a,b):= \{ c \in V(X) \mid d(a,b)=d(a,c)+d(c,b)\}$ denotes the \emph{interval} between two vertices. Notice that, when it exists, the gate of $x$ in $Y$ coincides with the unique vertex of $Y$ that minimises the distance to $x$, justifying that the gate is also sometimes referred to as the \emph{projection of $x$ to $Y$}. 

\medskip \noindent
Gated subgraphs are always convex, but the converse may not hold. For instance, in a complete graph with $\geq 3$ vertices, an edge is convex but not gated. In quasi-median graphs, gated subgraphs will play the role of convex subgraphs in median graphs. (Notice that gated subgraphs define an abstract convexity, as defined in \cite{MR1234493}. So gatedness can be thought of as a form of convexity.) In median graphs, a subgraph is gated if and only if it is convex. 

\medskip \noindent
Gated subgraphs are known to satisfy the following Helly property: a finite collection of pairwise intersecting gated subgraphs always has non-empty global intersection. This observation will be quite useful in the article.

\paragraph{Hyperplanes.} Similarly to median graphs, the geometry of quasi-median graphs is encoded into the combinatorics of specific separating subgraphs, called \emph{hyperplanes}. 

\begin{definition}
Let $X$ be a quasi-median graph. A \emph{hyperplane} is an equivalence class of edges with respect to the reflexive-transitive closure of the relation that identify two edges whenever they belong to a common $3$-cycle or they are opposite edges in a $4$-cycle. The \emph{carrier} $N(J)$ of a hyperplane $J$ is the subgraph of $X$ induced by the edges in $J$. Its \emph{fibres} are the connected components of the graph $N(J) \backslash \backslash J$ obtained from $N(J)$ by removing the edges in $J$. Two hyperplanes $J_1$ and $J_2$ are \emph{transverse} if there exists an induced $4$-cycle whose two pairs of opposite edges respectively belong to $J_1$ and $J_2$. 
\end{definition}
\begin{figure}[h!]
\begin{center}
\includegraphics[width=0.5\linewidth]{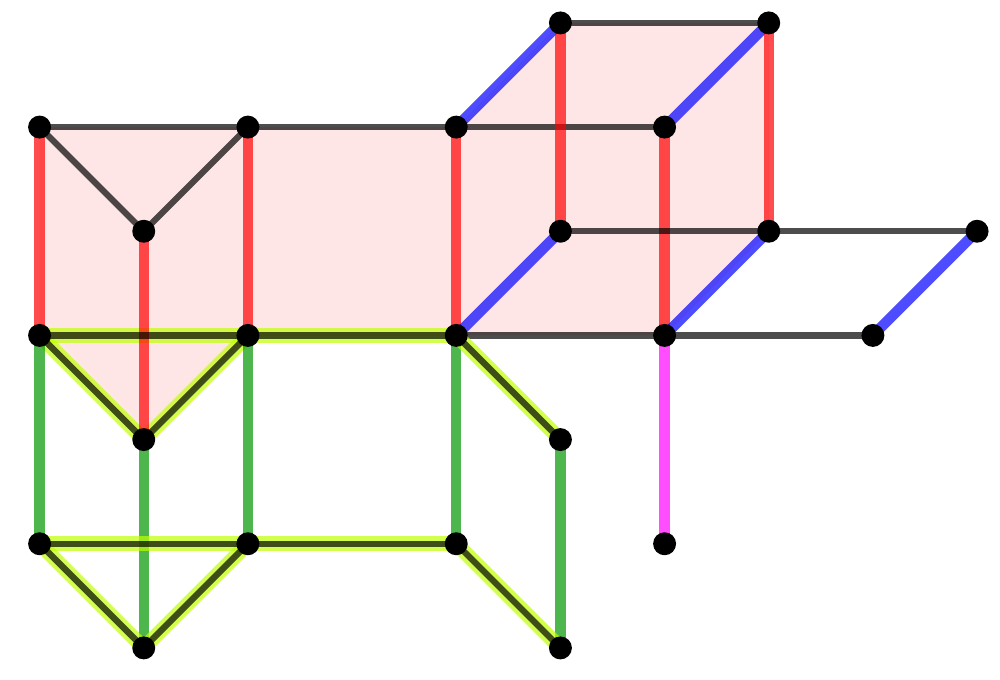}
\caption{A quasi-median graph and some of its hyperplanes. In red, a hyperplane and its carrier. In green, a hyperplane and its two fibres. The red and blue hyperplanes are transverse.}
\label{HyperplaneQM}
\end{center}
\end{figure}

\noindent
The connection between the geometry of a quasi-median graph and its hyperplanes is recorded in our next statement.

\begin{thm}[{\cite[Propositions~2.15 and~2.30]{QM}}]
Let $X$ be a quasi-median graph.
\begin{itemize}
	\item Every hyperplane $J$ separates $X$, i.e.\ the graph $X\backslash\backslash J$ obtained from $X$ by removing the edges from $J$ contains $\geq 2$ connected components. We call these components \emph{sectors}. 
	\item Sectors, carriers, and fibres of hyperplanes are gated.
	\item A path is geodesic if and only if it crosses each hyperplane at most once.
	\item The distance between two vertices coincides with the number of hyperplanes separating them.
\end{itemize}
\end{thm}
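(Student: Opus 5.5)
The plan is to build everything from \emph{cliques}, meaning maximal complete subgraphs. First I will check that in a quasi-median graph every clique $C$ is gated. Given $x \in V(X)$, take $c \in C$ minimising $d(x,\cdot)$. If two vertices of $C$ both realise the minimum, the Triangle Condition gives a common neighbour strictly closer to $x$. That neighbour lies outside $C$ by minimality, so it produces an induced $K_4^-$ unless it is adjacent to all of $C$, which would contradict maximality. Hence the minimiser $c$ is unique, and every other vertex of $C$ is at distance $d(x,c)+1$; this is exactly the gate property. We may then define the projection $\pi_C : V(X) \to V(C)$ and the partition $\{\pi_C^{-1}(c) \mid c \in C\}$.

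The key step, and the one I expect to be the main obstacle, is to show that this partition depends only on the hyperplane and not on the chosen clique. Let $J$ be a hyperplane and $C$ a clique whose edges lie in $J$. Every edge of $J$ sits in such a clique, and two edges in a common $3$-cycle lie in a common clique, so the $3$-cycle moves are automatic. For the $4$-cycle moves, let $C,C'$ be cliques containing opposite edges $ab$, $a'b'$ of an induced $4$-cycle $aba'b'$ (with $a \sim a'$, $b \sim b'$). I will show that $\pi_{C'}^{-1}(a') = \pi_C^{-1}(a)$, and likewise for $b$. For this, fix $z$ with $\pi_C(z)=a$, so $d(z,b)=d(z,a)+1$. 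The distances $d(z,a')$ and $d(z,b')$ lie in $\{d(z,a)\pm 1\}$, because $d(z,a') = d(z,a)$ together with the Triangle Condition would force a $K_4^-$. Then apply the Quadrangle Condition to the $4$-cycle with basepoint $z$ and use the absence of induced $K_{2,3}$ to rule out the configuration $d(z,b')<d(z,a')$. The same argument shows that the clique map $C \to C'$, defined by sending a vertex to its unique neighbour in $C'$ lying on a $4$-cycle through the edge $aa'$, is a well-defined bijection. Transitivity then gives a partition $\{S_i\}$ of $V(X)$, with at least two pieces, attached to $J$ itself. By construction each edge of $J$ joins two different pieces, and every edge not in $J$ stays inside one piece. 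The latter needs a short argument: if an edge $uv \notin J$ had $\pi_C(u)\neq \pi_C(v)$, then $u,v,\pi_C(u),\pi_C(v)$ would span a $3$-cycle or a $4$-cycle forcing $uv \in J$. It follows that the pieces are precisely the components of $X \backslash\backslash J$, at least after checking that each piece is connected; connectedness follows because geodesics to the gate stay inside the piece.

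Gatedness comes next. A sector $\pi_C^{-1}(c)$ is gated: for $x$ outside it, I take as gate the unique neighbour of $\pi_C(x)$'s fibre-translate in the sector and verify the gate property with the Quadrangle Condition. The carrier $N(J)$ is gated as the union of the cliques of $J$ together with the squares between them, using the same projection argument. Each fibre is the intersection of the carrier with a sector, so it is gated because an intersection of gated subgraphs is gated whenever it is non-empty.

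The metric statements then follow. Suppose a path crosses $J$ twice, say leaving a sector $S$ and later entering some sector. If it re-enters $S$, then the subpath has both endpoints in $S$ but leaves it, which contradicts the convexity of $S$ (gated implies convex) if the path is geodesic. If it enters a third sector $S''$, then I use the fact that $\pi_C$ is $1$-Lipschitz onto $C$ and changes only along $J$-edges, by exactly one step. Hence the path can be shortened by replacing its passage through $S$ with a passage through the carrier, using the clique bijections. So geodesics cross each hyperplane at most once. Conversely, let $\gamma$ be a path between $x$ and $y$ crossing each hyperplane at most once. Each hyperplane it crosses separates $x$ from $y$, so its length is at most the number of separating hyperplanes. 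Any path from $x$ to $y$ must cross every separating hyperplane, so this number is at most $d(x,y)$. Therefore $\gamma$ is geodesic, and $d(x,y)$ equals the number of hyperplanes separating $x$ and $y$.
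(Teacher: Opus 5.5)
The paper does not prove this statement; it imports it from \cite[Propositions~2.15 and~2.30]{QM}. Your argument therefore has to stand on its own, and it fails at the step you yourself flag as the main obstacle. The claim that $d(z,a')$ and $d(z,b')$ must differ from $d(z,a)$ and $d(z,b)$ is false. In the Hamming graph $K_2\times K_3$ take $a=(0,0)$, $b=(1,0)$, $a'=(0,1)$, $b'=(1,1)$ and $z=(0,2)$. Then $\pi_C(z)=a$, yet $d(z,a')=d(z,a)=1$ and $d(z,b')=d(z,b)=2$. Here the Triangle Condition only produces the house on $a,b,b',a',z$, which lies in a $3$-prism; it does not produce an induced $K_4^-$. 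Your reasoning is the parity argument from the bipartite (median) case, and it breaks exactly where triangles appear. Consequently your Quadrangle/$K_{2,3}$ argument only excludes $d(z,b')<d(z,a')$. It leaves open the case $d(z,a')=d(z,b')$, in which the gate of $z$ in $C'$ is a third vertex of $C'$, and that is precisely the multi-sector phenomenon at stake.

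The step can be repaired by reversing the order of your argument. First build the bijection $\phi\colon C\to C'$ directly. For $c\in C\setminus\{a,b\}$, the absence of $K_4^-$ gives $d(c,a')=d(c,b')=2$. The Triangle Condition at basepoint $c$ then gives a common neighbour of $a',b'$ adjacent to $c$, and this vertex lies in $C'$. Now suppose $\pi_{C'}(z)=\phi(c)\neq a'$. The gate identity $d(z,a')=d(z,\phi(c))+1$, together with $d(z,\phi(c))\geq d(z,c)-1=d(z,a)$ and $d(z,a')\leq d(z,a)+1$, forces $d(z,\phi(c))=d(z,a)$ and $d(z,a')=d(z,a)+1$. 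The square $a,c,\phi(c),a'$ is induced, again because there is no $K_4^-$. On it this is exactly the configuration your Quadrangle/$K_{2,3}$ argument rules out.

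Two further steps are asserted rather than proved. The first is the claim that an edge $uv\notin J$ cannot join two pieces. Writing $p=\pi_C(u)\neq q=\pi_C(v)$, one finds $d(u,p)=d(v,q)=m$, and $u,v,p,q$ span a short cycle only when $m\leq 1$. In general you need an induction on $m$. Choose a neighbour $u'$ of $u$ on a geodesic to $p$, and apply the Quadrangle Condition at basepoint $q$ to $u$ and its neighbours $v,u'$. This yields a square $uvv'u'$ with $\pi_C(u')=p$ and $\pi_C(v')=q$, at distance $m-1$ from $C$. The separation statement rests on this claim, so it cannot be skipped.

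The second is gatedness of sectors and carriers, which is only sketched; your candidate gate is not specified. If it is taken from the fixed clique $C$, it is wrong. In the ladder $K_2\times P_n$, where $P_n$ is the path on $0,1,\dots,n$, the gate of $(1,n)$ in the sector $\{(0,j)\}$ is $(0,n)$, not $(0,0)$. Choosing the right clique and verifying the gate property amounts to the product structure of carriers (\cite[Proposition~2.15]{QM}), which needs a real argument. Your ``passage through the carrier'' in the metric part presupposes the same product structure.

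The metric bullets can avoid this altogether. Suppose a geodesic crosses $J$ at $uu'$ and next at $vv'$, with $vv'$ in a clique $C''$ of $J$. Then $\pi_{C''}(u)\neq v$, because $u$ and $v$ lie in different pieces. Hence $d(u,v')\leq d(u,v)$, contradicting geodesicity. This covers both of your sub-cases at once.
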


\noindent
The main difference between median and quasi-median graphs is that, in quasi-median graphs, hyperplanes delimit at least two sectors (possibly infinitely many), while, in median graphs, hyperplanes always delimit exactly two halfspaces. In fact, this property characterises median graphs among quasi-median graphs:

\begin{lemma}\label{lem:WhenMedian}
A quasi-median graph is median if and only if all its hyperplanes delimit exactly two sectors. 
\end{lemma}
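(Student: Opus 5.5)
Both directions reduce to a single local fact about quasi-median graphs: a hyperplane delimits at least three sectors if and only if it contains an edge lying in a $3$-cycle. We use the definition of quasi-median graphs as weakly modular graphs without induced $K_{2,3}$ or $K_4^-$, and the previous theorem (hyperplanes separate, sectors are gated, geodesics cross each hyperplane at most once, distance counts separating hyperplanes).

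\emph{If $X$ is median, then every hyperplane delimits exactly two sectors.} A median graph is bipartite, so it has no triangles. Hence a hyperplane is an equivalence class of edges under opposite-edge-in-a-$4$-cycle alone. Fix a hyperplane $J$ and an edge $ab \in J$. We show that each vertex $x$ lies in the sector containing $a$ or the one containing $b$. Since $X$ is bipartite, $d(x,a) \neq d(x,b)$, and they differ by exactly one. The sectors are the classes of the partition induced by the Djokovi\'c--Winkler relation, which coincides with the hyperplane relation in median graphs. Concretely, let $W_{ab}$ (resp.\ $W_{ba}$) be the set of vertices closer to $a$ (resp.\ to $b$). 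Every edge of $J$ crosses between $W_{ab}$ and $W_{ba}$, and a geodesic from $x$ to $a$ or to $b$ crosses $J$ at most once. So $x$ is joined within $X\backslash\backslash J$ to whichever of $a,b$ it is closer to. This gives at most two sectors, and the previous theorem gives at least two.

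\emph{Conversely, suppose every hyperplane delimits exactly two sectors.} We check that $X$ has no triangle. If $xyz$ were a $3$-cycle, its three edges would all lie in one hyperplane $J$. Removing $J$ separates $x$, $y$ and $z$ pairwise: the edge $xy \in J$ joins distinct sectors, because a path in $X\backslash\backslash J$ from $x$ to $y$ together with $xy$ would give a cycle crossing $J$ exactly once, contradicting the gatedness/convexity properties (via the geodesic criterion, a cycle crosses each hyperplane an even number of times or at least twice). That yields at least three sectors, a contradiction. The same argument shows precisely that $x,y,z$ lie in three distinct sectors: by gatedness of the sector containing $x$, the gates of $y$ and $z$ would coincide with $x$.

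With no triangles, the Triangle Condition forces $d(o,x) \neq d(o,y)$ for adjacent $x,y$. Hence $X$ is bipartite, that is, a triangle-free weakly modular graph with no induced $K_{2,3}$. Such graphs are exactly the median graphs; this is the standard characterisation of median graphs as modular graphs without $K_{2,3}$ (Bandelt). Alternatively, one proves uniqueness of medians directly: existence follows from the Quadrangle Condition by the usual induction on $d(o,x)+d(o,y)$, and uniqueness follows from the absence of $K_{2,3}$.

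The main obstacle is the separation step: showing that the three vertices of a triangle lie in pairwise distinct sectors, which needs the fact that a path can never cross a hyperplane exactly once when returning to the same sector. I would derive this from gatedness of sectors in the previous theorem. For the final identification with median graphs, I would simply cite the known characterisation rather than reprove it.
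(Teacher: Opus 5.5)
Your argument is correct in outline but takes a different route from the paper. The paper settles the sector count at once by citing \cite[Corollary~2.21]{QM}: the number of sectors delimited by a hyperplane $J$ equals the number of vertices of any clique in $J$. So ``every hyperplane delimits exactly two sectors'' is literally equivalent to ``$X$ is triangle-free''. The paper then invokes Mulder's theorem that triangle-free quasi-median graphs are median.

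You instead prove by hand the two special cases of that corollary you need. You get ``at most two sectors'' when cliques are edges, from bipartiteness plus the geodesic criterion, and ``at least three sectors'' when $J$ contains a triangle. You then finish with Bandelt's characterisation of median graphs as modular graphs without induced $K_{2,3}$. This also needs the fact that triangle-free weakly modular graphs are modular, which you only sketch; Mulder's criterion, as used in the paper, avoids this detour. Your route is more self-contained, using only the basic hyperplane theorem from the preliminaries, and your first direction works verbatim for any triangle-free quasi-median graph. The paper's route is shorter but rests on an external corollary.

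Two justifications need tightening. In the first direction, say explicitly why $x$ reaches the closer endpoint, say $a$, without crossing $J$. A geodesic from $x$ to $a$ followed by $ab$ has length $d(x,a)+1=d(x,b)$. So it is a geodesic, and therefore it crosses $J$ only at $ab$.

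In the second direction, your reason that an edge $uv\in J$ joins distinct sectors does not work as written:
\begin{itemize}
\item the geodesic criterion says nothing directly about cycles;
\item ``a cycle crosses each hyperplane an even number of times'' is false in quasi-median graphs, since a triangle crosses its hyperplane three times;
\item gatedness of sectors does not obviously give the claim either.
\end{itemize}
The clean argument uses the last bullet of the theorem. Any hyperplane $K\neq J$ fails to separate $u$ and $v$, because the edge $uv$ survives in $X\backslash\backslash K$. Since $d(u,v)=1$ equals the number of separating hyperplanes, $J$ must separate $u$ and $v$. The three edges of a triangle all lie in one hyperplane, so applying this to each edge puts the three vertices in three distinct sectors. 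Your remark about gates is then unnecessary.
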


\begin{proof}
Let $X$ be a quasi-median graph, $C$ a clique (i.e.\ a maximal complete subgraph), and $J$ a hyperplane containing $C$. It follows from \cite[Corollary~2.21]{QM} that the number of sectors delimited by $J$ coincides with the number of vertices of $C$. Therefore, all the hyperplanes of $X$ delimit exactly two sectors if and only if $X$ is triangle-free. This is a characterisation of median graphs according to \cite[(25) p.\ 149]{MR605838}. 
\end{proof}

\noindent
We conclude this paragraph by recording two technical lemmas that will be useful later.

\begin{lemma}[{\cite[Lemma~2.34]{QM}}]\label{lem:SepProj}
Let $X$ be a quasi-median graph and $Y \subset V(X)$ a gated subset. for every $x \in V(X)$, a hyperplane separating $x$ from its gate in $Y$ always separates $x$ from $Y$. 
\end{lemma}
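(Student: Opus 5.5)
The statement to prove is Lemma~\ref{lem:SepProj}: if $Y$ is gated, $x \in V(X)$, and $y$ is the gate of $x$ in $Y$, then every hyperplane $J$ separating $x$ from $y$ separates $x$ from every vertex of $Y$. The plan is to combine the defining property of the gate with the fact that geodesics cross each hyperplane at most once. Concretely, fix such a $J$ and an arbitrary vertex $z \in V(Y)$; the goal is to show that $J$ separates $x$ from $z$.

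By gatedness there is a geodesic from $x$ to $z$ passing through $y$. Concatenate a geodesic $[x,y]$ with a geodesic $[y,z]$; since $d(x,z)=d(x,y)+d(y,z)$, this concatenation is a geodesic. Because $J$ separates $x$ from $y$, the subpath $[x,y]$ crosses $J$ exactly once, and therefore so does the whole path. This uses that a path is geodesic if and only if it crosses each hyperplane at most once.

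It remains to see that a path crossing $J$ exactly once joins vertices in distinct sectors of $J$. The step needing care is that sectors are the components of $X\backslash\backslash J$, so a priori a path could leave a sector and return to it. I would first try to argue this directly: if $x$ and $z$ lay in the same sector, a single crossing from sector $S$ into $S' \neq S$, followed by no further crossing, keeps the path inside $S'$. Indeed, the remaining portion avoids $J$-edges and so stays in one component, which is $S'$. This forces $z \in S' \neq S$, so $J$ separates $x$ and $z$.

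Since $z \in V(Y)$ was arbitrary, $J$ separates $x$ from $Y$. The only real obstacle is this bookkeeping about sectors. The crossing-count argument handles it cleanly without needing any distance-based characterisation of sectors, so the whole proof reduces to the gate property plus the geodesic/hyperplane criterion.
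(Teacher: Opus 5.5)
The paper gives no proof of this lemma (it is imported from \cite[Lemma~2.34]{QM}), so your argument is judged on its own. Most of it is correct. The concatenation $[x,y]\cup[y,z]$ is a geodesic, it contains exactly one edge of $J$, and that edge lies in $[x,y]$.

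There is a small but genuine gap in your last sentence. What you have shown is that $J$ separates $x$ from each vertex $z\in V(Y)$ individually: every vertex of $Y$ lies in \emph{some} sector other than the sector $S_x$ of $x$. In a median graph this is the same as separating $x$ from $Y$, but here $J$ may delimit three or more sectors. ``Separating $x$ from $Y$'' means, as in the paper's definition of $\mathcal{H}(A|B)$, that $Y$ lies in a \emph{single} sector of $J$ distinct from $S_x$; in particular $J$ does not cross $Y$. Pointwise separation does not rule out $Y$ meeting two different sectors, neither of which is $S_x$. The paper relies on this stronger form later:
\begin{itemize}
\item Claim~\ref{claim:InclusionHyp} needs $J\notin\mathcal{H}(Y)$;
\item the application of Lemma~\ref{lem:InclusionOne} inside Lemma~\ref{lem:ConstructPrism} needs the gated set to be contained in one sector of $J$.
\end{itemize}

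The repair is already inside your argument. The portion of the geodesic after its unique $J$-edge passes through $y$, so the sector $S'$ you land in is the sector $S_y$ of $y$, whatever $z$ is. Put directly: if some $z\in V(Y)$ were not in $S_y$, then $[y,z]$ would contain an edge of $J$, and the geodesic $[x,y]\cup[y,z]$ would cross $J$ twice. Hence $V(Y)\subset S_y\neq S_x$, which is the full statement. (Alternatively, $J$ cannot cross $Y$: a gated subgraph containing a $J$-edge contains the whole clique through it, that clique meets $S_x$, and $Y$ is connected. The crossing count is shorter.)

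The obstacle you flagged, a path leaving a sector and re-entering it, is not where the quasi-median subtlety lies. The subtlety is that a hyperplane can delimit more than two sectors.
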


\begin{lemma}\label{lem:InclusionOne}
Let $X$ be a quasi-median graph, $S$ a sector, and $Y_1, \ldots, Y_n$ pairwise intersecting gated subgraphs. If $Y_1 \cap \cdots \cap Y_n \leq S$ then $Y_i \leq S$ for some $1 \leq i \leq n$. 
\end{lemma}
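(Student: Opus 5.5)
Argue by contradiction. Suppose that no $Y_i$ is contained in $S$. Let $J$ be the hyperplane delimiting $S$. Since $S$ and each $Y_i$ are gated, every $Y_i$ meets $S$ or is disjoint from it, so we split into two cases.

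\textbf{Case A: some $Y_i$ is disjoint from $S$.} This case is immediate. The intersection $Y_1 \cap \cdots \cap Y_n$ is non-empty by the Helly property of gated subgraphs. It lies in $Y_i$ and is disjoint from $S$, which contradicts $Y_1 \cap \cdots \cap Y_n \leq S$.

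\textbf{Case B: every $Y_i$ meets $S$.} By assumption, each $Y_i$ also contains a vertex outside $S$. A gated (hence convex, hence connected) subgraph that meets both $S$ and its complement must contain an edge of $J$. Therefore each $Y_i$ crosses $J$, that is, $Y_i \cap N(J)$ contains a $J$-edge.

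The plan is to apply the Helly property to the family $Y_1, \ldots, Y_n, N(J)$, and then to $Y_1,\ldots,Y_n$ together with the complement of $S$. For this we use that the complement of a sector is not gated in general, but the union of the other sectors delimited by $J$ is gated. We need two facts:
\begin{itemize}
\item $Y_i \cap Y_j \cap N(J) \neq \emptyset$;
\item $(Y_i \cap Y_j) \not\leq S$.
\end{itemize}

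To establish these, we want the stronger statement that $Y_i \cap Y_j$ itself crosses $J$. Take $x \in Y_i \cap Y_j$. If $x \notin S$, we are done. Otherwise $x \in S$. Let $e=(a,b)$ be a $J$-edge of $Y_i$ and $f=(c,d)$ a $J$-edge of $Y_j$, with $a,c \in S$ and $b,d \notin S$. Consider the projection $p$ of $b$ onto $Y_j$. This is the \emph{main obstacle}: we must show that the relevant projections stay in $Y_i$ and leave $S$, and the argument can fail when $p$ stays in $S$.

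First, I would try working inside the fibre structure of $N(J)$. Each gated $Y$ crossing $J$ intersects $N(J)$ in a gated subgraph containing a clique of $J$, and $N(J)$ is isomorphic to a product of a fibre with a clique. For each $k$, let $F_k$ be the projection of $Y_k \cap N(J)$ to a fibre. Pairwise intersections of the $F_k$ should follow from the pairwise intersections of the $Y_k$ by projecting to $N(J)$, using Lemma~\ref{lem:SepProj}: a hyperplane separating $x$ from its projection onto $N(J)$ separates $x$ from $N(J)$, so it does not cross the $Y_k$ in a way that matters. Helly then gives a common point in the fibre.

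The harder part is the clique factor. We need a common vertex outside $S$ among the sets $Y_k \cap C$ for a clique $C$. Each such set is gated in a complete graph, hence either a single vertex or all of $C$. Since each $Y_k$ crosses $J$, $Y_k \cap C$ is all of $C$ whenever it has size at least $2$, and in that case it contains vertices outside $S$. The key point to verify is that the intersection $Y_k \cap N(J)$ genuinely decomposes as a product, so that $Y_k \cap C = C$ for some clique $C$ over the common fibre point. Once this is checked, $\bigcap_k Y_k$ contains a vertex of $C$ outside $S$, which contradicts $Y_1 \cap \cdots \cap Y_n \leq S$.
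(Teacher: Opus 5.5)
Case A is fine. In Case B you correctly reach the point where every $Y_i$ contains an edge of $J$. From there, though, the proposal never closes. You name the projection step as the ``main obstacle'' and leave it unresolved. The fibre/clique route then ends with ``the key point to verify'', which is exactly the content still missing.

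Your stated plan also rests on a false claim. The union of the sectors of $J$ other than $S$ \emph{is} the complement of $S$. By Proposition~\ref{prop:GatedHullMulti} it is not gated as soon as $J$ delimits at least three sectors, since its gated hull contains $N(J)$. So it cannot be fed into the Helly property.

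The missing idea is short. If a gated subgraph $Y$ contains an edge $\{a,b\}$ of $J$, it contains the whole clique $C$ through $\{a,b\}$. Indeed, suppose $c \in C \setminus Y$ has gate $g$. Then $d(c,a)=d(c,g)+d(g,a)$ and $d(c,b)=d(c,g)+d(g,b)$. Both distances equal $1$ and $d(c,g)\ge 1$, which forces $a=g=b$, a contradiction. Since $C$ meets every sector delimited by $J$, each $Y_i$ meets every such sector.

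Now fix a single sector $S' \neq S$ of $J$; this one is gated. Then $S', Y_1, \ldots, Y_n$ pairwise intersect, and Helly gives a vertex of $Y_1 \cap \cdots \cap Y_n$ in $S'$, i.e.\ outside $S$. This is exactly the paper's argument.

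It makes your intermediate goals unnecessary: that $Y_i \cap Y_j$ crosses $J$ (true, and it follows from the same observation), the projections, and Helly inside a fibre. Your fibre route could in fact be finished directly. Apply Helly to $Y_1, \ldots, Y_n, N(J)$ to get $z \in \bigcap_k Y_k \cap N(J)$. Then use convexity of $Y_k$ between $z$ and a $J$-clique contained in $Y_k$ to see that the $J$-clique through $z$ lies in every $Y_k$. But as written, this step is asserted rather than proved.
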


\begin{proof}
Let $J$ denote the hyperplane delimiting $S$ and fix another sector $S'$ delimited by $J$. Given an index $1 \leq i \leq n$, we claim that, if $Y_i$ is not contained in $S$, then $Y_i$ intersects $S'$.

\medskip \noindent
We already know that $Y_i$ intersects $S$, so, if $Y_i$ is not contained in $S$, necessarily it must be crossed by $J$. Let $\{a,b\}$ be an edge of $Y_i$ that belongs to $J$. The clique of $X$ containing $\{a,b\}$ must be contained in $Y_i$, since $Y_i$ is gated; but it also intersects all ther sectors delimited by $J$, including $S'$. So we conclude that $Y_i$ intersects $S'$, as claimed.

\medskip \noindent
It follows that, if $Y_i$ is not contained in $S$ for every $1 \leq i \leq n$, then the gated subgraphs $S', Y_1, \ldots, Y_n$ pairwise intersect, which implies that $Y_1 \cap \cdots \cap Y_n \cap S'$ is non-empty, contradicting the fact that $Y_1 \cap \cdots \cap Y_n$ is contained in $S$ (and a fortiori disjoint from~$S'$). 
\end{proof}

\paragraph{Multisectors.} In addition to sectors, unions of sectors also play an important role in the geometry of quasi-median graphs. 

\begin{definition}
Let $X$ be a quasi-median graph. A \emph{multisector} is the subgraph induced by a union of sectors delimited by a given hyperplane. A \emph{cosector} is the complement of a sector.
\end{definition}

\noindent
Notice that sectors and cosectors are multisectors. 

\medskip \noindent
In some sense, sectors encode gatedness in quasi-median graphs. As an illustration of this phenomenon, we can mention that sectors are gated and that every gated subgraph coincides with the intersection of the sectors containing it. Cosectors play a similar role for convexity:

\begin{prop}[{\cite[Proposition~2.104 and Lemma~2.105]{QM}}]\label{prop:MultiSectorConvex}
Let $X$ be a quasi-median graph. Every multisector is convex. Moreover, every convex subgraph coincides with the intersection of the cosectors containing it. 
\end{prop}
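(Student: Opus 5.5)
The statement is cited from \cite{QM}, but here is how I would prove it directly. There are two claims: every multisector is convex, and every convex subgraph equals the intersection of the cosectors containing it. Throughout I use only the recorded facts that sectors are gated and that a path is geodesic if and only if it crosses each hyperplane at most once.

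\emph{Convexity of multisectors.} Let $J$ be a hyperplane and $M$ a union of sectors delimited by $J$. Take $x,y \in M$ and a geodesic $\gamma$ from $x$ to $y$. Since $\gamma$ is geodesic, it crosses $J$ at most once. If it never crosses $J$, it stays inside the sector containing $x$, hence inside $M$. If it crosses $J$ exactly once, it passes from the sector of $x$ directly into the sector of $y$ and never returns. So every vertex of $\gamma$ lies in one of these two sectors, both of which belong to $M$. Hence $I(x,y) \subset M$ and $M$ is convex.

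\emph{Convex sets as intersections of cosectors.} One inclusion is trivial: a convex subgraph $Y$ lies in the intersection of the cosectors containing it. For the converse, take $x \notin Y$; we need a sector $S$ with $x \in S$ and $S \cap Y = \emptyset$, so that the cosector $X \backslash S$ contains $Y$ but not $x$. My first attempt is a separating argument.
\begin{itemize}
	\item Choose $y \in Y$ minimising $d(x,y)$ and a geodesic from $x$ to $y$. Let $z$ be the neighbour of $x$ on it, and let $J$ be the hyperplane of the edge $\{x,z\}$.
	\item Let $S$ be the sector of $J$ containing $x$. The aim is to show that $S \cap Y = \emptyset$.
\end{itemize}

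The hard part is choosing $z$ correctly. A convex $Y$ need not be gated, so a single nearest point does not immediately give separation. For example, in a triangle $\{a,b,c\}$ with $Y = \{a,b\}$ and $x = c$, the only hyperplane separates $c$ from each vertex of $Y$ individually. So the claim should come from minimality over all of $Y$, not from the particular geodesic.

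Suppose some $w \in Y$ lies in $S$. Then $J$ separates neither $x$ from $w$ nor (by choice of $J$) fails to separate $x$ from $y$. I would combine the geodesics from $x$ to $w$ and from $x$ to $y$, using the quadrangle and triangle conditions, to produce a vertex of $I(w,y)$, hence of $Y$, that is strictly closer to $x$ than $y$. This contradicts minimality.

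To make this rigorous, I would first pass to the gated hull of $Y$ and apply Lemma~\ref{lem:SepProj}: every hyperplane separating $x$ from its gate $p$ in that hull separates $x$ from the hull. If the hull contains $x$, I would instead argue by induction on $d(x,Y)$, using the clique structure. In a clique $C$ meeting $Y$ in at least two vertices, convexity of $Y$ forces $C \cap Y$ to be either a single vertex or all of $C$, except in the edge case shown above. The edge case is handled by the hyperplane of the clique together with a multisector rather than a sector, which is exactly why the statement is phrased with cosectors.

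I expect this reduction from convex to gated to be the main obstacle.
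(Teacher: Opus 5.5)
Your proof that multisectors are convex is correct. The gap is in the second half. The one step with real content is that the sector $S$ of $J$ containing $x$ is disjoint from $Y$. You announce it (``I would combine the geodesics \dots\ using the quadrangle and triangle conditions'') but never carry it out, and the route you propose for making it rigorous does not work.

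\begin{itemize}
\item Projecting $x$ onto the gated hull of $Y$ and invoking Lemma~\ref{lem:SepProj} gives nothing when $x$ lies in that hull. This is the typical case: in your triangle example the gated hull of $\{a,b\}$ is the whole triangle.
\item The fallback via cliques rests on a false claim. Any two vertices of a clique are adjacent, so their interval consists of just those two vertices, and every subset of a clique is convex. Convexity of $Y$ therefore puts no restriction on $C \cap Y$.
\item A multisector does not give what is needed either. You need a single sector containing $x$ and missing $Y$.
\item The triangle is not a counterexample to your first attempt. There the sector of $J$ containing $c$ is $\{c\}$, which is disjoint from $Y$.
\end{itemize}

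The missing idea is to use the gatedness of the sector $S$, not of $Y$. Suppose $w \in Y \cap S$, and let $v$ be the gate of $y$ in $S$.
\begin{itemize}
\item Since $w \in S$, we have $v \in I(y,w)$, so $v \in Y$ by convexity.
\item Since $x \in S$, we have $v \in I(y,x)$.
\item Since $J$ separates $x$ from $y$, we have $y \notin S$, so $v \neq y$.
\end{itemize}
Therefore $d(x,v) = d(x,y) - d(y,v) < d(x,y)$, contradicting the choice of $y$. This $v$ is exactly the vertex of $I(w,y)$ closer to $x$ that you were looking for. The argument works for any hyperplane separating $x$ from a nearest point of $Y$. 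So no careful choice of $z$ is needed, and no reduction from convex to gated subgraphs arises.
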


\noindent
It is worth mentioning that the only gated multisectors are the sectors. More precisely:

\begin{prop}[{\cite[Lemma~2.102]{QM}}]\label{prop:GatedHullMulti}
Let $X$ be a quasi-median graph and $J$ a hyperplane. If $S_1, \ldots, S_n$ are pairwise distinct sectors delimited by $J$ and $n \geq 2$, then the gated hull of $S_1 \cup \cdots \cup S_n$ is $S_1 \cup \cdots \cup S_n \cup N(J)$. 
\end{prop}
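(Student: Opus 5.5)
We must show that the gated hull of $S_1 \cup \cdots \cup S_n$ is exactly $S_1 \cup \cdots \cup S_n \cup N(J)$ for $n \geq 2$. The plan is to prove the two inclusions separately: first that the union $U := S_1 \cup \cdots \cup S_n \cup N(J)$ is gated and contains the $S_i$, and then that every gated subgraph containing $S_1 \cup S_2$ contains $N(J)$.

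\medskip \noindent
For the second inclusion, let $Y$ be gated with $S_1 \cup \cdots \cup S_n \subset Y$. Take any edge $\{a,b\}$ of $J$. The clique $C$ containing it has exactly one vertex in each sector delimited by $J$, by \cite[Corollary~2.21]{QM}. Hence $C$ has a vertex $c_1 \in S_1$ and a vertex $c_2 \in S_2$, and these two are adjacent. Since $Y$ is gated, it is convex, so it contains the edge $\{c_1,c_2\}$. Now take any other vertex $c$ of $C$. Its gate in $Y$ must lie on geodesics to both $c_1$ and $c_2$, which are at distance $1$ from $c$, so the gate is $c$ itself. The same argument applies to every vertex of every clique in $J$, so $N(J) \subset Y$.

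\medskip \noindent
For the first inclusion, we show that $U$ is gated by constructing gates explicitly. Let $x$ be a vertex outside $U$. Then $x$ lies in some sector $S$ delimited by $J$ other than the $S_i$. Let $p$ denote the gate of $x$ in $N(J)$ (carriers are gated). Since $x \notin N(J)$, $p$ lies in the fibre $N(J) \cap S$. Let $q$ be the neighbour of $p$ in $S_1$ along the clique of $J$ through $p$; we expect the gate to be $p$ itself. The candidate is not in $U$ when $S \neq S_i$, so this needs adjusting.

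\medskip \noindent
It is cleaner to use the characterisation of gated sets through intersections of sectors: a connected set is gated when it is an intersection of sectors. We would aim to write $U$ as such an intersection. Alternatively, we can check gatedness via the standard criterion for quasi-median (weakly modular) graphs: a connected induced subgraph is gated if it is locally convex (contains every $4$-cycle spanned by two of its edges at a vertex, and every triangle spanned by one of its edges).

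The local check splits into three cases:
\begin{itemize}
	\item $U$ is connected, since the $S_i$ are connected and linked through cliques of $J$ in $N(J)$.
	\item A triangle with an edge in $U$ lies either in a sector or in a clique of $J$, which is contained in $N(J)$.
	\item Take a $4$-cycle with two consecutive edges in $U$. If it is crossed by $J$, it lies in $N(J)$. Otherwise it lies in a single sector. If that sector is some $S_i$ we are done. If not, both edges lie in $N(J)$, and the fourth vertex stays in $N(J)$ because carriers are gated.
\end{itemize}
Note that this criterion (that locally convex connected subgraphs of quasi-median graphs are gated) is not among the results stated so far in the paper, so the plan relies on importing it.

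\medskip \noindent
The main obstacle is this local-to-global gatedness step. If the imported criterion is unavailable, the fallback is Lemma~\ref{lem:SepProj}. We would verify directly that for $x \in S$, the gate of $x$ in $N(J) \cap S$ (a fibre, hence gated) serves as the gate in $U$. Indeed, any geodesic from $x$ into $S_i$ must cross $J$, and therefore passes through the fibre; gatedness of the fibre then places the projection on that geodesic.
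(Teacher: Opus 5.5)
The paper gives no proof of this statement; it quotes it from \cite[Lemma~2.102]{QM}. Your proposal is a self-contained proof that is correct in substance, so the comparison is with a citation rather than an argument.

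Your inclusion $N(J)\subseteq Y$, for every gated $Y\supseteq S_1\cup\cdots\cup S_n$, is clean and complete. For a clique $C$ of $J$ and $c\in V(C)$, the gate of $c$ in $Y$ lies in $\{c,c_1\}\cap\{c,c_2\}=\{c\}$, and this is exactly where $n\geq 2$ is used.

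For the gatedness of $U$, you import a criterion: in a weakly modular graph, a connected induced subgraph that is closed under triangles on its edges and under $4$-cycles on pairs of its adjacent edges is gated. This is a true result, going back to Chepoi, and your case check is right with one omission. Your triangle bullet skips the subcase of a triangle lying in a sector $S\notin\{S_1,\ldots,S_n\}$. There the edge lies in $N(J)\cap S$, and you need that the gated subgraph $N(J)$ contains every common neighbour of two adjacent vertices of it. This is the same reasoning you give in your square bullet.

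More importantly, you abandoned your first, direct approach for a wrong reason. The gate $p$ of $x$ in $N(J)$ does belong to $U$, simply because $N(J)\subseteq U$. That approach finishes in two lines, and your final paragraph essentially carries it out. Let $u\in U$.
\begin{itemize}
\item If $u\in N(J)$, then $p\in I(x,u)$ because $N(J)$ is gated.
\item If $u\in S_i\setminus N(J)$, then, since $x\notin S_i$, any geodesic from $x$ to $u$ contains an edge of $J$, hence a vertex $q\in N(J)$. Therefore $d(x,u)=d(x,q)+d(q,u)=d(x,p)+d(p,q)+d(q,u)\geq d(x,p)+d(p,u)$. The reverse inequality is the triangle inequality, so $p\in I(x,u)$.
\end{itemize}

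This uses nothing beyond gatedness of carriers; you do not even need fibres. It therefore makes the imported local-to-global criterion unnecessary. I would present this direct gate argument as the proof that $U$ is gated and drop the detour through local convexity.
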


\paragraph{Prisms.} In a quasi-median graph, a \emph{clique} is a maximal complete subgraph. A \emph{prism} is a subgraph that decomposes as a product of finitely many cliques. The number of cliques in this product-decomposition is the \emph{cubical dimension} of the prism. According to \cite[Lemmas~2.16 and~2.80]{QM}, cliques and prisms are gated.

\medskip \noindent
Cliques (resp.\ prisms) in quasi-median graphs play the role of edges (resp.\ cubes) in median graphs. For instance, every quasi-median graph can be endowed with a contractible (and even CAT(0)) cellular structure by filling the prisms with products of simplices. Prisms also appear naturally in fixed-point properties:

\begin{prop}\label{prop:FixedPointMedian}
Let $G$ be a group acting on a quasi-median graph $X$. If $G$ has bounded orbits, then there is a prism that is stabilised. If moreover $G$ acts on $X$ without hyperplane-inversion, then $G$ fixes a vertex.
\end{prop}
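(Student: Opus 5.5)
We reduce to the corresponding statement for median graphs, or argue directly with cliques and the CAT(0) prism complex. For the first assertion, the approach is to pass to the geometric realisation. Fill in every prism of $X$ with a product of simplices, each simplex being the regular Euclidean simplex spanned by a clique. This gives the CAT(0) prism complex $X^\square$ mentioned above, on which $G$ acts by isometries. The realisation is finite-dimensional in the relevant sense locally, but it need not be locally finite, since cliques can be infinite. So the Bruhat--Tits fixed point theorem has to be applied with care: we need completeness. I expect this to be the main obstacle.

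To avoid completeness issues, I would argue more combinatorially instead. Take a bounded orbit $G\cdot o$. Its gated hull is contained in a ball, because gated hulls of bounded sets are bounded: only finitely many hyperplanes separate points of $G\cdot o$, and the hull is cut out by the sectors containing it. Moreover, this gated hull meets only finitely many hyperplanes. Now consider the family of $G$-invariant non-empty gated subgraphs crossed by finitely many hyperplanes, and pick one, $Y$, minimising the number of hyperplanes crossing it.

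Let $J$ be a hyperplane crossing $Y$. The intersections of $Y$ with the sectors of $J$ are gated. By minimality, $G$ cannot stabilise any proper union of these pieces that is gated, so it permutes them. I then want to show that every hyperplane crossing $Y$ is transverse to every other one crossing $Y$. If two hyperplanes $J_1,J_2$ crossing $Y$ were not transverse, some sector $S_1$ of $J_1$ would contain $J_2$. Then the set of vertices of $Y$ lying "between" the hyperplanes, i.e.\ the intersection over $g\in G$ and over hyperplanes of the sectors containing $N(gJ_2)$, would give a smaller $G$-invariant gated subgraph. Non-emptiness of this intersection follows from the Helly property for gated subgraphs, since only finitely many hyperplanes are involved and the sectors pairwise intersect. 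This contradicts minimality.

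Hence the finitely many hyperplanes crossing $Y$ are pairwise transverse. A gated subgraph all of whose hyperplanes pairwise cross is a prism: pick a vertex $y\in Y$ and the cliques at $y$ in each hyperplane, and use the gatedness of $Y$ to show that $Y$ equals their product. So $Y$ is a $G$-stabilised prism.

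For the second assertion, let $P=C_1\times\cdots\times C_n$ be the stabilised prism. $G$ permutes the hyperplanes of $P$. Without hyperplane-inversion, each element stabilising a hyperplane $J$ preserves each sector of $J$, hence each vertex of the corresponding clique factor. An element moving $J$ to $J'$ need not fix anything, so we look at the $G$-orbits of factors. For an orbit of factors $C_{i_1},\dots,C_{i_k}$, the stabiliser of $C_{i_1}$ fixes some vertex $c$ of $C_{i_1}$. Transport $c$ by coset representatives to obtain a well-defined choice of vertex in each factor of the orbit; this is well defined precisely because stabilisers act trivially on the factors they fix. The product of these chosen vertices is then a $G$-fixed vertex of $P$.
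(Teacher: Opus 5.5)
\textbf{Second assertion.} Your argument here is correct and is essentially the paper's. The absence of hyperplane-inversions makes the choice of a sector $gS$ of $gJ$ well defined along each $G$-orbit of hyperplanes of $P$, and the unique vertex of $P$ lying in all the chosen sectors is then fixed. The paper treats one orbit at a time, intersecting $P$ with $\bigcap_g gJ^+$ via Helly and inducting on the cubical dimension; you treat all orbits at once.

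\textbf{First assertion: the gap.} The paper does not reprove this part; it quotes \cite[Theorem~2.115]{QM}. Your whole argument rests on the existence of a $G$-invariant gated subgraph crossed by only finitely many hyperplanes, i.e.\ an invariant polytope. Your justification is that only finitely many hyperplanes separate points of the bounded orbit $G\cdot o$, so its gated hull is bounded with finitely many hyperplanes. This is false as soon as the orbit is infinite, which is the typical situation for the locally infinite graphs considered here.

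\emph{Counterexamples.} Take the median graph whose vertices are the finite subsets of $\mathbb{N}$, with edges between sets differing by one element, and let $G$ be the finitary permutations of $\mathbb{N}$. The orbit of $\{0\}$ is the set of singletons, of diameter $2$. Yet no halfspace contains all singletons, so its gated hull is the whole graph, which is unbounded and crossed by infinitely many hyperplanes. In the star $K_{1,\infty}$ with $G$ permuting the leaves, the hull of the leaf-orbit is bounded but still has infinitely many hyperplanes. In both examples $G$ fixes a vertex, but your procedure never reaches it. Passing from a bounded, possibly infinite, orbit to an invariant polytope is precisely the non-trivial content of the statement, and it needs a genuinely different idea.

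\textbf{A smaller issue in the transversality step.} The Helly argument requires the sectors to pairwise intersect. This fails when $gJ_1=hJ_1$ but $gJ_2$ and $hJ_2$ lie in different sectors of $gJ_1$. Take the path with vertices $a,b,c,d$ (in this order) with the reflection $a\leftrightarrow d$, $b\leftrightarrow c$. Let $J_1$ be the hyperplane of $\{b,c\}$ and $J_2$ that of $\{a,b\}$. Whichever way your intersection is read (sectors of $gJ_1$ containing $gJ_2$, or carriers $N(gJ_2)$), the two pieces are $\{a,b\}$ and $\{c,d\}$, so the intersection is empty. The pair must therefore be chosen with care: here $J_1=\{a,b\}$, $J_2=\{b,c\}$ gives the invariant edge $\{b,c\}$. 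So this step is not justified as written either.
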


\noindent
Recall that a \emph{hyperplane-inversion} is an isometry that stabilises a hyperplane and permutes non-trivially the sectors it delimits. 

\begin{proof}[Proof of Proposition~\ref{prop:FixedPointMedian}.]
The first assertion is \cite[Theorem~2.115]{QM}. Now, assume that $G$ stabilises a prism $P$ and acts on $X$ without hyperplane-inversion. If $P$ is a single vertex, there is nothing to prove. Otherwise, there exists a hyperplane $J$ crossing $P$. Fix a sector $J^+$ delimited by $J$. Because $G$ does not contain any hyperplane-inversion, the (finitely many) sectors in the orbit $G \cdot J^+$ pairwise intersect. Because prisms and sectors are gated, it follows that 
$$P_0 : = P \cap \bigcap\limits_{g \in G} gJ^+$$
defines is non-empty. Thus, we have found a $G$-invariant prism $P_0$ of smaller cubical dimension than $P$. After finitely many iterations of this argument, we eventually find that $G$ fixes a vertex, as desired. 
\end{proof}

\noindent
We conclude this paragraph with a technical lemme that will be useful later. 

\begin{lemma}\label{lem:ConstructPrism}
Let $X$ be a quasi-median graph, $x \in V(X)$ a vertex, and $J_1, \ldots, J_n$ a collection of pairwise transverse hyperplanes. If $x \in N(J_1) \cap \cdots\cap N(J_n)$, then there exists a prism containing $x$ whose hyperplanes are exactly $J_1, \ldots, J_n$. 
\end{lemma}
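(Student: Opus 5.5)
We argue by induction on $n$, building the prism one clique at a time and using the product structure of carriers of hyperplanes. For $n=1$, the vertex $x$ lies in $N(J_1)$, so it is an endpoint of some edge of $J_1$. Indeed, $N(J_1)$ is induced by the edges of $J_1$, so every vertex of the carrier is incident to such an edge. The clique containing that edge is a prism of cubical dimension one whose only hyperplane is $J_1$.

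For the inductive step, we assume there is a prism $P'$ containing $x$ whose hyperplanes are exactly $J_1,\ldots,J_{n-1}$. We want to enlarge $P'$ by a clique dual to $J_n$. Since $x\in N(J_n)$, let $C$ be the clique through $x$ contained in $J_n$. The natural candidate is the subgraph spanned by $P'$ and $C$, namely $P'\times C$. To make this precise we use the gate map onto the fibres of $J_n$. By \cite{QM}, the carrier $N(J_n)$ decomposes as a product $F\times C$, where $F$ is the fibre through $x$; the projections of $N(J_n)$ onto its fibres are isometries commuting with the product structure. So it suffices to show that $P'\subset N(J_n)$, or better that $P'$ lies in the fibre $F$ through $x$. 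Then $P'\times C\subset F\times C=N(J_n)$ is a product of $n$ cliques, hence a prism, with hyperplanes $J_1,\ldots,J_n$.

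The key step, which I expect to be the main obstacle, is showing $P'\subset F$. Fix $i<n$ and an edge $e$ of $P'$ lying in $J_i$ at a vertex $y$ of $F$ (start from $y=x$ and propagate along $P'$). Since $J_i$ and $J_n$ are transverse, they cross inside $N(J_n)$, so $J_i$ crosses the fibre $F$: the hyperplanes crossing a fibre are precisely those transverse to $J_n$. Here I would use the gatedness of fibres, together with Lemma~\ref{lem:SepProj}, to rule out an edge $e\in J_i$ at $y\in F$ that leaves $F$. Such an edge must project onto an edge of $F$ in $J_i$; otherwise its endpoint would be separated from its gate by $J_i$, forcing $J_i=J_n$ or $J_i$ separating $y$ from $F$, which is absurd. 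An alternative is the quadrangle and triangle conditions: the edge $e$ and an edge of $C$ at $y$ span a square, since transversality propagates through carriers. Then $e$ is an edge of the carrier not in $J_n$, so it lies in a fibre, namely $F$. Once every clique of $P'$ through $x$ is in $F$, so is their product $P'$, because $F$ is gated, hence convex, and a prism is the convex hull of its cliques at a vertex. The argument is finished by the product decomposition described above.
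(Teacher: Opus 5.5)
Your argument is correct, but it takes a different route from the paper.

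\textbf{The paper's route.} The paper does not induct. For each $i$ it fixes a sector $J_i^+$ of $J_i$ not containing $x$, and uses the Helly property to get $I=J_1^+\cap\cdots\cap J_n^+\neq\emptyset$. It then takes the gate $y$ of $x$ in $I$ and shows, via Lemmas~\ref{lem:SepProj} and~\ref{lem:InclusionOne} together with $x\in N(J_i)$, that the hyperplanes separating $x$ and $y$ are exactly $J_1,\ldots,J_n$. It concludes that the gated hull of $\{x,y\}$ is the required prism, by \cite[Proposition~2.68 and Lemma~2.74]{QM}.

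\textbf{Your route.} You build the prism one factor at a time inside $N(J_n)\cong F\times C$, and the crux is $P'\subset F$. That crux is sound, but the sentence justifying it is garbled. Keep the gate argument and state it like this. If an edge $\{y,z\}$ of $J_i$ has $y\in F$ and $z\notin F$, then $y$ is the gate of $z$ in $F$. Lemma~\ref{lem:SepProj} then says $J_i$ separates $z$ (not $y$) from $F$. This contradicts $J_i$ crossing $F$: a gated subgraph containing an edge of $J_i$ contains that edge's whole clique, hence meets every sector of $J_i$, including the one containing $z$. No alternative ``$J_i=J_n$'' arises. Propagating along edges of $P'$ from $x$ then gives $P'\subset F$ directly, so you need neither the square-completion alternative nor the convex-hull remark.

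\textbf{Comparison.} Your approach avoids the Helly property, the description of the hyperplanes of a gated hull, and the characterisation of prisms as gated subgraphs with pairwise transverse hyperplanes. It also exhibits the prism explicitly as the product of the cliques of $J_1,\ldots,J_n$ through $x$. The cost is two facts you quote from \cite{QM} rather than derive:
\begin{itemize}
\item the precise form of the carrier decomposition (fibres are the slices $F\times\{c\}$, and cliques of $J_n$ are the slices $\{f\}\times C$), which you need for $P'\times C$ to be a prism of $X$ with hyperplanes $J_1,\ldots,J_n$;
\item the fact that a hyperplane transverse to $J_n$ crosses every fibre of $J_n$.
\end{itemize}
The paper's proof is shorter and non-inductive, and reuses only the gate machinery already set up in its preliminaries.
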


\begin{proof}
For every $1 \leq i \leq n$, fix a sector $J_i^+$ delimited by $J_i$ that does not contain $x$. Since the $J_i$ are pairwise transverse, the $J_i^+$ pairwise intersect, so the intersection $I:= J_1^+ \cap \cdots \cap J_n^+$ is non-empty. Let $y$ denote the gate of $x$ in $I$. We claim that the hyperplanes separating $x$ and $y$ are exactly the $J_i$.

\medskip \noindent
It is clear that the $J_i$ all separate $x$ and $y$. Conversely, let $J$ be a hyperplane separating $x$ and $y$. According to Lemma~\ref{lem:SepProj}, $J$ separates $x$ from $I$; and, according to Lemma~\ref{lem:InclusionOne}, $J$ must separate $x$ from $J_i^+$ for some $1 \leq i \leq n$. Since $x \in N(J_i)$, the only possibility is that $J=J_i$. This concludes the proof of our claim.

\medskip \noindent
Let $P$ denote the gated hull of $\{x,y\}$. According to \cite[Proposition~2.68]{QM}, the hyperplanes of $P$ coincide with the hyperplanes of $X$ separating $x$ and $y$, namely $J_1, \ldots, J_n$. In particular, the hyperplanes of $P$ pairwise intersect. It follows from \cite[Lemma~2.74]{QM} that $P$ is a prism. This is the prism we were looking for. 
\end{proof}

\paragraph{Local-to-global characterisation.} In order to recognise the geometry of a graph (or of a cellular complex), it may be desirable to have a local characterisation that is easy to check. Below is such a criterion for quasi-median graphs, which extends a well-known criterion for median graphs:

\begin{thm}\label{thm:LocalGlobal}
A connected graph $X$ is quasi-median if and only if it satisfies the following conditions:
\begin{itemize}
	\item the square-triangle-completion $X^{\square\triangle}$ is simply connected;
	\item $X$ does not contain induced copies of $K_{2,3}$ or $K_4^-$;
	\item $X$ satisfies the \emph{$3$-cube condition}, i.e.\ every induced copy of $Q_3^-$ is contained in a $3$-cube $Q_3$;
	\item $X$ satisfies the \emph{$3$-prism condition}, i.e.\ every induced copy of the house graph is contained in a $3$-prism $K_2 \times K_3$. 
\end{itemize}
\end{thm}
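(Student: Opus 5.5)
Theorem~\ref{thm:LocalGlobal} is the quasi-median analogue of the Chepoi--Gromov local-to-global characterisation of median graphs. I would derive it from the corresponding characterisation of weakly modular graphs, namely Chepoi's theorem that a graph whose triangle-square complex is simply connected and which satisfies suitable local conditions is weakly modular.

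\emph{Necessity.} Assume $X$ is quasi-median. The absence of induced $K_{2,3}$ and $K_4^-$ holds by definition. For the $3$-cube and $3$-prism conditions, take an induced $Q_3^-$ or house graph. Its edges are partitioned into at most three hyperplanes, and these are pairwise transverse: opposite edges of a $4$-cycle and edges of a common triangle lie in the same hyperplane, and the configuration exhibits induced $4$-cycles crossing each pair. The configuration has a vertex lying in all the carriers. Lemma~\ref{lem:ConstructPrism} then gives a prism through that vertex whose hyperplanes are exactly these. This prism is $Q_3$ or $K_2\times K_3$, and since prisms are gated it contains the given configuration. For simple connectedness of $X^{\square\triangle}$, I would use that the CAT(0) prism complex is contractible: its $2$-skeleton is exactly the square-triangle completion, up to subdividing prism faces into squares and triangles. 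Alternatively, one can contract any cycle directly by induction on $\sum d(o,\cdot)$ using the triangle and quadrangle conditions.

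\emph{Sufficiency.} This is the main direction. I would pass to the universal cover, which is $X$ itself by simple connectedness, and build $X$ by BFS from a basepoint $o$, following Chepoi's scheme. By induction on $n$, I would show that the ball $B_n(o)$ satisfies the triangle and quadrangle conditions with respect to $o$. The key step is the following. Take two vertices $x,y$ at distance $n+1$ that are adjacent, or that have a common neighbour $z$ at distance $n+2$. Local completions via the $3$-cube and $3$-prism conditions then produce the required common neighbour closer to $o$. The forbidden $K_{2,3}$ and $K_4^-$ guarantee uniqueness of such neighbours, which makes the inductive construction well defined. This shows that $X$ is weakly modular with respect to every basepoint, hence weakly modular, and since it is $K_{2,3}$- and $K_4^-$-free, it is quasi-median.

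I expect the main obstacle to be the inductive step in sufficiency. In a house-shaped configuration one must verify that the new vertex supplied by the $3$-prism condition sits at the correct distance from $o$. The local conditions only say something in a neighbourhood, and one must rule out distances collapsing. Simple connectedness is what prevents this: one shows that the BFS layers form a covering-type structure, and the argument parallels Chepoi's proof for weakly modular graphs. Rather than redo it, I would ideally cite Chepoi's theorem, ``Graphs of some CAT(0) complexes'', stating that a graph with simply connected triangle-square complex satisfying the $3$-cube and $3$-prism (house) conditions and excluding $K_{2,3}$ and $K_4^-$ is weakly modular. I would then only check that its hypotheses match those listed here.
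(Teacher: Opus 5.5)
Your proposal is correct and matches the paper's proof. Necessity comes from the contractibility of the CAT(0) prism complex together with a local check of the $3$-cube and $3$-prism conditions. Sufficiency comes from quoting a local-to-global theorem for weakly modular graphs and then using the excluded $K_{2,3}$ and $K_4^-$. For the local check, the paper applies the quadrangle and triangle conditions directly to the isometrically embedded $Q_3^-$ and house. Your route through Lemma~\ref{lem:ConstructPrism} and gatedness of prisms works equally well; just note that the resulting prism is a product of cliques that contains the desired $Q_3$ or $K_2\times K_3$, not necessarily equal to it.

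The theorem you want to quote is Theorem~1 of Bre\v{s}ar--Chalopin--Chepoi--Gologranc--Osajda on bucolic complexes \cite{MR3062742}, the one the paper cites, rather than Chepoi's 2000 paper. It is applied to $X^{\square\triangle}$ viewed as a prism complex, which is exactly where $K_{2,3}$-freeness is used.
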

\begin{figure}[h!]
\begin{center}
\includegraphics[width=0.8\linewidth]{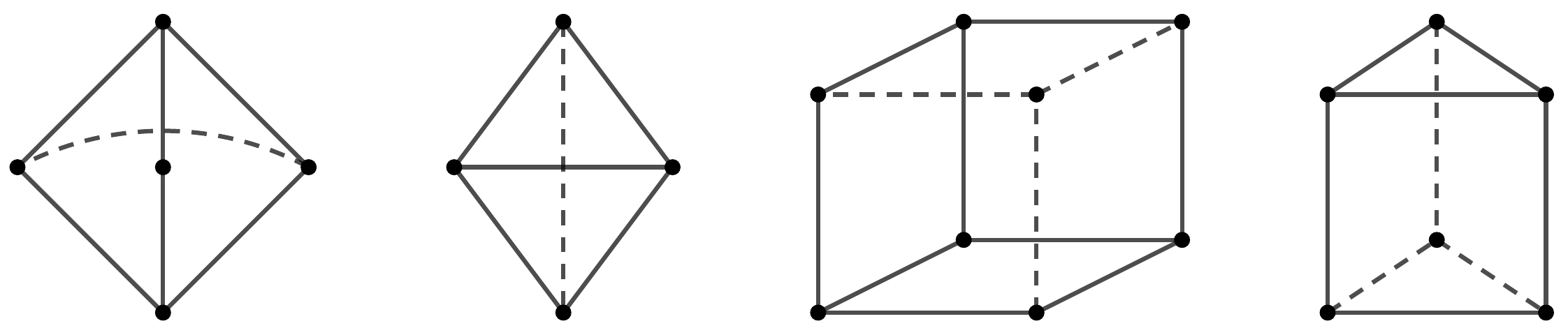}
\caption{Local-to-global characterisation of quasi-median graphs.}
\label{LocalGlobal}
\end{center}
\end{figure}

\begin{proof}
Assume that $X$ satisfies all the conditions above. Because $X$ does not contain induced copy of $K_{2,3}$, $X^{\square\triangle}$ is a \emph{prism complex}, i.e.\ a cellular complex whose cells are \emph{prisms}, namely products of simplices, such that the intersection between any two prisms is either empty or a prism of smaller dimension. Then, a direct application of \cite[Theorem~1]{MR3062742} shows that $X$ is weakly modular. Thus, $X$ is a weakly modular graph without induced copies of $K_{2,3}$ or $K_4^-$, i.e.\ $X$ is a quasi-median graph.

\medskip \noindent
Conversely, assume that $X$ is a quasi-median graph. We know from \cite[Theorem~2.120]{QM} that the prism-completion of $X$, namely the prism complex obtained from $X$ by filling its prisms with products of simplices, is contractible (in fact, can be endowed with a CAT(0) metric). Consequently, its two-skeleton, namely the square-triangle-completion $X^{\square\triangle}$, is simply connected. We also know that $X$ does not contain $K_{2,3}$ nor $K_4^-$ as an induced subgraph, just by definition of quasi-median graphs. It remains to verify that the $3$-cube and $3$-prism conditions are satisfied.

\medskip \noindent
An induced copy of the house graph is necessarily isometrically embedded. So we can apply the triangle condition and conclude that our house graph is contained in a $3$-prism $K_2 \times K_3$.

\medskip \noindent
An induced copy of $Q_3^-$ is also automatically isometrically embedded in a quasi-median graph, because the three hyperplanes that cross it must be pairwise distinct. Indeed, according to \cite[Lemma~2.25]{QM}, any two distinct cliques that intersect must belong to distinct hyperplanes. Then, we can apply the quadrangle condition and conclude that our $Q_3^-$ is contained in a $3$-cube $Q_3$.  
\end{proof}

\paragraph{Hyperplane-collapses.} We conclude this section with some preliminaries on \emph{hyperplane-collapses} of median graphs. Recall that, given a median graph $X$ and a collection of hyperplanes $\mathcal{J}$, the \emph{hyperplane-collapse} $X\backslash\backslash \mathcal{J}$ is the median graph obtained by cubulating the wallspace $(X, \text{hyperplanes not in } \mathcal{J})$. Alternatively, $X\backslash\backslash \mathcal{J}$ can be described as the graph obtained from $X$ by collapsing all the edges that belong to hyperplanes in $\mathcal{J}$. 

\medskip \noindent
First, we record how distances can be computed in hyperplane-collapses:

\begin{lemma}[{\cite[Proposition~4.8]{MR2059193}}]\label{lem:DistCollapse}
Let $X$ be a median graph and $\mathcal{J}$ a collection of hyperplanes. Denote by $\pi$ the canonical projection $X \twoheadrightarrow X \backslash \backslash \mathcal{J}$. For all vertices $x,y \in V(X)$,
$$d(\pi(x),\pi(y)) = \# \{ \text{hyperplanes not in $\mathcal{J}$ separating $x$ and $y$} \}.$$
\end{lemma}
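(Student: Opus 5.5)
The plan is to work with the description of $X\backslash\backslash \mathcal{J}$ as the cubulation of the wallspace $(V(X), \mathcal{W})$, where $\mathcal{W}$ is the set of walls $\{H^-,H^+\}$ induced by the hyperplanes $H \notin \mathcal{J}$. I will prove the two inequalities separately: the upper bound by projecting a geodesic of $X$, and the lower bound by showing that each hyperplane not in $\mathcal{J}$ still separates the images.

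First I record the setup. Distinct hyperplanes of $X$ delimit distinct pairs of halfspaces, so $H \mapsto \{H^-,H^+\}$ is a bijection between hyperplanes not in $\mathcal{J}$ and $\mathcal{W}$. A vertex of the cubulation is an orientation $\sigma$ of $\mathcal{W}$ that satisfies two conditions:
\begin{itemize}
	\item it is consistent, i.e.\ any two chosen halfspaces intersect;
	\item it differs from some principal orientation on only finitely many walls.
\end{itemize}
The principal orientation $\sigma_x$ chooses, for each wall, the halfspace containing $x$. Two vertices are adjacent exactly when they differ on a single wall. The projection is $\pi(x)=\sigma_x$. By construction, $\sigma_x$ and $\sigma_y$ differ exactly on the walls coming from hyperplanes not in $\mathcal{J}$ that separate $x$ and $y$. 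Call this finite set $\mathcal{W}(x,y)$.

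\emph{Upper bound.} Take a geodesic $x=x_0,x_1,\ldots,x_n=y$ in $X$. Consecutive vertices are separated by exactly one hyperplane. If that hyperplane lies in $\mathcal{J}$, then $\sigma_{x_i}=\sigma_{x_{i+1}}$; otherwise $\sigma_{x_i}$ and $\sigma_{x_{i+1}}$ differ on one wall, so they are adjacent. Since a geodesic crosses each hyperplane at most once, and crosses exactly those separating $x$ and $y$, the image is a path of length $\#\mathcal{W}(x,y)$ (after deleting repetitions). This gives $d(\pi(x),\pi(y)) \leq \#\mathcal{W}(x,y)$. This matches the ``collapse the edges of $\mathcal{J}$'' picture.

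\emph{Lower bound.} Along any edge of the cubulation the orientation changes on exactly one wall. So any path from $\sigma_x$ to $\sigma_y$ must change the orientation of every wall in $\mathcal{W}(x,y)$ at least once, and hence has length at least $\#\mathcal{W}(x,y)$. This gives the reverse inequality.

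\emph{Main obstacle.} The only delicate point is justifying that the two descriptions of $X\backslash\backslash\mathcal{J}$ agree. Concretely, one must check that in the quotient graph obtained by contracting the edges of $\mathcal{J}$, two vertices are adjacent only if their principal orientations differ on exactly one wall. I would verify this directly: an edge of the quotient comes from an edge of $X$ not in $\mathcal{J}$, whose endpoints are separated by a single hyperplane. If a further argument is needed, the alternative is to use only the cubulation description. In that case the projection of a geodesic already shows that the $\sigma_{x_i}$ are genuine vertices, namely principal orientations, so no extra consistency check is required.
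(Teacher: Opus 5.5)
Your proof is correct and complete. The paper does not prove this lemma itself. It quotes it from \cite{MR2059193}, where it amounts to the general fact that, in the cubulation of a wallspace, the distance between principal vertices $\sigma_x,\sigma_y$ equals the number of walls separating $x$ and $y$; here that fact is applied to the wallspace of hyperplanes not in $\mathcal{J}$.

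In the general setting the lower bound is the trivial one you give. The upper bound, however, requires showing that one can always flip a single wall of the symmetric difference while staying consistent (compare the choice of a $\subseteq$-maximal clade in the proof of Theorem~\ref{thm:QuasiCubulation}). You bypass this step. Since the wallspace comes from the median graph $X$ and only principal vertices are involved, projecting a geodesic of $X$ already gives a chain of principal, hence automatically consistent, orientations. The general route yields the formula for arbitrary wallspaces and arbitrary pairs of vertices of the cubulation. Yours gives a short, self-contained proof of exactly what the lemma asserts.

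Your ``main obstacle'' is not really one. With the cubulation definition, which is the one the paper adopts, nothing remains to be checked. With the contraction description, you do not need to identify the two models either. Contracting edges of $\mathcal{J}$ does not change the orientation of any wall outside $\mathcal{J}$, so each contraction class carries a well-defined orientation. Each surviving edge changes exactly one wall, so your two inequalities go through verbatim.

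If you did want a genuine identification of the two models, the edge check you mention would not suffice. You would also need that every consistent, almost-principal orientation of the walls outside $\mathcal{J}$ is principal. That follows from a Helly-plus-gate argument like the one in the proof of the second item of Corollary~\ref{cor:BigIntro}.
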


\noindent
As an easy consequence of this lemma, it can be shown that:

\begin{cor}\label{cor:DistancesCollapses}
Let $X$ be a median graph and $\bigsqcup_{i \in I} \mathcal{H}_i$ a partition of the set of its hyperplanes. For every $i \in I$, let $\pi_i$ denote the canonical projection $X \twoheadrightarrow X\backslash\backslash \mathcal{H}_i$. Then,
$$d(x,y)= \sum\limits_{i \in I} d(\pi_i(x),\pi_i(y)) \text{ for all } x,y \in V(X).$$
\end{cor}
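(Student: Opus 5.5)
The plan is to reduce the corollary directly to Lemma~\ref{lem:DistCollapse}. Fix $x,y \in V(X)$. In a median graph, $d(x,y)$ is the number of hyperplanes separating $x$ and $y$; this is the case of Lemma~\ref{lem:DistCollapse} with $\mathcal{J}=\emptyset$, or the general fact for quasi-median graphs recalled earlier. Let $\mathcal{S}(x,y)$ denote this finite set of separating hyperplanes.

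For each $i \in I$, Lemma~\ref{lem:DistCollapse} says that $d(\pi_i(x),\pi_i(y))$ counts the separating hyperplanes that do \emph{not} lie in the collapsed family $\mathcal{H}_i$. As literally written, this gives
$$\sum_{i\in I} d(\pi_i(x),\pi_i(y)) = \sum_{i \in I} \# \bigl(\mathcal{S}(x,y) \backslash \mathcal{H}_i\bigr),$$
and each separating hyperplane is counted $|I|-1$ times. That is not $d(x,y)$ unless $|I|=2$.

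So the intended reading must be that $X\backslash\backslash\mathcal{H}_i$ keeps exactly the hyperplanes of $\mathcal{H}_i$, i.e.\ it collapses the complement $\bigsqcup_{j\neq i}\mathcal{H}_j$. I would state this convention explicitly and then apply Lemma~\ref{lem:DistCollapse} with $\mathcal{J} = \bigsqcup_{j \neq i} \mathcal{H}_j$. This gives
$$d(\pi_i(x),\pi_i(y)) = \#\bigl(\mathcal{S}(x,y)\cap \mathcal{H}_i\bigr).$$

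Since the $\mathcal{H}_i$ partition the set of all hyperplanes, the sets $\mathcal{S}(x,y)\cap\mathcal{H}_i$ partition $\mathcal{S}(x,y)$. Summing over $i$ then yields $\#\mathcal{S}(x,y) = d(x,y)$. Only finitely many terms are nonzero, because $\mathcal{S}(x,y)$ is finite, so the sum is well defined even when $I$ is infinite.

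I do not expect a genuine obstacle here. The only delicate point is fixing the convention for which hyperplanes are collapsed by $\pi_i$, so that Lemma~\ref{lem:DistCollapse} is applied with the right family.
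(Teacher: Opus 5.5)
Your proposal is correct and follows the paper's argument: count the hyperplanes separating $x$ and $y$, split that count along the partition, and apply Lemma~\ref{lem:DistCollapse} to each piece. Your convention fix is also the right one, since the paper's proof silently identifies $d(\pi_i(x),\pi_i(y))$ with the number of hyperplanes \emph{in} $\mathcal{H}_i$ separating $x$ and $y$ (citing itself where it means Lemma~\ref{lem:DistCollapse}), which is exactly the reading in which $\pi_i$ collapses $\bigsqcup_{j\neq i}\mathcal{H}_j$, and this matches how $\mathbb{P}_i(X)$ is defined in the proof of Theorem~\ref{thm:FromQM}.
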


\begin{proof}
For all vertices $x,y \in V(X)$, we have
$$\begin{array}{lcl} d(x,y) & = & \text{number of hyperplanes separating $x$ and $y$} \\ \\ & =& \displaystyle \sum\limits_{i \in I} \text{ number of hyperplanes in $\mathcal{H}_i$ separating $x$ and $y$} \\ \\ & = & \displaystyle \sum\limits_{i \in I} d(\pi_i(x),\pi_i(y)), \end{array}$$
where the last equality is justified by Corollary~\ref{cor:DistancesCollapses}.
\end{proof}

\noindent
Next, we observe that canonical projections to hyperplane-collapses have convex preimages:

\begin{lemma}\label{lem:PreimageConvex}
Let $X$ be a median graph and $\mathcal{J}$ a non-empty collection of hyperplanes. Denote by $\pi$ the canonical projection $X \twoheadrightarrow X \backslash\backslash \mathcal{J}$. For every vertex $v$ of $X\backslash\backslash \mathcal{J}$, the preimage $\pi^{-1}(v)$ induces a proper convex subgraph in $X$. 
\end{lemma}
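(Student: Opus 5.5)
We have to prove that $\pi^{-1}(v)$ is a convex subgraph of $X$, and that it is proper. We use the description of $X\backslash\backslash \mathcal{J}$ as the graph obtained from $X$ by collapsing the edges of hyperplanes in $\mathcal{J}$, together with Lemma~\ref{lem:DistCollapse}.

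\textbf{Step 1: description of the fibre.} For two vertices $x,y \in V(X)$, Lemma~\ref{lem:DistCollapse} gives $\pi(x)=\pi(y)$ if and only if every hyperplane separating $x$ and $y$ belongs to $\mathcal{J}$. Fix $x_0 \in \pi^{-1}(v)$ ($\pi$ is surjective, so the fibre is non-empty). Then $\pi^{-1}(v)$ is the set of vertices $y$ such that every hyperplane separating $x_0$ from $y$ lies in $\mathcal{J}$. Equivalently,
$$\pi^{-1}(v) = \bigcap_{J \notin \mathcal{J}} J^{x_0},$$
where $J^{x_0}$ is the halfspace delimited by $J$ that contains $x_0$. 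Here we use that a vertex $y$ lies in all these halfspaces exactly when no hyperplane outside $\mathcal{J}$ separates it from $x_0$.

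\textbf{Step 2: convexity.} Halfspaces of a median graph are convex; this is the median case of Proposition~\ref{prop:MultiSectorConvex}. An intersection of convex subgraphs is convex. Hence $\pi^{-1}(v)$ induces a convex subgraph.

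If $\mathcal{J}$ contains every hyperplane, the intersection above is empty-indexed and equals $V(X)$. This is the degenerate case where the collapse is a single vertex. So "proper" must be read in some other sense, or the hypothesis must exclude this case.

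\textbf{Step 3: properness.} The natural reading is that the fibre is a proper subgraph whenever $\mathcal{J}$ does not contain all hyperplanes. In that case pick $J \notin \mathcal{J}$. Then $\pi^{-1}(v) \subset J^{x_0}$, which misses the other halfspace of $J$, which is non-empty. Alternatively, if "proper" refers to a hyperplane of $\mathcal{J}$ crossing the fibre, note that the fibre is a union of edges of $\mathcal{J}$, since it is connected by convexity.

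\textbf{Main obstacle.} The only delicate point is pinning down exactly which properness statement is intended, in light of the degenerate case where $\mathcal{J}$ is the full set of hyperplanes. The convexity itself is immediate from the halfspace description in Step 1.
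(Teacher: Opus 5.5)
Your proof is correct and is essentially the paper's argument: via Lemma~\ref{lem:DistCollapse}, the fibre is the intersection of the halfspaces containing a point of the fibre, taken over the hyperplanes not in $\mathcal{J}$, and it is therefore convex. You are also right that properness holds exactly when some hyperplane lies outside $\mathcal{J}$. The paper's proof instead says ``iff $\mathcal{J}$ is non-empty'', which fails for a single edge with $\mathcal{J}$ its hyperplane; your corrected condition is the one Theorem~\ref{thm:FromQM} actually uses. However, drop your ``alternative reading'' in Step~3, because it is false: in the path $a,b,c$ with $\mathcal{J}$ the hyperplane of $\{a,b\}$, the fibre $\{c\}$ is crossed by no hyperplane of $\mathcal{J}$.
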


\begin{proof}
We have
$$\begin{array}{lcl} \pi^{-1}(v) & = & \{ x \in V(X) \mid d(\pi(x),\pi(v))=0 \} \\ \\ & = & \{ x \in V(X) \mid \text{$x$ and $v$ not separated by any hyperplane from $\mathcal{J}$} \} \\ \\ & = & \bigcap \{\text{halfspaces containing $v$ delimited by the hyperplanes not in $\mathcal{J}$}\}, \end{array}$$
where the second equality is justified by Lemma~\ref{lem:DistCollapse}. As an intersection of halfspaces, $\pi^{-1}(v)$ must be convex. Moreover, it is clear from our description that $\pi^{-1}(v)$ is a proper subset of $V(X)$ if and only if $\mathcal{J}$ is non-empty. 
\end{proof}

\subsection{Relative numbers of ends}\label{section:Ends}

\noindent
In this section, we record some elementary definitions and properties related to the two notions of relative numbers of ends that we use in this article. We start by recalling the standard definition of the number of ends of a graph.

\begin{definition}
Let $X$ be a graph. Its \emph{number of ends} $e(X)$ is the maximal number, possibly infinite, of unbounded components in the complement of a ball.
\end{definition}

\noindent
The first notion of relative number of ends that we are interested in is the following:

\begin{definition}
Let $G$ be a group, $S \subset G$ a subset, and $H \leq G$ a subgroup. The \emph{Schreier graph} $\mathrm{Sch}(G,H,S)$ is the graph whose vertices are the cosets $Hg$ for $g \in G$ and whose edges connect any two cosets $Hg$ and $Hgs$ for $g \in G$ and $s \in S$. 
\end{definition}

\noindent
In the following, our group $G$ will be always finitely generated and we will choose for $S$ a finite generating subset. In order to shorten the notation, we will often write $\mathrm{Sch}(G,H)$ instead of $\mathrm{Sch}(G,H,S)$ when the specific choice of the finite generating set $S$ does not matter for our purpose.

\begin{definition}
Let $G$ be a finitely generated group and $H \leq G$ a subgroup. The \emph{number of ends of $G$ relative to $H$}, denoted by $e(G,H)$, is the number of ends of a Schreier graph $\mathrm{Sch}(G,H)$. 
\end{definition}

\noindent
Here, it is understood that a Schreier graph $\mathrm{Sch}(G,H,S)$ always has the same number of ends regardless of the finite generating set $S$ we choose. 

\noindent
The second notion of relative number of ends that we are interested in is more geometric in nature, and can be defined for arbitrary graphs:

\begin{definition}
Let $X$ be a graph and $Y \leq X$ a subgraph. A connected component of $X \backslash Y$ is \emph{deep} if it contains vertices arbitrarily far away from $Y$. 
\end{definition}

\begin{definition}
Let $X$ be a graph and $Y \leq X$ a subgraph. The \emph{number of coends} $\tilde{e}(X,Y)$ is
\begin{itemize}
	\item the maximal number of deep components of $X\backslash Y^{+L}$ for $L \geq 0$, if this quantity is finite;
	\item $\omega$ if $X \backslash Y^{+L}$ can have an arbitrarily large number of deep components, but always finite, for $L \geq 0$;
	\item $\omega+1$ if there exists $L \geq 0$ such that $X\backslash Y^{+L}$ has infinitely many deep components. 
\end{itemize}
\end{definition}

\noindent
One easily verify that, given a finitely generated group $G$ and a subgroup $H \leq G$, the number of coends $\tilde{e}(G,H)$ is well-defined, i.e.\ it does not depend on the spefic finite generating set we use to drawy the Cayley graph of $G$.

\paragraph{Coarse separation.} As a first preliminary result, we characterise the number of ends in a way similar to the definition of the number of coends. 

\begin{prop}\label{prop:CoarseSepCodim}
Let $G$ be a finitely generated group, $H \leq G$ a subgroup, and $p \in \mathbb{N}$. The inequality $e(G,H) \geq p$ holds if and only if there exists some $L \geq 0$ such that $G \backslash H^{+L}$ has $\geq p$ $H$-orbits of deep connected components.
\end{prop}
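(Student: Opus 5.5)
We will compare the Cayley graph $\mathrm{Cay}(G,S)$ with the Schreier graph $\mathrm{Sch}(G,H,S)$ through the quotient map $\pi : G \to H\backslash G$, $g \mapsto Hg$. This map is $1$-Lipschitz and surjective, and $H$ acts on the Cayley graph by left multiplication. The fibres of $\pi$ are the cosets $Hg$, and $d_{\mathrm{Sch}}(H, Hg) = d(g, H)$ because $H$ acts isometrically. So $\pi^{-1}$ of the ball of radius $L$ around the vertex $H$ in $\mathrm{Sch}(G,H)$ is exactly the neighbourhood $H^{+L}$, and $\pi$ restricts to a surjective map from $G \backslash H^{+L}$ onto the complement of that ball. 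The plan is to show that, for each $L$, connected components of the complement of the ball correspond to $H$-orbits of components of $G \backslash H^{+L}$, and that unbounded components correspond to orbits of deep components. The equivalence then follows directly from the definitions of $e$ and of deep components.

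First, the correspondence of components. A path in $G\backslash H^{+L}$ projects to a path in the complement of the ball. Conversely, a path in the Schreier graph starting from $Hg$ lifts, edge by edge ($Hx \to Hxs$ lifts to $x \to xs$), to a path starting at $g$. The lift avoids $H^{+L}$ since distances to $H$ are preserved. Moreover, $\pi$ is $H$-invariant because $\pi(hg) = Hhg = Hg$. It follows that $\pi$ induces a surjection from the set of $H$-orbits of components of $G\backslash H^{+L}$ onto the components of the complement of the ball. For injectivity: if two components $C, C'$ have the same image, pick $g\in C$ and $g' \in C'$ with $Hg = Hg'$, possible by path-lifting. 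Then $g' = hg$ for some $h \in H$, so $C' = hC$.

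Second, the correspondence of depth. Since $d(g,H) = d_{\mathrm{Sch}}(Hg, H)$, a component $C$ contains points arbitrarily far from $H$ if and only if $\pi(C)$ contains vertices arbitrarily far from the basepoint $H$, i.e.\ iff $\pi(C)$ is unbounded (the Schreier graph is connected and we measure from the ball's centre). So deep components correspond to unbounded components, and the orbit count is preserved.

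Finally, by definition $e(G,H)\geq p$ iff there is some ball, which we may centre at $H$ by enlarging its radius, whose complement has $\geq p$ unbounded components. Enlarging the radius does not decrease the count of unbounded components, a standard fact for the number of ends. By the correspondence this happens iff $G\backslash H^{+L}$ has $\geq p$ $H$-orbits of deep components. The main point requiring care is the path-lifting and injectivity step, specifically checking that $\pi$ maps components onto components rather than merely into them. This holds because lifts of Schreier paths stay outside $H^{+L}$; the remaining verifications are routine.
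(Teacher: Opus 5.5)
Your proposal is correct and follows essentially the same route as the paper: the map $g \mapsto Hg$ preserves distance to $H$ because paths lift, so each component of $G\backslash H^{+L}$ maps onto a component of $\mathrm{Sch}(G,H)\backslash B(H,L)$, two components have the same image exactly when they lie in the same $H$-orbit, and deep components correspond to unbounded ones. You are slightly more careful than the paper in justifying that balls may be recentred at $H$ and that enlarging them never decreases the number of unbounded components, a step the paper leaves implicit and which relies on $\mathrm{Sch}(G,H)$ being locally finite.
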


\begin{proof}
Let $\pi : G \twoheadrightarrow \mathrm{Sch}(G,H)$ denote the canonical projection $g \mapsto Hg$. We start by noticing that:

\begin{claim}\label{claim:DistToH}
For every $g \in G$, $d(g,H) = d(\pi(g),H)$.
\end{claim}

\noindent
If $g,gs_1, \ldots, gs_1\cdots s_n$ is a path in $G$ connecting $g$ to $H$, then $Hg,Hgs_1, \ldots, Hgs_1 \cdots s_n=H$ defines a path in $\mathrm{Sch}(G,H)$ connecting $Hg=\pi(g)$ to $H$, hence $d(\pi(g),H) \leq d(g,H)$. Conversely, if $Hg, Hgr_1, Hgr_1 \cdots r_m=H$ is a path in $\mathrm{Sch}(G,H)$ connecting $Hg=\pi(g)$ to $H$, then $g,gr_1, \ldots, gr_1 \cdots r_m \in H$ defines a path in $G$ connecting $g$ to $H$, hence $d(g,H) \leq d(\pi(g),H)$. This proves Claim~\ref{claim:DistToH}. 

\medskip \noindent
Notice that $\pi$ sends edges to edges or vertices, so it sends a path to a path. Since it also preserves the distance to $H$ according to Claim~\ref{claim:DistToH}, it follows that, given an $L \geq 0$, $\pi$ sends (deep) connected components of $G\backslash H^{+L}$ to (unbounded) connected components of $\mathrm{Sch}(G,H) \backslash B(H,L)$. Clearly, two components of $G\backslash H^{+L}$ are sent to the same component of $\mathrm{Sch}(G,H)$ precisely when they belong to the same $H$-orbit. Consequently, the number of unbounded connected components of $\mathrm{Sch}(G,H) \backslash B(H,L)$ coincides with the number of deep connected components of $G\backslash H^{+L}$. This assertion being true for every $L \geq 0$, our lemma follows immediately.
\end{proof}

\paragraph{Almost invariant subsets.} It is well-known that codimension-one subgroups, i.e.\ subgroups for which the relative number of ends is $\geq 2$, are characterised by \emph{almost invariant subsets}. Below, we generalise this approach in order to characterise the number of relative ends and the number of coends.

\begin{prop}\label{prop:AlmostInCoarseSep}
Let $G$ be a finitely generated group, $H \leq G$ a subgroup, and $p \geq 0$ an integer. The equality $\tilde{e}(G,H) \geq p$ holds if and only if there exist $A_1, \ldots, A_p \subset G$ such that:
\begin{itemize}
	\item $A_1, \ldots, A_p$ are parwise disjoint;
	\item $A_1, \ldots, A_p$ are all $H$-infinite;
	\item for all $1 \leq i \leq p$ and $g \in G$, $A_i \cap A_i^c g$ is $H$-finite.
\end{itemize}
\end{prop}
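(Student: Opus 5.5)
The plan is to translate between the two sides through one dictionary. A subset of $G$ is $H$-finite, i.e.\ contained in $HF$ for a finite $F \subset G$, if and only if it lies in some neighbourhood $H^{+R}$: indeed $H^{+R}=H\cdot B(1,R)$ in the Cayley graph. Consequently a connected component of $G\backslash H^{+L}$ is deep if and only if it is $H$-infinite. I will also use two closure facts. $H$-finite sets are stable under finite unions and under right multiplication, since $(HF)g=H(Fg)$. And $A\cap A^c g$ is $H$-finite if and only if $A^c\cap Ag^{-1}$ is, because $A^c\cap Ag^{-1}=(A\cap A^cg)g^{-1}$.

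\emph{Direct implication.} Assume $\tilde{e}(G,H)\geq p$, and fix $L$ such that $G\backslash H^{+L}$ has $p$ deep components $C_1,\ldots,C_p$. Set $A_i:=C_i$. These sets are pairwise disjoint, and they are $H$-infinite by the dictionary. Take a generator $s$ and a vertex $x\in A_i\cap A_i^cs$. Then $x$ is adjacent to $xs^{-1}\notin C_i$. Since $C_i$ is a component of $G\backslash H^{+L}$, this forces $xs^{-1}\in H^{+L}$, hence $x\in H^{+L+1}$. So $A_i\cap A_i^cs$ is $H$-finite. For a general $g$ I argue by induction on the word length, using
$$A\cap A^c g_1g_2 \subset (A\cap A^c g_2)\cup (A\cap A^cg_1)g_2$$
together with the closure facts above (inverses of generators being handled by the symmetry remark).

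\emph{Converse.} Let $A_1,\ldots,A_p$ be as in the statement and fix a finite generating set $S$. Consider an edge $\{x,xs\}$ with $x\in A_i$ and $xs\notin A_i$. Then $x\in A_i\cap A_i^cs^{-1}$ and $xs\in A_i^c\cap A_is$, and both sets are $H$-finite. Taking the union over the finitely many $s\in S$ and $1\leq i\leq p$, we get an $R$ such that every edge joining $A_i$ to $A_i^c$ has both endpoints in $H^{+R}$. Fix $L\geq R$. Then every component of $G\backslash H^{+L}$ is either contained in $A_i$ or disjoint from it. Since the $A_i$ are disjoint, it suffices to prove that each $A_i$ contains a deep component of $G\backslash H^{+L}$.

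\emph{Key step: finding a deep component.} Suppose instead that every component of $G\backslash H^{+L}$ contained in $A_i$ is shallow. I would use the following observations to reach a contradiction.
\begin{itemize}
\item $H$ acts on the set of components of $G\backslash H^{+L}$, and this action preserves the distance to $H$.
\item Every component meets $H^{+L+1}\backslash H^{+L}$. This set is $H$-finite, so there are only finitely many $H$-orbits of components.
\item Hence a uniform $M$ bounds the depth of every shallow component.
\end{itemize}
But $A_i\backslash H^{+L}$ is a union of components of $G\backslash H^{+L}$. Because $A_i$ is $H$-infinite, it contains points arbitrarily far from $H$, and the component containing such a point is not bounded by $M$, a contradiction. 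One technical point needs care: a shallow component inside $A_i$ may have $H$-translates that are not contained in $A_i$. This is harmless, since the uniform bound $M$ applies to all shallow components of $G\backslash H^{+L}$ regardless of where they sit.

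So we get $p$ distinct deep components and $\tilde{e}(G,H)\geq p$. I expect this finite-orbit argument to be the only non-routine step.
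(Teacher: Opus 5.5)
Your argument is correct and is essentially the paper's: the forward direction takes the deep components of $G\backslash H^{+L}$ as the $A_i$, and the converse chooses $L$ so that the edges between $A_i$ and $A_i^c$ lie in $H^{+L}$, bounds the depth of all shallow components uniformly, and takes the component of a point of $A_i$ lying beyond that bound. You check the almost-invariance condition inline rather than through the paper's lemma on $H$-finiteness of $\partial A$; your count of $H$-orbits of components also justifies the uniform depth bound that the paper simply asserts (one small slip: inverse generators are handled by rerunning your adjacency argument with $s^{-1}$, not by the symmetry remark, which swaps $A$ and $A^c$).
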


\begin{prop}\label{prop:AlmostInvCodim}
Let $G$ be a finitely generated group, $H \leq G$ a subgroup, and $p \geq 0$ an integer. The equality $e(G,H) \geq p$ holds if and only if there exist $A_1, \ldots, A_p \subset G$ such that:
\begin{itemize}
	\item $A_1, \ldots, A_p$ are all $H$-invariant;
	\item $A_1, \ldots, A_p$ are parwise disjoint;
	\item $A_1, \ldots, A_p$ are all $H$-infinite;
	\item for all $1 \leq i \leq p$ and $g \in G$, $A_i \cap A_i^c g$ is $H$-finite.
\end{itemize}
\end{prop}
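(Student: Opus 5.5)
We prove both implications via Proposition~\ref{prop:CoarseSepCodim}, which says that $e(G,H)\geq p$ if and only if, for some $L\geq 0$, $G\backslash H^{+L}$ has $\geq p$ $H$-orbits of deep components. The intended sets $A_i$ are $H$-saturated unions of deep components. Throughout, $G$ is identified with its Cayley graph for a fixed finite generating set $S$, $H$ acts on the left, and ``$H$-finite'' means contained in $H^{+R}$ for some $R$.

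\emph{From ends to almost invariant sets.} Suppose $e(G,H)\geq p$ and fix $L$ as given by Proposition~\ref{prop:CoarseSepCodim}. Choose deep components $C_1,\ldots,C_p$ of $G\backslash H^{+L}$ lying in pairwise distinct $H$-orbits, and set $A_i:=H C_i$.
\begin{itemize}
\item Each $A_i$ is $H$-invariant by construction, and it is $H$-infinite because $C_i$ is deep.
\item The $A_i$ are pairwise disjoint: $H^{+L}$ is $H$-invariant, so $H$ permutes the components of $G\backslash H^{+L}$, and the $C_i$ lie in distinct orbits.
\item Fix $g\in G$ and write it as a word of length $n$ in $S$. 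Right multiplication by $g$ moves every vertex by distance at most $n$. Hence if $x\in A_i$ and $xg\notin A_i$, the geodesic from $x$ to $xg$ leaves the union of components $A_i$, which forces it to pass through $H^{+L}$. Therefore $x\in H^{+(L+n)}$.
\end{itemize}
The last point shows $A_i\cap A_i^c g^{-1}\subset H^{+(L+n)}$, so it is $H$-finite. Since $g$ is arbitrary, the same bound holds for $A_i\cap A_i^c g$.

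\emph{From almost invariant sets to ends.} Conversely, let $A_1,\ldots,A_p$ be as in the statement. Apply the almost-invariance condition to each generator $s\in S$ and take the maximum over the finitely many $i$ and $s$. This gives $R$ such that every edge $\{x,xs\}$ joining $A_i$ to $A_i^c$ has both endpoints in $H^{+R}$, up to enlarging $R$ by $1$.

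Set $L:=R$. Every component of $G\backslash H^{+L}$ is then contained either in $A_i$ or in $A_i^c$, for each $i$. Since $A_i$ is $H$-infinite, it contains points arbitrarily far from $H$, and each such point outside $H^{+L}$ lies in a component contained in $A_i$. The main obstacle is to extract from this a \emph{deep} component inside $A_i$, since a priori $A_i\backslash H^{+L}$ could split into infinitely many shallow components.

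To overcome this, I use $H$-invariance. Components contained in $A_i$ fall into $H$-orbits, and distances to $H$ are constant along $H$-orbits. So if $A_i$ contains no deep component, every component in $A_i$ is bounded in its distance to $H$. Passing to the Schreier graph via the map $\pi$ of Proposition~\ref{prop:CoarseSepCodim}, $\pi(A_i)$ is a union of components of $\mathrm{Sch}(G,H)\backslash B(H,L)$ that is unbounded, because by Claim~\ref{claim:DistToH} $\pi$ preserves distance to $H$. The Schreier graph is locally finite, and components of the complement of the finite ball $B(H,L)$ meet its finite boundary. Hence there are only finitely many such components, so one of them is unbounded. Its $\pi$-preimage gives a deep component of $G\backslash H^{+L}$ contained in $A_i$.

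Disjointness of the $A_i$ then puts these deep components in distinct $H$-orbits. Proposition~\ref{prop:CoarseSepCodim} yields $e(G,H)\geq p$.
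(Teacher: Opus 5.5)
Your proposal is correct and follows essentially the paper's argument. In one direction you take unions of $H$-orbits of deep components of $G\backslash H^{+L}$; in the other you choose a uniform $L$ controlling the boundaries, find a deep component inside each $A_i$, get distinct $H$-orbits from invariance plus disjointness, and conclude with Proposition~\ref{prop:CoarseSepCodim}. The differences are cosmetic: you inline the boundary lemma instead of quoting it, and your local-finiteness argument in $\mathrm{Sch}(G,H)$ makes explicit the finiteness of $H$-orbits of components that the paper uses tacitly when it fixes a uniform $R$ bounding the non-deep components.
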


\noindent
The last conditions in these two propositions are clarified by our next lemma, which provides a more geometric but equivalent condition. Recall that, given a graph $X$ and set $S \subset V(X)$ of vertices, the \emph{boundary} $\partial S$ is the set of vertices not in $S$ but adjacent to at least one vertex in $S$. 

\begin{lemma}
Let $G$ be a finitely generated group, $H \leq G$ a subgroup, and $A \subset G$ a subset. The following assertions are equivalent:
\begin{itemize}
	\item $\partial A$ is $H$-finite;
	\item for every $g \in G$, $A \cap A^cg$ is $H$-finite.
\end{itemize}
\end{lemma}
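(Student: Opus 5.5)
The plan is to prove both implications directly. We work in the Cayley graph of $G$ with respect to a fixed finite symmetric generating set $S$, so $\partial A = \{ x \notin A \mid xs \in A \text{ for some } s \in S\}$. The key structural fact is that $H$ acts on the left on the Cayley graph by graph automorphisms, while the condition $A \cap A^c g$ involves right translation. Since left and right multiplications commute, $H$-finiteness is stable under right translation by a fixed element. Concretely, if $B$ is contained in finitely many cosets $Hk_1,\ldots,Hk_n$, then $Bg$ is contained in $Hk_1g,\ldots,Hk_ng$. The argument also uses that a finite union of $H$-finite sets is $H$-finite.

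First implication, assuming $A\cap A^cg$ is $H$-finite for every $g$. For $s \in S$, let $x \in \partial A$ with $xs\in A$. Then $x \in A^c$ and $x \in As^{-1}$, that is, $x \in As^{-1} \cap A^c = (A \cap A^c s)s^{-1}$. So
$$\partial A \subset \bigcup_{s\in S} (A\cap A^c s)s^{-1},$$
which is a finite union of right translates of $H$-finite sets, hence $H$-finite.

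Converse, assuming $\partial A$ is $H$-finite. I would first make the condition symmetric. Set $\partial' A := \{x \in A \mid xs \notin A \text{ for some } s\}$, the inner boundary. Each such $x$ satisfies $x = (xs)s^{-1}$ with $xs \in \partial A$ (because $xs\notin A$ and $(xs)s^{-1}=x\in A$). Hence $\partial' A \subset \bigcup_s (\partial A)s^{-1}$ is $H$-finite. Now fix $g$ and write $g = s_1\cdots s_n$. Take $x \in A \cap A^c g$, so $x\in A$ and $xg^{-1}\notin A$. The path $x, xs_n^{-1}, xs_n^{-1}s_{n-1}^{-1},\ldots, xg^{-1}$ goes from $A$ to $A^c$, so some prefix point $y = x s_n^{-1}\cdots s_{k}^{-1}$ lies in $\partial' A$. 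Then $x \in y\, s_k\cdots s_n$, so
$$A\cap A^c g \subset \bigcup_{k} (\partial' A)\, s_k\cdots s_n,$$
a finite union of right translates of an $H$-finite set, hence $H$-finite.

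There is no serious obstacle. The only point needing care is the bookkeeping of left versus right multiplication, together with the observation that $H$-finiteness, meaning $H$-finiteness modulo the left $H$-action, is preserved by right translation. I would state that observation first as a one-line remark and then run the two inclusions above.
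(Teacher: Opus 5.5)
Your proof is correct and follows the paper's argument. The first direction expresses $\partial A$ through the sets $A\cap A^c s$ for $s\in S$, and the converse is the paper's observation that $A\cap A^c g$ lies in the $\|g\|_S$-neighbourhood of $\partial A$, which you write out as a finite union of right translates. Your explicit tracking of the right translation by $s^{-1}$ is in fact slightly more careful than the paper: its displayed identity $\partial A=\bigcup_{s\in S}A\cap A^c s$ actually describes the inner boundary rather than $\partial A$ itself. This is harmless, since $H$-finiteness is stable under right translation, exactly as you note.
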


\begin{proof}
In this proof, we fix a finite generating set $S$ of $G$ and think about $G$ metrically as its Cayley graph associated to $S$. Notice that 
$$\begin{array}{lcl} \partial A & = & \displaystyle \{ x \in G \mid x \notin A, x \in A^{+1} \} = \bigcup\limits_{s \in S} \{x \in G \mid x \notin A, xs \in A \} \\ \\ & = & \displaystyle \bigcup\limits_{s \in S} \{ x \in G \mid xs^{-1} \notin A, x \in A\} = \bigcup\limits_{s \in S} A \cap A^cs. \end{array}$$
If we know that $A \cap A^cg$ is finite for every $g \in G$, then it follows that $\partial A$ can be written as a union of finitely many $H$-finite subsets, proving that $\partial A$ is $H$-finite, as desired. 

\medskip \noindent
Then, notice that, given a $g \in G$, $A \cap A^cg$ is contained in the $\|g\|_S$-neighbourhood of $\partial A$. Indeed, if $x \in A \cap A^cg$, then a geodesic connecting $x \in A$ to $xg^{-1} \in A^c$, which has length $\|g\|_S$, must intersect $\partial A$, proving that $x$ lies at distance $\leq \|g\|_S$ from $\partial A$. Since a neighbourhood of an $H$-finite subset is stille $H$-finite, we conclude that, if $\partial A$ is finite then $A \cap A^cg$ is $H$-finite for every $g \in G$.
\end{proof}

\begin{proof}[Proof of Propositions~\ref{prop:AlmostInCoarseSep} and~\ref{prop:AlmostInvCodim}.]
Fix an $L \geq 0$ and let $C_1, \ldots, C_n$ denote the deep connected components of $G\backslash H^{+L}$. Clearly,
\begin{itemize}
	\item $C_1,\ldots, C_n$ are pairwise disjoint;
	\item each $C_i$ is $H$-infinite (since $C_i$ is deep);
	\item each $\partial C_i$ is $H$-finite (since contained in $H^{+L}$).
\end{itemize}
Let $C_{i_1}, \ldots, C_{i_m}$ be representatives of our components modulo the action of $H$. For every $1 \leq k \leq m$, set $B_k:= \bigcup_{h \in H} hC_{i_k}$. Then
\begin{itemize}
	\item each $B_k$ is $H$-invariant;
	\item $B_1, \ldots, B_m$ are pairwise disjoint;
	\item each $B_k$ is $H$-infinite (since they contain $H$-infinite subsets by construction);
	\item each $\partial B_k$ is $H$-finite since $\partial B_k \subset \bigcup_{h \in H} h \partial C_{i_k} \subset H^{+L}$.
\end{itemize}
This proves half of our propositions. Conversely, assume that there exist pairwise disjoint $H$-infinite subsets $A_1, \ldots, A_p \subset G$ with $H$-finite boundaries. 

\medskip \noindent
Fix an $L \geq 0$ such that $\partial A_1 \cup \cdots \cup \partial A_p \subset H^{+L}$. Then, fix an $R \geq 0$ such that every component of $G\backslash H^{+L}$ that is not deep is contained in $H^{+R}$. For every $1 \leq i \leq p$, let $a_i \in A_i$ be a point satisfying $d(a_i,H)>R$. (Such a point exists because $A_i$ is $H$-infinite.) Let $C_i$ denote the connected component of $G\backslash H^{+L}$ containing $a_i$. 

\medskip \noindent
Notice that $C_i$ is deep by construction; and that $C_i \subset A_i$, since otherwise $C_i$ would have to intersect $\partial A_i$, which is impossible since $\partial A_i \subset H^{+L}$. Thus, we have found deep connected components $C_1, \ldots, C_p$, which are pairwise distinct since the $A_i$ are pairwise disjoint. It follows that $\tilde{e}(G,H) \geq p$. 

\medskip \noindent
If we know moreover that the $A_i$ are $H$-invariant, then $H \cdot C_i \subset A_i$ for every $1 \leq i \leq p$, which implies that $C_1, \ldots, C_p$ lie in pairwise distinct $H$-orbits since the $A_i$ are pairwise disjoint. Then, it follows from Proposition~\ref{prop:CoarseSepCodim} that $e(G,H) \geq p$. 
\end{proof}

\section{From quasi-median to median}\label{section:QMtoM}

\noindent
In this section, we show that every quasi-median graph is closely related to a median graph canonically associated to it. Such a connection should not be surprising, since every every quasi-median graph can be turned into a median graph by cubulating the wallspace given by the walls $\{\text{sector}, \text{cosector}\}$. (See \cite[Proposition~4.16]{QM} and the related discussion for details.) In fact, our construction below turns out to be equivalent to this cubulation, but the persective we propose here, more explicit, will be easier to work with.

\medskip \noindent
Our median models of quasi-median graphs, which we call \emph{graph of prisms}, are defined and studied in Section~\ref{section:GraphPrisms}. In order to prove that graph of prisms are median, it will be convenient to look at them as subgraphs in other median graphs associated to quasi-median graphs, namely \emph{graphs of polytopes}. Section~\ref{section:GraphPoly} is dedicated to these graphs.

\subsection{Graph of polytopes}\label{section:GraphPoly}

\noindent
This section is dedicated to \emph{graphs of polytopes} associated to quasi-median graphs. Vertices of such graphs are specific gated subgraphs in the corresponding quasi-median graphs, namely:

\begin{definition}
Let $X$ be a quasi-median graph. A non-empty subgraph of $X$ is a \emph{polytope} if it is the gated hull of finitely many vertices. 
\end{definition}

\noindent
Polytopes can be defined with respect to any abstract convexity \cite{MR1234493}. Here, we use the convexity induced by gated subgraphs, which is often more relevant, in the context of quasi-median graphs, than the geodesic convexity. We emphasize, however, that we do not allow our polytopes to be empty, contrary to \cite[\S 1]{MR1234493}. 

\medskip \noindent
Examples of polytopes includes singletons, cliques, and prisms. In particular, polytopes may not be finite. The structure of polytopes is clarified by the following observation:

\begin{lemma}[{\cite[Corollaries~2.71 and~2.81]{QM}}]\label{lem:WhenPolytope}
Let $X$ be a quasi-median graph and $Y \leq X$ a gated subgraph. The following assertions are equivalent: 
\begin{itemize}
	\item $Y$ is a polytope;
	\item $Y$ has only finitely many hyperplanes;
	\item $Y$ is a union of finitely many prisms.
\end{itemize}
\end{lemma}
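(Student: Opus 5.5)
The plan is to prove the three implications (i)$\Rightarrow$(ii)$\Rightarrow$(iii)$\Rightarrow$(i), relying on the gated-hull description of polytopes and on the hyperplane calculus recalled in Section~\ref{section:QM}.

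\medskip \noindent
\textbf{(i)$\Rightarrow$(ii).} Suppose $Y$ is the gated hull of finitely many vertices $x_1,\ldots,x_n$, and let $\mathcal{J}$ be the finite set of hyperplanes separating some pair $x_i,x_j$. I will show that every hyperplane crossing $Y$ lies in $\mathcal{J}$. Consider the intersection $Z$ of all sectors containing $\{x_1,\ldots,x_n\}$ that are delimited by hyperplanes not in $\mathcal{J}$. The set $Z$ is gated, because an intersection of gated subgraphs is gated, and it contains every $x_i$, so $Y\subseteq Z$. Now let $J$ be a hyperplane crossing $Y$. Since $Y$ is gated, it contains a whole clique of $J$, and in particular it meets two sectors of $J$. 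On the other hand, if $J\notin\mathcal{J}$, then all the $x_i$ lie in a single sector $S$ of $J$, and $S$ contains $Z\supseteq Y$. This contradicts the fact that $Y$ meets two sectors of $J$, so $J\in\mathcal{J}$ and $Y$ has only finitely many hyperplanes.

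\medskip \noindent
\textbf{(ii)$\Rightarrow$(iii).} Suppose $Y$ is crossed by only finitely many hyperplanes. Every prism contained in $Y$ has pairwise transverse hyperplanes, all of which are hyperplanes of $Y$. Moreover, a prism is determined by one of its vertices together with its set of hyperplanes (by the construction in Lemma~\ref{lem:ConstructPrism} and gatedness). The idea is to group the vertices of $Y$ into finitely many prisms. For each vertex $y\in Y$, consider a maximal family of pairwise transverse hyperplanes of $Y$ whose carriers contain $y$, and take the resulting prism $P_y$ given by Lemma~\ref{lem:ConstructPrism}; this prism lies in $Y$ by gatedness. The main obstacle is showing that only finitely many distinct prisms arise in this way, even though $Y$ may have infinitely many vertices. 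For this I would use the following fact: the vertices of a gated subgraph with finitely many hyperplanes are determined up to a ``clique choice'' by the pattern of sectors they occupy. Concretely, I would fix a basepoint $o\in Y$ and, for each $y$, record the set of hyperplanes separating $o$ from $y$. There are finitely many such sets, namely subsets of the finite hyperplane set. I would then argue that all vertices sharing the same separating set lie in one common prism, namely the prism spanned at $y$ by the pairwise transverse hyperplanes among them. The key claim here is that two vertices separated by the same hyperplanes from $o$ differ only by sectors of those hyperplanes; their gated hull is then crossed only by pairwise transverse hyperplanes, so it is a prism by \cite[Lemma~2.74]{QM}. This yields finitely many prisms covering $Y$.

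\medskip \noindent
\textbf{(iii)$\Rightarrow$(i).} Suppose $Y=P_1\cup\cdots\cup P_k$ with each $P_j$ a prism. For each $j$, choose two opposite vertices $a_j,b_j$ of $P_j$, so that the hyperplanes separating $a_j$ and $b_j$ are exactly the hyperplanes of $P_j$. By \cite[Proposition~2.68]{QM}, the gated hull of $\{a_j,b_j\}$ has exactly these hyperplanes and contains $a_j$. By the prism structure it coincides with a prism through $a_j$ having the same hyperplanes; this prism need not be exactly $P_j$, which is the only delicate point here. To handle this, I would instead take the gated hull $W$ of $\{a_j,b_j : 1\le j\le k\}$ and show that $W$ contains every $P_j$ and is contained in $Y$. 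Containment in $Y$ holds because $Y$ is gated and contains all the chosen vertices. For the reverse inclusion, I would enlarge the finite set of chosen vertices by adding, in each clique factor of each $P_j$, the vertices needed to span that factor. Each clique is finite-generated as a gated hull only when it is finite, so if a clique is infinite I would fall back on the previous paragraph to conclude that the gated hull of finitely many vertices contains the whole clique. This uses the fact that a gated set containing an edge of a clique contains the entire clique. With that observation, two vertices per clique factor suffice, the enlarged finite set generates $Y$, and $Y$ is a polytope.
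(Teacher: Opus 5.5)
The paper gives no proof of this lemma (it is quoted from \cite{QM}), so I am judging your argument on its own. Your (i)$\Rightarrow$(ii) is correct, though $Z$ is superfluous: a sector containing all the $x_i$ is gated, hence already contains $Y$.

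\textbf{The gap in (ii)$\Rightarrow$(iii).} The genuine gap is at the step you yourself call the key claim. From $\mathcal{H}(o|y)=\mathcal{H}(o|y')=\mathcal{A}$ you correctly get $\mathcal{H}(y|y')\subseteq\mathcal{A}$. You then assert, with a bare ``then'', that the hyperplanes of $\mathrm{GatedHull}(y,y')$ are pairwise transverse. Membership in $\mathcal{A}$ says nothing about transversality, since the elements of $\mathcal{A}$ need not be pairwise transverse. Two ingredients are missing:
\begin{itemize}
\item Each $J\in\mathcal{H}(y|y')$ places $o,y,y'$ in three pairwise distinct sectors.
\item If $J_1\neq J_2$ are not transverse, then $N(J_2)$ lies in one sector $S_1$ of $J_1$, and all the other sectors of $J_1$ lie in one sector of $J_2$. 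Indeed, each of them is connected and contains no edge of $J_2$. Each also meets the connected subgraph $N(J_1)$, which contains no edge of $J_2$ either, since such an edge would yield an induced square.
\end{itemize}
At most one of $o,y,y'$ lies in $S_1$, so two of them are not separated by $J_2$. This contradicts $J_2\in\mathcal{H}(y|y')\subseteq\mathcal{A}$. Only with this argument do \cite[Proposition~2.68]{QM} and \cite[Lemma~2.74]{QM} show that $\mathrm{GatedHull}(y,y')$ is a prism.

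\textbf{The single-prism step.} You next claim that the whole class $V_{\mathcal{A}}:=\{y\in Y \mid \mathcal{H}(o|y)=\mathcal{A}\}$ lies in a single prism ``spanned at $y$''. This is also unjustified. It needs transversality between hyperplanes separating different pairs, and it needs $y\in N(J)$ for each of them before Lemma~\ref{lem:ConstructPrism} applies. You can avoid the claim altogether:
\begin{itemize}
\item Fix $y_0\in V_{\mathcal{A}}$. The prisms $\mathrm{GatedHull}(y_0,y)$, for $y\in V_{\mathcal{A}}$, all contain $y_0$ and have their hyperplanes in $\mathcal{A}$.
\item As you observed, a prism is determined by a vertex and its hyperplanes, so at most $2^{|\mathcal{A}|}$ of these prisms occur.
\item Letting $\mathcal{A}$ range over the subsets of $\mathcal{H}(Y)$ then covers $Y$ by finitely many prisms.
\end{itemize}

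\textbf{On (iii)$\Rightarrow$(i).} The ``delicate point'' is not one. $\mathrm{GatedHull}(a_j,b_j)\subseteq P_j$ because $P_j$ is gated, and both have the same hyperplanes, so they coincide by Claim~\ref{claim:InclusionHyp}. Hence the gated hull of $\{a_j,b_j\}_{j}$ is $Y$. Your detour there also contains a false sentence: every clique, finite or not, is the gated hull of any two of its vertices. That is precisely the fact you invoke immediately afterwards.
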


\noindent
We encode polytopes of a quasi-median graphs into a single graph as follows:

\begin{definition}
Let $X$ be a quasi-median graph. The \emph{graph of polytopes} $\mathrm{Poly}(X)$ is the graph 
\begin{itemize}
	\item whose vertices are the polytopes of $X$;
	\item and whose edges connect any two polytopes $P \leq Q$ whenever there is no polytope $R$ satisfying $P<R<Q$.
\end{itemize}
\end{definition}

\noindent
In other words, the graph of prisms is the covering graph of the poset $(\{\text{polytopes}\}, \subseteq)$. Recall that, given a poset $(E,\leq)$, the \emph{covering relation} $\lessdot$ is defined by: $a \lessdot b$ if $a<b$ and there is no $c \in E$ satisfying $a<c<b$. Then, the \emph{covering graph} of $(E,\leq)$ is the graph whose vertex-set is $E$ and whose edges connect any two $a,b \in E$ satisfying $a\lessdot b$. 

\medskip \noindent
Our next lemma describes more precisely the covering relation $\lessdot$ between polytopes of quasi-median graphs. For convenience, it uses the following notation: given a quasi-median graph $X$ and subgraphs $A,B \leq X$,
\begin{itemize}
	\item $\mathcal{H}(A)$ denotes the set of the hyperplanes that separate at least two vertices of $A$;
	\item $\mathfrak{h}(A)$ denotes the cardinality of $\mathcal{H}(A)$;
	\item $\mathcal{H}(A | B)$ denotes the set of the hyperplanes that separates $A$ and $B$ (i.e.\ that contain $A$ and $B$ into two distinct sectors); 
	\item $\mathfrak{h}(A|B)$ denotes the cardinality of $\mathcal{H}(A|B)$. 
\end{itemize}
We are now ready to state and prove our characterisation.

\begin{lemma}\label{lem:Lessdot}
Let $X$ be a quasi-median graph and $A,B \leq X$ two polytopes. The following assertions are equivalent:
\begin{itemize}
	\item[(i)] $A \lessdot B$;
	\item[(ii)] $A<B$ and $B= \mathrm{GatedHull}(A \cup \{x\})$ for every $x \in \partial_BA$, where $$\partial_BA:= \{ p \in V(B) \mid p \text{ not in $A$ but adjacent to some vertex in } A\}.$$ 
	\item[(iii)] $A \leq B$ and $\mathfrak{h}(B)= \mathfrak{h}(A)+1$;
\end{itemize}
\end{lemma}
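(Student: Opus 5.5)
Our plan is to prove the cycle (i)$\Rightarrow$(ii)$\Rightarrow$(iii)$\Rightarrow$(i), and to use the hyperplane count $\mathfrak{h}$ as a strictly monotone invariant on polytopes. The basic fact we need first is this: if $A \leq B$ are gated, then $\mathcal{H}(A) \subseteq \mathcal{H}(B)$, and $A=B$ as soon as $\mathcal{H}(A)=\mathcal{H}(B)$. To see the second part, take $b \in B$ with gate $a \in A$. By Lemma~\ref{lem:SepProj}, every hyperplane separating $a$ and $b$ separates $b$ from $A$, so it is not in $\mathcal{H}(A)$, yet it lies in $\mathcal{H}(B)$. Hence $A<B$ forces $\mathfrak{h}(A)<\mathfrak{h}(B)$, and these numbers are finite by Lemma~\ref{lem:WhenPolytope}.

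\textbf{(i)$\Rightarrow$(ii).} Take $x \in \partial_B A$, which is non-empty because $B$ is connected and $A<B$. The set $R:=\mathrm{GatedHull}(A\cup\{x\})$ is a polytope, since $A$ is the gated hull of finitely many vertices, and it satisfies $A<R\leq B$. By the covering relation, $R=B$.

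\textbf{(ii)$\Rightarrow$(iii).} Pick $x \in \partial_B A$ adjacent to $a \in A$, and let $J$ be the hyperplane of the edge $\{a,x\}$. Since $x \notin A$ and $A$ is gated, the gate of $x$ in $A$ is a neighbour at distance $1$, and by Lemma~\ref{lem:SepProj} $J$ separates $x$ from $A$. The main step is to show $\mathcal{H}(\mathrm{GatedHull}(A\cup\{x\}))=\mathcal{H}(A)\cup\{J\}$. For this we use the description of the gated hull as the intersection of all sectors containing $A\cup\{x\}$. A hyperplane $K \notin \mathcal{H}(A)\cup\{J\}$ does not separate any two vertices of $A\cup\{x\}$: indeed $x$ and $a$ lie on the same side of $K$, and all of $A$ lies on one side of $K$. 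So $A\cup\{x\}$ is contained in a single sector of $K$, and therefore so is the hull, i.e.\ $K \notin \mathcal{H}(\mathrm{hull})$. This gives $\mathfrak{h}(B)=\mathfrak{h}(A)+1$.

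\textbf{(iii)$\Rightarrow$(i).} We have $A<B$, since the counts differ. If $A<R<B$ for some polytope $R$, the strict monotonicity forces $\mathfrak{h}(A)<\mathfrak{h}(R)<\mathfrak{h}(B)=\mathfrak{h}(A)+1$, which is impossible.

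The main obstacle is the hull computation in (ii)$\Rightarrow$(iii). It relies on gated subgraphs being intersections of the sectors containing them, as recalled in the paragraph on multisectors. We will also need the monotonicity lemma for the inclusion $\mathcal{H}(A)\subseteq\mathcal{H}(B)$ and for the strictness; both are short consequences of Lemma~\ref{lem:SepProj}.
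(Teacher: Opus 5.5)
Your proposal is correct and follows the paper's proof essentially step by step. Your monotonicity fact is the paper's Claim~\ref{claim:InclusionHyp}, and your hull computation in (ii)$\Rightarrow$(iii) is Claim~\ref{claim:AddingVertexGated}; the only minor difference is that you get $\mathcal{H}(\mathrm{GatedHull}(A\cup\{x\}))=\mathcal{H}(A)\sqcup\{J\}$ directly from the gatedness of sectors, whereas the paper cites \cite[Proposition~2.68]{QM} for the fact that a hyperplane crosses a gated hull if and only if it separates two of its generating vertices.
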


\begin{proof}
We start by proving a pair of elementary but useful observations:

\begin{claim}\label{claim:AddingVertexGated}
Let $Y \leq X$ be a gated subgraph and $p \in \partial_XY$. If $J$ denotes the hyperplane separating $p$ from a neighbour in $Y$, then
$$\mathcal{H}(\mathrm{GatedHull}(Y \cup \{p\})) = \mathcal{H}(Y) \sqcup \{J\}.$$
\end{claim}

\noindent
We know from \cite[Proposition~2.68]{QM} that a hyperplane crosses the gated hull of $Y \cup \{p\}$ if and only if it separates at least two vertices in $Y \cup \{p\}$. The hyperplanes separating two vertices in $Y$ are the hyperplanes of $Y$, and there is a unique hyperplane separating $p$ from some vertex of $Y$, namely $J$. 

\begin{claim}\label{claim:InclusionHyp}
Let $Y,Z \leq X$ be two gated subgraphs. If $Y \leq Z$ and $\mathcal{H}(Y)= \mathcal{H}(Z)$, then $Y=Z$. 
\end{claim}

\noindent
If $Y \leq Z$ but $Y \neq Z$, then there exists a vertex $z \in V(Z \backslash Y)$. As a consequence of Lemma~\ref{lem:SepProj}, there exists a hyperplane $J$ separating $z$ from $Y$. Of course, $J \in \mathcal{H}(Z) \backslash \mathcal{H}(Y)$, so we cannot have $\mathcal{H}(Y) = \mathcal{H}(Z)$, concluding the proof of Claim~\ref{claim:InclusionHyp}. 

\medskip \noindent
Assume that $A \lessdot B$. Since $A<B$, $\partial_BA$ contains at least one vertex, say $x$. Then, $A< \mathrm{GatedHull}(A \cup \{x\}) \leq B$. So we must have $B= \mathrm{GatedHull}(A \cup \{x\})$. Thus, (i) implies (ii). We also know from Claim~\ref{claim:AddingVertexGated} that (ii) implies (iii). 

\medskip \noindent
Now, assume that (iii) holds. Let $C \leq X$ be a polytope satisfying $A \leq C \leq B$. Of course, $\mathcal{H}(A) \subset \mathcal{H}(C) \subset \mathcal{H}(B)$. Since $\mathfrak{h}(B)= \mathfrak{H}(A)+1$, necessarily $\mathcal{H}(C)=\mathcal{H}(A)$ or $\mathcal{H}(B)$. Then, it follows from Claim~\ref{claim:InclusionHyp} that $C=A$ or $B$. Hence $A \lessdot B$. This proves that (iii) implies (i). 
\end{proof}

\noindent
The rest of the section is dedicated to the proof of the following statement:

\begin{thm}\label{thm:GraphPoly}
The graph of polytopes of a quasi-median graph is median.
\end{thm}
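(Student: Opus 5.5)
We plan to prove Theorem~\ref{thm:GraphPoly} by checking the local-to-global criterion for median graphs. That is, we show that $\mathrm{Poly}(X)$ is connected, that its square-completion is simply connected, that it contains no induced $K_{2,3}$, and that it satisfies the $3$-cube condition. This is the triangle-free specialisation of Theorem~\ref{thm:LocalGlobal}. An alternative route would compute distances directly and verify the median property, and we will use that as a sanity check. Throughout, Lemma~\ref{lem:Lessdot}(iii) is the key tool: edges of $\mathrm{Poly}(X)$ join $A \subset B$ with $\mathfrak{h}(B)=\mathfrak{h}(A)+1$. Hence $P \mapsto \mathfrak{h}(P)$ is a grading, and $\mathrm{Poly}(X)$ is bipartite, so triangle-free.

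\textbf{Step 1: metric description.} We will show that for polytopes $P,Q$,
$$d(P,Q)=\mathfrak{h}(P)+\mathfrak{h}(Q)-2\,\mathfrak{h}(P\cap Q)+2\,\mathfrak{h}(P|Q)\]
or a similar formula. The natural geodesic passes through the polytope $\mathrm{proj}_Q(P)$ together with the gated hull of $P\cup Q$; alternatively it goes down from $P$ to a vertex-level polytope and back up. To justify the formula, we first show that any polytope $B$ can be reached from any polytope $A\subset B$ by a chain of coverings. We do this by adding boundary vertices one at a time (Claim~\ref{claim:AddingVertexGated}), and by repeatedly removing a hyperplane from $B$ by intersecting with a suitable sector (Lemma~\ref{lem:InclusionOne}). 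This gives connectivity. For the lower bound, we assign to each polytope $P$ the finite family of "hyperplane-sector" data. A cleaner route is to embed $\mathrm{Poly}(X)$ isometrically into a median graph. The map $P \mapsto (\text{for each hyperplane } J,\ \text{the set of sectors of } J \text{ meeting } P)$ sends $P$ into a product over $J$ of finite-support Boolean lattices. Each edge changes exactly one coordinate by one step, by Claim~\ref{claim:AddingVertexGated} and Lemma~\ref{lem:Lessdot}.

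\textbf{Step 2: median property via this embedding.} We will realise $\mathrm{Poly}(X)$ as a convex subgraph, or at least a median-closed isometric subgraph, of a median graph $M$. Each polytope is determined by the sectors it meets, since a gated subgraph is the intersection of the sectors containing it. The number of sectors of $J$ meeting $P$ is $1$ if $J\notin\mathcal{H}(P)$ and is all sectors otherwise, by Proposition~\ref{prop:GatedHullMulti} and the clique argument in Lemma~\ref{lem:InclusionOne}. So the coordinate for $J$ takes only two kinds of values: a single sector $S$, or "all". The coordinate space for $J$ is therefore a star, i.e.\ a tree in which all sectors are joined to a central vertex "all". Thus $P\mapsto(\text{coordinates})$ maps into a product of stars restricted to finite support, which is a median graph. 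We check that adjacency corresponds to changing one coordinate between a leaf and the centre, using Lemma~\ref{lem:Lessdot}. We then check that the image is closed under the coordinatewise median. Given polytopes $P_1,P_2,P_3$, the coordinatewise median prescribes for each $J$ either a sector or "all". We must show it is realised by a polytope, namely the intersection of the prescribed sectors with the gated hull of $P_1\cup P_2\cup P_3$. Nonemptiness comes from the Helly property, and correctness of the coordinates from Lemma~\ref{lem:InclusionOne}. Finally we show the image is geodesically convex, or at least isometric, by exhibiting monotone paths through polytopes. A median-closed isometric subgraph of a median graph is median.

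\textbf{Main obstacle.} We expect the hard step to be the realisation in Step 2: showing that an arbitrary coordinatewise median or geodesic point in the product of stars comes from an honest polytope. In particular, the hyperplanes marked "all" must be exactly $\mathcal{H}$ of the constructed polytope, and the marked sectors must be pairwise compatible. Compatibility should follow from the fact that for each pair $J,J'$ the data comes from some $P_i$, so the sectors pairwise intersect; Helly then gives a nonempty gated intersection. Transversality of the hyperplanes marked "all" needs Lemma~\ref{lem:ConstructPrism}-type arguments to guarantee they actually cross the resulting gated set. We would first try to build the median polytope explicitly as
$$\mathrm{GatedHull}(\mu(x_1,x_2,x_3)\text{ over } x_i\in P_i)\]
and compare coordinates.
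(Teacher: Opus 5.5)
Once you commit to the embedding route of Step~2, your argument is correct. The local-to-global plan you announce first is never needed. The argument repackages the paper's proof rather than reproducing it.

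The paper proceeds in three stages. It proves $d(A,B)=\mathfrak{h}(A\backslash B)+2\mathfrak{h}(A|B)+\mathfrak{h}(B\backslash A)$ by passing through $\mathrm{GatedHull}(A\cup B)$ (Lemma~\ref{lem:DistPolyNested}, Proposition~\ref{prop:DistPoly}). It then derives the interval description of Corollary~\ref{cor:IntervalPoly}. Finally, it shows by hand that $M=\bigcap\{\text{sectors containing two of } A_1,A_2,A_3\}$ is a median, and proves uniqueness separately with Lemma~\ref{lem:SepProj}.

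In your version, Lemma~\ref{lem:Lessdot}(iii) makes $P\mapsto(\text{coordinates})$ $1$-Lipschitz into the restricted product of stars. The path up to the gated hull and back down makes it isometric. This is literally Proposition~\ref{prop:DistPoly}, because a star coordinate contributes $1$ for $J\in\mathcal{H}(P)\triangle\mathcal{H}(Q)$ and $2$ for $J\in\mathcal{H}(P|Q)$.

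The coordinatewise median is ``the leaf $S$ if $S$ contains two of the $P_i$, the centre otherwise''. So the polytope realising it is exactly the paper's $M$:
\begin{itemize}
\item only the finitely many hyperplanes of $\mathrm{GatedHull}(P_1\cup P_2\cup P_3)$ impose constraints;
\item pigeonhole plus Helly gives nonemptiness;
\item Lemma~\ref{lem:InclusionOne} shows that every centre-marked hyperplane crosses $M$.
\end{itemize}
What your packaging buys is that Corollary~\ref{cor:IntervalPoly} and the uniqueness step come for free, since an isometric, median-closed subgraph of a median graph inherits unique medians. The paper avoids the auxiliary product at the cost of checking uniqueness directly.

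Several side remarks in your proposal are wrong and should be dropped; none of them is needed.
\begin{itemize}
\item No transversality or Lemma~\ref{lem:ConstructPrism}-type argument is needed. Polytopes, unlike prisms, may have non-transverse hyperplanes.
\item $\mathrm{GatedHull}(\mu(x_1,x_2,x_3))$ is not well defined. Three distinct vertices of a $3$-clique, for instance, have no median.
\item In your Step~1 formula, $\mathfrak{h}(P\cap Q)$ should be $\#(\mathcal{H}(P)\cap\mathcal{H}(Q))$, since $P\cap Q$ may be empty.
\item Descending to a single vertex and climbing back is not a geodesic in general.
\item The image is not convex in the product, but isometry is all you use. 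For example, let $X$ be the path with vertices $x,z,y$. A product geodesic from $\{x\}$ to $\{y\}$ may first move the coordinate of the hyperplane containing $\{z,y\}$ to the centre. The resulting coordinates (leaf $\{x\}$ for the other hyperplane, centre for this one) are realised by no polytope.
\end{itemize}
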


\noindent
For median graphs, a proof can be found in \cite[Section~9.1]{QM}. (See also \cite{MR4449680} for a similar construction in arbitrary median metric spaces.) An alternative argument can also be extracted from the earlier work \cite{MR529794}, based on the characterisation of median graphs as covering graphs of discrete median semilattices. 

\medskip \noindent
In order to prove Theorem~\ref{thm:GraphPoly}, we start by computing distances in graphs of polytopes. 

\begin{prop}\label{prop:DistPoly}
Let $X$ be a quasi-median graph and $A,B \leq X$ two polytopes. Then,
$$\begin{array}{lcl} d_{\mathrm{Poly}(X)}(A,B) & = & \mathfrak{h}(A\backslash B) + 2 \mathfrak{h}(A|B) +  \mathfrak{h}(B \backslash A) \\ \\ & = & 2 \mathfrak{h}(A \cup B) - \mathfrak{h}(A) - \mathfrak{h}(B). \end{array}$$
\end{prop}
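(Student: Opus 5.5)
We prove the formula by showing an upper bound through an explicit path in $\mathrm{Poly}(X)$ and a lower bound through a $1$-Lipschitz potential. First, the two expressions agree. Here $\mathfrak{h}(A\backslash B)$ counts the hyperplanes of $A$ that are not hyperplanes of $B$. By \cite[Proposition~2.68]{QM}, the hyperplanes of $\mathrm{GatedHull}(A\cup B)$ are exactly those separating two vertices of $A \cup B$. These are the hyperplanes of $A$, those of $B$, and those separating $A$ from $B$, and the three families are disjoint in the appropriate sense. Hence
$$\mathfrak{h}(A\cup B) = |\mathcal{H}(A)\cap\mathcal{H}(B)| + \mathfrak{h}(A\backslash B)+\mathfrak{h}(B\backslash A)+\mathfrak{h}(A|B),$$
where $\mathfrak{h}(A\cup B)$ is read as $\mathfrak{h}$ of the gated hull. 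Writing $\mathfrak{h}(A)=|\mathcal{H}(A)\cap\mathcal{H}(B)|+\mathfrak{h}(A\backslash B)$, and similarly for $B$, gives $2\mathfrak{h}(A\cup B)-\mathfrak{h}(A)-\mathfrak{h}(B)$ equal to the first expression. One also checks that a hyperplane crossing both $A$ and $B$ cannot separate $A$ from $B$, so the decomposition is disjoint.

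\emph{Upper bound.} Let $C$ be the gated hull of $A\cup B$. It is a polytope, since it is the gated hull of the finitely many vertices generating $A$ and $B$. We climb from $A$ to $C$ one step at a time. While $A'\neq C$, pick $x\in\partial_C A'$ and replace $A'$ by $\mathrm{GatedHull}(A'\cup\{x\})$. By Claim~\ref{claim:AddingVertexGated} and Lemma~\ref{lem:Lessdot}, each step is an edge of $\mathrm{Poly}(X)$ and raises $\mathfrak{h}$ by exactly one. Such an $x$ always exists inside $C$: $C$ is connected and $A'$ is gated in $C$. So $d(A,C)\le \mathfrak{h}(C)-\mathfrak{h}(A)$, and symmetrically $d(B,C)\le\mathfrak{h}(C)-\mathfrak{h}(B)$. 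Summing gives $d(A,B)\le 2\mathfrak{h}(C)-\mathfrak{h}(A)-\mathfrak{h}(B)$.

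\emph{Lower bound.} This is the main obstacle. Fix $A$ and consider the function $\Phi(P)=2\mathfrak{h}(\mathrm{GatedHull}(A\cup P))-\mathfrak{h}(A)-\mathfrak{h}(P)$. It suffices to show that $\Phi$ changes by at most $1$ along any edge $P\lessdot Q$, because $\Phi(A)=0$. By Lemma~\ref{lem:Lessdot}, $\mathcal{H}(Q)=\mathcal{H}(P)\sqcup\{J\}$ and $Q=\mathrm{GatedHull}(P\cup\{x\})$. Consequently $\mathrm{GatedHull}(A\cup Q)=\mathrm{GatedHull}(A\cup P\cup\{x\})$. Its hyperplanes are those of $\mathrm{GatedHull}(A\cup P)$, together with possibly $J$; no others can appear, since any new hyperplane would separate $x$ from $A\cup P$, and $x$ is adjacent to $P$ across $J$ only. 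So the first term changes by $0$ or $2$, while $\mathfrak{h}$ of the second argument increases by exactly $1$. The total change in $\Phi$ is therefore $\pm1$. Integrating along a geodesic from $A$ to $B$ yields $d(A,B)\ge\Phi(B)$, which completes the proof.
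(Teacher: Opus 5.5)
Your proof is correct. The upper bound and the identity between the two expressions are essentially the paper's. The paper also climbs from $A$ and from $B$ to $C=\mathrm{GatedHull}(A\cup B)$, via Lemma~\ref{lem:DistPolyNested}, whose proof is exactly your step-by-step enlargement through $\partial_C A'$. It then splits $\mathcal{H}(C)$ using \cite[Proposition~2.68]{QM}.

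The real difference is in the lower bound. The paper argues globally, by bookkeeping along an arbitrary path:
\begin{itemize}
\item each edge toggles exactly one hyperplane (Lemma~\ref{lem:Lessdot});
\item each hyperplane of $\mathcal{H}(A)\setminus\mathcal{H}(B)$ must be removed, and each of $\mathcal{H}(B)\setminus\mathcal{H}(A)$ added;
\item each hyperplane of $\mathcal{H}(A|B)$ must be added and later removed.
\end{itemize}
The last point tacitly uses that two nested polytopes not crossed by a hyperplane lie in the same sector of it, so some intermediate polytope must be crossed.

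You instead build a potential $\Phi$ with $\Phi(A)=0$ and check $|\Phi(Q)-\Phi(P)|=1$ on a single edge. This uses only two facts: $\mathrm{GatedHull}(A\cup Q)=\mathrm{GatedHull}(\mathrm{GatedHull}(A\cup P)\cup\{x\})$, and the only hyperplane separating $x$ from its unique neighbour in $P$ is $J$.

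Your route is more self-contained and purely local, with no sector-tracking along the path, and it identifies $d_{\mathrm{Poly}(X)}(A,\cdot)$ with an explicit function. The paper's route is shorter to state and says more about geodesics: along any geodesic, each hyperplane of $\mathcal{H}(A)\triangle\mathcal{H}(B)$ is toggled exactly once, each of $\mathcal{H}(A|B)$ exactly twice, and no other hyperplane at all.

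One small wording point: for the four families in your decomposition to be disjoint, you need that a hyperplane crossing \emph{either} $A$ or $B$ cannot separate them, not just one crossing both. This is immediate from the definition of $\mathcal{H}(A|B)$.
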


\noindent
Our proof of the proposition will be based on the following particular case:

\begin{lemma}\label{lem:DistPolyNested}
Let $X$ be a quasi-median graph and $A, B$ two polytopes. If $A \leq B$, then
$$d_{\mathrm{Poly}(X)}(A,B) = \mathfrak{h}(B \backslash A).$$
\end{lemma}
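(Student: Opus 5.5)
We have to show that, for polytopes $A \leq B$, $d_{\mathrm{Poly}(X)}(A,B) = \mathfrak{h}(B\backslash A)$. Here $\mathcal{H}(B\backslash A)$ is read as $\mathcal{H}(B) \backslash \mathcal{H}(A)$, which makes sense since $\mathcal{H}(A) \subset \mathcal{H}(B)$. We prove the upper bound by exhibiting an explicit path, and the lower bound by showing that no single edge of $\mathrm{Poly}(X)$ can change the set of hyperplanes by more than one element.

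\textbf{Upper bound.} We build an ascending chain from $A$ to $B$, arguing by induction on $\mathfrak{h}(B)-\mathfrak{h}(A)$; this quantity is finite by Lemma~\ref{lem:WhenPolytope}.
\begin{itemize}
	\item If $A=B$ there is nothing to prove.
	\item Otherwise $A<B$. Since $B$ is connected (gated subgraphs are convex, hence connected), $\partial_BA$ contains some vertex $x$.
	\item Set $A_1:=\mathrm{GatedHull}(A\cup\{x\})$. It is a polytope, being the gated hull of finitely many vertices, and $A_1 \leq B$ because $B$ is gated.
	\item By Claim~\ref{claim:AddingVertexGated} (inside the proof of Lemma~\ref{lem:Lessdot}), $\mathfrak{h}(A_1)=\mathfrak{h}(A)+1$, so Lemma~\ref{lem:Lessdot}(iii) gives $A\lessdot A_1$.
\end{itemize}
Iterating yields a path $A=A_0\lessdot A_1\lessdot\cdots\lessdot A_k=B$ in $\mathrm{Poly}(X)$. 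Each step adds exactly one hyperplane, so $k=\mathfrak{h}(B)-\mathfrak{h}(A)=\mathfrak{h}(B\backslash A)$. Hence $d(A,B)\leq \mathfrak{h}(B\backslash A)$.

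\textbf{Lower bound.} Consider the function $\phi(P):=|\mathcal{H}(P)\,\triangle\,\mathcal{H}(A)|$ on polytopes. By Lemma~\ref{lem:Lessdot}, if $P\lessdot Q$ then $\mathcal{H}(P)\subset\mathcal{H}(Q)$ and $\mathcal{H}(Q)$ has exactly one more element. So along any edge the set $\mathcal{H}(\cdot)$ changes by the addition or removal of a single hyperplane, and $\phi$ changes by exactly $1$. Therefore any path from $A$ to $B$ has length at least
$$\phi(B)-\phi(A)=|\mathcal{H}(B)\backslash\mathcal{H}(A)|=\mathfrak{h}(B\backslash A),$$
using $\mathcal{H}(A)\subset\mathcal{H}(B)$. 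The two bounds together give the equality.

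The only step that needs care is checking that the chosen $x$ keeps $A_1$ inside $B$ and genuinely adds exactly one hyperplane. Both follow from $x$ being a neighbour of $A$ lying in the gated subgraph $B$, via Claim~\ref{claim:AddingVertexGated}.
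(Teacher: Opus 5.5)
Your proof is correct and follows essentially the same route as the paper. The paper also gets the lower bound from Lemma~\ref{lem:Lessdot}, observing that each edge changes the hyperplane count by one; your symmetric-difference potential is only a cosmetic variant of that count. For the upper bound it also iterates the gated hull of $A$ with a vertex of $\partial_BA$ via Claim~\ref{claim:AddingVertexGated}. The one small difference is at termination: the paper invokes Claim~\ref{claim:InclusionHyp} to see that the chain ends at $B$, while your bound $\mathfrak{h}(A_i)\leq\mathfrak{h}(B)$ makes that step implicit.
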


\begin{proof}
It follows from Lemma~\ref{lem:Lessdot} that, along a path in $\mathrm{Poly}(X)$, the number of hyperplanes of a polytope increase or decrease by one at each step. Therefore, we must have $d_{\mathrm{Poly}(X)}(A,B) \geq \mathfrak{h}(B \backslash A)$. Conversely, we can construct a path of length $\mathfrak{h}(B\backslash A)$ in $\mathrm{Poly}(X)$ connecting $A$ to $B$ as follows. If $A=B$, there is nothing to prove, so assume that $A<B$. Consequently, the boundary $\partial_BA$ contains at least one vertex, say $p$. According to Lemma~\ref{lem:Lessdot} and Claim~\ref{claim:AddingVertexGated}, the gated hull of $A \cup \{p\}$ is adjacent to $A$ in $\mathrm{Poly}(X)$ and its set of hyperplanes is $\mathcal{H}(A) \sqcup \{J\}$, where $J$ is the unique hyperplane separating $p$ from $A$, which is also a hyperplane of $B$. In particular, $\mathrm{GatedHull}(A \cup \{p\}) \leq B$ has one more hyperplane in common with $B$ than $A$. Thus, after $\mathfrak{h}(B\backslash A)$ iterations of this construction, we obtain a polytope $A^+ \leq B$ with $\mathcal{H}(A^+)= \mathcal{H}(B)$, which implies that $A^+=B$ according to Claim~\ref{claim:InclusionHyp}. 
\end{proof}

\begin{proof}[Proof of Proposition~\ref{prop:DistPoly}.]
Let $C$ denote the gated hull of $A \cup B$. It follows from Lemma~\ref{lem:DistPolyNested} that
$$d_{\mathrm{Poly}(X)}(A,B) \leq d_{\mathrm{Poly}(X)}(A,C) + d_{\mathrm{Poly}(X)}(C,B) = \mathfrak{h}(C\backslash A) + \mathfrak{h}(C\backslash B).$$
We know from \cite[Proposition~2.68]{QM} that the hyperplanes crossing $C$ are exactly the hyperplanes separating at least two vertices in $A \cup B$. Hence
$$\mathfrak{h}(C\backslash A)= \mathfrak{h}(A|B) + \mathfrak{h}(B\backslash A) \text{ and } \mathfrak{h}(C\backslash B)= \mathfrak{h}(A|B) + \mathfrak{h}(A\backslash B).$$
So far, we have proved that 
$$d_{\mathrm{Poly}(X)}(A,B) \leq \mathfrak{h}(A\backslash B) + 2 \mathfrak{h}(A|B) +  \mathfrak{h}(B \backslash A).$$
The reverse equality is clear. Indeed, we know from Lemma~\ref{lem:Lessdot} that, along a path connecting $A$ to $B$ in $\mathrm{Poly}(X)$, one adds or removes one hyperplane at each step. And, along the path, every hyperplane crossing $A$ but not $B$ has to be removed at some point, every hyperplane crossing $B$ but not $A$ has to be added at some point, and every hyperplane separating $A$ and $B$ has to be added and then remove at some points. This proves the first equality of our proposition.

\medskip \noindent
Then, notice that the expression $2\mathfrak{h}(A \cup B)-\mathfrak{h}(A)-\mathfrak{h}(B)$ counts ($2-1-0=$) once the hyperplanes crossing $A$ but not $B$, ($2-0-1=$) once the hyperplanes crossing $B$ but not $B$, ($2-1-1=$) none the hyperplanes crossing both $A$ and $B$, and ($2-0-0=$) twice the hyperplanes separating $A$ and $B$. Hence the equality
$$\mathfrak{h}(A\backslash B)+2\mathfrak{h}(A|B) +\mathfrak{h}(B\backslash A) = 2 \mathfrak{h}(A \cup B)- \mathfrak{h}(A)- \mathfrak{h}(B),$$
which concludes the proof of our proposition. 
\end{proof}

\noindent
As a consequence of Proposition~\ref{prop:DistPoly}, it is possible to describe geodesics in graphs of polytopes.

\begin{cor}\label{cor:IntervalPoly}
Let $X$ be a quasi-median graph and $A,B,C \leq X$ three polytopes. In $\mathrm{Poly}(X)$, $C$ belongs to a geodesic connecting $A$ to $B$ if and only if the following conditions are satisfied:
\begin{itemize}
	\item every sector containing $C$ also contains $A$ or $B$;
	\item every sector containing $A$ and $B$ also contains $C$;
\end{itemize}
\end{cor}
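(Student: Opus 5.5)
Since the vertices $A,B,C$ lie in a graph, $C$ lies on a geodesic from $A$ to $B$ exactly when
$$d(A,C)+d(C,B)=d(A,B).$$
I will check this equality hyperplane by hyperplane, using the formula of Proposition~\ref{prop:DistPoly}. For a polytope $P$ and a hyperplane $J$, say that $J$ \emph{crosses} $P$ if $J \in \mathcal{H}(P)$. Otherwise, $P$ lies in a single sector of $J$, which I denote $J(P)$.

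For two polytopes $P,Q$, Proposition~\ref{prop:DistPoly} writes $d(P,Q)=\sum_J c_J(P,Q)$, where:
\begin{itemize}
	\item $c_J(P,Q)=0$ if $J$ crosses both $P$ and $Q$, or crosses neither and $J(P)=J(Q)$;
	\item $c_J(P,Q)=1$ if $J$ crosses exactly one of $P$ and $Q$;
	\item $c_J(P,Q)=2$ if $J$ crosses neither and $J(P)\neq J(Q)$.
\end{itemize}
So $d(A,C)+d(C,B)-d(A,B)=\sum_J \bigl(c_J(A,C)+c_J(C,B)-c_J(A,B)\bigr)$. Each $c_J(P,Q)$ is the distance between two "states" of $J$: either "crossing", or a choice of sector. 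These states form a star metric, with "crossing" at the centre and each sector at distance $1$ from it. A star is a tree, so the triangle inequality holds for each term, and every summand is $\geq 0$. The case of a star with two leaves is included. Hence $C$ lies on a geodesic if and only if, for every hyperplane $J$, the state of $J$ at $C$ lies on the tree-geodesic between its states at $A$ and $B$.

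Next I will translate this betweenness condition case by case.
\begin{itemize}
	\item If $A$ and $B$ lie in the same sector $S$ of $J$, the only point between them is $S$ itself. So $C$ must lie in $S$.
	\item If $A$ and $B$ lie in distinct sectors, the allowed states are those two sectors or "crossing". In particular, $C$ must not lie in any third sector.
	\item If $J$ crosses exactly one of $A,B$, say $A$ (with $B\subset S$), the allowed states are "crossing" or $S$.
	\item If $J$ crosses both, only "crossing" is allowed.
\end{itemize}
These cases together say two things. First, if $C$ lies in a sector $T$ of $J$, then $T$ contains $A$ or $B$. Second, if a sector contains both $A$ and $B$, it contains $C$. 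These are exactly the two bullet conditions of the statement. Conversely, the two conditions rule out each forbidden configuration. For instance, "$C$ in a sector while $J$ crosses both $A$ and $B$" is forbidden, because that sector would contain neither $A$ nor $B$. Note that "a sector containing $C$" means $C$ lies inside it, which happens exactly when $J$ does not cross $C$.

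The main obstacle is only bookkeeping: interpreting "contains" for crossing hyperplanes correctly, and confirming the per-hyperplane decomposition. For the latter, I will rewrite $\mathfrak{h}(A\backslash B)+2\mathfrak{h}(A|B)+\mathfrak{h}(B\backslash A)$ as a sum over hyperplanes. This requires $\mathcal{H}(A\backslash B)$ to mean the hyperplanes crossing $A$ but not $B$, and $\mathcal{H}(A|B)$ to mean the hyperplanes crossing neither, with $A$ and $B$ in different sectors. All sums are finite, since only the hyperplanes in $\mathcal{H}(\mathrm{GatedHull}(A\cup B\cup C))$ contribute, and there are finitely many of them by Lemma~\ref{lem:WhenPolytope}.
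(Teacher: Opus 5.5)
Your proof is correct and follows the same route as the paper. Both decompose $d(A,C)+d(C,B)-d(A,B)$ hyperplane by hyperplane via Proposition~\ref{prop:DistPoly} and check when every contribution vanishes; the paper records this as a case table in Figure~\ref{Computation}, while your star-metric packaging (``crossing'' at the centre, sectors as leaves) shows at once that each term is non-negative and gives the allowed configurations.
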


\begin{proof}
By applying the formula given by Proposition~\ref{prop:DistPoly}, we can compute $d(A,C)+d(C,B)$ and compare its value with $d(A,B)$. This is done on Figure~\ref{Computation}. It follows from this computation that $C$ belongs to a geodesic connecting $A$ and $B$, i.e.\ $d(A,C)+d(C,B)=d(A,B)$, if and only if the three conditions above are satisfied.
\begin{figure}
\begin{center}
\includegraphics[width=\linewidth]{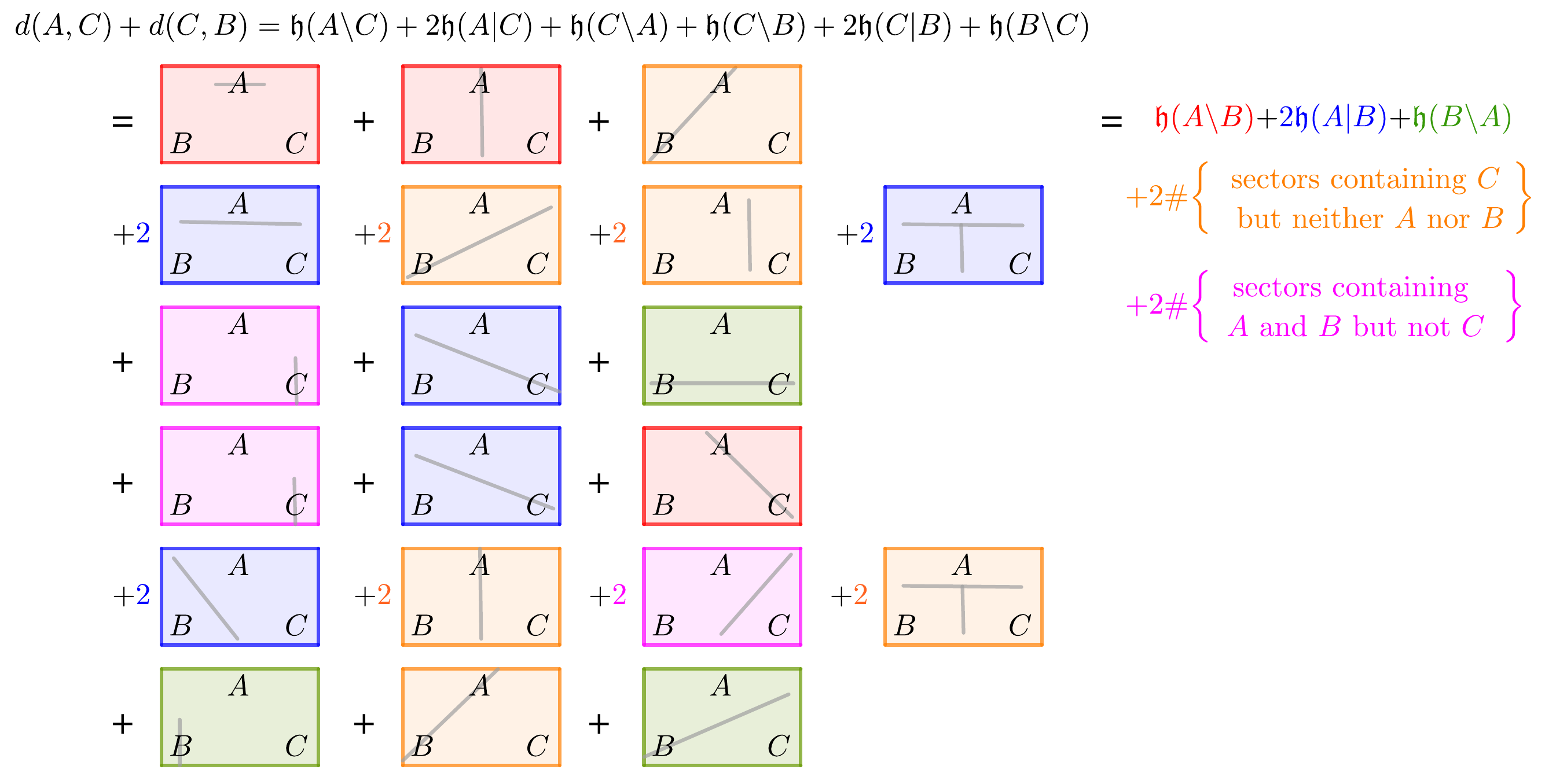}
\caption{Proof of Corollary~\ref{cor:IntervalPoly}.}
\label{Computation}
\end{center}
\end{figure}
\end{proof}

\noindent
We are now ready to prove the main result of this section. 

\begin{proof}[Proof of Theorem~\ref{thm:GraphPoly}.]
In order to show that the graph of polytopes of a given quasi-median graph $X$ is median, our goal is to prove:

\begin{claim}\label{claim:MedianPoly}
Let $A_1,A_2,A_3 \leq X$ be three polytopes. The unique median $M$ of $\{A_1,A_2,A_3\}$ in $\mathrm{Poly}(X)$ is the intersection of all the sectors containing at least two polytopes among $A_1,A_2,A_3$. Moreover, a hyperplane crosses $M$ if and only if each of its sectors contains at most one polytope among $A_1,A_2,A_3$.
\end{claim}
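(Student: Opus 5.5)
Let $M$ denote the intersection of all sectors containing at least two of $A_1,A_2,A_3$. The plan is to show that $M$ is a polytope, that it lies on geodesics between each pair $A_i,A_j$, and that it is the only vertex of $\mathrm{Poly}(X)$ with this property. Membership in geodesics and uniqueness will both come from Corollary~\ref{cor:IntervalPoly}.

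\textbf{Step 1: $M$ is a non-empty polytope.} Every sector containing two of the $A_i$ contains the polytope $Q_{ij}$, the gated hull of $A_i \cup A_j$. So $M$ is the intersection of the sectors containing some $Q_{ij}$. Each such sector is one of two kinds.
\begin{itemize}
\item If the sector is delimited by a hyperplane crossing none of $A_1\cup A_2\cup A_3$, it contains the gated hull $Q$ of $A_1 \cup A_2 \cup A_3$.
\item Otherwise it is delimited by one of the finitely many hyperplanes of $Q$, which by Lemma~\ref{lem:WhenPolytope} and \cite[Proposition~2.68]{QM} are the hyperplanes separating vertices of $A_1\cup A_2\cup A_3$. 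There are only finitely many such sectors, since each contains a vertex of $Q$ and $Q$ meets only finitely many sectors of each of its hyperplanes.
\end{itemize}
Any two of the sectors involved intersect, because for two pairs $\{i,j\},\{k,l\}$ among three indices some $A_m$ lies in both. By the Helly property for gated subgraphs, the finitely many sectors of the second kind, together with $Q$, have non-empty intersection. Next, $M \subseteq Q$, because $Q$ is the intersection of the sectors containing it and each of these contains all three $A_i$. Hence $M$ equals $Q$ intersected with the finitely many sectors of the second kind. So $M$ is non-empty and gated, and it has finitely many hyperplanes since $\mathcal{H}(M)\subseteq\mathcal{H}(Q)$. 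By Lemma~\ref{lem:WhenPolytope}, $M$ is a polytope.

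\textbf{Step 2: the hyperplanes of $M$.} Suppose a hyperplane $J$ has some sector containing two of the $A_i$. Then $M$ lies inside that sector, so $J$ does not cross $M$. Conversely, suppose every sector of $J$ contains at most one of the $A_i$. For each $i$, if $J$ crosses $A_i$ then $A_i$ meets several sectors of $J$. If $J$ does not cross $A_i$, then $A_i$ lies in a single sector, and the other two $A_j$ avoid that sector. I then want to show that $M$ meets two distinct sectors of $J$.

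The tool is Lemma~\ref{lem:InclusionOne}, applied to the family consisting of $Q$ and the sectors of the second kind, which pairwise intersect. Suppose $M$ were contained in a single sector $S$ of $J$. Then one member of the family would be contained in $S$. It cannot be $Q$, since $J$ crosses $Q$. So it is a sector $T$ that contains some pair $A_i,A_j$, and $T\subseteq S$ puts $A_i,A_j$ in the same sector of $J$, a contradiction. Therefore $M$ is not contained in any single sector of $J$. As $M$ is connected and $J$ separates, $J$ crosses $M$. This proves the second sentence of the claim.

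\textbf{Step 3: $M$ is a median, and the only one.} Fix $A=A_1$, $B=A_2$, $C=M$ and check the two conditions of Corollary~\ref{cor:IntervalPoly}.
\begin{itemize}
\item \emph{Every sector containing $A$ and $B$ contains $M$.} This holds by the definition of $M$.
\item \emph{Every sector $S$ containing $M$ contains $A_1$ or $A_2$.} Let $J$ be the hyperplane of $S$. Since $S$ contains $M$, $J$ does not cross $M$, so by Step 2 some sector $S'$ of $J$ contains two of the $A_i$. Then $M\subseteq S'$, which forces $S'=S$, so $S$ contains two of the $A_i$ and hence $A_1$ or $A_2$.
\end{itemize}
The same argument applies to every pair, so $M$ is a median of $A_1,A_2,A_3$.

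For uniqueness, let $C$ be any median. The second condition of Corollary~\ref{cor:IntervalPoly}, applied to all three pairs, gives $C\subseteq M$. Now let $J$ cross $M$. By Step 2, no sector of $J$ contains two of the $A_i$. If $C$ were contained in a sector of $J$, the first condition of Corollary~\ref{cor:IntervalPoly}, applied to the pairs, would force that sector to contain one of $A_1,A_2$, one of $A_1,A_3$ and one of $A_2,A_3$. Any such choice puts at least two of the $A_i$ in the sector, which is a contradiction. So $\mathcal{H}(C)=\mathcal{H}(M)$, and Claim~\ref{claim:InclusionHyp} gives $C=M$.

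I expect the main obstacle to be Step 1 and the reverse direction of Step 2, namely handling the potentially infinite family of sectors and correctly invoking Helly and Lemma~\ref{lem:InclusionOne} with a finite family. The first thing I would try is the reduction to $Q$ together with the finitely many sectors of hyperplanes of $Q$, as described above.
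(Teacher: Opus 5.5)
Your argument follows the paper's proof essentially step by step. The paper also writes $M$ as the gated hull $Q$ of $A_1\cup A_2\cup A_3$ intersected with finitely many sectors, uses Helly for non-emptiness, Lemma~\ref{lem:InclusionOne} for the hyperplanes of $M$, and Corollary~\ref{cor:IntervalPoly} for the median property. It proves uniqueness the same way, by showing that a median $C$ satisfies $C\subseteq M$ and $\mathcal{H}(C)=\mathcal{H}(M)$. Your use of all three pairs in that last step is cleaner than the paper's ``say $A_1$ up to reindexing''.

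There is one incorrect justification, in Step~1. You say there are finitely many sectors of the second kind because ``$Q$ meets only finitely many sectors of each of its hyperplanes''. This is false, since polytopes may contain infinite cliques. For instance, let $X$ be a complete graph on infinitely many vertices and $A_1,A_2,A_3$ three distinct vertices. Then $Q=X$ meets all of the infinitely many sectors of its unique hyperplane.

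The finiteness you need is still true, for a different reason. The sectors delimited by a given hyperplane are pairwise disjoint, and any two $2$-element subsets of $\{1,2,3\}$ intersect. So each hyperplane delimits at most one sector containing two of the $A_i$, and there are at most $\mathfrak{h}(Q)<\infty$ sectors of the second kind. With this substitution, the Helly argument in Step~1 and the application of Lemma~\ref{lem:InclusionOne} in Step~2 both go through, since each needs a finite family. The rest of your proof is then complete.
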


\noindent
The first step is to justify that $M$ is non-empty. Since the gated hull $A^+$ of $A_1 \cup A_2 \cup A_3$ coincides with the intersection of all the sectors containing $A_1 \cup A_2 \cup A_3$ (\cite[Lemma~2.69]{QM}), we can write
\begin{equation}\label{eq:M}
M = A^+ \cap \bigcap \{ \text{sectors of } A^+ \text{ containing at least two polytopes among } A_1,A_2,A_3 \}
\end{equation}
Notice that $A^+$ is a polytope, and consequently, according to Lemma~\ref{lem:WhenPolytope}, has only finitely many hyperplanes. Thus, $M$ is an intersection of finitely many gated subgraphs that pairwise intersect, which implies that $M$ is indeed non-empty. 

\medskip \noindent
Next, let us verify that a hyperplane $J$ of $X$ crosses $M$ if and only if it does not delimit a sector containing two polytopes among $A_1,A_2,A_3$. It is clear that from the definition of $M$ that, if $J$ delimits a sector containing two polytopes among $A_1,A_2,A_3$, then $J$ does not cross $M$. Conversely, assume that $J$ does not cross $M$. In other words, $J$ delimits some sector $S$ containing $M$. It follows from Lemma~\ref{lem:InclusionOne} and from the decomposition (\ref{eq:M}) of $M$ that $S$ contains $A^+$ or a sector of $A^+$ containing at least two polytopes among $A_1,A_2,A_3$. A fortiori, $S$ contains at least two polytopes among $A_1,A_2,A_3$, as desired. This proves the second assertion of Claim~\ref{claim:MedianPoly}. 

\medskip \noindent
It also follows from this description of the hyperplanes crossing $M$, combined with the definition of $M$ and Corollary~\ref{cor:IntervalPoly}, that $M$ is a median of $\{A_1,A_2,A_3\}$ in $\mathrm{Poly}(X)$. It remains to verify that this is the only such median. 

\medskip \noindent
Assume that $M_0 \leq X$ is a median of $\{A_1,A_2,A_3\}$ in $\mathrm{Poly}(X)$. It follows from Corollary~\ref{cor:IntervalPoly} that every sector containing two polytopes among $A_1,A_2,A_3$ must also contain $M_0$, hence $M_0 \leq M$. If $M_0<M$, then it follows from Lemma~\ref{lem:SepProj} that there exists some hyperplane $J$ crossing $M$ but not $M_0$. Let $S$ denote the sector delimited by $J$ that contains $M_0$. Since $M_0$ belongs to a geodesic connecting $A_1$ to $A_2$ in $\mathrm{Poly}(X)$, it follows from Corollary~\ref{cor:IntervalPoly} that $S$ contains $A_1$ or $A_2$, say $A_1$ up to reindexing our polytopes. Similarly, because $M_0$ belongs to a geodesic connecting $A_2$ to $A_3$ in $\mathrm{Poly}(X)$, $S$ must contain $A_2$ or $A_3$. Thus, $S$ contains at least two polytopes among $A_1,A_2,A_3$, contradicting our description of the hyperplanes crossing $M$. In other words, we must have $M_0=M$.
\end{proof}

\subsection{Graph of prisms}\label{section:GraphPrisms}

\noindent
In this section, we define and study \emph{graphs of prisms} of quasi-median graphs, which will play an important role in the proof of Theorem~\ref{thm:BigIntro}.

\begin{definition}
Let $X$ be a quasi-median graph. Its \emph{graph of prisms} $\mathbb{P}(X)$ is the graph
\begin{itemize}
	\item whose vertices are the prisms of $X$;
	\item and whose edges connect any two prisms $P \leq Q$ whenever there is no prism $R$ satisfying $P < R < Q$
\end{itemize}
\end{definition}

\noindent
In other words, the graph of prisms is the covering graph of the poset $(\{\text{prisms}\}, \subseteq)$. Notice that, since prisms are polytopes, graphs of prisms are naturally subgraphs of graphs of polytopes. 

\medskip \noindent
Our first goal in this section is to prove that:

\begin{thm}\label{thm:PrismMedian}
The graph of prisms of a quasi-median graph is a median graph.
\end{thm}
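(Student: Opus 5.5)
The plan is to realise $\mathbb{P}(X)$ as a subgraph of the median graph $\mathrm{Poly}(X)$ (Theorem~\ref{thm:GraphPoly}) that is isometrically embedded and closed under medians. Then $\mathbb{P}(X)$ is itself median: a median-closed isometric subgraph of a median graph is median. Uniqueness of medians in $\mathbb{P}(X)$ follows because any median in $\mathbb{P}(X)$ is a median in $\mathrm{Poly}(X)$, by isometry.

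\emph{Step 1: edges of $\mathbb{P}(X)$ are edges of $\mathrm{Poly}(X)$, and conversely between prisms.} If $P<Q$ are prisms, then $Q$ is a product of cliques and $P$ a subproduct of subcliques. By Lemma~\ref{lem:ConstructPrism} and gatedness, the prisms between $P$ and $Q$ correspond to intermediate sets of hyperplanes of $Q$ containing $\mathcal{H}(P)$. Hence $P\lessdot Q$ among prisms exactly when $\mathfrak{h}(Q)=\mathfrak{h}(P)+1$, which by Lemma~\ref{lem:Lessdot}(iii) is covering among polytopes. One must also check that every subproduct of a prism containing $P$ is a prism; this holds since a gated subgraph of a prism whose hyperplanes pairwise cross is a prism (\cite[Lemma~2.74]{QM}).

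\emph{Step 2: isometry.} Given prisms $A,B$, I build a path in $\mathbb{P}(X)$ of length $\mathfrak{h}(A\backslash B)+2\mathfrak{h}(A|B)+\mathfrak{h}(B\backslash A)$, which matches Proposition~\ref{prop:DistPoly}. I argue by induction on this quantity. If $\mathcal{H}(A\backslash B)\neq\emptyset$, I shrink $A$ by intersecting it with the sector of some $J\in\mathcal{H}(A\backslash B)$ containing $B$. This gives a prism with one fewer hyperplane, and the quantity drops by one. The same works symmetrically for $B$. Otherwise $\mathcal{H}(A)\subset\mathcal{H}(B)$ and vice versa. If $\mathcal{H}(A|B)\neq\emptyset$, I take $a\in A$ and its gate $b$ in $B$, and let $J$ be the hyperplane adjacent to $a$ on a geodesic to $b$. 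I expect the main obstacle here: showing that $J$ is transverse to all hyperplanes of $A$ and that $a\in N(J)$, so that Lemma~\ref{lem:ConstructPrism} gives a prism with hyperplanes $\mathcal{H}(A)\cup\{J\}$. My first attempt is to choose $a$ to be the gate of $B$'s gate-set in $A$, i.e.\ work with the pair of mutual projections $A\to B\to A$. Then the hyperplanes separating $A$ from $B$ are exactly those separating these two gates, and each such hyperplane crosses every hyperplane of $A$, by the standard quasi-median fact that hyperplanes separating two gated sets and the hyperplanes of the projection are transverse. If $A=B$ as hyperplane sets and nothing separates them, Claim~\ref{claim:InclusionHyp}-type reasoning gives $A=B$.

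\emph{Step 3: median closure.} Let $A_1,A_2,A_3$ be prisms. By Claim~\ref{claim:MedianPoly}, their median $M$ in $\mathrm{Poly}(X)$ is the intersection of the sectors containing two of them. A hyperplane crosses $M$ only if each of its sectors contains at most one $A_i$. Such a hyperplane crosses each $A_i$ or separates them pairwise. I need any two such hyperplanes $J,J'$ to be transverse; since $M$ is a polytope, Lemma~\ref{lem:WhenPolytope} and \cite[Lemma~2.74]{QM} then make $M$ a prism. Both $J$ and $J'$ cross $M$, which is gated and connected, so they are transverse whenever they meet in a common prism. If they were not transverse, $J'$ would lie in a single sector $S$ of $J$. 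Since $J$ crosses $M$, there is a vertex of $M$ outside $S$, and I use the sector-counting description to derive that two of the $A_i$ lie in a common sector of $J$ or of $J'$. This is a contradiction, since each $A_i$ is a prism and so its crossed hyperplanes are already pairwise transverse. This last case analysis is the other delicate point, and I would handle it by distinguishing whether $J$ and $J'$ cross the $A_i$ or separate them.
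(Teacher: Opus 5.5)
\textbf{Gap in Step 3.} Step 3 is the real content of the theorem. You name the target (two of the $A_i$ in a common sector), but you do not reach it, and the moves you sketch point the wrong way.

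\begin{itemize}
\item Your dichotomy is false as stated. A hyperplane crossing $M$ may cross $A_1$ and put $A_2,A_3$ into two different sectors. So the condition from Claim~\ref{claim:MedianPoly} must be read separately for each $A_i$.
\item The observation that $M$ has a vertex outside $S$ leads nowhere. A non-prism polytope can perfectly well be crossed by two non-transverse hyperplanes, so the contradiction has to come from where the $A_i$ sit.
\item The contradiction is with Claim~\ref{claim:MedianPoly}, not with the $A_i$ being prisms. That property enters earlier in the argument.
\end{itemize}

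The missing idea is to use both sides of the non-transverse pair. Let $S$ be the sector of $J$ containing $J'$, and $S'$ the sector of $J'$ containing $J$. At most one $A_i$ lies in $S$. Any other $A_i$ falls into one of two cases.
\begin{itemize}
\item It is crossed by $J$. Being a prism, it is then not crossed by $J'$, and it meets $N(J)\subset S'$, so $A_i\subset S'$.
\item It lies in a sector of $J$ other than $S$. That sector avoids $N(J')\subset S$ and meets $N(J)\subset S'$, hence lies in $S'$.
\end{itemize}
Thus two of the $A_i$ lie in $S'$, contradicting that $J'$ crosses $M$. This is exactly the paper's argument.

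\textbf{Steps 1 and 2.} These are correct and take a different route from the paper. The paper never builds geodesics in $\mathbb{P}(X)$. It only shows that $\mathbb{P}(X)$ is connected and median-closed in $\mathrm{Poly}(X)$, and gets the isometric embedding from Lemma~\ref{lem:MedianClosed}. Your direct path construction is longer, but it proves $d_{\mathbb{P}(X)}=d_{\mathrm{Poly}(X)}$ on prisms independently of median-closure.

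The obstacle you anticipated is milder than you think. Once $\mathcal{H}(A)=\mathcal{H}(B)$, take any $a\in A$ and the first hyperplane $J$ on a geodesic from $a$ to its gate in $B$. Then $J$ does not cross $B$, hence does not cross $A$, so it separates $A$ from $B$, and $a\in N(J)$. Any hyperplane $K$ of $A$ is transverse to $J$. Otherwise all sectors of $K$ but one lie in the sector of $J$ containing $K$. Since $A$ and $B$ each meet two sectors of $K$ and are not crossed by $J$, both would be forced into that sector of $J$, a contradiction. No mutual projections are needed.

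Your Step 1 check, that coverings among prisms and among polytopes coincide, is a point the paper leaves implicit.
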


\noindent
Our proof of the theorem will use the fact that graphs of polytopes are median combined with the following observation:

\begin{lemma}\label{lem:MedianClosed}
Let $X$ be a median graph and $Y \leq X$ a subgraph. If $Y$ is connected and median-closed (i.e.\ the median point of $\{a,b,c\}$ belongs to $Y$ for all $a,b,c \in V(Y)$), then $Y$ is isometrically embedded in $X$. In particular, $Y$ is a median graph.
\end{lemma}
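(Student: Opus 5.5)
To prove Lemma~\ref{lem:MedianClosed}, I show that for any two vertices $a,b \in V(Y)$ we have $d_Y(a,b)=d_X(a,b)$, arguing by induction on $d_Y(a,b)$. The inequality $d_X \leq d_Y$ is automatic since $Y$ is a subgraph, so the content is to find, inside $Y$, a path from $a$ to $b$ of length $d_X(a,b)$.

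Fix $a,b \in V(Y)$. Since $Y$ is connected, choose a geodesic in $Y$ from $a$ to $b$, and let $c$ be the neighbour of $b$ on it. Then $d_Y(a,c)=d_Y(a,b)-1$, so by induction $d_Y(a,c)=d_X(a,c)$. Because $c$ and $b$ are adjacent in the median graph $X$, the bipartite structure gives $d_X(a,b)=d_X(a,c)\pm 1$.

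If $d_X(a,b)=d_X(a,c)+1$, then
$$d_Y(a,b)\leq d_Y(a,c)+1=d_X(a,c)+1=d_X(a,b),$$
and we are done.

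Otherwise $d_X(a,b)=d_X(a,c)-1$, and this is the case where median-closure enters. Let $m$ be the median of $\{a,b,c\}$ in $X$; by hypothesis $m\in V(Y)$. Since $b,c$ are adjacent, $m$ lies on a geodesic from $b$ to $c$, so $m\in\{b,c\}$. Because $m$ also lies on a geodesic from $a$ to $c$, and $b$ is closer to $a$ than $c$ is, we get $m=b$. This says that $b$ lies on an $X$-geodesic from $a$ to $c$, which is consistent with the case hypothesis but gives nothing new by itself.

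The productive step is to shorten the comparison. Let $a'$ be the neighbour of $a$ on a $Y$-geodesic from $a$ to $b$. Both $d_Y(a',b)$ and $d_Y(a',c)$ are smaller than $d_Y(a,b)$, so the induction hypothesis applies to the pairs $(a',b)$ and $(a',c)$. To make this clean, I run the induction as a statement about all pairs $(u,v)$ with $d_Y(u,v)<n$, applied to $(a',b)$:
$$d_Y(a,b)\leq 1+d_Y(a',b)=1+d_X(a',b).$$
If $d_X(a',b)=d_X(a,b)-1$, then $d_Y(a,b)\leq d_X(a,b)$ and we are done. Otherwise $d_X(a',b)=d_X(a,b)+1$. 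In that case the median $m'$ of $\{a,a',b\}$ is $a$, and the triple $(a,b,c)$ is in the symmetric bad configuration at both ends.

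The main obstacle is to rule out this doubly bad situation, and I see two routes. The first is to use the median of $\{a',b',c\}$-type triples to produce a strictly shorter $Y$-path, contradicting the choice of geodesic. The second, which I find cleaner and would try first, is to take any path in $Y$ from $a$ to $b$ and replace it by a path of $X$-length $d_X(a,b)$ that stays in $Y$. One does this by iteratively replacing three consecutive vertices $x,y,z$ whenever $d_X(x,z)<2$ (a backtrack), or more generally applying median-closure to $\{a,y,b\}$. Concretely, set $y_i:=\mu(a,b,p_i)$ for the vertices $p_i$ of a $Y$-path from $a$ to $b$. Each $y_i\in Y$, consecutive $y_i$ are equal or adjacent because the median map is $1$-Lipschitz in each variable, and every $y_i$ lies in the interval $I(a,b)$. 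After deleting repetitions, the sequence $(y_i)$ is a path in $Y$ from $a$ to $b$ through $I(a,b)$. Along such a path the distance to $a$ changes by exactly $1$ at each step and, being bipartite-monotone inside the interval, should be forced to increase. I expect that justifying this monotonicity is the real difficulty.

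If the monotonicity fails, I would fall back on the first route, inducting on $\sum_i d_X(a,y_i)$ and using the observation that a local maximum $y_{i-1},y_i,y_{i+1}$ in $I(a,b)$ can be replaced by the median $\mu(a,y_{i-1},y_{i+1})\in Y$. This median is a common neighbour of $y_{i-1}$ and $y_{i+1}$ that is closer to $a$, by the quadrangle condition in $X$, and it lies in $Y$ by median-closure. Repeating this removes all local maxima and yields a geodesic of $X$ contained in $Y$. Once isometric embedding is established, $Y$ is median, because medians computed in $X$ of triples in $Y$ lie in $Y$ and are unique medians for the restricted metric.
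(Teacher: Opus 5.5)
Your final fallback (the last paragraph) is, up to two small repairs, a correct proof. It follows a genuinely different route from the paper's. The two attempts before it should be dropped or repaired.

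\textbf{The second route.} The monotonicity you hope for is false, not merely hard to prove. In the $3$-cube with $a=000$ and $b=111$, the interval $I(a,b)$ is the whole cube. The path $000,100,110,010,011,111$ has no backtrack, yet its distances to $a$ are $0,1,2,1,2,3$. Projecting onto $I(a,b)$ therefore buys nothing, and it is also unnecessary.

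\textbf{The first route.} Your induction closes using the very tool of your fallback. In the bad case, let $c'$ be the vertex preceding $c$ on the $Y$-geodesic. By induction $d_X(a,c')=n-2=d_X(a,b)$, and $c'\neq b$. The quadrangle condition at $c$ then gives $w=\mu(a,c',b)\in V(Y)$, adjacent to $c'$ and $b$, with $d_X(a,w)=n-3$. Since $d_Y(a,w)\leq n-1$, induction gives $d_Y(a,w)=n-3$, hence $d_Y(a,b)\leq n-2$, a contradiction. The productive median is $\mu(a,c',b)$, not $\mu(a,b,c)$, which is trivially $b$.

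\textbf{Repairs to the fallback.} First, when $y_{i-1}=y_{i+1}$ the median $\mu(a,y_{i-1},y_{i+1})$ is $y_{i-1}$ itself, not a common neighbour. Backtracks must therefore be deleted as a separate move, which also lowers $\sum_i d_X(a,y_i)$. Second, you should say why a path with no interior local maximum of $d_X(a,\cdot)$ is geodesic. The distance starts at $0$ and changes by $\pm 1$ at each step, so a first decrease would produce a local maximum.

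\textbf{Comparison with the paper.} The paper argues with hyperplanes. In a shortest non-geodesic subpath $x_i,\ldots,x_j$ of a $Y$-path, the first and last edges lie in a common hyperplane $J$. Convexity of fibres makes $x_{i+1},\ldots,x_{j-1}$ run along $N(J)$, and the medians $\mu(x_i,x_k,x_j)$ form the opposite side of a ladder. A single replacement thus shortens the path by $2$, and one inducts on length. Your move is the local version of this: it pushes one peak across one square. Iterated with basepoint $x_i$ along the squares crossed by $J$, it reproduces exactly the paper's ladder. Your approach needs only bipartiteness, the quadrangle condition and uniqueness of medians, at the price of a potential-function termination argument. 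The paper's gives an explicit one-step shortening within its hyperplane toolkit.

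(Both arguments tacitly take $Y$ to be an induced subgraph, since the edges to the new median vertices must belong to $Y$. For non-induced subgraphs the statement fails. For example, a $4$-cycle minus one edge is connected and median-closed in the $4$-cycle, but it is not isometrically embedded.)
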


\noindent
In fact, it can be proved that, in a median graph, a subgraph is connected and median-closed if and only if it is a retract. See \cite[Chapter~2.4]{Book} for details. The weaker assertion provided by Lemma~\ref{lem:MedianClosed} will be sufficient for our purpose. 

\begin{proof}[Proof of Lemma~\ref{lem:MedianClosed}.]
Let $a,b \in V(Y)$ be two vertices. Because $Y$ is connected, there exists a path
$$x_1:=a, x_2, \ldots, x_{n-1}, x_n:=b$$
in $Y$. If this is a geodesic in $X$, then there is nothing to prove, so we assume that our path is not a geodesic in $X$. Fix a subpath $x_i, \ldots, x_j$ ($1 \leq i<j \leq n$) of minimal length that is not a geodesic. This amounts to saying that the edge $\{x_i,x_{i+1}\}$ and $\{x_{j-1},x_j\}$ belong to the same hyperplane, say $J$, and that $x_{i+1},\ldots, x_{j-1}$ is a geodesic. Since fibres of hyperplanes are convex, the configuration is the following:
\begin{center}
\includegraphics[width=0.6\linewidth]{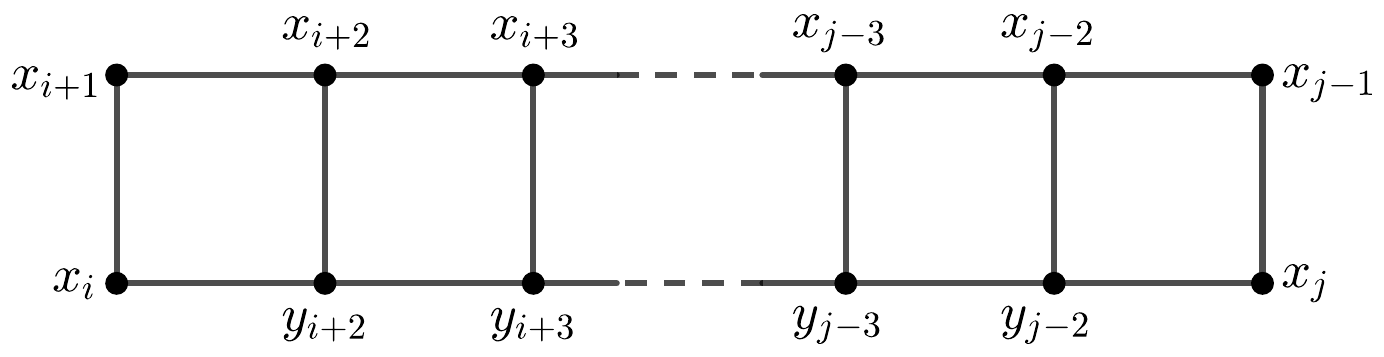}
\end{center}
Notice that, for every $i+2 \leq k \leq j-2$, $y_k$ is the median of $\{x_i,x_k,x_j\}$, and consequently belongs to $Y$. Thus, we can shorten our path in $Y$ by replacing $x_i, x_{i+1},\ldots,x_{j-1}, x_j$ with $x_i,y_{i+2},\ldots, y_{j-2},x_j$. After finitely many iterations, we find a geodesic of $X$ contained in $Y$ that connects $a$ to $b$. 
\end{proof}

\begin{proof}[Proof of Theorem~\ref{thm:PrismMedian}.]
Let $X$ be a quasi-median graph. In order to prove that the graph of prisms $\mathbb{P}(X)$ is median, it suffices to show that $\mathbb{P}(X)$ is a connected median-closed subgraph of the graph of polytopes $\mathrm{Poly}(X)$, which is median according to Theorem~\ref{thm:GraphPoly}. Indeed, the desired conclusion then follows from Lemma~\ref{lem:MedianClosed}. 

\medskip \noindent
Let $P \leq X$ be a prism, which we decompose as a product of cliques $C_1 \times \cdots \times C_n$. For every $1 \leq i \leq n$, fix a vertex $o_i \in V(C_i)$. Then
$$C_1 \times C_2 \times \cdots C_n, \{o_1\} \times C_2 \times \cdots \times C_n, \ldots, \{o_1\} \times \{o_2\} \times \cdots \times \{o_n\}$$
defines a path in $\mathbb{P}(X)$ from our prism $P$ to a singleton. Thus, in order to prove that $\mathbb{P}(X)$ is connected, it is sufficient to verify that, for all vertices $a,b \in V(X)$, $\{a\}$ and $\{b\}$ are connected by a path in $\mathbb{P}(X)$. But it is clear that, given a path $x_1:=a,x_2, \ldots, x_{n-1},x_n:=b$ in $X$, 
$$\{x_1\}, \{x_1,x_2\}, \{x_2\}, \{x_2,x_3\}, \{x_3\}, \ldots, \{x_b\}$$
defines a path in $\mathbb{P}(X)$ connecting $\{a\}$ to $\{b\}$. Thus, we have proved that $\mathbb{P}(X)$ is connected.

\medskip \noindent
It remains to verify that $\mathbb{P}(X)$ is median-closed in $\mathrm{Poly}(X)$. So let $P_1,P_2,P_3 \leq X$ be three prisms. We know from Claim~\ref{claim:MedianPoly} that the median $M$ of $\{P_1,P_2,P_3\}$ in $\mathrm{Poly}(X)$ is the intersection of all the sectors containing at least two prisms among $P_1,P_2,P_3$ and that a hyperplane crosses $M$ if and only if each of its sectors contains at most one prism among $P_1,P_2,P_3$. Our goal is to show that $M$ belongs to $\mathrm{P}(X)$, i.e.\ is a prism, which amounts to saying that the hyperplanes crossing $M$ are pairwise transverse (\cite[Lemma~2.74]{QM}). 

\medskip \noindent
So let $J_1$ and $J_2$ be two hyperplanes of $X$ crossing $M$. Assume for contradiction that $J_1$ and $J_2$ are not transverse. Let $J_1^+$ (resp.\ $J_2^+$) denote the sector delimited by $J_1$ that contains $J_2$ (resp.\ $J_1$). We know that $J_1^+$ contains at most one prism among $P_1,P_2,P_3$. Up to reindexing our prisms, say that $P_1$ and $P_2$ are not contained in $J_1^+$. In other words, they are either crossed by $J_1$ or contained in a sector delimited by $J_1$ distinct from $J_1^+$. But, then, $P_1$ and $P_2$ must be contained in $J_2^+$, contradicting our description of the hyperplanes crossing $M$. So $J_1$ and $J_2$ must indeed be transverse. 
\end{proof}

\noindent
Since now we know from Theorem~\ref{thm:PrismMedian} that graphs of prisms are median, it is natural to investigate the structure of their hyperplanes. We start with a couple of elementary observations.

\medskip \noindent
First, given a quasi-median graph $X$, notice that edges of $\mathbb{P}(X)$ are naturally oriented: given two prisms $P$ and $Q$ that are adjacent in $\mathbb{P}(X)$, either $P \lessdot Q$ (in which case we orient $\{P,Q\}$ from $P$ to $Q$) or $Q \lessdot P$ (in which case we orient $\{P,Q\}$ from $Q$ to $P$). 

\medskip \noindent
Next, notice that the edges of $\mathbb{P}(X)$ are naturally labelled by the sectors of $X$. Indeed, given two prisms $P \lessdot Q$, $P$ is codimension-one face of $Q$, i.e.\ $Q$ decomposes as a product $P \times C$ between $P$ and some clique $C$. Then, we label the edge $\{P,Q\}$ with the sector containing $P$ that is delimited by the hyperplane containing $C$. 

\begin{prop}\label{prop:HypPrism}
Let $X$ be a quasi-median graph. The map
$$\left\{ \begin{array}{ccc} E(\mathbb{P}(X)) & \to & \{\text{sectors of } X\} \\ e & \mapsto & \text{label of } e \end{array} \right.$$
induces a bijection $\{\text{hyperplanes of } \mathbb{P}(X)\} \to \{\text{sectors of } X\}$. Moreover, for every sector $S$ of $X$, the halfspaces delimited by the unique hyperplane labelled by $S$ are 
$$\{ P \text{ prism} \mid P \subset S \} \text{ and } \{ P \text{ prism} \mid P \nsubseteq S \}.$$
\end{prop}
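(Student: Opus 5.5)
Since $\mathbb{P}(X)$ is median (Theorem~\ref{thm:PrismMedian}), each hyperplane of $\mathbb{P}(X)$ is the equivalence class of an edge under the transitive closure of ``being opposite in a $4$-cycle''. It delimits exactly two halfspaces, and two vertices are separated by it precisely when a geodesic between them crosses it. The plan is to show three things: that the label map is constant on hyperplanes, that for every sector $S$ the edges labelled by $S$ are exactly the edges joining $\mathcal{P}_S:=\{P \mid P\subset S\}$ to its complement, and that every sector occurs as a label. Together these give injectivity, surjectivity and the description of the halfspaces.

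\emph{Step 1 (labels are constant on hyperplanes).} Take a $4$-cycle in $\mathbb{P}(X)$. By Lemma~\ref{lem:Lessdot}, the number of hyperplanes of a prism changes by one along each edge, so up to symmetry the cycle has one of two shapes. Either it is a chain $P\lessdot Q,R\lessdot T$ with $Q\neq R$, or it is a zigzag; the zigzag would force $P\lessdot Q\gtrdot P'\lessdot Q'\gtrdot P$ with equal hyperplane counts. In the chain case, write $Q=P\times C_1$ and $R=P\times C_2$, so $T=P\times C_1\times C_2$ by gatedness and Lemma~\ref{lem:ConstructPrism}. The edge $\{P,Q\}$ is labelled by the sector of $J(C_1)$ containing $P$, and the opposite edge $\{R,T\}$ by the sector of $J(C_1)$ containing $R$. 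These coincide because $R=P\times C_2$ is not crossed by $J(C_1)$. The zigzag configuration has to be ruled out or handled similarly, and checking this is the part of Step 1 I would do most carefully. Two distinct prisms $Q,Q'$ both covering $P$ and $P'$ would have $P\cup P'\subseteq Q\cap Q'$, and the intersection of two prisms is a prism. Hence $P=P'$ or the cycle degenerates.

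\emph{Step 2 (edges labelled $S$ are exactly the edges crossing $\partial\mathcal{P}_S$).} Let $P\lessdot Q=P\times C$, and let $J$ be the hyperplane containing $C$. Then $Q$ is crossed by $J$, so $Q\not\subset S'$ for every sector $S'$ of $J$, while $P$ lies in exactly one sector of $J$, namely the label. For a sector $S'$ delimited by a hyperplane $J'\neq J$, either $J'$ crosses $P$, in which case neither $P$ nor $Q$ lies in $S'$, or $P$ and $Q$ lie in the same sector of $J'$. So $\{P,Q\}$ goes from $\mathcal{P}_S$ to its complement if and only if $S$ is the label of $\{P,Q\}$. It follows that the set of edges labelled $S$ is the edge-boundary of $\mathcal{P}_S$. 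This set is nonempty, which gives surjectivity: take any vertex $x\in S$ adjacent to the clique of $J$, and use the edge $\{x\}\lessdot$ that clique.

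\emph{Step 3 (the boundary is a single hyperplane with the stated halfspaces).} The set $\mathcal{P}_S$ is convex in $\mathbb{P}(X)$. I would prove this with Corollary~\ref{cor:IntervalPoly}: if $A,B\subset S$ and $C$ lies on a geodesic between them in $\mathrm{Poly}(X)$, then every sector containing $A$ and $B$ contains $C$. Geodesics of $\mathbb{P}(X)$ are geodesics of $\mathrm{Poly}(X)$ by isometric embedding (Lemma~\ref{lem:MedianClosed}), so the property transfers. The complement of $\mathcal{P}_S$ is also convex, by the same argument applied to the first condition of Corollary~\ref{cor:IntervalPoly}. Suppose $C\subset S$ lies between $A$ and $B$; then $S$ contains $A$ or $B$, i.e.\ $A\subset S$ or $B\subset S$. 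A partition of a median graph into two nonempty convex sets is a halfspace pair, so its edge-boundary is exactly one hyperplane.

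Combining the steps: the hyperplane labelled $S$ is unique by Step~3, and distinct sectors give distinct hyperplanes by Step~1. The main obstacle is the $4$-cycle analysis in Step~1, specifically the non-chain configurations. Step~2 and Step~3 already show that every hyperplane meeting the boundary of $\mathcal{P}_S$ equals it, which may even make Step~1 unnecessary, so I would try that shortcut first.
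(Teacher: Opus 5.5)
Your argument is correct, but it follows a different route from the paper.

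\textbf{The paper's route.} It proves three things separately:
\begin{itemize}
\item well-definedness of the label on hyperplanes, through Lemma~\ref{lem:SquarePrism}, the explicit description of induced $4$-cycles of $\mathbb{P}(X)$;
\item injectivity, through ladders: it first reduces to edges of the form $\{x\}\lessdot C$, then joins two cliques of the same hyperplane by a chain of prisms, using the product structure of carriers;
\item the halfspace description, through Claim~\ref{Claim:InOrNotInS} together with connectedness of halfspaces.
\end{itemize}

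\textbf{Your route.} Your Step~2 is a two-sided version of Claim~\ref{Claim:InOrNotInS}: it identifies the edges labelled $S$ with the edge-boundary of $\mathcal{P}_S$. Your Step~3 replaces both the square analysis and the ladders with a convexity argument. Corollary~\ref{cor:IntervalPoly} shows that $\mathcal{P}_S$ and its complement are convex. Transporting it to $\mathbb{P}(X)$ is legitimate: the proof of Theorem~\ref{thm:PrismMedian} makes $\mathbb{P}(X)$ isometrically embedded in $\mathrm{Poly}(X)$, since $\mathbb{P}(X)$ is an induced, median-closed, connected subgraph. Finally, in a median graph a partition into two nonempty convex sets is a halfspace pair. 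Indeed, if $a\in A$ and $b\notin A$ are adjacent and some $x\in A$ were closer to $b$, then $b\in I(x,a)\subset A$.

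\textbf{Step~1 is unnecessary.} The shortcut you suspected does work. Steps~2--3 show that the edges labelled $S$ form exactly one hyperplane. Since every edge carries exactly one label, these hyperplanes partition $E(\mathbb{P}(X))$. This gives well-definedness, bijectivity and the description of the halfspaces at once. For the record, your zigzag argument is also fine: $Q\cap Q'$ is a prism with $P\subsetneq Q\cap Q'\subseteq Q$, which forces $Q=Q\cap Q'=Q'$.

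\textbf{Trade-offs.} Your route is shorter and needs neither the product structure of carriers nor the ladder constructions. Its cost is reliance on the full graph-of-polytopes machinery: the distance formula, the interval characterisation, and the isometric embedding. The paper's proof is more local and produces explicit chains of squares joining any two edges with the same label.
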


\noindent
In order to prove the proposition, the following characterisation of squares in graphs of prisms will be needed:

\begin{lemma}\label{lem:SquarePrism}
Let $X$ be a quasi-median graph. Every induced $4$-cycle in $\mathbb{P}(X)$ is of the form
\begin{center}
\includegraphics[width=0.4\linewidth]{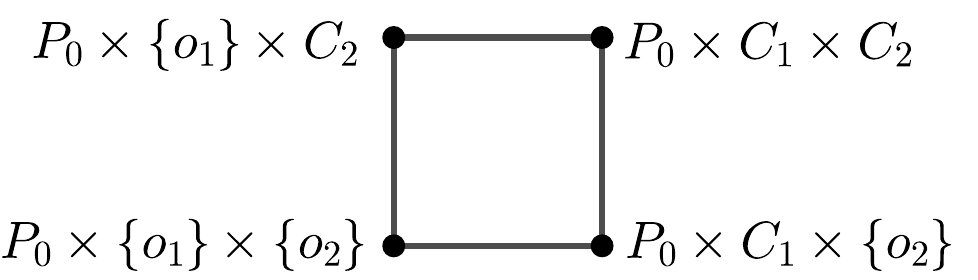}
\end{center}
where $P:=P_0 \times C_1 \times C_2$ is a prism of $X$ that we decompose as a product between a prism $P_0$ and two cliques $C_1,C_2$ and where $o_1 \in V(C_1),o_2 \in V(C_2)$ are two vertices.
\end{lemma}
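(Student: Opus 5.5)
We need to classify induced 4-cycles in $\mathbb{P}(X)$. The edges of $\mathbb{P}(X)$ join prisms $P\lessdot Q$, and by Lemma~\ref{lem:Lessdot} this means $\mathfrak{h}(Q)=\mathfrak{h}(P)+1$. Consequently, along any path in $\mathbb{P}(X)$ the number of hyperplanes changes by exactly $\pm 1$ at each step. The plan is to enumerate the possible ``height profiles'' of an induced $4$-cycle $A_1,A_2,A_3,A_4$ with respect to the function $\mathfrak{h}$, and to show that only one profile survives.

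\medskip \noindent
Since $\mathfrak{h}$ changes by $\pm1$ at each step around a closed cycle of length four, the profile is, up to rotation, one of two shapes: either a ``zigzag'' $k,k+1,k,k+1$, or a ``monotone'' profile $k,k+1,k+2,k+1$.

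\medskip \noindent
\textbf{Monotone case.} Suppose $A_1\lessdot A_2\lessdot A_3$ and $A_1\lessdot A_4\lessdot A_3$. Write $A_3=A_1\times C_1\times C_2$ as a product of $A_1$ with two cliques. The prisms lying between $A_1$ and $A_3$ are determined by their sets of hyperplanes, by Claim~\ref{claim:InclusionHyp}. Each such intermediate prism contains $A_1$ and crosses exactly one of the two extra hyperplanes, so it is of the form $A_1\times C_1$ or $A_1\times C_2$ (more precisely, their translates containing $A_1$). Since $A_2\neq A_4$, these are the two intermediate vertices. This gives the square in the statement with $P_0=A_1$, read the other way: bottom vertex $P_0\times\{o_1\}\times\{o_2\}$ and top vertex $P$.

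\medskip \noindent
\textbf{Zigzag case, which I expect to be the main obstacle.} Suppose $A_1,A_3\lessdot A_2,A_4$. Then $A_1\cup A_3$ lies in $A_2\cap A_4$. Both $A_1$ and $A_3$ are codimension-one faces of $A_2$, so each is obtained from $A_2$ by fixing a vertex in one clique factor. There are two possibilities.
\begin{itemize}
\item If the fixed factors are different, then $A_1\cap A_3$ is a prism of codimension two, and the gated hull of $A_1\cup A_3$ is all of $A_2$. The same reasoning applied to $A_4$ gives $A_4=\mathrm{GatedHull}(A_1\cup A_3)=A_2$, a contradiction.
\item If the fixed factor is the same clique $C$ with different vertices, then $A_1$ and $A_3$ are disjoint parallel copies. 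In this case both $A_2$ and $A_4$ equal the gated hull of $A_1\cup A_3$, which again forces $A_2=A_4$.
\end{itemize}
So the zigzag profile cannot occur. To make this case rigorous, I would use Proposition~\ref{prop:DistPoly} together with the gated hull description from \cite[Proposition~2.68]{QM}. In both sub-cases the hyperplane set of $\mathrm{GatedHull}(A_1\cup A_3)$ already equals $\mathcal{H}(A_2)$, and Claim~\ref{claim:InclusionHyp} then yields the required equality of prisms.

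\medskip \noindent
\textbf{Identifying the square.} Once only the monotone case remains, the explicit description follows by choosing vertices $o_1\in C_1$ and $o_2\in C_2$ such that $A_1=P_0\times\{o_1\}\times\{o_2\}$. This is possible because $A_1$ is a face of $A_3$ containing $P_0$-directions only. The remaining vertices of the cycle are then $P_0\times C_1\times\{o_2\}$ and $P_0\times\{o_1\}\times C_2$, and the top vertex is $P$, exactly as pictured in the statement.
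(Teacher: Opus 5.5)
Your proposal is correct and follows the same route as the paper. The paper fixes a vertex of minimal cubical dimension and splits according to whether the opposite vertex lies below or above its two neighbours, which is your zigzag/monotone dichotomy, and it identifies the monotone case in the same way.

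The only difference is how the zigzag case is excluded. You observe that $\mathrm{GatedHull}(A_1\cup A_3)$ coincides with both $A_2$ and $A_4$, so your two sub-cases merge into that single observation. The paper instead tracks hyperplane sets to show that the two lower prisms are parallel faces separated by a single hyperplane, which would then have to be the extra hyperplane of both upper prisms.
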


\begin{proof}
Consider in an induced $4$-cycle in $\mathbb{P}(X)$. 

\medskip \noindent
\begin{minipage}{0.2\linewidth}
\includegraphics[width=0.95\linewidth]{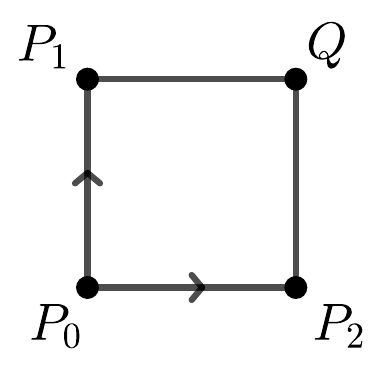}
\end{minipage}
\begin{minipage}{0.78\linewidth}
Fix a vertex represented by a prism $P_0$ of $X$ with minimal cubical dimension. Then, its two neighbours $P_1$ and $P_2$ in our $4$-cycle must satisfy $P_0 \lessdot P_1,P_2$. Let $Q$ denote the prism of $X$ representing the fourth vertex of our $4$-cycle. 
\end{minipage}

\medskip \noindent
First, assume that $Q \lessdot P_1$. 

\medskip \noindent
\begin{minipage}{0.2\linewidth}
\includegraphics[width=0.95\linewidth]{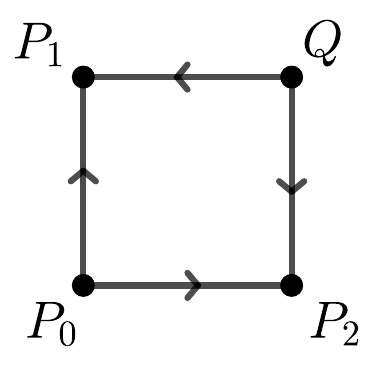}
\end{minipage}
\begin{minipage}{0.78\linewidth}
Then, $Q$ has cubical dimension $\mathrm{dim}_\square(P_1)-1 = \dim_\square(P_0)$. Since $\dim_\square(P_2)= \dim_\square(P_0)+1$, we must also have $Q \lessdot P_2$. For $i=1,2$, let $J_i$ (resp.\ $H_i$) denote the unique hyperplane of $P_i$ that does not cross $P_0$ (resp.\ $Q$). Notice that, because $P_1 \neq P_2$, necessarily $J_1 \neq J_2$. 
\end{minipage}

\medskip \noindent
Notice that
$$(\mathcal{H}(P_0) \sqcup \{J_1\}) \backslash \{H_1\} = \mathcal{H}(Q) = (\mathcal{H}(P_0) \sqcup \{J_1\} ) \backslash \{H_2\},$$
where $\mathcal{H}(\cdot)$ denotes the set of the hyperplanes crossing a given prism. Since $J_1 \notin \mathcal{H}(P_0) \sqcup \{J_2\}$ and $J_2 \notin \mathcal{H}(P_0) \sqcup \{J_1\}$, we must have $H_1=J_1$ and $H_2=J_2$. Thus, given an $i=1,2$, $P_0$ and $Q$ are two codimension-one faces of the prism $P_i$ that are not crossed by the hyperplane $J_i$. This implies that $J_i$ is the unique hyperplane separating $P_0$ and $Q$. Hence $J_1=J_2$, a contradiction. 

\medskip \noindent
Therefore, we must have $P_1 \lessdot Q$. Then,
$$\dim_\square(Q)= \dim_\square(P_1)+1 = \dim_\square(P_0)+2.$$
Since $\dim_\square(P_2)= \dim_\square(P_0)+1$, we must also have $P_2 \lessdot Q$. Thus, $Q$ is a prism containing $P_1$ and $P_2$ as two distinct codimension-one faces and containing $P_0$ as a common codimension-one face of $P_1$ and $P_2$. The desired description follows. 
\end{proof}

\begin{proof}[Proof of Proposition~\ref{prop:HypPrism}.]
Let $\Lambda$ denote our map $E(\mathbb{P}(X)) \to \{\text{sectors of } X\}$. First, notice that $\Lambda$ induces a well-defined map 
$$\bar{\Lambda} : \{\text{hyperplanes of } \mathbb{P}(X)\} \to \{\text{sectors of } X\},$$ 
which amounts to saying that any two edges that belong to the same hyperplane have the same label. Indeed, this follows from Lemma~\ref{lem:SquarePrism}, which implies that two opposite edges in an induced $4$-cycle have the same label. We need to verify that $\bar{\Lambda}$ is bijective.

\medskip \noindent
First, let us justify that $\bar{\Lambda}$ is surjective. So let $S$ be an arbitrary sector of $X$. Fix a clique $C$ contained in the hyperplane delimiting $S$ and let $x \in V(C)$ be the unique vertex of $C$ that belongs to $S$. Then $\{\{x\}, C\}$ is an edge of $\mathbb{P}(X)$ labelled by $S$. 

\medskip \noindent
Next, we want to prove that $\bar{\Lambda}$ is injective, i.e.\ any two edges of $\mathbb{P}(X)$ with the same label belong to the same hyperplane. So let $P_1 \lessdot P_1^+$ and $P_2 \lessdot P_2^+$ be two pairs of prisms such that $\{P_1,P_1^+\}$ and $\{P_2, P_2^+\}$ are two edges of $\mathbb{P}(X)$ with the same label, say the sector $S$ of $X$. 

\medskip \noindent
Fix an $i=1,2$. Decompose the prism $P_i^+$ as a product of cliques $C_1 \times \cdots \times C_n$. Up to reindexing the factors, say that $P_i = \{o_1\} \times C_2 \times \cdots \times C_n$ where $o_1 \in V(C_1)$. Fixing arbitrary vertices $o_2 \in V(C_2), \ldots, o_n \in V(C_n)$, we obtain a ladder

\noindent
\begin{center}
\includegraphics[width=0.8\linewidth]{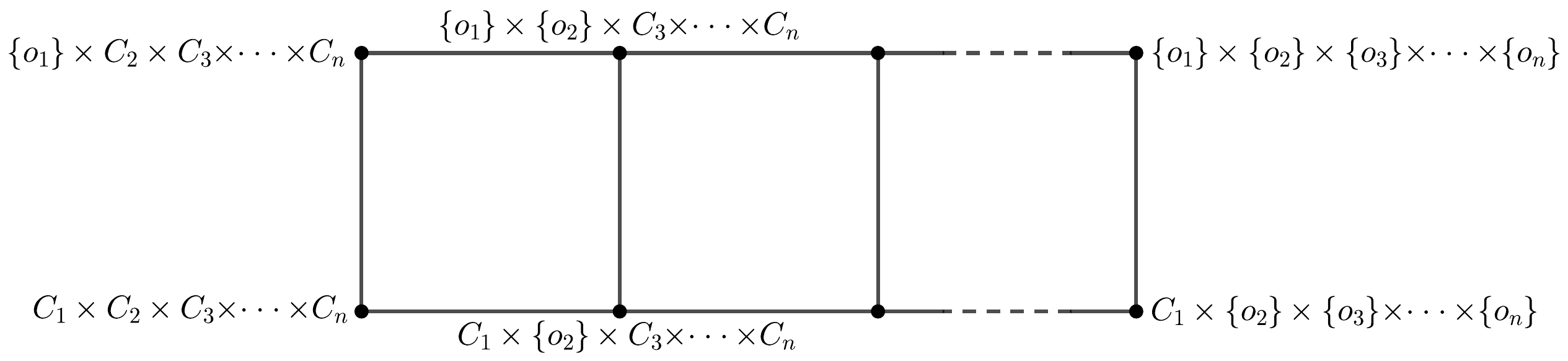}
\end{center}

\noindent
This observation allows us to assume without loss of generality that $P_1,P_2$ are vertices, say $x_1,x_2$, and that $P_1^+,P_2^+$ are cliques, say $C_1,C_2$. Because $C_1$ and $C_2$ belong to the same hyperplane, namely the hyperplane delimiting $S$, we deduce from the product structure of carriers (see \cite[Proposition~2.15]{QM}) that there exists a sequence of cliques 
$$K_1:=C_1, K_2, \ldots, K_{n-1}, K_n:= C_2$$
such that, for every $1 \leq i \leq n-1$, $K_i$ and $K_{i+1}$ are opposite cliques in some prism $P_i$. For every $1 \leq i \leq n$, let $z_i$ denote the unique vertex of $K_i$ that belongs to $S$. Notice that $z_1=x_1$ and $z_n=x_n$. For every $1 \leq i \leq n-1$, $z_i$ and $z_{i+1}$ are adjacent, so they belong to a unique clique $Q_i$. 

\noindent
\begin{center}
\includegraphics[width=0.6\linewidth]{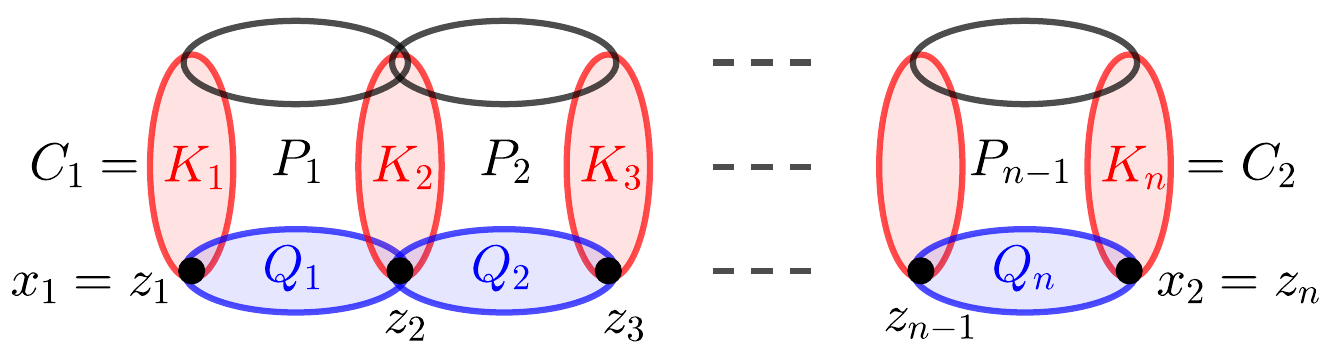}
\end{center}

\noindent
From such a data, we construct the following ladder, which shows that our edges $\{\{x_1\},C_1\}= \{\{z_1\}, K_1\}$ and $\{\{x_2\},C_2\}= \{\{z_n\}, K_n\}$ indeed belong to the same hyperplane of $\mathbb{P}(X)$.

\noindent
\begin{center}
\includegraphics[width=0.5\linewidth]{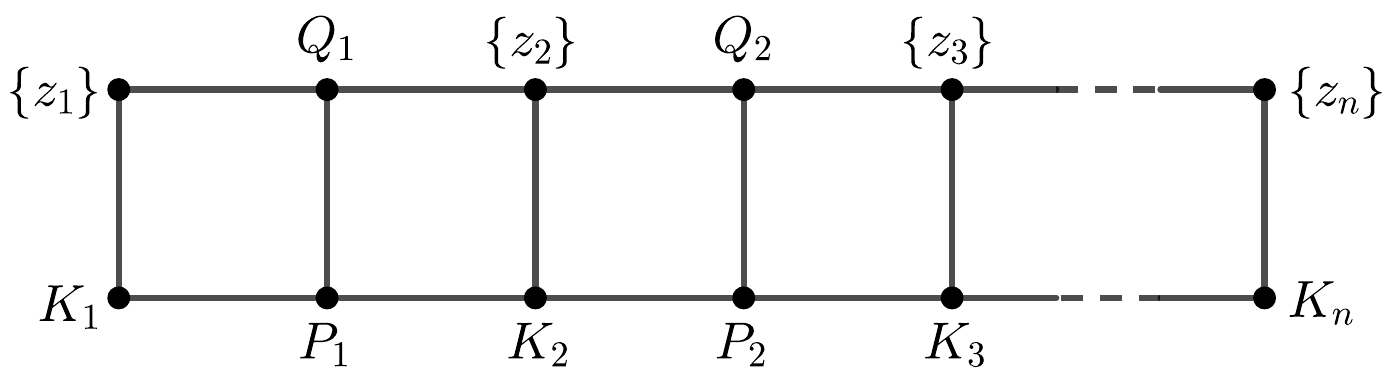}
\end{center}

\noindent
Thus, we have proved the first assertion of our theorem, i.e.\ $\bar{\Lambda}$ is a well-defined bijection. It remains to describe the halfspaces delimited by the unique hyperplane of $\mathbb{P}(X)$ labelled by a given sector $S$ of $X$. 

\begin{claim}\label{Claim:InOrNotInS}
Let $P \lessdot Q$ be two prisms of $X$ such that $\{P,Q\}$ is not labelled by $S$. Then, $P \subset S$ if and only if $Q \subset S$.
\end{claim}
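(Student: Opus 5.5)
Write $Q = P \times C$, where $C$ is a clique, and let $J$ be the hyperplane containing $C$. By definition, the label of $\{P,Q\}$ is the sector $S'$ delimited by $J$ that contains $P$. Let $H$ be the hyperplane delimiting $S$. The hypothesis says $S' \neq S$. We distinguish two cases, $J = H$ and $J \neq H$.

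Since $P \leq Q$, one direction is immediate: if $Q \subset S$ then $P \subset S$. For the converse, assume $P \subset S$; we must show $Q \subset S$.

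\textbf{Case $J=H$.} Here $S'$ and $S$ are both sectors delimited by $H$ and both contain $P$. Distinct sectors of the same hyperplane are disjoint, so $S=S'$, contradicting the hypothesis. Hence this case cannot occur when $P \subset S$, and the implication holds vacuously.

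\textbf{Case $J \neq H$.} We claim $H$ does not cross $Q$. The hyperplanes of the prism $Q = P\times C$ are those of $P$ together with $J$. Since $P \subset S$, $H$ does not cross $P$, and $H \neq J$, so $H \notin \mathcal{H}(Q)$. Hence $Q$, which is connected and contains no edge of $H$, lies in a single sector delimited by $H$. That sector contains $P \subset S$, so it is $S$, and $Q \subset S$.

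The only delicate point is the identification of the hyperplanes of $Q$ with $\mathcal{H}(P) \sqcup \{J\}$. This follows from the product decomposition: every edge of $P \times C$ is parallel, through a square, either to an edge of $P$ or to an edge of a copy of $C$, and all copies of $C$ lie in $J$. Alternatively, it follows from Claim~\ref{claim:AddingVertexGated} applied to a vertex of $Q$ adjacent to $P$.
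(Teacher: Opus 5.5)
Your proof is correct and is essentially the paper's argument. Assuming $P \subset S$, the hyperplane delimiting $S$ cannot be the one containing the clique $C$ with $Q = P \times C$, so it does not cross $Q$, and $Q$ lies in the sector containing $P$, namely $S$; your Case $J=H$ spells out why the label hypothesis (together with $P \subset S$) forces the two hyperplanes to differ, a step the paper compresses into one line.
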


\noindent
Clearly, if $P$ is not contained in $S$, then $Q$ cannot be contained in $S$ either. Now, assume that $P$ is contained in $S$. Let $R$ denote the sector of $X$ labelling $\{P,Q\}$ and let $H$ denote the hyperplane of $X$ delimiting $R$. Because $R \neq S$, necessarily $H \neq J$. Because $P$ is contained in $S$, this implies that $J$ does not cross $Q$. Therefore, $Q$ is entirely contained in a sector delimited by $J$, which has to be $S$ since $P$ is already contained in $S$. This proves Claim~\ref{Claim:InOrNotInS}. 

\medskip \noindent
Fix a clique $C$ contained in the hyperplane delimiting $S$ and let $x \in V(C)$ denote the unique vertex of $C$ contained in $S$. The edge $\{\{x\}, C\}$ is labelled by $S$. If a prism $P$ of $X$ represents a vertex of $\mathbb{P}(X)$ that lies in the same halfspaces as $\{x\}$ delimited by the hyperplane labelled by $S$ (i.e.\ the hyperplane containing $\{\{x\},C\}$), then $P$ is containted in $S$. Indeed, by connectedness of halfspaces, there must exists a path in our halfspace connecting $\{x\}$ to $P$ none of whose edges is labelled by $S$. Then, it follows from Claim~\ref{Claim:InOrNotInS} that $P \leq S$ since $\{x\} \leq S$. Similarly, if a prism $P$ of $X$ represents a vertex of $\mathbb{P}(X)$ that lies in the same halfspaces as $C$ delimited by the hyperplane labelled by $S$ (i.e.\ the hyperplane containing $\{\{x\},C\}$), then $P$ is not containted in $S$. The desired description of the two halfspaces follows. 
\end{proof}

\noindent
As a consequence of Theorem~\ref{prop:HypPrism}, notice that:

\begin{cor}\label{cor:NoHypInvPrism}
Let $G$ be a group acting on a quasi-median graph $X$. There is no hyperplane-inversion in the induced action $G \curvearrowright \mathbb{P}(X)$. 
\end{cor}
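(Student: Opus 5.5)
The plan is to read the halfspaces of $\mathbb{P}(X)$ off Proposition~\ref{prop:HypPrism} and check that no element of $G$ can swap them. A hyperplane-inversion of $\mathbb{P}(X)$ is an element $g \in G$ that stabilises a hyperplane $\mathfrak{J}$ of $\mathbb{P}(X)$ and exchanges the two halfspaces it delimits. So we fix $g \in G$ and a hyperplane $\mathfrak{J}$ of $\mathbb{P}(X)$ with $g\mathfrak{J}=\mathfrak{J}$, and aim to show that $g$ preserves each halfspace of $\mathfrak{J}$.

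First, by Proposition~\ref{prop:HypPrism}, $\mathfrak{J}$ is the unique hyperplane labelled by some sector $S$ of $X$. Its two halfspaces are
$$\mathfrak{J}^+ = \{P \text{ prism} \mid P \subset S\} \quad \text{and} \quad \mathfrak{J}^- = \{P \text{ prism} \mid P \nsubseteq S\}.$$

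Second, the labelling is $G$-equivariant. The action of $G$ on $\mathbb{P}(X)$ is induced by the action on prisms, and $g$ preserves inclusions and the product decompositions of prisms. Hence if the edge $\{P,Q\}$ with $P \lessdot Q$ is labelled by $S$, then $\{gP,gQ\}$ is labelled by $gS$. Consequently $g\mathfrak{J}$ is the hyperplane labelled by $gS$. Since $g\mathfrak{J}=\mathfrak{J}$, the injectivity of the bijection in Proposition~\ref{prop:HypPrism} gives $gS=S$.

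Third, $g$ preserves the halfspaces. For any prism $P$, we have $P \subset S$ if and only if $gP \subset gS = S$. Therefore $g\mathfrak{J}^+=\mathfrak{J}^+$ and $g\mathfrak{J}^-=\mathfrak{J}^-$, so $g$ is not an inversion.

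The only point needing care is the equivariance of the labels in the second step. This reduces to the observation that $g$ sends the clique factor $C$ in $Q = P\times C$ to the clique factor $gC$ in $gQ = gP \times gC$. It then sends the hyperplane containing $C$, and the sector of that hyperplane containing $P$, to the corresponding objects for $gC$ and $gP$. This is immediate from $g$ being a graph automorphism of $X$, so I expect no real obstacle.
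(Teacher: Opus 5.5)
Your proof is correct, but it takes a different route from the paper. The paper never uses equivariance of the labels. It argues by contradiction from the halfspace description alone: if $g$ exchanged $\{P \mid P \subset S\}$ and $\{P \mid P \nsubseteq S\}$, then testing this on single vertices shows that $g$ swaps $S$ and its complement. Hence the hyperplane delimiting $S$ has exactly two sectors and is stabilised by $g$. An edge $e$ of that hyperplane is not contained in $S$, so $ge$ would have to be, which is absurd because $ge$ lies in the same hyperplane.

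You instead check that the labelling is $G$-equivariant and deduce $gS=S$ directly, after which preservation of the two halfspaces is immediate. Your route is shorter and gives slightly more, namely $\mathrm{stab}_G(\mathfrak{J})=\mathrm{stab}_G(S)$; the reverse inclusion follows from equivariance plus injectivity of $\bar{\Lambda}$. The equivariance you verify is also implicitly needed later, for $G$ to act on the collapses $\mathbb{P}_i(X)$ in the proof of Theorem~\ref{thm:FromQM}. The paper's route has the advantage of relying only on the halfspace description in Proposition~\ref{prop:HypPrism}.

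One small correction: passing from $g\mathfrak{J}=\mathfrak{J}$ to $gS=S$ uses that $\bar{\Lambda}$ is well defined (each hyperplane carries a single label), not that it is injective.
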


\begin{proof}
Assume for contradiction that there exists some $g \in G$ inverting some hyperplane $J$ of $\mathbb{P}(X)$. According to Proposition~\ref{prop:HypPrism}, there exists a sector $S$ of $X$ such that the two halfspaces delimited by $J$ are
$$\{\text{prisms contained in } S\} \text{ and } \{\text{prisms not contained in } S\}.$$
Since $g$ inverts $J$, it follows that, for every prism $P$ of $X$, if $P$ is contained in $S$, then $gP$ is not contained in $S$; and, if $P$ is not contained in $S$, then $gP$ is contained in $S$. Applied to the vertices of $X$, this observation shows that $g$ switches $S$ and $S^c$ in $X$. Therefore, $H$ must be a hyperplane delimiting exactly two sectors, $g$ must stabilise $H$ and switch its two sectors. Let $e$ be an edge in $H$ (which is also a clique, because the fact that $H$ delimits exactly two sectors implies that all the cliques in $H$ are edges). Then $ge$ also belongs to $H$. But then, $e$ is not contained in $S$ and $ge$ neither, hence a contradiction.
\end{proof}

\noindent
We conclude this section by recording a technical lemma for future use.

\begin{lemma}\label{lem:SepHypPrism}
Let $X$ be a quasi-median graph. For all vertices $x,y \in V(X)$, the hyperplanes separating $\{x\}$ and $\{y\}$ in $\mathbb{P}(X)$ are the hyperplanes labelled by the sectors containing one of $x,y$ but not the other.
\end{lemma}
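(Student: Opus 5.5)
This should follow directly from Proposition~\ref{prop:HypPrism}, which says two things. First, hyperplanes of $\mathbb{P}(X)$ are in bijection with sectors of $X$. Second, the hyperplane labelled by a sector $S$ has halfspaces $\{P \mid P \subset S\}$ and $\{P \mid P \nsubseteq S\}$. In a median graph, a hyperplane separates two vertices exactly when they lie in distinct halfspaces. So it suffices to decide, for each sector $S$ of $X$, when $\{x\}$ and $\{y\}$ fall on different sides.

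For a singleton prism $\{x\}$, the condition $\{x\} \subset S$ just means $x \in V(S)$. So $\{x\}$ and $\{y\}$ lie in distinct halfspaces of the hyperplane labelled by $S$ iff exactly one of $x \in S$, $y \in S$ holds, that is, iff $S$ contains one of $x,y$ but not the other. Since Proposition~\ref{prop:HypPrism} gives a bijection, every hyperplane of $\mathbb{P}(X)$ arises as the one labelled by a unique sector. Running over all sectors therefore yields exactly the hyperplanes separating $\{x\}$ and $\{y\}$, which is the claim.

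I do not expect a real obstacle; the only care needed is to use the description of halfspaces rather than the labels of edges along a path. As a sanity check, if $x,y$ are separated by $k$ hyperplanes of $X$, each contributes exactly two such sectors (the one containing $x$ and the one containing $y$). This gives distance $2k$ in $\mathbb{P}(X)$, consistent with Proposition~\ref{prop:DistPoly} applied to $A=\{x\}$, $B=\{y\}$.
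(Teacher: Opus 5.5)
Your argument is correct, but it is not the route the paper takes.

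The paper fixes a geodesic $x=z_1,\ldots,z_n=y$ in $X$ and uses Proposition~\ref{prop:DistPoly} to check that $\{z_1\},C_1,\{z_2\},\ldots,\{z_n\}$ is a geodesic of $\mathbb{P}(X)$. It then reads off the separating hyperplanes as those dual to the edges of this path, computing each label by hand: the sector of $J_i$ containing $z_i$, respectively $z_{i+1}$. It therefore uses only the first half of Proposition~\ref{prop:HypPrism}, namely that labels induce a bijection. It combines this with the fact that a geodesic in a median graph crosses exactly the separating hyperplanes.

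You instead use the second half, the explicit halfspace description. This reduces the lemma to the observation that $\{x\}\subset S$ if and only if $x\in V(S)$. Your route is shorter and avoids the distance formula altogether. The work has been moved into the halfspace description, which was already established (via connectedness of halfspaces) in the proof of Proposition~\ref{prop:HypPrism}.

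What the paper's route gives in exchange is an explicit geodesic from $\{x\}$ to $\{y\}$ in $\mathbb{P}(X)$, and with it the identity $d_{\mathbb{P}(X)}(\{x\},\{y\})=2d_X(x,y)$ directly. You recover that identity only afterwards, in your sanity check. Strictly speaking, Proposition~\ref{prop:DistPoly} computes distances in $\mathrm{Poly}(X)$, so that comparison also relies on $\mathbb{P}(X)$ being isometrically embedded in $\mathrm{Poly}(X)$, as established in the proof of Theorem~\ref{thm:PrismMedian}. The count of $2k$ separating hyperplanes already gives the distance in $\mathbb{P}(X)$ on its own.
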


\begin{proof}
Let $z_1:=x, z_2, \ldots, z_{n-1},z_n:=y$ be a geodesic in $X$. For every $1 \leq i \leq n-1$, let $C_i$ denote the clique of $X$ containing the edge $\{z_i,z_{i+1}\}$. It follows from Proposition~\ref{prop:DistPoly} that:

\begin{fact}\label{fact:GeodPX}
The path
$$\{z_1\}, C_1, \{z_2\}, C_2, \{z_3\}, \ldots, \{z_n\}$$
in $\mathbb{P}(X)$ is a geodesic.
\end{fact}

\noindent
Then, it follows from Claim~\ref{claim:GeodPX} that the hyperplanes of $\mathbb{P}(X)$ separating $\{x\}$ and $\{y\}$ are exactly the hyperplanes given by the labels of the edges $\{\{z_i\}, C_i\}$, $\{C_i, \{z_{i+1}\}$, $1 \leq i \leq n-1$. 

\medskip \noindent
For every $1 \leq i \leq n-1$, let $J_i$ denote the hyperplane of $X$ containing $C_i$. Notice that $J_1, \ldots, J_{n-1}$ are exactly the hyperplanes separating $x$ and $y$. For every $1 \leq i \leq n-1$, the edge $\{\{z_i\},C_i\}$ is labelled by the sector delimited by $J_i$ that contains $z_i$ but not $z_{i+1}$, or equivalently $x$ but not $y$; and the edge $\{C_i,\{z_{i+1}\}\}$ is labelled by the sector delimited by $J_i$ that contains $z_{i+1}$ but not $z_i$, or equivalently $y$ but not $x$. 

\medskip \noindent
The desired description of the hyperplanes of $\mathbb{P}(X)$ separating $\{x\}$ and $\{y\}$ follows. 
\end{proof}

\section{Minimal actions on quasi-median graphs}\label{section:BigMin}

\noindent
In Theorem~\ref{thm:BigIntro}, given a group acting on a quasi-median graph, we relate the number of sectors delimited by a hyperplane with the number of coends of its stabiliser. In order to have such a relation, orbits must visit non-trivially all the sectors of the quasi-median graph. In other words, some kind of minimality for the action is required. 

\medskip \noindent
In Section~\ref{section:Min}, we focus on convex-minimality. We show that every action can be turned into a convex-minimal action (Proposition~\ref{prop:ConvexMinimal}) and we prove a characterisation of convex-minimal actions (Proposition~\ref{prop:WhenConvMin}). 

\medskip \noindent
However, a stronger form of convex-minimality will be required in order to prove Theorem~\ref{thm:BigIntro}. Namley, given a group $G$ acting on a quasi-median graph $X$, we want the actions of $G$ on $X$ and on its graph of prisms $\mathbb{P}(X)$ to be both convex-minimal. In Section~\ref{section:StrongConvMin}, we characterise this condition (Proposition~\ref{prop:WhenStrongConvMin}) and we prove that it is automatically satisfied by monohyp actions (Theorem~\ref{thm:MonoHypConvMin}). This is the main result of this section.

\subsection{Convex-minimality}\label{section:Min}

\noindent
Recall that, given a group $G$ acting on a quasi-median graph $X$, the action is \emph{convex-minimal} if $X$ is the only non-empty $G$-invariant convex subgraph of $X$. We start by showing that every action on a quasi-median graph can be easily turned into a convex-minimal action:

\begin{prop}\label{prop:ConvexMinimal}
Let $G$ be a finitely generated group acting on a quasi-median graph $X$. There exists $x \in V(X)$ such that $G \curvearrowright \mathrm{ConvHull}(G \cdot x)$ is convex-minimal.
\end{prop}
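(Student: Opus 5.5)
We choose the vertex $x$ so that its convex hull of the orbit is minimal, using a complexity argument on hyperplanes. Fix a finite generating set $S$ of $G$. For any vertex $y$, the convex hull $Y_y:=\mathrm{ConvHull}(G\cdot y)$ is $G$-invariant and convex. By Proposition~\ref{prop:MultiSectorConvex}, a convex subgraph is the intersection of the cosectors containing it; equivalently, a vertex $z$ lies outside $Y_y$ exactly when some sector $S'$ contains $z$ and is disjoint from $G\cdot y$. So we control convex subgraphs through sectors avoiding orbits.

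\textbf{Step 1: every invariant convex subgraph contains the hull of an orbit.} If $Z\subseteq X$ is a non-empty $G$-invariant convex subgraph and $z\in V(Z)$, then $\mathrm{ConvHull}(G\cdot z)\subseteq Z$. So $G\curvearrowright Y_x$ is convex-minimal if and only if, for every $z\in V(Y_x)$, we have $Y_z=Y_x$. Note that convex subgraphs of $Y_x$ are convex in $X$, because $Y_x$ is convex.

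\textbf{Step 2: choose $x$ minimising a finite quantity.} For a vertex $y$, set $\delta(y):=\max_{s\in S} d(y,sy)$. Also let $\mathcal{W}(y)$ be the set of hyperplanes separating $y$ from some $sy$, $s\in S$. Since $G$ is generated by $S$, the path $y, s_1y, s_1s_2y,\dots$ shows that every hyperplane crossing $Y_y$ (that is, separating two orbit points) is a $G$-translate of a hyperplane in $\mathcal{W}(y)$. We choose $x$ minimising $\sum_{s\in S}d(x,sx)$, which is a non-negative integer. Now take $z\in V(Y_x)$, and suppose for contradiction that $Y_z\subsetneq Y_x$. Then some sector $S'$, delimited by a hyperplane $J$, contains a point $gx$ of the orbit but misses $G\cdot z$; this $J$ crosses $Y_x$. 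Replacing $z$ by $g^{-1}z$, we may assume $x\in S'$ and $G\cdot z\cap S'=\emptyset$.

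\textbf{Step 3 (main obstacle): produce a vertex with strictly smaller complexity.} The natural candidate is the gate $x'$ of $x$ in the gated hull of $G\cdot z$, or more simply the projection of $x$ onto the complement structure. By Lemma~\ref{lem:SepProj}, a hyperplane separating $x$ from its gate separates $x$ from the whole target. We then compare $d(x',sx')$ with $d(x,sx)$. Projection onto a $G$-invariant gated subgraph is $G$-equivariant and $1$-Lipschitz, so $d(x',sx')\le d(x,sx)$. The projection also removes the hyperplane $J$, which separated $x$ from $G\cdot z$, from some displacement. This gives strict inequality for some $s$, contradicting minimality.

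The delicate point is that $G\cdot z$ need not have a gated hull strictly smaller than $X$. To handle this, we would work in the median graph $\mathbb{P}(X)$ (Theorem~\ref{thm:PrismMedian}), where convex equals gated. There we apply the classical median argument to the vertex $\{x\}$: project onto $\mathrm{ConvHull}(G\cdot\{z\})$ and use Lemma~\ref{lem:SepHypPrism} to translate hyperplanes of $\mathbb{P}(X)$ into sectors of $X$. If this approach stalls, the fallback is to pick $x$ so that the number of $G$-orbits of sectors meeting $G\cdot x$ in a proper non-empty way is minimal. This number is finite, by the finiteness of $\mathcal{W}(x)$ modulo $G$. Passing from $x$ to $z$ then strictly decreases it, because the sector $S'$ now misses the orbit, while no new sector can start separating the orbit $G\cdot z\subseteq Y_x$. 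This last point follows since any sector meeting $Y_x$ non-trivially already meets $G\cdot x$ non-trivially, as cosectors containing $G\cdot x$ contain $Y_x$.
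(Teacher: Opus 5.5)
\paragraph{The displacement route (Steps 2--3) fails.}
This is not just a missing proof of Step 3: a minimiser of $\sum_{s\in S}d(x,sx)$ can be a wrong choice of $x$.

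Take $X=K_4\times K_4$ on $\{0,1,2,3\}^2$, put $\rho=(2\,3)$ and $\sigma=(0\,1)$. Let $G$ be generated by the involutions $c(i,j)=(j,i)$, $e_1(i,j)=(\rho(i),\sigma(j))$, $e_2(i,j)=(i,\sigma(j))$ and $e_3(i,j)=(\rho(i),j)$, with the symmetric generating set $S=\{c,e_1,e_2,e_3\}$.
\begin{itemize}
\item Every vertex $v$ has $\sum_{s\in S}d(v,sv)\geq 2$. Vertices with one coordinate in $\{0,1\}$ and the other in $\{2,3\}$ are moved at distance $2$ by $c$. Vertices of $\{0,1\}^2$ are moved at distance $1$ by each of $e_1,e_2$, and vertices of $\{2,3\}^2$ at distance $1$ by each of $e_1,e_3$.
\item The vertex $x=(0,2)$ attains the minimum $2$, since it is fixed by $e_1,e_2,e_3$.
\item Yet $\mathrm{ConvHull}(G\cdot x)$ contains $(0,0)\in I((0,2),(2,0))$. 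Being $G$-invariant, it therefore contains the convex $G$-invariant subgraph $G\cdot(0,0)=\{0,1\}^2$, which misses $x$. So $G\curvearrowright \mathrm{ConvHull}(G\cdot x)$ is not convex-minimal.
\end{itemize}
This is exactly the phenomenon you flagged: the gated hull of $\{0,1\}^2$ is all of $X$, so projecting $x$ onto it does nothing. (In median graphs, where convex subgraphs are gated, your projection argument does go through.)

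Passing to $\mathbb{P}(X)$ does not rescue this complexity either. By Fact~\ref{fact:GeodPX}, $d_{\mathbb{P}(X)}(\{u\},\{v\})=2d_X(u,v)$, so the same $x$ minimises the displacement of singletons there. Moreover, the projection of $\{x\}$ onto a convex subgraph of $\mathbb{P}(X)$ is in general a prism, not a vertex of $X$.

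\paragraph{Your fallback is correct and essentially the paper's proof.}
The paper chooses $x$ minimising the number of $G$-orbits of sectors of $\mathrm{ConvHull}(G\cdot x)$, and bounds this number by the same walk along $x, s_1x, s_1s_2x,\dots$. A proper invariant convex subgraph $Y$ then gives a sector of the hull disjoint from $Y$, so the hull of an orbit inside $Y$ has fewer orbits of sectors. Your version counts $G$-orbits of sectors of $X$ meeting $G\cdot x$ properly. This is slightly cleaner bookkeeping, because it makes explicit the comparison between sector structures of two different convex subgraphs, which the paper's final sentence leaves implicit.

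Two details need fixing when you write it up.
\begin{itemize}
\item \emph{Finiteness.} It does not follow from finiteness of $\mathcal{W}(x)$ alone, since one hyperplane may delimit infinitely many sectors. Run the walk on sectors instead. If a sector $T$ contains $gx$ but not $hx$, there are $k\in G$ and $s\in S\cup S^{-1}$ with $kx\in T$ and $ksx\notin T$. Then $k^{-1}T$ is the sector containing $x$ of a hyperplane separating $x$ from $sx$. This gives at most $\sum_{s\in S\cup S^{-1}}d(x,sx)$ orbits.
\item \emph{Strict decrease and monotonicity.} These use two convexity facts, not one. A sector disjoint from $G\cdot x$ is disjoint from $Y_x$, because cosectors are convex. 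A sector containing $G\cdot x$ contains $Y_x$, because sectors are convex. The second fact is the one that shows $S'$, which misses $z\in Y_x$, meets $G\cdot x$ properly. It also shows that a sector meeting $G\cdot z$ properly cannot contain all of $G\cdot x$.
\end{itemize}
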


\begin{proof}
First, we claim that, for every $x \in V(X)$, $G \curvearrowright \mathrm{ConvHull}(G \cdot x)$ has only finitely many $G$-orbits of sectors.

\medskip \noindent
Let $S$ be a sector of $\mathrm{ConvHull}(G\cdot x)$. As a consequence of Proposition~\ref{prop:MultiSectorConvex}, $S$ separates $G \cdot x$, i.e.\ there exist $g,h \in G$ such that $gx \in S$ but $hx \notin S$. Fix a finite symmetric generating set $S \subset G$ and write $g^{-1}h$ as a product of generators, say $s_1 \cdots s_n$. Along the sequence
$$g\cdot x, gs_1 \cdot x, \ldots, gs_1 \cdots s_n \cdot x= h\cdot x,$$
we can find two successive vertices such that only the first one belongs to $S$, say $gs_1 \cdots s_i \cdot x \in S$ but $gs_1 \cdots s_is_{i+1} \cdot x \notin S$. This amounts to saying that $x$ belongs to $(gs_1 \cdots s_i)^{-1}S$ but not $s_{i+1}\cdot x$. Thus, we have proved that every sector has a translate in 
$$\bigcup\limits_{s \in S} \{\text{sector containing $x$ but not $xs$}\}.$$
This set has cardinality $\leq |S| \cdot \max \{ d(x,xs), s \in S\}< \infty$. This proves our claim.

\medskip \noindent
Now, choose $x \in V(X)$ such that $G \curvearrowright \mathrm{ConvHull}(G \cdot x)$ has the smallest possible number of $G$-orbits of sectors. If this action is not convex-minimal, there exists a proper $G$-invariant convex subgraph $Y \leq \mathrm{ConvHull}(G \cdot x)$. It follows from Proposition~\ref{prop:MultiSectorConvex} that there is a sector in $\mathrm{ConvHull}(G \cdot x)$ that is disjoint from $Y$, which implies that, given an arbitrary vertex $y \in V(Y)$, the action of $G$ on $\mathrm{ConvHull}(G \cdot y) \leq Y$ has fewer $G$-orbits of sectors than $G \curvearrowright \mathrm{ConvHull}(G \cdot x)$, contradicting our choice of $x$. 
\end{proof}

\noindent
Next, we prove the following convenient characterisation of convex-minimal actions:

\begin{prop}\label{prop:WhenConvMin}
Let $G$ be a group acting on a quasi-median graph $X$. The action $G \curvearrowright X$ is convex-minimal if and only if, for all sectors $S \leq X$ and vertex $x \in V(X)$, $G \cdot x \cap V(S) \neq \emptyset$. 
\end{prop}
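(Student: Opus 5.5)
We prove the two implications separately; both rest on Proposition~\ref{prop:MultiSectorConvex}, which says that multisectors (in particular cosectors) are convex and that every convex subgraph is the intersection of the cosectors containing it.

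For the forward direction, assume $G \curvearrowright X$ is convex-minimal and suppose, for contradiction, that there is a sector $S$ and a vertex $x$ with $G\cdot x \cap V(S) = \emptyset$. Then $G \cdot x$ lies in the cosector $S^c$. Consider the convex hull $\mathrm{ConvHull}(G\cdot x)$. It is $G$-invariant because $G\cdot x$ is, and it is non-empty. By Proposition~\ref{prop:MultiSectorConvex} it is contained in the convex subgraph $S^c$, which is proper since $S$ is non-empty. This contradicts convex-minimality.

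For the converse, assume every orbit meets every sector, and let $Y \leq X$ be a non-empty $G$-invariant convex subgraph. If $Y \neq X$, Proposition~\ref{prop:MultiSectorConvex} writes $Y$ as the intersection of the cosectors containing it. So some cosector $S^c$ contains $Y$, with $S$ a sector disjoint from $Y$; otherwise the intersection would be all of $X$. Pick any $y \in V(Y)$. Then $G\cdot y \subset Y$ is disjoint from $S$, contradicting the hypothesis. Hence $Y = X$ and the action is convex-minimal.

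The only point that needs care is the step "$Y \neq X$ implies some cosector contains $Y$". This holds because the empty intersection of cosectors is interpreted as $X$. Equivalently, a vertex $z \notin Y$ is excluded by some cosector containing $Y$, and this again gives a sector $S$ disjoint from $Y$.
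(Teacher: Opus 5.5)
Your proof is correct and follows the paper's argument essentially verbatim. Both directions come from Proposition~\ref{prop:MultiSectorConvex}, used exactly as you use it. A proper invariant convex subgraph lies in some cosector, so any orbit inside it misses the corresponding sector. Conversely, an orbit missing a sector $S$ has its convex hull inside the convex cosector of $S$, which gives a proper non-empty invariant convex subgraph.
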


\begin{proof}
Assume that $G \curvearrowright X$ is not convex-minimal, i.e.\ there exists a proper $G$-invariant convex subgraph $Y \leq X$. As a consequence of Proposition~\ref{prop:MultiSectorConvex}, there exists a sector $S$ disjoint from $Y$. Fix a vertex $y \in V(Y)$, the orbit $G \cdot y \subset V(Y)$ must be disjoint from~$S$. 

\medskip \noindent
Conversely, assume that there exist a sector $S \leq X$ and a vertex $x \in V(X)$ such that $G \cdot X$ is disjoint from $S$. It follows from Proposition~\ref{prop:MultiSectorConvex} that $\mathrm{ConvHull}(G \cdot x)$ is disjoint from $S$, providing a proper convex subgraph of $X$ that is $G$-invariant. Thus, $G \curvearrowright X$ is not convex-minimal. 
\end{proof}

\subsection{Strong convex-minimality}\label{section:StrongConvMin}

\noindent
Given a group $G$ acting on a quasi-median graph $X$, it may be noticed that, even though the action $G \curvearrowright X$ is convex-minimal, the action $G \curvearrowright \mathbb{P}(X)$ induced on the graph of prisms may not be convex-minimal. This is the case, for instance, when $X$ is a clique on which $G$ acts transitively. We start by characterising precisely when the induced action on the graph of prisms is convex-minimal. Comparing with Proposition~\ref{prop:WhenConvMin}, this condition can be thought of as a natural strenghtening of convex-minimality.  

\begin{prop}\label{prop:WhenStrongConvMin}
Let $G$ be a group acting on a quasi-median graph $X$. The induced action $G \curvearrowright \mathbb{P}(X)$ is convex-minimal if and only if, for all sector $S \leq X$ and prism $P \leq X$, $P$ has a $G$-translate contained in $S$.
\end{prop}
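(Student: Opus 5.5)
The plan is to transfer the criterion of Proposition~\ref{prop:WhenConvMin} to the median graph $\mathbb{P}(X)$ (Theorem~\ref{thm:PrismMedian}). Median graphs are quasi-median, and their sectors are exactly their halfspaces. So Proposition~\ref{prop:WhenConvMin} applies: $G \curvearrowright \mathbb{P}(X)$ is convex-minimal if and only if every $G$-orbit of a vertex of $\mathbb{P}(X)$ meets every halfspace of $\mathbb{P}(X)$. By Proposition~\ref{prop:HypPrism}, the halfspaces of $\mathbb{P}(X)$ come in pairs indexed by the sectors $S$ of $X$:
$$\mathcal{S}^+ := \{P \text{ prism} \mid P \subset S\}, \qquad \mathcal{S}^- := \{P \text{ prism} \mid P \nsubseteq S\}.$$
Hence convex-minimality on $\mathbb{P}(X)$ is equivalent to two requirements, holding for every prism $P$ and every sector $S$: (a) some $gP$ lies in $S$, and (b) some $gP$ is not contained in $S$.

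The forward direction is immediate. Convex-minimality gives that every orbit meets $\mathcal{S}^+$, which is exactly the condition in the statement.

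For the converse, I must show that the hypothesis also gives (b). Fix $P$ and $S$, and let $J$ be the hyperplane delimiting $S$. Choose another sector $S' \neq S$ delimited by $J$; it exists because hyperplanes separate $X$. Applying the hypothesis to $S'$ produces $g$ with $gP \subset S'$. Since $S' \cap S = \emptyset$ and prisms are non-empty, $gP \nsubseteq S$. So (b) follows from (a) applied to a different sector of the same hyperplane, and Proposition~\ref{prop:WhenConvMin} then concludes.

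The main point to check carefully is the use of Proposition~\ref{prop:WhenConvMin} on $\mathbb{P}(X)$. That argument uses Proposition~\ref{prop:MultiSectorConvex}, which holds in median graphs with sectors equal to halfspaces, so there is no real obstacle. One must only be careful that the halfspace description in Proposition~\ref{prop:HypPrism} covers every hyperplane of $\mathbb{P}(X)$, which holds since the labelling map is a bijection onto the sectors of $X$.
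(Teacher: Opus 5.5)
Your proof is correct and is essentially the paper's argument. You apply Proposition~\ref{prop:WhenConvMin} to the median graph $\mathbb{P}(X)$ and use the halfspace description of Proposition~\ref{prop:HypPrism} to reduce convex-minimality to conditions (a) and (b); you also spell out why (a) for all sectors implies (b), by applying (a) to another sector of the same hyperplane, a step the paper only asserts.
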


\begin{proof}
The combination of Proposition~\ref{prop:WhenConvMin} with the description of the sectors in the graph of prisms given by Proposition~\ref{prop:HypPrism} implies that $G \curvearrowright \mathbb{P}(X)$ is convex-minimal if and only if, for all sector $S \leq X$ and prism $P \leq X$, there exist $g_1,g_2 \in G$ such that $g_1P \subset S$ and $g_2P \nsubseteq S$. This is equivalent to the condition saying that every prism has a $G$-translate in every sector. 
\end{proof}

\noindent
Finally, we prove the main result of this section, namely:

\begin{thm}\label{thm:MonoHypConvMin}
Let $G$ be a group acting on a quasi-median graph $X$. If $G \curvearrowright X$ is monohyp, then $G \curvearrowright \mathbb{P}(X)$ is convex-minimal. 
\end{thm}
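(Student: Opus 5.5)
The plan is to verify the criterion of Proposition~\ref{prop:WhenStrongConvMin}: for every prism $P$ and every sector $S$ of $X$, some $G$-translate of $P$ is contained in $S$. Fix $P$ and $S$, let $J$ be the hyperplane delimiting $S$, and fix a vertex $x \in V(P)$. We argue by contradiction and assume that no translate of $P$ lies in $S$.

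\textbf{Step 1: reformulating the assumption.} If a translate $gP$ meets $S$ but is not crossed by $J$, then $gP$ lies in a single sector of $J$, which must be $S$. So under our assumption, every translate $gP$ that meets $S$ is crossed by $J$. In particular, $J$ is one of the hyperplanes of $gP$.

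\textbf{Step 2: a hyperplane $H$ deep inside $S$.} The key claim is that there is a hyperplane $H \neq J$ such that $H$ meets $S$, $H$ is not transverse to $J$, and some sector $S'$ of $H$ is contained in $S$. To find $H$, take a vertex $s \in S \backslash N(J)$ and let $p$ be its gate in $N(J)$. By Lemma~\ref{lem:SepProj}, any hyperplane $H$ separating $s$ from $p$ separates $s$ from all of $N(J)$. Hence:
\begin{itemize}
	\item $H$ is not transverse to $J$, since a hyperplane transverse to $J$ crosses $N(J)$;
	\item the sector $S'$ of $H$ containing $s$ is disjoint from $N(J)$, so it lies in a single sector of $J$, namely $S$.
\end{itemize}

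\textbf{Step 3: the contradiction.} Hyperplane-transitivity gives $H = hJ$ for some $h \in G$, so $S'$ is also a sector. Convex-minimality and Proposition~\ref{prop:WhenConvMin} give $g \in G$ with $gx \in S' \subset S$. By Step~1, $J$ crosses $gP$. Since $S'$ is disjoint from $N(J)$, the prism $gP$ has vertices both in $S'$ and outside $S'$ (on $J$'s clique), so $H$ crosses $gP$ as well. But the hyperplanes of a prism are pairwise transverse, while $H$ and $J$ are not. This contradiction proves the statement whenever $S \not\subset N(J)$.

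\textbf{Main obstacle: the case $S \subset N(J)$.} I expect this degenerate case to be the hard part, and I would handle it with unboundedness of orbits. The idea is to show that if some sector of $J$ lies in $N(J)$, then, by transitivity on hyperplanes and convex-minimality, every hyperplane is transverse to every other. The first thing to try is to prove this using translates of $S$ together with Lemma~\ref{lem:InclusionOne}. Once all hyperplanes are pairwise transverse, $X$ is a single prism-like gated hull. Such an $X$ has a finite-dimensional orbit structure, because there are finitely many orbits of hyperplanes, and $G$ stabilises it. Then Proposition~\ref{prop:FixedPointMedian}, or directly a stabilised prism, yields bounded orbits, contradicting the monohyp assumption. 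If this direct route fails, the fallback is to reduce to the previous case by replacing $S$ with a translate $gS$ that is not contained in $N(J)$, which exists by unboundedness, and transporting the conclusion back.
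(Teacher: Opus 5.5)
\textbf{The case $S \not\subset N(J)$ is correct, but the case $S \subset N(J)$ has a genuine gap.}

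Your argument for $S \not\subset N(J)$ works, and it uses only convex-minimality. The appeal to hyperplane-transitivity in Step~3 does nothing, since $S'$ is already a sector of $H$. You can also shortcut the transversality step: if $J$ crosses $gP$, every vertex of $gP$ lies in $N(J)$, which contradicts $gx \in S'$ and $S' \cap N(J) = \emptyset$.

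The gap is the case $S \subset N(J)$. You cannot dispose of it by contradiction, because it genuinely occurs for monohyp actions. Let $X$ be the graph of finitely supported maps $\mathbb{Z} \to \mathbb{Z}/3$, two maps being adjacent when they differ at exactly one coordinate, with the natural action of $\mathbb{Z}/3 \wr \mathbb{Z}$ (lamps add, $\mathbb{Z}$ shifts).
\begin{itemize}
\item The action is vertex-transitive, hence convex-minimal. It is hyperplane-transitive, and its orbits are unbounded.
\item The hyperplanes are the coordinates and every carrier is all of $X$. So every sector lies in the carrier of its hyperplane, and all hyperplanes are pairwise transverse.
\end{itemize}
Hence the step ``all hyperplanes pairwise transverse $\Rightarrow$ $G$ stabilises a prism $\Rightarrow$ bounded orbits'' is false. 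Infinitely many pairwise transverse hyperplanes do not span a prism, and a single orbit of hyperplanes gives no bound on their number.

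The fallback fails as well. $S \subset N(J)$ implies $gS \subset N(gJ)$ for every $g$, and ``some translate of $P$ lies in $S$'' holds for $S$ if and only if it holds for $gS$. So passing to a translate changes nothing. Knowing $gS \not\subset N(J)$ for the original $J$ is irrelevant, because Step~2 needs a sector that escapes the carrier of its \emph{own} hyperplane.

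\textbf{What is missing.} In this case you must actually construct the translate, and this is exactly where unboundedness and hyperplane-transitivity enter. The paper does not split according to whether $S \subset N(J)$.
\begin{itemize}
\item \emph{A translate avoiding $J$.} Some translate $gP$ is not crossed by $J$. Otherwise $P$ is crossed by every $g^{-1}J$, hence by every hyperplane, so $X$ has finitely many hyperplanes and is bounded. Thus $gP$ lies in a single sector of $J$; assume it is not $S$.
\item \emph{Convex-minimality.} Consider $\bigcap_{k \in G} kS^c$. It is $G$-invariant, convex and proper, so it is empty by convex-minimality.
\item \emph{Easy subcase.} Suppose some $k$ with $kJ$ not crossing $gP$ has $gP \not\subset kS^c$. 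Then $k^{-1}gP \subset S$ and you are done.
\item \emph{Remaining subcase.} Otherwise the emptiness forces $gP \cap \bigcap_{k \,:\, kJ \in \mathcal{H}(gP)} kS^c = \emptyset$. By the product structure of $gP$, there is then a hyperplane $K$ crossing $gP$ all of whose sectors are $G$-translates of $S$. So $\mathrm{stab}(K)$ permutes the sectors of $K$ transitively, and by hyperplane-transitivity $\mathrm{stab}(J)$ permutes the sectors of $J$ transitively. Some element of $\mathrm{stab}(J)$ then carries $gP$ into $S$.
\end{itemize}
This transitivity of $\mathrm{stab}(J)$ on the sectors of $J$, extracted from convex-minimality, is the ingredient your proposal lacks.
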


\begin{proof}
Our goal is to prove the convex-minimality of $G \curvearrowright \mathbb{P}(X)$ thanks to Proposition~\ref{prop:WhenStrongConvMin}. So let $S\leq X$ be a sector and $P \leq X$ a prism. We need to find some $g \in G$ such that $gP \subset S$, or equivalently that $gP \cap S^c = \emptyset$. If $P \cap S^c= \emptyset$, then there is nothing to prove, so, from now on, we assume that $P \cap S^c \neq \emptyset$. We distinguish two cases.

\medskip \noindent
\underline{Case 1:} Assume that $P$ is contained in $S^c$. Notice that, if we denote by $J$ the hyperplane delimiting $S$ and $\mathcal{H}(P)$ the set of the hyperplanes crossing $P$, then we can write
$$\bigcap\limits_{g \in G} gS^c = \bigcap\limits_{g \in G \text{ such that } gJ \notin \mathcal{H}(P)} gS^c \cap \bigcap\limits_{g \in G \text{ such that } gJ \in \mathcal{H}(P)} gS^c.$$
If $g \in G$ is such that $gJ \notin \mathcal{H}(P)$ and if $P \nsubseteq gS^c$, then $P \cap gS^c = \emptyset$, or equivalently $g^{-1}P \cap S^c=\emptyset$. Thus, we have found a $G$-translate of $P$ disjoint from $S^c$, as desired. So let us assume that $P \subset gS^c$ for every $g \in G$ such that $gJ \notin \mathcal{H}(P)$. Consequently,
$$P \leq \bigcap\limits_{g \in G \text{ such that } gJ \notin \mathcal{H}(P)} gS^c.$$
Thus, we have proved that
$$\bigcap\limits_{g \in G} gS^c \geq P \cap \bigcap\limits_{g \in G \text{ such that } gJ \in \mathcal{H}(P)} gS^c.$$
In other words, $\bigcap_{g \in G} gS$ contains an intersection of cosectors in $P$. But this intersection has to be empty, since otherwise $\bigcap_{g \in G} gS^c$ would provide a proper convex subgraph of $X$ that is $G$-invariant, contradicting the convex-minimality of $G \curvearrowright X$. The only possibility is that, in our intersection of cosectors, we find all the cosectors of some hyperplane, say $gJ$. This implies that $\mathrm{stab}(gJ)$ permutes transitively the cosectors delimited by $gJ$, which amounts to saying that $\mathrm{stab}(gJ)$ permutes transtively the sectors delimited by $gJ$. Since the action $G \curvearrowright X$ is hyperplane-transitive, we deduce that the stabiliser of any hyperplane permutes transitively the sectors it delimits. 

\medskip \noindent
Since $P$ is contained in $S^c$, necessarily it is not crossed by $J$, i.e.\ $P$ is contained in some sector delimited by $J$. But now we know that there exists some $g \in \mathrm{stab}(J)$ that sends this sector to $S$, hence $gP \leq S$.

\medskip \noindent
\underline{Case 2:} Assume that $P$ is not contained in $S^c$. Since $P$ intersects $S^c$ by assumption, necessarily $P$ is crossed by the hyperplane $J$ delimiting $S$. If there exists $g \in G$ such that $gP$ is not crossed by $J$, then either $gP$ is disjoint from $S^c$, and we are done, or $gP$ is contained in $S^c$. In the latter case, we can apply the previous case in order to find some $h \in G$ such that $hgP$ is disjoint from $S^c$. So let us assume that, for every $g \in G$, $gP$ is crossed by $J$. This amounts to saying that, for every $g \in G$, $P$ is crossed by $gJ$. Since $G \curvearrowright X$ is hyperplane-transitive, this implies that $P$ is crossed by all the hyperplanes of $X$. As a consequence, $X$ must have only finitely many hyperplanes. So it is a bounded quasi-median graph, contradicting the assumption that $G \curvearrowright X$ has unbounded orbits. 
\end{proof}

\section{From quasi-median graphs}\label{section:FromQM}

\noindent
This section is dedicated to the proof of the following half of Theorem~\ref{thm:BigIntro}. 

\begin{thm}\label{thm:FromQM}
Let $G$ be a finitely generated group, $H \leq G$ a subgroup, and $p \in \mathbb{N}$, $q \in \mathbb{N} \cup \{\omega\}$. Assume that $G$ admits an $H$-monohyp action on a quasi-median graph $X$ such that a hyperplane stabilised by $H$ delimits $\geq q$ sectors and $\geq p$ $H$-orbits of sectors. Then, $e(G,H) \geq p$ and $\tilde{e}(G,H) \geq q$. 
\end{thm}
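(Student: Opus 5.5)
Fix a hyperplane $J$ of $X$ stabilised by $H$ and a basepoint $o \in V(X)$. For every sector $S$ delimited by $J$, set $A_S := \{ g \in G \mid g\cdot o \in V(S)\}$. The plan is to verify the hypotheses of Propositions~\ref{prop:AlmostInCoarseSep} and~\ref{prop:AlmostInvCodim} for suitable families of these sets.

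For $\tilde{e}$, I will take $\geq q$ distinct sectors $S_1,\ldots,S_q$; if $q=\omega$, any finite number of them. For $e$, I will take representatives $S_1,\ldots,S_p$ of distinct $H$-orbits and replace $A_{S_i}$ by the $H$-invariant set $\bigcup_{h\in H} A_{hS_i} = \{g \mid g\cdot o \in H\cdot S_i\}$.

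The required properties are as follows.

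\emph{Pairwise disjointness.} This is clear, since distinct sectors of $J$ are disjoint and unions over distinct $H$-orbits are disjoint.

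\emph{Boundary condition.} For $g \in A_S \cap A_S^c s$ with $s$ in a finite generating set, the vertices $g\cdot o$ and $gs^{-1}\cdot o$ are separated by $J$. This means $g^{-1}J$ is one of the finitely many hyperplanes separating $o$ from $s^{-1}o$. Hence $g$ lies in finitely many cosets $g_0\,\mathrm{stab}(J)$. Since $J$ is stabilised by $H$ and $H$ is a hyperplane-stabiliser, I will take $\mathrm{stab}(J)=H$, so these are finitely many cosets $Hg_0$. Such a set is $H$-finite in the sense needed: it lies in a bounded neighbourhood of $H$ after left-translating. This is the point to check carefully against the left/right conventions, and I expect it to be routine. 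The same argument applies to the $H$-saturated unions, since a boundary point still corresponds to $J$ separating $g o$ from $g s^{-1} o$.

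\emph{$H$-infiniteness.} This is the main obstacle: I need, for every sector $S$ of $J$, that $A_S$ contains elements arbitrarily far from $H$, i.e.\ that $d(g\cdot o, N(J))$ is unbounded over $g\cdot o \in S$. (Distance in $G$ to $H$ dominates, up to a constant, the distance in $X$ from $g o$ to $H o$, which lies near $N(J)$.) Here I will use Theorem~\ref{thm:MonoHypConvMin}, through Proposition~\ref{prop:WhenStrongConvMin}: every prism of $X$ has a $G$-translate in every sector.

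The idea is then to find far points inside $S$. Since orbits are unbounded and hyperplane-transitivity holds, I will first find hyperplanes $gJ$ nested inside $S$, i.e.\ with $N(gJ)$ contained in $S$. To get these, take a clique $C$ of $J$ and translate it into $S$ using the prism condition; its hyperplane $g J \neq J$ has carrier meeting $S$. Then choose a sector $S'$ of $gJ$ not containing $J$, which is possible because $gJ$ delimits $\geq 2$ sectors and at most one contains $N(J)$; this gives $S' \subset S$. Iterating yields a chain $S \supset S' \supset S'' \supset \cdots$ of sectors separated from $J$ by increasingly many hyperplanes. Convex-minimality (Proposition~\ref{prop:WhenConvMin}) guarantees that the orbit of $o$ meets each member of the chain. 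Those orbit points are at distance at least $k$ from $N(J)$, since $k$ hyperplanes separate them from $J$. This gives $H$-infiniteness of each $A_S$, and hence of the saturated unions, concluding via the two propositions.
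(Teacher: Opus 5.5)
Your disjointness and boundary arguments are correct. The boundary estimate is in fact more direct than the paper's route through Claim~\ref{claim:DistQMSum}; note that the cosets you obtain are $\mathrm{stab}(J)g_0=Hg_0$, which is exactly $H$-finiteness. The gap is the $H$-infiniteness step.

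As written, the nesting step fails. Translating a clique $C$ of $J$ into $S$ only guarantees that $J$ does not cross $gC$, not that $N(gJ)\subset S$. The hyperplane $gJ$ may be transverse to $J$, and then no sector of $gJ$ is contained in $S$. This already happens for $\mathbb{Z}^2\rtimes\mathbb{Z}/4$ acting on the square grid: a rotated copy of an edge of a vertical hyperplane $J$ can be placed in a half-plane of $J$ so that it belongs to a horizontal hyperplane.

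More seriously, the intermediate goal itself---orbit points of $S$ arbitrarily far from $N(J)$ in $X$---is false in general, so no choice of translates will rescue it. Let $G=\mathbb{Z}_2\wr\mathbb{Z}$ act on the median graph of finitely supported maps $\mathbb{Z}\to\{0,1\}$, with lamps flipping coordinates and $\mathbb{Z}$ shifting.
\begin{itemize}
\item This action is monohyp, $H=\mathrm{stab}(J_0)$ is the lamp subgroup, and $J_0$ delimits two sectors.
\item Every vertex $x$ is joined to $x+e_0$ by an edge of $J_0$, so $N(J_0)=X$. Hence any two hyperplanes are transverse, no nested chain exists, and $d(go,N(J_0))=0$ for every $g$.
\item Yet with $o$ the zero configuration and $S=\{x\mid x(0)=0\}$, $A_S$ contains every shift $(0,t)$. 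So $A_S$ meets the infinitely many distinct cosets $H(0,t)$ and is $H$-infinite.
\end{itemize}
$H$-infiniteness concerns cosets of $H$ in $G$; unbounded $X$-distance to $N(J)$ is only a sufficient condition for it.

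The paper argues by contradiction instead:
\begin{itemize}
\item If $A_S$ were $H$-finite, it would be $\mathrm{stab}(S)$-finite: if $a=hb$ with $a,b\in A_S$ and $h\in H$, then $h\in\mathrm{stab}(S)$.
\item Let $\mathbb{P}_i(X)$ be the hyperplane-collapse of $\mathbb{P}(X)$ keeping only the hyperplanes labelled by $G$-translates of $S$. By Lemmas~\ref{lem:DistCollapse} and~\ref{lem:SepHypPrism}, $d(\pi_i(o),g\pi_i(o))=|\mathrm{stab}(S)\backslash(A_S\triangle A_Sg^{-1})|$ there, which would be uniformly bounded.
\item Corollary~\ref{cor:NoHypInvPrism} and Proposition~\ref{prop:FixedPointMedian} then give a $G$-fixed vertex of $\mathbb{P}_i(X)$.
\item Its preimage is a proper $G$-invariant convex subgraph of $\mathbb{P}(X)$ by Lemma~\ref{lem:PreimageConvex}, contradicting Theorem~\ref{thm:MonoHypConvMin}.
\end{itemize}
This is how strong convex-minimality has to enter, rather than through chains of nested sectors in $X$.
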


\begin{proof}
Fix a basepoint $o \in V(X)$ and a hyperplane $J$ stabilised by $H$. Denote by $\mathscr{S}$ the set of the sectors delimited by $J$. By assumption, we know that $\mathscr{S}$ has cardinality $\geq q$ and can be partitioned into $n \geq p$ $H$-orbits, say $\mathscr{S} = \mathscr{S}_1 \sqcup \cdots \sqcup \mathscr{S}_n$. For all $S \in \mathscr{S}$ and $1 \leq i \leq n$, we set
$$A_S:= \{ g \in G \mid go \in S\} \text{ and } B_i:= \bigcup\limits_{S \in \mathscr{S}_i} A_S= \left\{ g \in G \mid go \in \bigcup\limits_{S \in \mathscr{S}_i} S \right\}.$$
Our goal is to show that the $A_S$ verify Proposition~\ref{prop:AlmostInCoarseSep} and that the $B_i$ verify Proposition~\ref{prop:AlmostInvCodim}. The latter assertion will be a straightforward consequence of the former assertion, so we first focus on the $A_S$. 

\medskip \noindent
We know from Proposition~\ref{prop:HypPrism} that the hyperplanes of the graph of prisms $\mathbb{P}(X)$ are naturally labelled by the sectors of $X$. Given an $1 \leq i \leq n$, let $\mathbb{P}_i(X)$ denote the hyperplane-collapse of $\mathbb{P}(X)$ that collapses all the hyperplanes not labelled by sectors from $G \cdot \mathscr{S}_i$. We denote by $\pi_i$ the canonical projection $\mathbb{P}(X) \to \mathbb{P}_i(X)$. 

\begin{claim}\label{claim:DistPPi}
For all $S \in \mathscr{S}_i$ and $g \in G$, 
$$d(\pi_i(o), g \pi_i(o)) = | \mathrm{stab}(S) \backslash (A_S \triangle A_Sg^{-1}) |.$$
\end{claim}

\noindent
It follows from Lemmas~\ref{lem:DistCollapse} and~\ref{lem:SepHypPrism} that $d(\pi_i(o),g \pi_i(o))$ equals
$$\# \{ hR \mid R \in \mathscr{S}_i, \text{ either $o \in hR$ and $go \notin hR$ or $o \notin hR$ and $go \in hR$} \}.$$
Hence
$$\begin{array}{lcl} d(\pi_i(o),g\pi_i(o)) & = & \# \{ hS \mid h^{-1} \in A_S \backslash A_Sg^{-1} \cup A_Sg^{-1} \backslash A_S\} \\ \\ & = & \# \{ h \mathrm{stab}(S) \mid h^{-1} \in A_S \triangle A_Sg^{-1} \} \\ \\ & = & \# \{ \mathrm{stab}(S) h \mid h \in A_S \triangle A_Sg^{-1} \}. \end{array}$$
This concludes the proof of Claim~\ref{claim:DistPPi}.

\medskip \noindent
Given an arbitrary $S \in \mathscr{S}$, let us verify that $A_S$ is $H$-infinite. If not, then $A_S$ must be $\mathrm{stab}(S)$-finite. Indeed, if $A_S$ is $H$-finite, then we know that any sufficiently large subset $T \subset A_S$ must contain two distinct elements in the same $H$-orbit, say $a,b \in T$ satisfy $a=hb$ for some $h \in H$. Since $h$ stabilises the hyperplane delimiting $S$ and since $h$ sends $ho \in S$ to $hbo = ao \in S$, necessarily $h \in \mathrm{stab}(S)$. Thus, $A_S$ is indeed $\mathrm{stab}(S)$-finite. It follows from Claim~\ref{claim:DistPPi} that the $G$-orbit of $\pi_i(o)$ is bounded in $\mathbb{P}_i(X)$, where $1 \leq i \leq n$ is the index satisfying $S \in \mathscr{S}_i$. It follows from Corollary~\ref{cor:NoHypInvPrism} and Proposition~\ref{prop:FixedPointMedian} that $G$ fixes a vertex in $\mathbb{P}_i(X)$, say $v$. Then, we deduce from Lemma~\ref{lem:PreimageConvex} that $G$ stabilises the convex subgraph $\pi_i^{-1}(v) \subsetneq \mathbb{P}(X)$, proving that $G$ does not act convex-minimally on $\mathbb{P}(X)$, a contradiction according to Theorem~\ref{thm:MonoHypConvMin} since $G \curvearrowright X$ is monohyp by assumption.

\medskip \noindent
So far, we have proved that all the $A_S$ are $H$-infinite. It is clear that the $A_S$ are pairwise disjoint. In order to apply Proposition~\ref{prop:AlmostInCoarseSep}, it remains to verify that $A_S \cap A_S^c g$ is $H$-finite for all $S \in \mathscr{S}$ and $g \in G$. 

\begin{claim}\label{claim:DistQMSum}
Fix arbitrary sectors $S_1 \in \mathscr{S}_1, \ldots, S_n \in \mathscr{S}_n$. For every $g \in G$, the equality
$$d(o,go)= \sum\limits_{i=1}^n | \mathrm{stab}(S_i) \backslash (A_{S_i} \triangle A_{S_i}g^{-1}) |$$
holds in $\mathbb{P}(X)$. 
\end{claim}

\noindent
Our claim follows immediately from Claim~\ref{claim:DistPPi} and Corollary~\ref{cor:DistancesCollapses}.

\medskip \noindent
Given an arbitrary $g \in G$, it follows from Claim~\ref{claim:DistQMSum} that
$$|\mathrm{stab}(S) \backslash (A_S \backslash A_S g)| \leq d(o,g^{-1}o) < \infty.$$
Thus, $A_S \backslash A_Sg$ is $\mathrm{stab}(S)$-finite, and a fortiori $H$-finite since $\mathrm{stab}(S)$ is a subgroup of~$H$. 

\medskip \noindent
We are finally ready to apply Proposition~\ref{prop:AlmostInCoarseSep} and to conclude that $\tilde{e}(G,H) \geq |\mathscr{S}| \geq q$.

\medskip \noindent
Next, let us deduce from what we have done so far that Proposition~\ref{prop:AlmostInvCodim} applies to the $B_i$. It is clear that $B_1, \ldots, B_n$ are pairwise disjoint. Also, given an index $1 \leq i \leq n$, notice that $B_i:= \bigcup_{S \in \mathscr{S}_i} A_S$ is $H$-invariant (since $\mathscr{S}_i$ is an $H$-orbit of sectors) and that it is $H$-infinite (since we alread proved that each $A_S$ is $H$-infinite). Finally, given a $g \in G$, we have
$$B_i \cap B_i^c g = \left( \bigcup\limits_{S \in \mathscr{S}_i} A_S \right) \cap \left( \bigcup\limits_{S \in \mathscr{S}_i} A_S g \right)^c = \left( \bigcup\limits_{S \in \mathscr{S}_i} A_S \right) \cap \left( \bigcap\limits_{S \in \mathscr{S}_i} A_S^c g \right),$$
hence
$$B_i \cap B_i^c g \subset \bigcup\limits_{S \in \mathscr{S}_i} (A_S \cap A_S^c g).$$
Since $\mathscr{S}_i$ is an $H$-orbit, it follows that
$$H \backslash ( B_i \cap B_i^cg ) = H \backslash (A_S \cap A_S^c g),$$
which we already know to be finite. We conclude that Proposition~\ref{prop:AlmostInvCodim} does apply, proving that $e(G,H) \geq n \geq p$, as desired. 
\end{proof}

\section{Quasi-cubulation}\label{section:QuasiCubulation}

\noindent
After Theorem~\ref{thm:FromQM}, our goal is to show that every finitely generated group with a coarsely separating subgroup acts on some quasi-median graph. This will be proved in the next section, based on the construction we describe now. In a nutshell, we generalise the cubulation of wallspaces, i.e.\ the construction of a median graph from a collection of bipartitions, to the construction of a quasi-median graph from a collection of arbitrary partitions. See Appendix~\ref{section:Buneman} for more background.  

\begin{definition}
A \emph{space with characters} $(X,\mathfrak{C})$ is the data of a set $X$ and a collection $\mathfrak{C}$ of partitions of $X$ such that:
\begin{itemize}
	\item every $\mathscr{C}\in \mathfrak{C}$ contains at least two elements;
	\item no $\mathscr{C} \in \mathfrak{C}$ contains the empty set.
\end{itemize}
An element $\mathscr{C} \in \mathfrak{C}$ is a \emph{character} and an element $C \in \mathscr{C}$ is a \emph{clade}.
\end{definition}

\noindent
We emphasize that, here, $\mathfrak{C}$ is a set and not multi-set, i.e.\ we do not allow repetitions in $\mathfrak{C}$. 

\medskip \noindent
Our goal is to associate a quasi-median graph to every space with characters. Vertices will be specific \emph{selectors}:

\begin{definition}
Let $(X,\mathfrak{C})$ be a space with characters. A \emph{selector} $\sigma$ is a map $\mathscr{C} \to \mathfrak{P}(X)$ satisfying $\sigma(\mathscr{C}) \in \mathscr{C}$ for every $\mathscr{C} \in \mathfrak{C}$. 
\end{definition}

\noindent
In other words, a selector chooses a clade inside every character. Vertices of our quasi-median graphs will be selectors satisfying a specific property. In order to define this property, we need the following definition:

\begin{definition}
Let $(X,\mathfrak{C})$ be a space with characters and $\mathscr{C}$ a character. An \emph{extension} of a clade $C \in \mathscr{C}$ is a union
$$\bigcup\limits_{D \in \mathscr{C} \backslash \{D_0\}} D$$
for some $D \in \mathscr{C}$ distinct from $C$. 
\end{definition}

\noindent
In other words, an extension of a clade is the complement of a distinct clade coming from the same character. We emphasize that, if a character has more than two clades, then a clade has several possible extensions.

\begin{definition}\label{def:Coherent}
Let $(X,\mathfrak{C})$ be a space with characters. A selector $\sigma$ is \emph{coherent} if, for all characters $\mathscr{C},\mathscr{D}\in \mathfrak{C}$, $\sigma(\mathscr{C})$ and $\sigma(\mathscr{D})$ do not have disjoint extensions.
\end{definition}

\noindent
Notice that, if $(X,\mathfrak{C})$ is a wallspace, i.e.\ every character is bipartition, then a selector $\sigma$ is coherent if and only if $\sigma(\mathscr{C})\cap \sigma(\mathscr{D}) \neq \emptyset$ for all characters $\mathscr{C},\mathscr{D} \in \mathfrak{C}$. Thus, our definition of coherent selectors extends the definition of ultrafilters (also called orientations) from wallspaces.

\medskip \noindent
We are now ready to define the quasi-median graph that we associate to every space with characters. 

\begin{definition}
Let $(X,\mathfrak{C})$ be a space with characters. Its \emph{quasi-cubulation} $\overline{\mathrm{QM}}(X,\mathfrak{C})$ is the graph
\begin{itemize}
	\item whose vertices are the coherent selectors,
	\item and whose edges connect two selectors whenever they differ on a single character.
\end{itemize}
\end{definition}

\noindent
Thus, given a coherent selector $\sigma$ and a character $\mathscr{C}\in \mathfrak{C}$, the neighbours of $\sigma$ in the quasi-cubulation are the
$$[\sigma,C] : \mathscr{D} \mapsto \left\{ \begin{array}{cl} C & \text{if } \mathscr{D}= \mathscr{C} \\ \sigma(\mathscr{D}) & \text{otherwise} \end{array} \right.$$
for $C \in \mathscr{C}$. In the sequel, we will use this convenient notation repeatedly.  

\medskip \noindent
Let us verify that our quasi-cubulation does yield quasi-median graphs, as claimed.

\begin{thm}\label{thm:QuasiCubulation}
Every connected component of the quasi-cubulation $\overline{\mathrm{QM}}(X,\mathfrak{C})$ of a space with characters $(X,\mathfrak{C})$ is quasi-median. Moreover, for all coherent selectors $\mu$ and $\nu$, 
$$d_{\overline{\mathrm{QM}}(X,\mathfrak{C})} (\mu,\nu)= \# \{ \mathscr{C} \in \mathfrak{C} \mid \mu(\mathscr{C}) \neq \nu ( \mathscr{C}) \}.$$
In particular, two coherent selectors belong to the same component of $\overline{\mathrm{QM}}(X,\mathfrak{C})$ if and only if they disagree only on finitely many characters. 
\end{thm}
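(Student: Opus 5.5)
The plan is to first establish the distance formula and then check that each component is quasi-median via the local-to-global characterisation (Theorem~\ref{thm:LocalGlobal}).

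\emph{Step 1: the distance formula.} Along any path in $\overline{\mathrm{QM}}(X,\mathfrak{C})$ exactly one character changes its value per edge, so $d(\mu,\nu) \geq \#\{\mathscr{C} \mid \mu(\mathscr{C}) \neq \nu(\mathscr{C})\}$. For the reverse inequality, with the set $\Delta$ of disagreeing characters finite, I would build a path from $\mu$ toward $\nu$ that resets one character at a time, always keeping coherence. The key lemma is that some $\mathscr{C} \in \Delta$ can be reset, i.e.\ $[\mu,\nu(\mathscr{C})]$ is coherent. The set of clades occurring in $\mu$ or $\nu$ carries a partial order by inclusion. Coherence of a selector $\sigma$ means: for $\mathscr{C} \neq \mathscr{D}$, there are no clades $C' \in \mathscr{C}\backslash\{\sigma(\mathscr{C})\}$ and $D' \in \mathscr{D}\backslash\{\sigma(\mathscr{D})\}$ with $C'^c \cap D'^c=\emptyset$, equivalently with $C' \cup D' = X$. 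I would take $\mathscr{C}\in\Delta$ with $\nu(\mathscr{C})$ minimal (or $\mu(\mathscr{C})$ maximal) with respect to the relevant inclusions. I would then argue that if $[\mu,\nu(\mathscr{C})]$ failed coherence with some $\mathscr{D}$, one of the following would occur: either $\nu$ is incoherent (if $\mathscr{D}\in \Delta$ or $\mu(\mathscr{D})=\nu(\mathscr{D})$), or a different element of $\Delta$ contradicts minimality. Iterating gives a path of length $|\Delta|$. The "in particular" then follows: finitely many disagreements give a path, and conversely adjacency changes one character.

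\emph{Step 2: the quasi-median property.} I would check the four conditions of Theorem~\ref{thm:LocalGlobal} on a component.
\begin{itemize}
\item \textbf{Forbidden subgraphs.} Each edge carries a label, namely the unique character it changes. A triangle is monochromatic, since three selectors pairwise differing in one character must all differ in the same character. An induced $4$-cycle has opposite edges with equal labels and two distinct labels, because its two diagonals differ in two characters. An induced $K_4^-$ would then force the shared edge's endpoints to be adjacent to both remaining vertices with a single label, making them adjacent. An induced $K_{2,3}$ would force two of its degree-$3$ vertices to be at distance $2$ via three common neighbours. But selectors differing in exactly the characters $\mathscr{C},\mathscr{D}$ have at most two common neighbours, namely those obtained by changing one of the two characters.
\item \textbf{$3$-cube and $3$-prism conditions.} For $Q_3^-$, I would take the missing vertex to be the selector agreeing with the given ones coordinatewise. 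Its coherence reduces to pairwise conditions, each of which is witnessed by some existing vertex; this pairwise nature of Definition~\ref{def:Coherent} is what makes the argument work. The house graph is handled the same way, with the triangle's colour class.
\item \textbf{Simple connectivity.} This is the step I expect to be the main obstacle. I would try to avoid it by instead verifying the triangle and quadrangle conditions directly using the distance formula, which is often easier. Both conditions are statements about selectors differing in a few characters relative to a base point $o$. For the quadrangle condition, with $x,y$ differing from $z$ in $\mathscr{C}$ and $\mathscr{D}$, $w$ is the selector taking $x$'s value at $\mathscr{D}$ and $y$'s value at $\mathscr{C}$; we need $w$ coherent, and again only pairwise checks are needed, each pair witnessed by one of $x,y,z$. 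The triangle condition is similar, taking $w$ with $o$'s value on the common character.
\end{itemize}
Weakly modular plus no $K_{2,3}$, $K_4^-$ then gives the quasi-median property directly from the definition.
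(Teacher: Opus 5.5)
Your Step~1 has a genuine gap. The extremal choice you propose is the wrong one, and the tie case is not handled. Take $X=\{a,b,c\}$ with characters $\mathscr{W}_1=\{\{a\},\{b,c\}\}$ and $\mathscr{W}_2=\{\{a,b\},\{c\}\}$, and let $\mu,\nu$ be pointed at $a,c$. Both ``$\nu(\mathscr{C})$ minimal'' and ``$\mu(\mathscr{C})$ maximal'' select $\mathscr{W}_2$. But $[\mu,\nu(\mathscr{W}_2)]$ chooses the disjoint clades $\{a\},\{c\}$ of two bipartitions, so it is not coherent.

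The actual analysis runs as follows. If $[\mu,\nu(\mathscr{C})]$ is incoherent, there are $\mathscr{D}\neq\mathscr{C}$, $C'\in\mathscr{C}\backslash\{\nu(\mathscr{C})\}$ and $D'\in\mathscr{D}\backslash\{\mu(\mathscr{D})\}$ with $C'\cup D'=X$. Coherence of $\mu$ forces $C'=\mu(\mathscr{C})$. Coherence of $\nu$ then forces $D'=\nu(\mathscr{D})$, so $\mathscr{D}\in\Delta$. From $\mu(\mathscr{C})\cup\nu(\mathscr{D})=X$ you get $\nu(\mathscr{C})\subseteq\nu(\mathscr{D})$ and $\mu(\mathscr{D})\subseteq\mu(\mathscr{C})$. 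So you need $\nu(\mathscr{C})$ maximal (or $\mu(\mathscr{C})$ minimal).

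Even then, maximality only yields $\nu(\mathscr{D})=\nu(\mathscr{C})$, which contradicts nothing by itself. It forces $\mu(\mathscr{C})\cup\nu(\mathscr{C})=X$, i.e.\ $\mathscr{C}$ is a bipartition, and this really occurs. Take $\mathscr{B}=\{\{b\},\{a,c\}\}$ and $\mathscr{T}=\{\{a\},\{b\},\{c\}\}$, with $\mu,\nu$ pointed at $a,b$. Both $\nu$-values equal $\{b\}$; resetting $\mathscr{B}$ first destroys coherence, while resetting $\mathscr{T}$ first works.

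The missing idea is the tie-break. Two distinct characters with the same $\nu$-value cannot both be bipartitions, so among ties you pick a non-bipartition. This is the paper's argument. Its printed condition ``$\mu(\mathscr{D})$ maximal'' contains the same slip as your second option, but its tie-breaking rule and its final step (that $\mathscr{D}$ would be a bipartition) match the choice of $\nu(\mathscr{D})$ maximal.

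For Step~2, your final route differs from the paper's and is more economical. The paper checks the hypotheses of Theorem~\ref{thm:LocalGlobal}: simple connectivity of $\mathrm{QM}^{\square\triangle}$ via a loop-shortening ladder argument, plus the $3$-cube and $3$-prism conditions. It therefore relies on the external bucolic-complex result behind Theorem~\ref{thm:LocalGlobal}. Verifying the triangle and quadrangle conditions directly from the distance formula lands on the definition of a quasi-median graph itself. Simple connectivity and the $3$-cube and $3$-prism checks then become superfluous, and your $K_{2,3}$ and $K_4^-$ arguments are fine.

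Two repairs are needed. In the quadrangle condition, the hypotheses force $w$ to carry $o$'s clades on both changed characters, so that pair is witnessed by $o$, not by one of $x,y,z$. The triangle condition is not a witness argument at all. For $w=[x,o(\mathscr{C})]$ and $\mathscr{E}\neq\mathscr{C}$, the pair $(o(\mathscr{C}),x(\mathscr{E}))$ in general occurs in none of $o,x,y$. Instead, an obstruction would be $C'\in\mathscr{C}\backslash\{o(\mathscr{C})\}$ and $E'\in\mathscr{E}\backslash\{x(\mathscr{E})\}$ with $C'\cup E'=X$. Coherence of $x$ forces $C'=x(\mathscr{C})$, and coherence of $y$, which agrees with $x$ on $\mathscr{E}$, forces $C'=y(\mathscr{C})$. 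This contradicts $x(\mathscr{C})\neq y(\mathscr{C})$. It is the mechanism the paper uses for the $3$-prism condition; your house-graph check would need it too, though your route no longer requires that check.
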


\begin{proof}
We start by proving the second assertion of our theorem, i.e.\ we compute distances in the quasi-cubulation.

\medskip \noindent
Let $\mu$ and $\nu$ be two coherent selectors. For convenience, we write $\mu \triangle \nu$ the set of characters on which $\mu$ and $\nu$ disagree. It is clear from the definition of $\overline{\mathrm{QM}}(X,\mathfrak{C})$ that 
$$d_{\overline{\mathrm{QM}}(X,\mathfrak{C})}(\mu,\nu) \geq  \# \mu \triangle \nu.$$
In order to prove the reverse inequality, we assume that $\mu \triangle \nu$ is finite and we construct a path of length $\#\mu \triangle \nu$ connecting $\mu$ to $\nu$ in $\overline{\mathrm{QM}}(X,\mathfrak{C})$. For this, we start by choosing a character $\mathscr{D} \in \mu \triangle \nu$ such that:
\begin{itemize}
	\item $\mu(\mathscr{D})$ is $\subseteq$-maximal in $\{ \mu(\mathscr{C}) \mid \mathscr{C} \in \mu \triangle \nu\}$;
	\item if there exists at least one other $\mathscr{C} \in \mu \triangle \nu$ such that $\mu(\mathscr{D})= \nu(\mathscr{c})$, we want $\mathscr{D}$ not to be a bipartition.
\end{itemize}
Notice that, if $\mathscr{C}_1,\mathscr{C}_2 \in \mathfrak{C}$ are two distinct characters such that $\mu(\mathscr{C}_1)= \nu(\mathscr{C}_2)$, then at least one of $\mathscr{C}_1,\mathscr{C}_2$ is not a bipartition, since otherwise we would have
$$\mathscr{C}_1= \{ \mu(\mathscr{C}_1), \mu(\mathscr{C}_1)^c \} = \{\mu(\mathscr{C}_2), \mu(\mathscr{C}_2)^c \} = \mathscr{C}_2.$$
Therefore, our second item above makes sense. Now, we claim that $[\mu,\nu(\mathscr{D})]$ is coherent. Otherwise, there exists a character $\mathscr{C} \neq \mathscr{D}$ such that $\mu(\mathscr{C})$ and $\nu(\mathscr{D})$ have disjoint extensions. Since $\nu$ is coherent, necessarily $\mathscr{C} \in \mu \triangle \nu$. And, since $\mu$ is coherent, the extension of $\nu(\mathscr{C})$ disjoint from an extension of $\mu(\mathscr{D})$ must be the complement of $\mu(\mathscr{C})$, hence $\mu(\mathscr{D}) \subset \mu(\mathscr{C})$. By maximality of $\mu(\mathscr{D})$, this implies that $\mu(\mathscr{D})= \mu(\mathscr{C})$. Notice that, since $\mu(\mathscr{D})$ has an extension disjoint from the extension $\mu(\mathscr{C})^c$ of $\nu(\mathscr{C})$, necessarily $\mu(\mathscr{D})$ is an extension of $\mu(\mathscr{D})$, which amounts to saying that $\mathscr{D}$ is a bipartition. We get a contradiction with our choice of $\mathscr{D}$.

\medskip \noindent
Thus, we have found a character $\mathscr{D} \in \mu \triangle \nu$ such that $[\mu,\nu(\mathscr{D})]$ is a coherent selector, is adjacent to $\mu$ in $\overline{\mathrm{QM}}(X,\mathfrak{C})$, and disagrees with $\nu$ on $<\# \mu \triangle \nu$ characters. By iterating the argument, we obtain a path of length $\leq \# \mu \triangle \nu$ connecting $\mu$ to $\nu$ in $\overline{\mathrm{QM}}(X,\mathfrak{C})$, proving that
$$d_{\overline{\mathrm{QM}}(X,\mathfrak{C})}(\mu, \nu) \leq \# \mu \triangle \nu,$$
as desired. 

\medskip \noindent
Now, given a connected component $\mathrm{QM}$ of $\overline{\mathrm{QM}}(X,\mathfrak{C})$, our goal is to prove that $\mathrm{QM}$ is a quasi-median graph by applying Theorem~\ref{thm:LocalGlobal}. 

\medskip \noindent
First, we prove that $\mathrm{QM}^{\square\triangle}$ is simply connected. So let $\gamma$ be a loop in $\mathrm{QM}$. Each edge of $\gamma$ amounts to modifying the clade that a given selector chooses for some character. Since, when traveling along $\gamma$, we are eventually back at our initial selector, the clade of each character is modified at least twice (or none). Fix a shortest path in $\gamma$ whose first and last edges correspond to modifying the clade of the same character. In other words, consider a subpath
$$\sigma, [\sigma,A], [\sigma,A,C_1], \ldots, [\sigma,A,C_1,\ldots, C_n], [\sigma,A,C_1, \ldots, C_n, B]$$
where $A$ and $B$ are two distinct clades of the same character and where $A,C_1, \ldots, C_n$ are clades of pairwise distinct characters. Let $\mathscr{C}$ denote the character containing $A$ and $B$; and, for every $1 \leq i \leq n$, let $\mathscr{C}_i$ denote the character containing $C_i$. 

\begin{claim}\label{claim:SimplyConnected}
For every $0 \leq i \leq n$, the selector $\sigma_i:=[\sigma,C_1, \ldots, C_i]$ is coherent.
\end{claim}

\noindent
We argue by induction over $i$. Of course, we already know that $\sigma_0=\sigma$ is coherent. So let $1 \leq i \leq n$ be an index such that $\sigma_{i-1}$ is coherent. Since $\sigma_i= [\sigma_{i-1},C_i]$, proving that $\sigma_i$ is coherent amounts to proving that, for every character $\mathscr{D} \neq \mathscr{C}_i$, the clades $\sigma_{i-1}(\mathscr{D})$ and $C_i$ do not have disjoint extensions. 

\medskip \noindent
Notice that, since $\mathscr{C}$ is distinct from the $\mathscr{C}_j$, $[[\sigma_{i-1},A],C_i]$ coincides with $[\sigma, A, C_1, \ldots, C_i]$, and in particular is coherent. Therefore, for every $\mathscr{D} \neq \mathscr{C}_i,\mathscr{C}$, the clades
$$[\sigma_{i-1},A](\mathscr{D})=\sigma_{i-1}(\mathscr{D})$$
and $C_i$ do not have disjoint extensions. Thus, it only remains to verify that $\sigma_{i-1}(\mathscr{C})$ and $C_i$ do not have disjoint extensions. If this is the case, then $\mathscr{C}$ has only one clade all of whose extensions intersect every extension of $C_i$. But, since $[\sigma,A,C_1, \ldots, C_n]$ is coherent, 
$$[\sigma,A,C_1, \ldots, C_n](\mathscr{C})=A \text{ and } [\sigma,A,C_1, \ldots, C_n](\mathscr{C}_i)= C_i$$
do not have disjoint extensions. (Here, we use the fact that $\mathscr{C}$ is distinct from the $\mathscr{C}_j$ and the fact that $\mathscr{C}_i$ is distinct from the $\mathscr{C}_j$ for $j \neq i$.) Similarly, since $[\sigma,B,C_1, \ldots, C_n]$ is coherent, 
$$[\sigma,B,C_1, \ldots, C_n](\mathscr{C})=B \text{ and } [\sigma,B,C_1, \ldots, C_n](\mathscr{C}_i)= C_i$$
do not have disjoint extensions. Hence a contradiction since $A \neq B$. This concludes the proof of Claim~\ref{claim:SimplyConnected}.

\medskip \noindent
\begin{minipage}{0.35\linewidth}
\includegraphics[width=\linewidth]{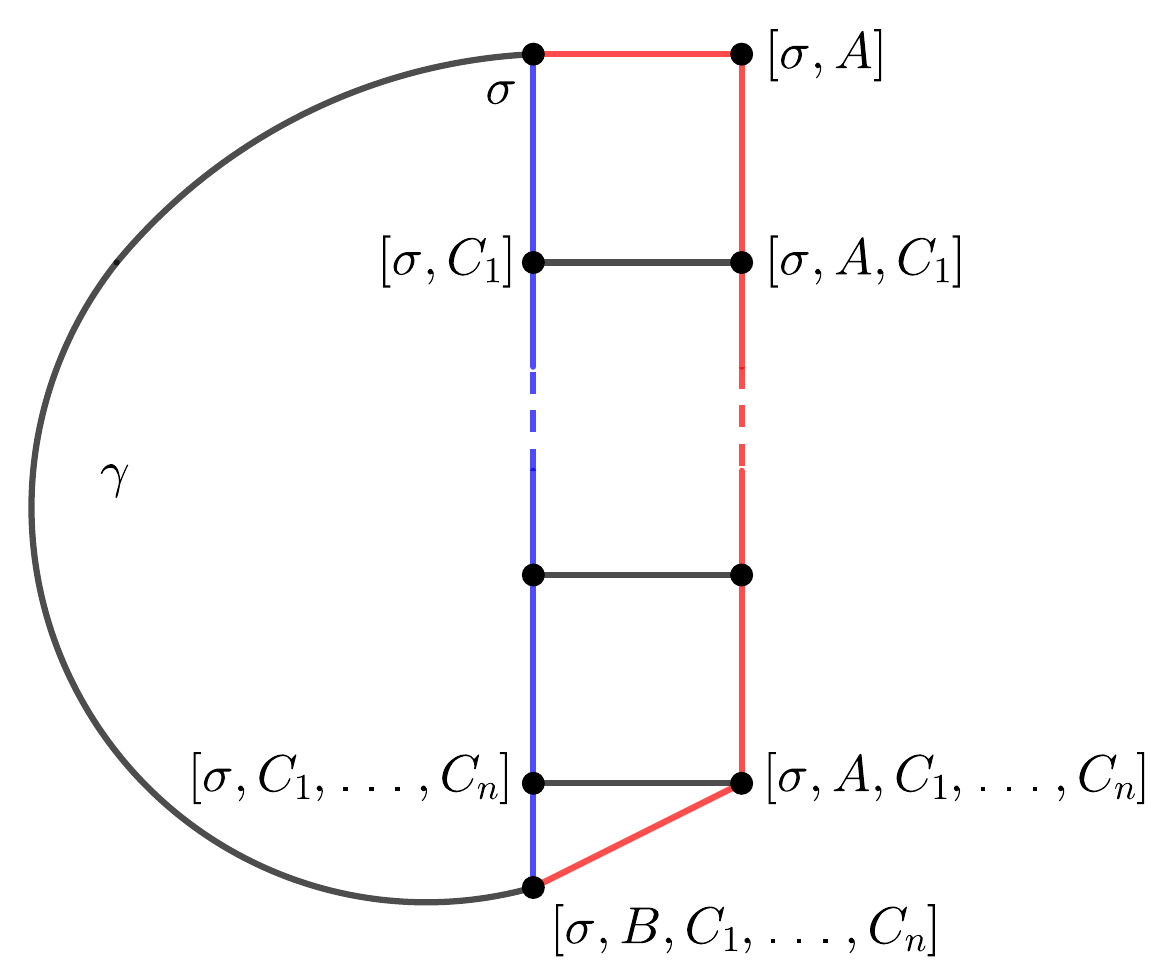}
\end{minipage}
\begin{minipage}{0.63\linewidth}
As illustrated by the figure on the left, notice that the vertices $\sigma$, $[\sigma,C_1]$, $\ldots$, $[\sigma, C_1, \ldots, C_n]$ define a ladder with $[\sigma,A, C_1]$, $\ldots$, $[\sigma,A,C_1, \ldots, C_n]$ in $\mathrm{QM}$, and that $[\sigma,C_1, \ldots, C_n]$ and $[\sigma,B,C_1, \ldots, C_n]$ either agree or define a triangle with $[\sigma,A,C_1, \ldots, C_n]$ (depending on whether or not $B$ coincides with $\sigma(\mathscr{C})$). 
\end{minipage}

\medskip \noindent
We record our construction for future use:

\begin{claim}\label{claim:SameHypConstruct}
For all pairwise distinct characters $\mathscr{C},\mathscr{C}_1, \ldots, \mathscr{C}_n \in \mathfrak{C}$ and all clades $A \neq B \in \mathscr{C}, C_1 \in \mathscr{C}_1, \ldots, C_n \in \mathscr{C}_n$, if
$$[\sigma,A], [\sigma,A,C_1], \ldots, [\sigma,A,C_1, \ldots, C_n]$$
is a path connecting the edges $\{\sigma, [\sigma,A]\}$ and $\{[\sigma,B,C_1, \ldots,C_n], [\sigma, A,C_1, \ldots, C_n]\}$, then there exists a ladder connecting $\{\sigma, [\sigma,A]\}$ to $\{[\sigma,C_1, \ldots, C_n], [\sigma,A,C_1, \ldots, C_n]\}$. Moreover, the vertices $[\sigma,C_1, \ldots, C_n]$ and $[\sigma,B,C_1, \ldots, C_n]$ either agree or define a triangle with $[\sigma,A,C_1, \ldots, C_n]$
\end{claim}

\noindent
Thus, replacing in $\gamma$ the subpath
$$\sigma, [\sigma,A], [\sigma,A,C_1], \ldots, [\sigma,A,C_1, \ldots, C_n], [\sigma,B ,C_1, \ldots, C_n]$$
with
$$\sigma, [\sigma,C_1], \ldots, [\sigma,C_1, \ldots, C_n], [\sigma,B,C_1, \ldots, C_n]$$
shortens its length (by one or two) and does not modify its homotopy class in $\mathrm{QM}^{\square\triangle}$. After finitely many iterations of this argument, we conclude that $\gamma$ is homotopy equivalent to a single point in $\mathrm{QM}^{\square\triangle}$, as desired.

\medskip \noindent
In order to conclude that $\mathrm{QM}$ is a quasi-median graph thanks to Theorem~\ref{thm:LocalGlobal}, it remains to verify that $\mathrm{QM}$ does not contain induced copies of $K_{2,3}$ or $K_4^-$, that it satisfies the $3$-cube condition, and that it satisfies the $3$-prism condition.

\medskip \noindent
Assume for contradiction that $\mathrm{QM}$ contains an induced copy of $K_{2,3}$. Such a copy must be isometrically embedded, so the two vertices of degree three can be written as $\sigma$ and $[\sigma,A,B]$ for some clades $A \in \mathscr{A}$ and $B \in \mathscr{B}$ coming from distinct characters. The three vertices of degree two in $K_{2,3}$ must be selectors that differ from both $\sigma$ and $[\sigma,A,B]$ on a single character. So at least two of these selectors differ from $\sigma$ only at $\mathscr{A}$ or only at $\mathscr{B}$, and consequently must be adjacent in $\mathrm{QM}$, proving that our copy of $K_{2,3}$ is not induced.

\medskip \noindent
Consider a copy of $K_4^-$ in $\mathrm{QM}$. Let $\sigma$ be a coherent selector representing a vertex of degree three in $K_4^-$. Its neighbours can be written as $[\sigma,A]$, $[\sigma,B]$, and $[\sigma,C]$ for some characters $\mathscr{A},\mathscr{B},\mathscr{C} \in \mathfrak{C}$ and clades $A \in \mathscr{A}$, $B \in \mathscr{B}$, $C \in \mathscr{C}$. Up to renaming our characters, assume that $[\sigma,B]$ is the neighbour of degree three of $\sigma$ in $K_4^-$. Because $[\sigma,A]$ and $[\sigma,B]$ are adjacent, they must differ on a single character, which imposes that $\mathscr{A}= \mathscr{B}$. Similarly, we must have $\mathscr{B}= \mathscr{C}$. Thus, $[\sigma,A]$ and $[\sigma,C]$ differ only on the character $\mathscr{A}=\mathscr{C}$, and consequently must be adjacent, proving that our copy of $K_4^-$ is not induced.

\medskip \noindent
Consider an induced copy of $Q_3^-$ in $\mathrm{QM}$. 

\medskip \noindent
\begin{minipage}{0.3\linewidth}
\includegraphics[width=0.95\linewidth]{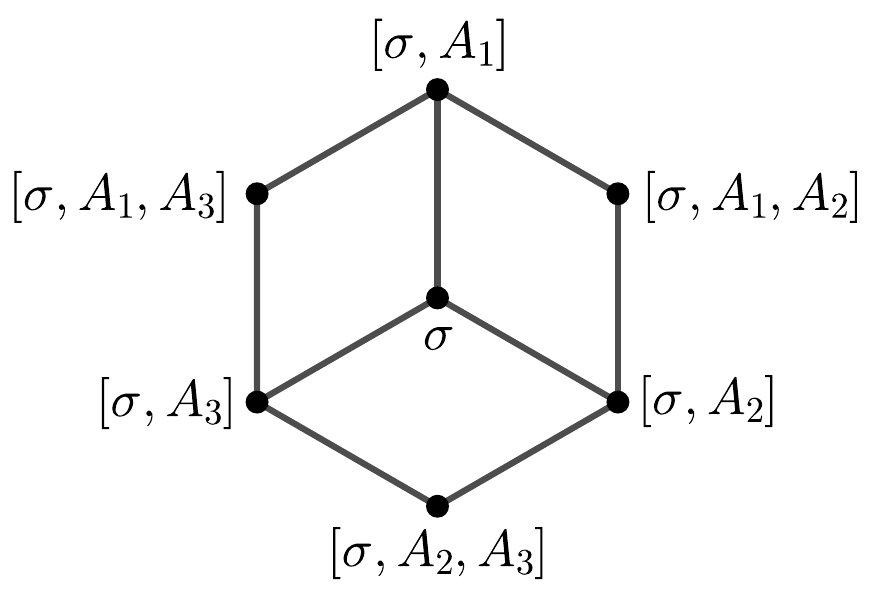}
\end{minipage}
\begin{minipage}{0.68\linewidth}
Fix a coherent selector $\sigma$, characters $\mathscr{A}_1,\mathscr{A}_2, \mathscr{A}_3 \in \mathfrak{C}$, and clades $A_1 \in \mathscr{A}_1$, $A_2 \in \mathscr{A}_2$, $A_3 \in \mathscr{A}_3$ such that our copy of $Q_3^-$ can be described as in the left figure. If the selector $[\sigma,A_1,A_2,A_3]$ is coherent, then it provides the missing vertex of the copy of the $3$-cube $Q_3$ containing $Q_3^-$ that we are looking for. 
\end{minipage}

\medskip \noindent
But, if $[\sigma,A_1,A_2,A_3]$ is not coherent, then one of $[\sigma, A_1,A_2]$, $[\sigma,A_1,A_3]$, $[\sigma,A_2,A_3]$ must not be coherent as well, which is not the case since we know that they already represent vertices in $\mathrm{QM}$.

\medskip \noindent
Finally, consider a copy of the house graph $H$ in $\mathrm{QM}$. 

\medskip \noindent
\begin{minipage}{0.3\linewidth}
\includegraphics[width=0.95\linewidth]{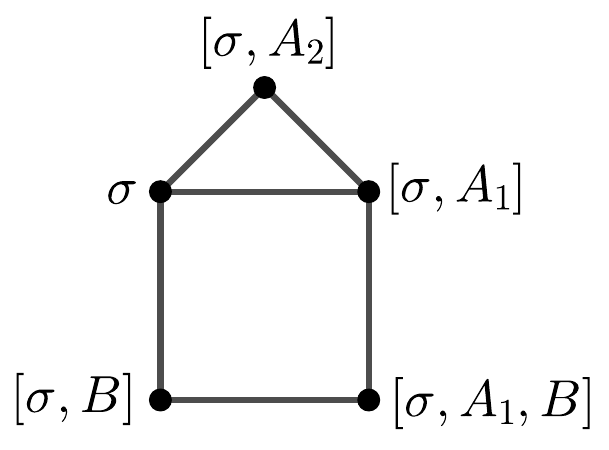}
\end{minipage}
\begin{minipage}{0.68\linewidth}
Fix a selector $\sigma$, characters $\mathscr{A},\mathscr{B} \in \mathfrak{C}$, and clades $A_1,A_2 \in \mathscr{A}$, $B\in \mathscr{B}$ sucht that our copy of $H$ can be described as in the left figure. If the selector $[\sigma,B,A_2]$ is coherent, then it provides the missing vertex of the copy of the $3$-prism $K_2 \times K_3$ that we are looking for.
\end{minipage}

\medskip \noindent
If $[\sigma, A_2,B]$ is not coherent, then $A_2$ and $B$ must have disjoint extensions, say $A_2^+$ and $B^+$. Because $[\sigma,B]$ is coherent, $\sigma(\mathscr{A})$ cannot have an extension disjoint from some extension of $B$, so $\sigma(\mathscr{A})$ cannot be contained in $A_2^+$, which amounts to saying that $\sigma(\mathscr{A})$ coincides with the unique clade of $\mathscr{A}$ not contained in $A_2^+$. Similarly, because $[\sigma,A_1,B]$ is coherent, $A_1$ must coincide with the unique clade of $\mathscr{A}$ not contained in $A_2^+$. Hence $\sigma(\mathscr{A})= A_1$. This implies that $\sigma=[\sigma,A_1]$, which is impossible since our copy of $H$ is induced. 

\medskip \noindent
Thus, we have proved that Theorem~\ref{LocalGlobal} applies, and we conclude, as desired, that $\mathrm{QM}$ is a quasi-median graph.
\end{proof}

\paragraph{Choosing a component.} According to Theorem~\ref{thm:QuasiCubulation}, the quasi-cubulation of a space with characters is a disjoint union of quasi-median graphs. Now, we need to choose one connected component in a rather canonical way. The motivation is that, given a group $G$ acting on the underlying set of a space with characters $(X,\mathfrak{C})$, if $\mathfrak{C}$ is $G$-invariant then we would like $G$ to act on our canonical component of the quasi-cubulation $\overline{\mathrm{QM}}(X,\mathfrak{C})$. In full generality, this is not possible. Nevertheless, under some reasonable assumption, there will be a canonical choice.

\begin{definition}
A space with characters $(X,\mathfrak{C})$ is \emph{locally finite} if the following condition is satisfied:
\begin{itemize}
	\item[] for all $x,y \in X$ and for all but finitely many characters $\mathscr{C}\in \mathfrak{C}$, $x$ and $y$ belong to the same clade of $\mathscr{C}$.
\end{itemize}
\end{definition}

\noindent
As we shall see, there is a natural choice for a component of the quasi-cubulation of a locally finite space with characters. It is based on the following family of selectors:

\begin{definition}
Let $(X,\mathfrak{C})$ be a space with characters. A selector $\sigma$ is \emph{pointed} if there exists some $x \in X$ such that $x \in \sigma(\mathscr{C})$ for every character $\mathscr{C} \in \mathfrak{C}$. 
\end{definition}

\noindent
Notice that pointed selectors are clearly coherent. Moreover, it follows from Theorem~\ref{thm:QuasiCubulation} that, if our space with characters is locally finite, then all the pointed selectors belong to the same component of the quasi-cubulation. This allows us to define:

\begin{definition}
Given a locally finite space with characters $(X,\mathfrak{C})$, we denote by $\mathrm{QM}(X,\mathfrak{C})$ the component of the quasi-cubulation $\overline{\mathrm{QM}}(X,\mathfrak{C})$ that contains the pointed selectors. 
\end{definition}

\noindent
Notice that, if a group $G$ acts on the underlying set of a locally finite space with characters $(X,\mathfrak{C})$ and that $\mathfrak{C}$ is $G$-invariant, then $G$ sends pointed selectors to pointed selectors, and consequently preserves $\mathrm{QM}(X,\mathfrak{C})$. Thus, we obtain, as desired, an action of $G$ on a quasi-median graph, namely $\mathrm{QM}(X,\mathfrak{C})$.

\medskip \noindent
We conclude this section by describing the hyperplanes and sectors in our quasi-median graphs. In our next statement, use the fact that, given a space with characters $(X,\mathfrak{C})$, the edges of $\overline{\mathrm{QM}}(X,\mathfrak{C})$ are naturally labelled by $\mathfrak{C}$. Indeed, an edge connect two coherent selectors that differ precisely on one character, and we can use this character as a label of our edge. 

\begin{prop}\label{prop:HypInQC}
Let $(X,\mathfrak{C})$ be a locally finite space with characters. The map
$$\Lambda : \left\{ \begin{array}{ccc} E(\mathrm{QM}(X,\mathfrak{C})) & \to & \mathfrak{C} \\ e & \mapsto & \text{label of } e \end{array} \right.$$
induces a bijection from the set of hyperplanes of $\mathrm{QM}(X,\mathfrak{C})$ to $\mathfrak{C}$. Moreover, for every character $\mathscr{C}\in \mathfrak{C}$, the sectors delimited by the unique hyperplane of $\mathrm{QM}(X,\mathfrak{C})$ labelled by $\mathscr{C}$ are the subgraphs induced by
$$\{ \xi \in V(\mathrm{QM}(X,\mathfrak{C})) \mid \xi(\mathscr{C})=C\} \text{ for } C \in \mathscr{C}.$$ 
\end{prop}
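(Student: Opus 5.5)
Here is the plan: first show that the labelling $\Lambda$ is constant on hyperplanes, then that it is injective and surjective on hyperplanes, and finally identify the sectors. Throughout I write $\mathrm{QM}$ for $\mathrm{QM}(X,\mathfrak{C})$.

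\textbf{Step 1: $\Lambda$ descends to hyperplanes.} Hyperplanes are generated by two relations: edges in a common $3$-cycle, and opposite edges of a $4$-cycle. For a $3$-cycle $\sigma,[\sigma,A],[\sigma,B]$, the two selectors $[\sigma,A]$ and $[\sigma,B]$ are adjacent, so (as in the $K_4^-$ argument of Theorem~\ref{thm:QuasiCubulation}) $A$ and $B$ lie in the same character. Hence all three edges carry the same label. For an induced $4$-cycle, the distance formula of Theorem~\ref{thm:QuasiCubulation} lets me write it as $\sigma,[\sigma,A],[\sigma,A,B],[\sigma,B]$ with $A,B$ in distinct characters. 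Opposite edges then visibly change the same character.

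\textbf{Step 2: surjectivity.} Given $\mathscr{C}$, pick $x\in X$, the pointed selector $\sigma_x$, and a clade $C\neq\sigma_x(\mathscr{C})$. I then check that $[\sigma_x,C]$ is coherent. For $\mathscr{D}\neq\mathscr{C}$, every extension of $\sigma_x(\mathscr{D})$ contains $x$. The extension of $C$ obtained by removing $\sigma_x(\mathscr{C})$ does not contain $x$, and the other extensions of $C$ contain $x$. So a disjoint pair could only arise from the extension $E=\bigcup_{D\neq\sigma_x(\mathscr{C})}D$ of $C$, which misses $x$, and an extension $F$ of $\sigma_x(\mathscr{D})$, which contains $x$. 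I expect this step to need more care: it is not automatic that $E\cap F\neq\emptyset$. A cleaner choice is to pick $y\in C$ and walk from $\sigma_x$ to $\sigma_y$; they differ on $\mathscr{C}$ and on finitely many characters by local finiteness. The geodesic given by Theorem~\ref{thm:QuasiCubulation} contains an edge changing $\mathscr{C}$, so the label $\mathscr{C}$ occurs in $\mathrm{QM}$.

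\textbf{Step 3: injectivity.} Take two edges with the same label $\mathscr{C}$, say $\{\mu,[\mu,A]\}$ and $\{\nu,[\nu,B]\}$. Using the sector description from Step 4, it suffices to join them by a path in which $\mathscr{C}$ is never modified. Then repeated use of Claim~\ref{claim:SameHypConstruct} gives a ladder or triangle, hence the same hyperplane. To build such a path, I connect $\mu$ to $\nu$ by a geodesic; by Theorem~\ref{thm:QuasiCubulation} it changes each character at most once. I argue that an edge labelled $\mathscr{C}$ on it can be pushed using the ladder construction of Claim~\ref{claim:SameHypConstruct}, exactly as in the simple-connectedness argument. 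Alternatively, I use directly that in a quasi-median graph the hyperplane $J$ of $\{\mu,[\mu,A]\}$ has sectors equal to the components of $\mathrm{QM}\backslash\backslash J$. If $J$ did not contain all $\mathscr{C}$-labelled edges, then two distinct hyperplanes, both labelled $\mathscr{C}$, would separate some pair of vertices, or a geodesic would cross two such edges. That would contradict the distance formula, which counts $\mathscr{C}$ only once.

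\textbf{Step 4: sectors.} For each $C\in\mathscr{C}$ let $V_C=\{\xi\mid\xi(\mathscr{C})=C\}$. Edges not labelled $\mathscr{C}$ stay inside a single $V_C$, so each sector of $J_{\mathscr{C}}$ lies in some $V_C$. Conversely, two selectors in $V_C$ are joined by a geodesic that never changes $\mathscr{C}$, by the distance formula, so each $V_C$ is connected in $\mathrm{QM}\backslash\backslash J_{\mathscr{C}}$. This uses injectivity, which ensures the removed edges are exactly the $\mathscr{C}$-labelled ones. Each nonempty $V_C$ is therefore a sector. Every $V_C$ is nonempty because it contains the pointed selector $\sigma_y$ for any $y\in C$, which lies in $\mathrm{QM}$. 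The main obstacle is Step 3, namely showing that all $\mathscr{C}$-labelled edges form a single hyperplane; I will rely on the distance formula together with the fact that geodesics cross each hyperplane at most once.
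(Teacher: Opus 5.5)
Your Steps 1, 2 and 4 follow the paper's proof. In Step 1, also note that a non-induced $4$-cycle has a diagonal and so splits into two $3$-cycles, which the triangle case already covers. In Step 4 you are more careful than the paper: you prove connectivity of each $V_C$ with the distance formula and nonemptiness with pointed selectors.

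The real difference is in Step 3. Your first option is the paper's argument, which applies Claim~\ref{claim:SameHypConstruct} along a geodesic joining the two edges. As written, however, ``pushing'' is unspecified, and the appeal to Step 4 is circular, since Step 4 uses injectivity. To make it precise, take a geodesic from $\mu$ to $\nu$. It contains at most one $\mathscr{C}$-labelled edge $e'$. On each side of $e'$ it changes only pairwise distinct characters other than $\mathscr{C}$. So Claim~\ref{claim:SameHypConstruct} applies on each side, and transitivity concludes. The paper's write-up tacitly assumes $e'$ does not occur.

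Your second option is a genuinely different and cleaner route. Since Theorem~\ref{thm:QuasiCubulation} already makes $\mathrm{QM}$ quasi-median, suppose two distinct hyperplanes $J_1 \neq J_2$ are both labelled $\mathscr{C}$. They separate some common pair of vertices. A geodesic between these vertices crosses both, so it contains two $\mathscr{C}$-labelled edges. This contradicts the distance formula, which forces a geodesic to change each character at most once.

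The only step you leave unjustified is the existence of that common pair. It takes three lines:
\begin{itemize}
\item Pick edges $\{a_1,b_1\} \in J_1$ and $\{a_2,b_2\} \in J_2$.
\item The vertices $a_1,b_1$ lie in one sector of $J_2$, because their edge is not in $J_2$. The vertices $a_2,b_2$ lie in distinct sectors of $J_2$. So some $y \in \{a_2,b_2\}$ is separated by $J_2$ from both $a_1$ and $b_1$.
\item The vertices $a_1,b_1$ lie in distinct sectors of $J_1$. So some $x \in \{a_1,b_1\}$ is separated from $y$ by $J_1$, and hence by both hyperplanes.
\end{itemize}

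As for what each approach buys: the paper's route is constructive and stays inside the selector graph. It reuses the ladder machinery built for simple connectedness and needs no global theory of hyperplanes. Your route relies on the quasi-median structure, obtained through the local-to-global criterion, and on the facts that geodesics cross every separating hyperplane and that edge endpoints are separated by their hyperplane. In exchange, it avoids all bookkeeping about which endpoints agree on $\mathscr{C}$.
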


\begin{proof}
First, let us verify that $\Lambda$ induces a well-defined map from the set of hyperplanes of $\mathrm{QM}(X,\mathfrak{C})$ to $\mathfrak{C}$. In other words:

\begin{claim}\label{claim:SameLabel}
In $\overline{\mathrm{QM}}(X,\mathfrak{C})$, the edges of a $3$-cycles are all labelled by the same character and two opposite edges in a $4$-cycle are labelled by the same character.
\end{claim}

\noindent
The vertex-set of a $3$-cycle can be written as $\{\sigma, [\sigma,A], [\sigma,B]\}$ for some coherent selector $\sigma$, characters $\mathscr{A},\mathscr{B}\in \mathfrak{C}$, and clades $A \in \mathscr{A}$, $B \in \mathscr{B}$. Notice that, since $[\sigma,A]$ and $[\sigma,B]$ represent adjacent vertices, as selectors they must disagree on exactly one character, hence $\mathscr{A}= \mathscr{B}$. Thus, the three edges of our $3$-cycle are labelled by $\mathscr{A}= \mathscr{B}$. This proves the first assertion of our claim.

\medskip \noindent
Next, consider a $4$-cycle in $\overline{\mathrm{QM}}(X,\mathfrak{C})$. If it is not induced, then it must be contained in a complete subgraph (as a consequence of the fact that a quasi-median graph has no induced copy of $K_4^-$) and we conclude from the previous observation that all its edges have the same label. So, from now on, assume that our $4$-cycle is induced, and in particular isometrically embedded. It follows from Theorem~\ref{thm:QuasiCubulation} that two opposite vertices in $K_{2,3}$ can be represented as $\sigma$ and $[\sigma, A,B]$ for two distinct characters $\mathscr{A},\mathscr{B} \in \mathfrak{C}$ and some clades $A \in \mathscr{A}, B \in \mathscr{B}$. The two remaining vertices of our cycle must differ just on a single character from both $\sigma$ and $[\sigma,A,B]$. Clearly, there are only two possibilities: $[\sigma,A]$ and $[\sigma,B]$. Thus, our $4$-cycle is $(\sigma, [\sigma,A], [\sigma,A,B], [\sigma,B])$. Indeed, two opposite edges are labelled by the same character. 

\medskip \noindent
This concludes the proof of Claim~\ref{claim:SameLabel}.

\medskip \noindent
Thus, we have proved that $\Lambda$ induces a map $\bar{\Lambda}$ from the set of hyperplanes of $\mathrm{QM}(X,\mathfrak{C})$ to $\mathfrak{C}$. Let us verify that $\bar{\Lambda}$ is injective. In other words:

\begin{claim}\label{claim:IFFsameHypLabel}
Two edges in a common component of $\overline{\mathrm{QM}}(X,\mathfrak{C})$ belong to the same hyperplane if and only if they have the same label.
\end{claim}

\noindent
We already know from Claim~\ref{claim:SameLabel} that two edges of the same hyperplane have the same label. So let $\{\sigma, [\sigma,A]\}$ and $\{\varsigma, [\varsigma, B]\}$ be two edges where $\sigma,\varsigma$ are two coherent selectors belonging to the same component of $\overline{\mathrm{QM}}(X,\mathfrak{C})$ and where $A,B$ are two clades from the same character $\mathscr{C} \in \mathfrak{C}$. Fix a geodesic
$$[\sigma,A], [\sigma,A,C_1], \ldots, [\sigma,A,C_1, \ldots, C_n] = [\varsigma,B]$$
connecting $[\sigma,A]$ to $[\varsigma,B]$, where $C_1 \in \mathscr{C}_1, \ldots, C_n \in \mathscr{C}_n$ are clades coming from characters $\mathscr{C}_1, \ldots, \mathscr{C}_n \in \mathfrak{C}$. As a consequence of Theorem~\ref{thm:QuasiCubulation}, the fact that our path is a geodesic amounts to saying that the characters $\mathscr{C},\mathscr{C}_1, \ldots, \mathscr{C}_n$ are pairwise distinct. Then, we deduce from Claim~\ref{claim:SameHypConstruct} that our edges $\{\sigma, [\sigma,A]\}$ and $\{\varsigma, [\varsigma, B]\}$ belong to the same hyperplane, as desired. This concludes the proof of Claim~\ref{claim:IFFsameHypLabel}.

\medskip \noindent
Finally, let us verify that $\bar{\Lambda}$ is surjective. So, given a character $\mathscr{C} \in \mathfrak{C}$, we need to find a hyperplane of $\mathrm{QM}(X,\mathfrak{C})$ labelled by $\mathscr{C}$. Let $x,y \in X$ be two points that belong to two distinct clades of $\mathscr{C}$. Consider a path connecting the selectors $\sigma_x$ and $\sigma_y$ respectively pointed at $x$ and $y$. Since $\sigma_x$ and $\sigma_y$ disagree on $\mathscr{C}$, along our path we can find two consecutive selectors that disagree on $\mathscr{C}$. The corresponding edge must be labelled by $\mathscr{C}$, and we conclude that the hyperplane containing this edges is labelled by $\mathscr{C}$. This concludes the proof of the fact that $\bar{\Lambda}$ is bijective.

\medskip \noindent
Given a character $\mathscr{C} \in \mathfrak{C}$, it remains to describe the sectors delimited by the hyperplane of $\mathrm{QM}(X,\mathfrak{C})$ labelled by $\mathscr{C}$. But it is clear that the subgraphs induced by the
$$\{ \xi \in V(\mathrm{QM}(X,\mathfrak{C})) \mid \xi(\mathscr{C})=C\} \text{ for } C \in \mathscr{C}$$ 
are the maximal subgraphs of $\mathrm{QM}(X,\mathfrak{C})$ that have no edge labelled by $\mathscr{C}$. Equivalently, according to Claim~\ref{claim:IFFsameHypLabel}, they are the maximal subgraphs not crossed by the hyperplane labelled by $\mathscr{C}$, which amounts to saying that they are the sectors delimited by our hyperplane labelled by $\mathscr{C}$. 
\end{proof}

\section{From coarsely separating subgroups}\label{section:FromCoarseSep}

\noindent
This section is dedicated to the proof of the remaining half of Theorem~\ref{thm:BigIntro}.

\begin{thm}\label{thm:FromCoarseSep}
Let $G$ be a finitely generated group, $H \leq G$ a subgroup, and $p \in \mathbb{N}$, $q \in \mathbb{N}_{\geq 2} \cup \{\omega\}$. If $e(G,H) \geq p$ and $\tilde{e}(G,H) \geq q$, then $G$ admits an $H$-monohyp action on a quasi-median graph such that a hyperplane stabilised by $H$ delimits $\geq q$ sectors and $\geq p$ $H$-orbits of sectors. 
\end{thm}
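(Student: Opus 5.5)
We follow the strategy announced in the introduction. Since $\tilde{e}(G,H) \geq q \geq 2$ and $e(G,H)\geq p$, Proposition~\ref{prop:CoarseSepCodim} and the definition of coends give an $L \geq 0$ such that $G\backslash H^{+L}$ has at least $q$ deep components (or, if $q=\omega$, as many as we like; we take a sequence of $L$'s if needed, or, in the $\omega+1$ case, a single $L$) falling into at least $p$ $H$-orbits. Enlarging $L$ only refines the deep components, so we fix one $L$ that works for both counts simultaneously. The case $q=\omega$ with only finitely many deep components for each $L$ is delicate; there we would instead realise $\geq k$ sectors for each finite $k$, or note that $\geq q$ sectors with $q=\omega$ only requires infinitely many. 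So we aim to produce infinitely many sectors, possibly by a direct union over $L$ of the partitions below. Let $\mathscr{C}$ consist of the deep components together with $H$ and $H_+\backslash H$, where $H_+$ is the complement of the union of deep components. Discard $H_+\backslash H$ if empty, or merge it with $H$ if we want $H$-stability to be clean. This is a partition of $G$ into $\geq 3$ nonempty pieces, and $H$ (acting on the left) permutes the deep components and fixes $H$ and $H_+\backslash H$, so $H \leq \mathrm{stab}(\mathscr{C})$. Set $\mathfrak{C} := \{g\mathscr{C} \mid g\in G\}$.

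Next we verify that $(G,\mathfrak{C})$ is a locally finite space with characters. Given $x,y\in G$ and a geodesic between them of length $\ell$, a character $g\mathscr{C}$ separates $x$ and $y$ only if the geodesic meets $gH^{+L}$, and boundary edges between distinct clades lie within $gH^{+L+1}$. Hence $g^{-1}x$ lies within $\ell+L+1$ of $H$, so $g$ ranges over finitely many cosets $gH$. Because $\mathrm{stab}(\mathscr{C}) \supseteq H$, this leaves only finitely many characters. We then take the $G$-action on $\mathrm{QM}(G,\mathfrak{C})$ (Theorem~\ref{thm:QuasiCubulation}). By Proposition~\ref{prop:HypInQC}, hyperplanes correspond to characters, so the action is hyperplane-transitive, and the hyperplane $J$ labelled by $\mathscr{C}$ has stabiliser $\mathrm{stab}(\mathscr{C})$. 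Its sectors correspond to the clades of $\mathscr{C}$, so it delimits $\geq q$ sectors, with $H$-orbits of sectors equal to $H$-orbits of clades, hence $\geq p$. We must check that $\mathrm{stab}(\mathscr{C})=H$ exactly. If $g\mathscr{C}=\mathscr{C}$, then $g$ preserves the set of deep components, which are exactly the $H$-infinite clades, so it preserves $H_+$ and hence coarsely $H$, i.e.\ $gH$ is at finite Hausdorff distance from $H$. That alone is insufficient, so we would instead make the clade $H$ rigid, e.g.\ by using the distinguished clade $H$ itself: $g$ maps clades to clades and $H$ is the unique clade containing $1$... This fails as well, so the cleanest fix is to replace $H$ by $\mathrm{stab}(\mathscr{C})$ only if needed. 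We expect that stabiliser to be a finite-index overgroup commensurable with $H$; the remedy is to refine the partition by splitting the clade $H$ into singletons-free labelled pieces, namely to record $H$ and its complement in $H_+$ as separate clades, which forces $g\cdot H = H$, i.e.\ $g\in H$.

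Finally, unbounded orbits follow since the deep clades are $H$-infinite, so pointed selectors at far points of a deep component are separated from $\sigma_1$ by many translates of $\mathscr{C}$. Alternatively, a bounded orbit would give a fixed prism (Proposition~\ref{prop:FixedPointMedian}), contradicting the infinitely many hyperplanes crossed by orbit geodesics. We then pass to a convex-minimal subaction via Proposition~\ref{prop:ConvexMinimal}. This last step is the main obstacle: restricting to $\mathrm{ConvHull}(G\cdot x)$ must not lose hyperplanes or sectors of $J$. Every sector of $J$ contains pointed selectors $\sigma_y$ for $y$ in the corresponding clade, and $\sigma_y=y\sigma_1$ lies in the orbit of $\sigma_1$. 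So for the basepoint $\sigma_1$ the orbit meets every sector of every hyperplane. Proposition~\ref{prop:WhenConvMin} then shows the action on $\mathrm{QM}(G,\mathfrak{C})$ is already convex-minimal, with no restriction needed. Finally, the Hamming-graph version (third item of Theorem~\ref{thm:BigIntro}) is not needed here.
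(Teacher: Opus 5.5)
Your construction is the paper's: the partition $\mathscr{C}$, the characters $\mathfrak{C}=\{g\mathscr{C}\}$, local finiteness, and hyperplanes and sectors read off via Proposition~\ref{prop:HypInQC}. Your local-finiteness argument is fine.

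\textbf{The last step is wrong.} Proposition~\ref{prop:WhenConvMin} requires that \emph{every} $G$-orbit meets every sector. Checking only the orbit of $\sigma_1$ shows merely that $\mathrm{ConvHull}(G\cdot\sigma_1)=\mathrm{QM}(G,\mathfrak{C})$. In fact $G\curvearrowright\mathrm{QM}(G,\mathfrak{C})$ is in general not convex-minimal.
\begin{itemize}
\item Take a deep component $C$ and set $\sigma:=[\sigma_1,C]$. This selector is coherent. Disjoint extensions of $C$ and $\sigma_1(g\mathscr{C})$, with $g\notin H$, would force a clade of $g\mathscr{C}$ to contain $G\backslash H$. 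Then some translate $gC'$ of a deep component would lie in $H$, which is impossible since $gC'$ contains a ball of radius $1$.
\item No translate $g\sigma$ lies in the sector $\{\xi\mid\xi(\mathscr{C})=H\}$, since that would need $\sigma(g^{-1}\mathscr{C})=g^{-1}H$. For $g\in H$ this value is $C$. For $g\notin H$ it is the clade containing $1$, and $1\notin g^{-1}H$.
\end{itemize}
So you must restrict to $\mathrm{ConvHull}(G\cdot x)$ for the unknown $x$ given by Proposition~\ref{prop:ConvexMinimal}.

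The missing idea is the paper's Claim~\ref{claim:ConvexMin}: every \emph{deep} sector meets every $G$-orbit. A deep component $C$ contains infinitely many $g_i$ that are pairwise distinct modulo $H$. Hence the characters $g_i^{-1}\mathscr{C}$ are pairwise distinct, with $\sigma_1(g_i^{-1}\mathscr{C})=g_i^{-1}C$. Any vertex $\sigma$ of $\mathrm{QM}$ differs from $\sigma_1$ on only finitely many characters, so $g_i\sigma$ lies in the sector for some $i$.

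This claim does two jobs:
\begin{itemize}
\item It guarantees that $J$ still delimits $\geq q$ sectors and $\geq p$ $H$-orbits of sectors after restricting.
\item It excludes a $G$-fixed vertex in the unbounded-orbit step. A fixed prism that is not a vertex is crossed by every hyperplane, forcing $[G:H]<\infty$, so only the vertex case needs the claim.
\end{itemize}
Neither of your two sketches for unbounded orbits proves this.

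\textbf{Your computation of $\mathrm{stab}(\mathscr{C})$ is also incomplete.} Keeping $H$ and $H_+\backslash H$ as separate clades does not by itself force $gH=H$. An element of $\mathrm{stab}(\mathscr{C})$ only permutes clades, and could a priori move $H$ onto another clade. The paper takes $L\geq 3$ precisely to rule this out. Then $H$ is the only clade containing no ball of radius $1$: deep components and $H_+\backslash H$ contain such balls, while $H$ would only if $H=G$. Left translations preserve this property, so $gH=H$ and $g\in H$. Merging $H_+\backslash H$ into $H$, as you suggest at one point, would break this. You need $\mathrm{stab}(J)=H$ exactly for the action to be $H$-monohyp.

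\textbf{On $q=\omega$:} your remedies do not yield one action. A union over several $L$, or different actions for different finite $k$, is not a single action with infinitely many sectors. The paper simply fixes one $L$ for which $G\backslash H^{+L}$ has $\geq q$ deep components. For $q=\omega$ this tacitly means infinitely many deep components at a single scale.
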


\begin{proof}
We know from the definition of the number of coends and from Proposition~\ref{prop:CoarseSepCodim} that there exists $L \geq 3$ such that $G \backslash H^{+L}$ contains $\geq q$ deep connected components and $\geq p$ $H$-orbits of deep connected components. Let $H_+$ denote the complement of the union of all the deep components of $G\backslash H^{+L}$. Notice that $H_+$ is $H$-finite, and consequently must be contained in some neighbourhood of $H$, say $H^{+R}$ for some $R \geq L$. Set
$$\mathscr{C}:= \{\text{deep component of } G \backslash H^{+L}\} \cup \{ H, H_+\backslash H \}.$$
Notice that:

\begin{claim}\label{claim:StabC}
The $G$-stabiliser of $\mathscr{C}$ is $H$.
\end{claim}

\noindent
Because $L \geq 3$, $H$ is the only element of $\mathscr{C}$ that, as a subset of $G$, does not contain any ball of radius $1$. (Notice that, if $H$ contains a ball of radius $1$, then it must contain the generating set given by the word-metric of $G$ under consideration, hence $H=G$, contradicing the assumption $\tilde{e}(G,H) \geq q \geq 2$.) Consequently, $\mathrm{stab}(\mathscr{C})$ must stabilise $H$, i.e.\ $\mathrm{stab}(\mathscr{C}) \leq H$. Conversely, it is clear that $H$ stabilises $\mathscr{C}$. This proves Claim~\ref{claim:StabC}.

\medskip \noindent
Now, we set $\mathfrak{C}:= \{ g \mathscr{C} \mid g \in G\}$. Following Section~\ref{section:QuasiCubulation}, our goal is to show that $(G,\mathfrak{C})$ is a locally finite space with characters and that the natural action of $G$ on a convex-minimal subgraph of $\mathrm{QM}(G,\mathfrak{C})$ satisfies the conditions we are looking for. We start by verifying that:

\begin{claim}\label{Claim:LocallyFinite}
The space with characters $(G,\mathfrak{C})$ is locally finite. 
\end{claim}

\noindent
Let $x,y \in G$ be two points and let $g_1 \mathscr{C},\ldots, g_n \mathscr{C}$ be characters for which $x$ and $y$ belong to two distinct clades. Given an index $1 \leq i \leq n$, we can find two distinct $C_1,C_2 \in \mathscr{C}$ such that $x \in g_iC_1$ and $y \in g_iC_2$. If $C_1$ or $C_2$ belongs to $\{H, H_+\backslash H\}$, then it is clear that $g_iH^{+R}$ intersects the ball $B:=B(x,d(x,y))$. Otherwise, $C_1$ and $C_2$ are two distinct deep components of $G\backslash H^{+L}$. Since $g_iH^{+L}$ separates $g_iC_1$ and $g_iC_2$, some point along a geodesic connecting $x$ to $y$, which necessarily belong to $B$, will belong to $g_iH^{+L}$ as well, hence $B \cap g_iH^{+L} \neq \emptyset$. In any case, we know that $g_iH^{+R}$ intersects $B$. Thus, the cosets $g_1H, \ldots, g_nH$ all intersect $B^{+R}$. By local finiteness of $G$, we know that, if $n$ is too large, then there must exist $i \neq j$ such that $g_iH \cap g_jH \neq \emptyset$, which implies that $g_iH=g_jH$, and finally $g_i \mathscr{C}=g_j \mathscr{C}$ as a consequence of Claim~\ref{claim:StabC}. This concludes the proof of Claim~\ref{Claim:LocallyFinite}.

\medskip \noindent
Since $(G,\mathscr{C})$ is locally finite, and because $\mathfrak{C}$ is $G$-invariant, we obtain a quasi-median graph $\mathrm{QM}:= \mathrm{QM}(G,\mathscr{C})$ on which $G$ acts, as defined in Section~\ref{section:QuasiCubulation}. According to Proposition~\ref{prop:ConvexMinimal}, $\mathrm{QM}$ contains a convex subgraph $\mathrm{QM}_-$ on which $G$ acts convex-minimally. Our goal now is to verify that the action $G \curvearrowright \mathrm{QM}_-$ satisfies the conditions of our theorem. 

\medskip \noindent
We know from Proposition~\ref{prop:HypInQC} that every sector $S$ in $\mathrm{QM}$ can be described as
$$V(S)= \{ \xi \in V(\mathrm{QM}) \mid \xi(g \mathscr{C}) = gC\}$$
for some character $g \mathscr{C}$ and some clade $C \in \mathscr{C}$. Here, $C$ is either $H$, or $H_+\backslash H$, or a deep component of $G\backslash H^{+L}$. In the latter case, we refer to $S$ as a \emph{deep sector}. 

\begin{claim}\label{claim:ConvexMin}
Every deep sector of $\mathrm{QM}$ intersects every $G$-orbit. 
\end{claim}

\noindent
Let $\sigma \in V(\mathrm{QM})$ be an arbitrary selector and $S$ a deep sector of $\mathrm{QM}$. Fix an element $g \in G$ and a deep component $C$ of $G \backslash H^{+L}$ such that
$$V(S)= \{ \xi \in V(\mathrm{QM}) \mid \xi(g \mathscr{C}) = g(C \cup H)\}.$$
Up to translating $S$, we assume for convenience that $g=1$. Because $C$ is deep, we can find infinitely many $g_1,g_2, \ldots \in C$ that are pairwise distinct modulo $H$. According to Claim~\ref{claim:StabC}, this amounts to saying that the characters $g_1^{-1} \mathscr{C}, g_2^{-1} \mathscr{C}, \ldots$ are pairwise distinct. Our goal is to show that there exists $i \geq 1$ such that $\sigma(g_i^{-1} \mathscr{C})= g_i^{-1}C$.

\medskip \noindent
For every $i \geq 1$, we know that $1$ belongs to the clade $g_i^{-1}C$ of the character $g_i^{-1}\mathscr{C}$. Consequently, if $\sigma_1$ denotes the selector pointed at $1$, then $\sigma_1(g_i^{-1}\mathscr{C})=g_i^{-1}C$ for every $i \geq 1$. But, by definition of $\mathrm{QM}$, we know that $\sigma$ and $\sigma_1$ differ only on finitely many characters, hence $\sigma(g_i^{-1}\mathscr{C})= \sigma_1(g_i^{-1}\mathscr{C})=g_i^{-1}C$ for all but finitely many $i \geq 1$.

\medskip \noindent
Thus, we have proved that there exists $i \geq 1$ such that $(g_i\sigma)(\mathscr{C}) = C$, hence $g_i \sigma \in V(S)$. This concludes the proof of Claim~\ref{claim:ConvexMin}.

\begin{claim}\label{claim:UnboundedOrbits}
The action $G \curvearrowright \mathrm{QM}_-$ has unbounded orbits.
\end{claim}

\noindent
If our action has bounded orbits, it follows from Proposition~\ref{prop:FixedPointMedian} that $G$ stabilises a prism $P$ in $\mathrm{QM}_-$. 

\medskip \noindent
First, assume that $P$ is not a single vertex. Because we know from  Proposition~\ref{prop:HypInQC} that $G \curvearrowright \mathrm{QM}$ has a single orbit of hyperplanes (as $\mathscr{C}$ has single orbit of characters), this implies that all the hyperplanes of $\mathrm{QM}$ cross $P$. A fortiori, $\mathrm{QM}$ must have only finitely many hyperplanes. But we also know from Proposition~\ref{prop:HypInQC} that hyperplane-stabilisers are conjugates of $H$ (as $\mathrm{stab}(\mathscr{C})=H$ according to Claim~\ref{claim:StabC}), so it implies that $H$ must have finite index in $G$, which is a contradiction with the assumption $\tilde{e}(G,H) \geq q \geq 2$. 

\medskip \noindent
Next, assume that $P$ is reduced to a single vertex. Notice that every hyperplane of $\mathrm{QM}$ delimits $\geq q \geq 2$ deep sectors, which implies, as a consequence of Claim~\ref{claim:ConvexMin}, that the orbit $G \cdot P$ must intersect two disjoint sectors. This contradicts the fact that $P$ is a vertex fixed by $G$. 

\medskip \noindent
This concludes the proof of Claim~\ref{claim:UnboundedOrbits}.

\medskip \noindent
We are finally ready to conclude the proof of our theorem. The action $G \curvearrowright \mathrm{QM}_-$ is convex-minimal by definition, and it has unbounded orbits according to Claim~\ref{claim:UnboundedOrbits}. As already mentioned, it follows from Proposition~\ref{prop:HypInQC} that $G$ acts on $\mathrm{QM}$ with a single orbit of hyperplanes and with hyperplane-stabilisers conjugate to $H$. Necessarily, the same holds of the action on $\mathrm{QM}_-$ (since the latter is not reduced to a single vertex). Thus, the action $G \curvearrowright \mathrm{QM}_-$ is $H$-monohyp. Then, given a hyperplane $J$ in $\mathrm{QM}_-$ stabilised by $H$, it follows from the definition of $\mathscr{C}$ and Proposition~\ref{prop:HypInQC} that $J$ delimits $\geq q$ deep sectors and $\geq p$ $H$-orbits of deep sectors in $\mathrm{QM}$. We conclude from Claim~\ref{claim:ConvexMin} that $J$ delimits $\geq q$ sectors and $\geq p$ $H$-orbits of sectors in $\mathrm{QM}_-$. 
\end{proof}

\section{Proof of Theorem~\ref{thm:BigIntro} and its consequences}\label{section:Proof}

\begin{proof}[Proof of Theorem~\ref{thm:BigIntro}.]
Our theorem is an immediate consequence of Theorems~\ref{thm:FromQM} and~\ref{thm:FromCoarseSep}. 
\end{proof}

\begin{proof}[Proof of Corollary~\ref{cor:BigIntro}.]
The first item follows from Theorem~\ref{thm:BigIntro} with $q=2$. The third item follows from Theorem~\ref{thm:BigIntro} with $p=1$. The fourth item follows from Theorem~\ref{thm:BigIntro} and from Proposition~\ref{prop:ConvexMinimal}.

\medskip \noindent
The second item requires more work. Assume that $H$ is a codimension-one subgroup. We know from Theorem~\ref{thm:BigIntro} that $G$ admits an $H$-monohyp action on some quasi-median graph $X$ such that a hyperplane stabilised by $H$ delimits at least two $H$-orbits of sectors. Fix a hyperplane $J$ stabilise by $H$ and let $S_0$ be a sector delimited by $J$. Set $S_0^+:= \bigcup_{h \in H} hS_0$. Notice that, because $J$ delimits at least two $H$-orbits of sectors, $S_0^+$ is a proper subgraph of $X$. Thus, we can endow $V(X)$ with a collection of characters by setting
$$\mathscr{C}:= \{ V(S_0^+), V(X\backslash S_0^+) \} \text{ and } \mathfrak{C}:= \{ g \mathscr{C} \mid g \in G\}.$$
Notice that $\mathrm{stab}(\mathscr{C})=H$, which implies that $\mathfrak{C}$ is naturally in bijection with the set of hyperplanes of $X$. Also, notice that the space with characters $(V(X),\mathfrak{C})$ is locally finite. Indeed, for all $x,y \in V(X)$, if $g_1\mathscr{C}, \ldots, g_n\mathscr{C}$ are characters for which $x$ and $y$ belong to distinct clades, then $g_1J, \ldots, g_nJ$ are hyperplanes of $X$ separating $x$ and $y$. Consequently, there exist at most $d_X(x,y)< \infty$ characters for which $x$ and $y$ do not belong to the same clade. 

\medskip \noindent
Thus, we can consider the action of $G$ on the quasi-cubulation $\mathrm{M}:= \mathrm{QM}(V(X),\mathfrak{C})$. Because of the bijection between $\mathfrak{C}$ and the set of hyperplanes of $X$ already mentioned, it follows from the fact the $G \curvearrowright X$ is $H$-monohyp and from Proposition~\ref{prop:HypInQC} that $\mathrm{M}$ has a single $G$-orbit of hyperplanes and that hyperplane-stabilisers are conjugates of $H$. Because $H$ fixes each clade of $\mathscr{C}$, we also deduce from Proposition~\ref{prop:HypInQC} that $H$ does not contain a hyperplane-inversion. Finally, because every character has two clades, it follows again from Proposition~\ref{prop:HypInQC} that every hyperplane of $\mathrm{M}$ delimits exactly two sectors, which amounts to saying, according to Lemma~\ref{lem:WhenMedian}, that $\mathrm{M}$ is a median graph. Thus, it only remains to verify that $G$ acts on $\mathrm{M}$ with unbounded orbits.

\medskip \noindent
If $G \curvearrowright \mathrm{M}$ has bounded orbits, then, according to Proposition~\ref{prop:FixedPointMedian}, $G$ fixes some vertex, say $\sigma$. Consequently, the intersection
$$I:= \bigcap\limits_{\mathscr{D} \in \mathfrak{C}} \sigma(\mathscr{D}) \subsetneq X$$
is $G$-invariant, and we know from Proposition~\ref{prop:MultiSectorConvex} that it is convex. If we show that $I$ is non-empty, then this will contradict the convex-minimality of $G \curvearrowright X$. 

\medskip \noindent
Fix an arbitrary vertex $x \in V(X)$. Since $\sigma$ is a vertex of $\mathrm{M}$, necessarily $\sigma$ and the selector $\sigma_x$ pointed at $x$ differ on only finitely many characters, say $g_1\mathscr{C}, \ldots, g_n\mathscr{C}$. Because the $\sigma(g_i\mathscr{C})$ pairwise intersect, so do their gated hulls $\overline{\sigma(g_i\mathscr{C})}$, hence 
$$\bar{I}:= \bigcap\limits_{i=1}^n \overline{\sigma(g_i\mathscr{C})} \neq \emptyset$$
as a consequence of the Helly property satisfied by gated subsets. Let $y \in \bar{I}$ denote the gate of $x$ in $\bar{I}$. According to Proposition~\ref{prop:GatedHullMulti}, for every $1 \leq i \leq n$, either $y$ belongs to $\sigma(g_i\mathscr{C})$ or it belongs to $\overline{\sigma(g_i\mathscr{C})} \backslash \sigma(g_i\mathscr{C}) \subset N(g_i \mathscr{C})$. Up to reindexing our characters, assume that $y$ belongs to $N(g_iJ) \backslash \sigma(g_i \mathscr{C})$ for every $1 \leq i \leq p$ and that $y$ belongs to $\sigma(g_i \mathscr{C})$ for every $p<i \leq n$. 

\medskip \noindent
Notice that $g_1J, \ldots, g_pJ$ are pairwise transverse. Indeed, if $g_rJ$ and $g_sJ$ are not transverse for some $1 \leq r,s \leq p$, then $y$ lies between $g_rJ$ and $g_sJ$, so $\sigma(g_r\mathscr{C})$ and $\sigma(g_s\mathscr{C})$ must contained in the cosectors respectively delimited by $g_rJ$ and $g_sJ$ that do not contain $y$, hence $\sigma(g_r\mathscr{C}) \cap \sigma(g_s \mathscr{C})= \emptyset$, a contradiction. 

\medskip \noindent
Thus, we can apply Lemma~\ref{lem:ConstructPrism} and find a prism $P$ containing $y$ whose hyperplanes are exactly $g_1J, \ldots, g_pJ$. In $P$, we can clearly a vertex $z$ in $\sigma(g_1\mathscr{C}) \cap \cdots \cap \sigma(g_p\mathscr{C})$. We claim that $z$ belongs to our intersection $I$. This amounts to saying that $\sigma$ coincides with the selector $\sigma_z$ pointed at $z$. 

\medskip \noindent
First, let us verify that $\sigma_x \triangle \sigma_y = \{g_{p+1}\mathscr{C}, \ldots, g_n\mathscr{C}\}$. Let $g\mathscr{C} \in \sigma_x \triangle \sigma_y$. The hyperplane $gJ$ must separate $x$ and $y$. It follows from Lemma~\ref{lem:SepProj} that $gJ$ separates $x$ and $\bar{I}$, and then from Lemma~\ref{lem:InclusionOne} that $gJ$ separates $x$ and $\overline{\sigma(g_i\mathscr{C})}$ for some $1 \leq i \leq n$. Thus, $\sigma_y(g \mathscr{C})$ contains $\sigma(g_i \mathscr{C})$, which implies that $\sigma(g \mathscr{C})= \sigma_y(g\mathscr{C})$. Since $\sigma_y(g \mathscr{C}) \neq \sigma_x(g \mathscr{C})$, necessarily $g \mathscr{C} \in \sigma \triangle \sigma_x$. Moreover, since $y \in \sigma_y(g\mathscr{C})= \sigma(g\mathscr{C})$, we must have $g \mathscr{C} \in \{g_{p+1}\mathscr{C}, \ldots, g_n \mathscr{C} \}$. Conversely, Let $p+1 \leq i \leq n$. We know that $y \in \sigma(g_i \mathscr{C})$, hence $\sigma_y(g_i\mathscr{C})= \sigma(g_i \mathscr{C}) \neq \sigma_x(g_i\mathscr{C})$, and finally $g_i \mathscr{C} \in \sigma_x \triangle \sigma_y$, as desired.

\medskip \noindent
Second, it is clear by construction of $z$ that $\sigma_y \triangle \sigma_z = \{ g_1 \mathscr{C}, \ldots, g_p \mathscr{C}\}$ since, when passing from $y$ to $z$, each $\sigma_y(g_i \mathscr{C})$ is changed into $\sigma(g_i \mathscr{C})$. 

\medskip \noindent
We deduce from the previous two observations that
$$\begin{array}{lcl} \sigma \triangle \sigma_z & = & (\sigma \triangle \sigma_x) \triangle (\sigma_x \triangle \sigma_y) \triangle ( \sigma_y \triangle \sigma_z) \\ \\ & = & \{g_1\mathscr{C}, \ldots, g_n \mathscr{C}\} \triangle \{g_{p+1} \mathscr{C}, \ldots, g_n \mathscr{C}\} \triangle \{g_1 \mathscr{C} , \ldots, g_p\mathscr{C}\} = \emptyset \end{array}$$
which proves that $\sigma=\sigma_z$, as desired. 
\end{proof}

\section{Codimensionality-one versus coarse separation}\label{section:CodimVSsep}

\noindent
This section is dedicated to the proof of Theorem~\ref{thm:IntroSep} from the introduction. The fourth first items will be rather straightforward. The fifth item is more interesting, so we prove it separately. It is a consequence of the following construction:

\begin{prop}\label{prop:Example}
Let $G:= H \ast_Z S$ be an amalgam of two one-ended finitely generated groups. Assume that:
\begin{itemize}
	\item $Z$ is infinite and almost malnormal in $H$;
	\item $H$ is not coarsely separated by $Z$;
	\item $S$ has no proper finite-index subgroup.
\end{itemize}
Then, $S$ is a coarsely separating subgroup of $G$ that is not virtually a codimension-one subgroup of $G$. 
\end{prop}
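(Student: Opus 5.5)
The argument has two halves. First I show that $S$ coarsely separates $G$ by counting deep components in the Bass--Serre picture. Then I use the absence of proper finite-index subgroups in $S$ to reduce ``virtually codimension-one'' to ``$e(G,S)\geq 2$'', and show that $e(G,S)=1$ by showing that the Schreier graph is one-ended.

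\textbf{Coarse separation.} Let $T$ be the Bass--Serre tree of $G=H\ast_Z S$. The Cayley graph of $G$ is quasi-isometric to a tree of spaces: vertex spaces are cosets $gH$ and $gS$, and edge spaces are cosets $gZ$. Fix the vertex $v_S$ of $T$ stabilised by $S$. Its incident edges correspond to the cosets $sZ$, $s\in S$, and each such edge leads to a branch of $T$. Removing a large neighbourhood $S^{+L}$ from $G$ cuts off each branch. The branch through $sZ$ begins with the vertex space $sH$, glued to $sS$ along $sZ$. Since $Z$ does not coarsely separate $H$ and $H$ is one-ended, $sH\setminus sZ^{+L'}$ has exactly one deep component. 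Pieces further down the tree are attached to this component through other cosets of $Z$ in $sH$. Almost malnormality of $Z$ in $H$ ensures that these cosets diverge from $sZ$, so they eventually leave every neighbourhood of $S$. Hence each branch contributes one deep component of $G\setminus S^{+L}$, and it is deep because $H$ is infinite and not contained in a neighbourhood of $Z$. As $Z$ has infinite index in $S$ (here I use that $S$ is one-ended and $Z$ is proper), there are infinitely many branches. This gives $\tilde e(G,S)=\omega+1\ge 2$, so $S$ is coarsely separating.

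\textbf{Not virtually codimension-one.} Since $S$ has no proper finite-index subgroup, it suffices to prove $e(G,S)=1$. By Proposition~\ref{prop:CoarseSepCodim}, this means that for every $L$ the deep components of $G\setminus S^{+L}$ form a single $S$-orbit. By the description above, these components are indexed by the edges of $T$ at $v_S$, that is, by $S/Z$, on which $S$ acts transitively. So there is one $S$-orbit. Equivalently, $\mathrm{Sch}(G,S)$ is quasi-isometric to one copy of $H/Z$-type branches hanging off a compact piece, and it is one-ended because $H$ is not coarsely separated by $Z$.

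\textbf{Main obstacle.} The delicate step is the precise identification of deep components with branches for every $L$. Two things can go wrong. Two branches might merge through the vertex space $sH$ far from $S$, or a branch might split into several deep pieces. The first is prevented by the tree structure, since branches meet only inside $S^{+L}$ up to bounded error. For the second, I would use almost malnormality: $sH\cap S^{+L}$ lies in a bounded neighbourhood of $sZ$ inside $sH$, because $S\cap sHs^{-1}$ is essentially $sZs^{-1}$. Then the assumption that $Z$ does not coarsely separate $H$ controls the components inside $sH$, and induction along the tree handles the deeper vertex spaces.
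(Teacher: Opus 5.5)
Your proposal is correct in outline and follows the paper's route. You build the tree of spaces over the Bass--Serre tree, use almost malnormality so that edge spaces not attached to $S$ have finite coarse intersection with $S$, and use non-separation of $H$ by $Z$ together with one-endedness so that $S$ coarsely separates no vertex space, which gives exactly one deep component per branch at $v_S$. Transitivity of $S$ on $S/Z$ then gives $\tilde{e}(G,S)=\omega+1$ and $e(G,S)=1$, and the no-finite-index hypothesis removes ``virtually''.

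Three small corrections:
\begin{itemize}
	\item $[S:Z]=\infty$ comes from $S$ having no proper finite-index subgroup. One-endedness of $S$ does not give this; it is actually needed for the deeper $S$-vertex spaces.
	\item The inclusion $sH\cap S^{+L}\subset sZ^{+L}$ comes from $sZ$ separating $S$ from $sH$ (Lemma~\ref{lem:EasySep}), not from almost malnormality.
	\item Your ``induction along the tree'' is exactly the step the paper delegates to \cite[Proposition~4.6]{CoarseSepRAAG}. A plain induction would need bounds uniform over the infinitely many vertex spaces meeting $S^{+L}$.
\end{itemize}
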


\noindent
For instance, we can take for $S$ a simple group (e.g.\ Thompson's groups $T$ and $V$), for $H$ one of the many acylindrically hyperbolic groups not coarsely separable by a family of subexponential growth exhibited in \cite{CoarseSep, CoarseSepHyp, CoarseSepRAAG} (e.g.\ a uniform lattice in $\mathbb{H}^n$ for $n \geq 3$), and for $Z$ a cyclic subgroup $H$ generated by a Morse element. 

\medskip \noindent
During the proof of our proposition, the following elementary observation will be useful:

\begin{lemma}\label{lem:EasySep}
Let $X$ be a graph and $A,B,C \subset V(X)$ three subsets of vertices. If $B$ separates $A$ and $C$, then $A^{+L} \cap C \subset B^{+L}$ for every $L \geq 0$.
\end{lemma}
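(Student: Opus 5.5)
The idea is to take a vertex of $A^{+L}\cap C$ and follow a shortest path from it back to $A$. Recall that ``$B$ separates $A$ and $C$'' means that every path in $X$ joining a vertex of $A$ to a vertex of $C$ passes through a vertex of $B$. Equivalently, no connected component of $X\backslash B$ meets both $A$ and $C$.

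Fix $L \geq 0$ and a vertex $x \in A^{+L} \cap C$. By definition of the neighbourhood $A^{+L}$, there exists $a \in A$ with $d(x,a) \leq L$. Choose a geodesic $x=z_0, z_1, \ldots, z_k=a$ with $k \leq L$. This is a path from a vertex of $C$ to a vertex of $A$, so by the separation hypothesis some vertex $z_j$ belongs to $B$. Consequently
$$d(x,B) \leq d(x,z_j) = j \leq k \leq L,$$
so $x \in B^{+L}$. Since $x$ was an arbitrary vertex of $A^{+L}\cap C$, this proves $A^{+L} \cap C \subset B^{+L}$.

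The only subtle point is the boundary case where the path meets $B$ at an endpoint, namely $x\in B$ or $a \in B$. Both cases are harmless. The distance $d(x,z_j)$ is at most $L$ for every index $0\leq j \leq k$, including $j=0$ and $j=k$, so no separate argument is needed. If the paper's notion of separation instead means that $A$ and $C$ lie in distinct components of $X\backslash B$ (with $A,C$ possibly meeting $B$), the same argument applies verbatim: either $x\in B$, or the path from $x$ to $a$ must leave the component of $X \backslash B$ containing $x$, and it can only do so through a vertex of $B$.
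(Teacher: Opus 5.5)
Your argument is correct and is essentially the paper's proof: choose $a \in A$ with $d(x,a)\le L$, observe that a geodesic from $x$ to $a$ must meet $B$ at some vertex $b$, and conclude $x\in B^{+L}$ from $d(x,b)\le d(x,a)\le L$. Your remarks on the endpoint cases and on the alternative reading of ``separates'' are correct, and the paper's proof does not need them.
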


\begin{proof}
Fix an $L \geq 0$. Given a vertex $c \in A^{+L} \cap C$, there exists a vertex $a \in A$ satisfying $d(a,c) \leq L$. A geodesic $\gamma$ connecting $a$ to $c$ must intersect $B$. Let $b$ be a vertex in $\gamma \cap B$. Since $d(b,c) \leq d(a,c) \leq L$, we conclude that $c \in B^{+L}$.
\end{proof}

\begin{proof}[Proof of Proposition~\ref{prop:Example}.]
Let $T$ denote the Bass-Serre tree of $G$ associated to its amalgam decompotition. We think of $G$ as a tree of spaces over $T$ whose edge-spaces are the cosets of $Z$ and whose vertex-spaces are the cosets of $H$ and $S$. 

\begin{claim}\label{claim:Example}
For all $g \in G$ and $L \geq 0$, if $gZ \cap S^{+L}$ is infinite, then $g \in S$.
\end{claim}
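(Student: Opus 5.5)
We argue with the Bass--Serre tree $T$ of $G=H\ast_Z S$, viewing $G$ as a tree of spaces: vertex-spaces are the cosets $kH$ and $kS$, edge-spaces are the cosets $kZ$. An edge-space $kZ$ is the intersection of its two adjacent vertex-spaces. The plan is to show that if $gZ$ is not an edge-space adjacent to $S$, then $gZ$ stays boundedly close to $S$ only along a finite set. The key step is a reduction to the coarse intersection of two distinct edge-spaces inside a single coset of $H$, where almost malnormality of $Z$ in $H$ applies.

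\textbf{Step 1 (locating $gZ$ relative to $S$).} Consider the edge $e_g$ of $T$ corresponding to $gZ$ and the vertex $v_S$ corresponding to $S$. If $e_g$ is adjacent to $v_S$, then $gZ=sZ$ for some $s\in S$, so $g\in sZ\subset S$ and we are done. Otherwise, take the geodesic in $T$ from $v_S$ to $e_g$. Its first edge $e_1$ corresponds to some $s_1Z$ with $s_1\in S$. Its second vertex $v_1$ is an $H$-type vertex $s_1H$. At $v_1$ the geodesic leaves through an edge $e_2\neq e_1$, whose edge-space is $s_1hZ$ for some $h\in H\setminus Z$.

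\textbf{Step 2 (funnelling through edge-spaces).} In the tree of spaces, a vertex-space (or edge-space) lying deeper in $T$ is separated from $S$ by every edge-space along the geodesic. So by Lemma~\ref{lem:EasySep}, applied with $A=S$, $B=s_1hZ$ and $C=gZ$ (or $C$ the relevant vertex-space containing $gZ$), every point of $gZ\cap S^{+L}$ lies in $(s_1hZ)^{+L}$. Combining this with the separation of $S$ from $s_1H$ by $s_1Z$, every point of $gZ\cap S^{+L}$ is within $L$ of a point of $s_1hZ$ that is itself within roughly $2L$ of $s_1Z$. Translating by $s_1^{-1}$, infiniteness of $gZ\cap S^{+L}$ would force $hZ\cap Z^{+2L}$ to be infinite. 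Here one should use that each edge-space meets $S^{+L}$ only near the adjacent edge, and keep track of how the $L$-neighbourhoods propagate; the constant may grow, but stays bounded.

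\textbf{Step 3 (malnormality).} This is the step I expect to be the main obstacle, because the estimates of Step 2 take place in the metric of $G$, while almost malnormality is a statement inside $H$. To bridge this, I would use that $H$ is undistorted in $G$: $H$ is a vertex group of a splitting of a finitely generated group with finitely generated edge group, so it is a retract-like piece of the tree of spaces. Granting this, $hZ\cap Z^{+L'}$ being infinite in $H$ yields, by a standard pigeonhole on pairs $hz_1=z_2c$ with $c$ in a finite ball, infinitely many $z_1$ sharing the same $c$. This gives $z_2^{-1}hz_1=c=z_2'^{-1}hz_1'$, so $h^{-1}Zh\cap Z$ contains $z_1z_1'^{-1}$ with infinitely many distinct values. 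Thus $Z\cap h^{-1}Zh$ is infinite, and almost malnormality forces $h\in Z$, contradicting $h\notin Z$. Hence $e_g$ must be adjacent to $v_S$, and $g\in S$.
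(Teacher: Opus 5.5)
Your argument is correct and is essentially the paper's proof. Like the paper, you take the first $H$-vertex $s_1H$ on the tree geodesic from $S$ to $gZ$, use Lemma~\ref{lem:EasySep} to push $gZ\cap S^{+L}$ into the coarse intersection of the two distinct edge-spaces $s_1Z$ and $s_1hZ$, and conclude from almost malnormality of $Z$ in $H$; your pigeonhole in Step~3 just writes out the paper's one-line remark that distinct cosets of $Z$ have bounded coarse intersection. One correction: drop the claim that $H$ is undistorted in $G$, which is both unnecessary and false in general (vertex groups of amalgams can be distorted, e.g.\ when $Z$ is distorted in $S$), because your pigeonhole already works as written in the Cayley graph of $G$: its balls are finite, and $c=z_2^{-1}hz_1$ may be any element of $G$.
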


\noindent
Notice that $g \in S$ if and only if the edge-space $gZ$ is attached to the vertex-space $S$. Therefore, if $g \notin S$, then the edge-space $gZ$ is separated from the vertex-space $S$ by some vertex-space $sH$ where $s \in S$. This implies that $gZ$ and $S$ are separated by two distinct edge-spaces $sZ$ and $shZ$ attached to $sH$, where $h \in H$. It follows from Lemma~\ref{lem:EasySep} that $S^{+L} \cap gZ \subset sZ^{+L} \cap shZ^{+L}$. But, since $Z$ is almost malnormal in $G$, we know that the coarse intersection between two discting cosets of $Z$ is coarsely bounded. Therefore, $sZ^{+L} \cap shZ^{+L}$, and a fortiori $S^{+L} \cap gZ$, must be finite. This concludes the proof of Claim~\ref{claim:Example}. 

\medskip \noindent
We deduce from Claim~\ref{claim:Example} that:

\begin{claim}\label{claim:NoSep}
The vertex-space $S$ does not coarsely separate any vertex-space.
\end{claim}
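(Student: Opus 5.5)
We need to prove Claim~\ref{claim:NoSep}: the vertex-space $S$ does not coarsely separate any vertex-space. In other words, for every vertex-space $V$ (a coset $gH$ or $gS$ distinct from $S$) and every $L \geq 0$, the complement $V \backslash S^{+L}$ has at most one component that is deep in $V$ (with $V$ carrying its own word metric, quasi-isometrically embedded via the tree of spaces).

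The plan is to split into cases according to the position of $V$ relative to $S$ in the Bass--Serre tree $T$.

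\emph{$V$ not adjacent to $S$ in $T$.} The edge-space $E$ attached to $V$ on the side of $S$ separates $V \backslash E$ from $S$. Lemma~\ref{lem:EasySep} then gives $S^{+L} \cap V \subset E^{+L}$. If $E$ is not attached to $S$, then Claim~\ref{claim:Example} applied to $E$ shows that $E \cap S^{+L'}$ is finite for every $L'$. Using the separation again, $S^{+L} \cap V$ is then a bounded subset of $V$. Since $V$ is one-ended, the complement of a bounded set has exactly one deep component, so $S$ does not coarsely separate $V$.

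\emph{$V$ adjacent to $S$.} Then $V = sH$ with $s \in S$, and after translating by $s^{-1} \in S$ we may take $V = H$. The edges at $S$ are $s'Z$ for $s' \in S$. The only edge-space attached to both $H$ and $S$ is $Z$ itself, and every other edge-space of $H$ is separated from $S$ by $Z$. Hence $H \cap S^{+L} \subset Z^{+L'}$ for some $L'$: points of $H$ close to $S$ must be close to the common edge-space $Z$, via the tree-of-spaces structure together with Lemma~\ref{lem:EasySep}. Conversely $Z \subset S$, so $H \cap S^{+L}$ is coarsely equal to $Z$. Since $H$ is not coarsely separated by $Z$, the set $H \backslash S^{+L}$ has at most one deep component.

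The main obstacle is making the reduction from ``close to $S$ in $G$'' to ``close to $Z$ in $H$'' precise. This requires controlling distances through the tree of spaces, using that vertex-spaces are undistorted and that a geodesic from $H$ to $S$ must pass through the edge-space $Z$. One also has to pass between the metric on $G$ and the intrinsic metric on $V$ when talking about deep components. I would handle both points with the standard fact that the inclusions of vertex-spaces in a tree of spaces over a finite graph of finitely generated groups are quasi-isometric embeddings, and that a path in $G$ leaving $V$ and returning must exit and re-enter through the same edge-space. Such a path can therefore be replaced by a path in $V$ of coarsely controlled length, which is where almost malnormality keeps the relevant bounds finite.
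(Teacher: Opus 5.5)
Your argument is correct and is essentially the paper's proof. You use the same split according to whether $V$ is adjacent to $S$ in the Bass--Serre tree: in the non-adjacent case, Lemma~\ref{lem:EasySep}, Claim~\ref{claim:Example} and one-endedness of $V$; when $V=sH$, Lemma~\ref{lem:EasySep} together with the hypothesis that $Z$ does not coarsely separate $H$.

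The one thing to correct is your last paragraph. Vertex groups of an amalgam need not be undistorted: $\mathbb{Z}^2$ is exponentially distorted in $\mathbb{Z}^2 \ast_{\mathbb{Z}} BS(1,2)$ when $\mathbb{Z}$ is identified with $\langle a \rangle$. Almost malnormality also plays no role in this claim beyond its use inside Claim~\ref{claim:Example}. Neither point is needed, though. In the Cayley graph of $G$ with respect to a union of finite generating sets of $H$ and $S$, the cosets are subgraphs and $sZ$ genuinely separates $S$ from $sH$. Moreover, if $x \in H$ and $z \in Z$ satisfy $d_G(x,z) \leq L$, then $z^{-1}x$ lies in the finite set $H \cap B_G(1,L)$, so $G$-closeness to $Z$ already gives uniformly bounded closeness in the word metric of $H$.
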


\noindent
Let $V$ be a vertex-space distinct from $S$. We distinguish two cases, depending on whether or not $V$ is adjacent to the vertex-space $S$.

\medskip \noindent
First, assume that $V$ is not adjacent to $S$. Then, $V$ is separated from $S$ by some edge-space $E$ that is not attached to $S$. It follows from Lemma~\ref{lem:EasySep} that $S^{+L} \cap V \subset E^{+L} \cap S^{+L}$ and from Claim~\ref{claim:Example} that $E^{+L} \cap S^{+L}$ is finite. Thus, $S^{+L}$ cannot coarsely separate $V$ since $V$ is one-ended. 

\medskip \noindent
Next, assume that $V$ is adjacent to $S$, i.e.\ $V=sH$ for some $s \in S$. We know from Lemma~\ref{lem:EasySep} that $S^{+L} \cap sH \subset sZ^{+L}$ since $sZ^{+L}$ separates $S$ and $sH$. But we also know by assumption that $Z$ does not coarsely separate $H$, so $S^{+L}$ cannot coarsely separate~$sH$.

\medskip \noindent
This concludes the proof of Claim~\ref{claim:NoSep}.

\medskip \noindent
Removing from the Bass-Serre tree $T$ the vertex $S$ yields a collection of connected components $\{T_s, \ s \in S\}$. Let $T_s^+ \subset G$ denote the union of all the vertex-spaces given by the vertices of $T_s$. 

\begin{claim}\label{claim:SubtreeNoSep}
For every $s \in S$, $S$ does not coarsely separate $T_s^+$. 
\end{claim}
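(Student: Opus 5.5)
Here $T_s^+$ is the union of the vertex-spaces lying in the component $T_s$ of $T \backslash \{S\}$. This component is attached to the vertex $S$ through the edge-space $sZ$, which is adjacent to the vertex-space $sH$. My plan is to show that, for every $L \geq 0$, all of $T_s^+ \backslash S^{+L}$ except a bounded set lies in a single component of $G \backslash S^{+L}$. I will do this by gluing together the ``large pieces'' of the individual vertex-spaces, using Claim~\ref{claim:NoSep} and Claim~\ref{claim:Example}.

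First, I fix $L$ and record where $S^{+L}$ meets $T_s^+$. The subgraph $sZ^{+L}$ separates $S$ from $T_s^+$, so Lemma~\ref{lem:EasySep} gives
$$S^{+L} \cap T_s^+ \subset sZ^{+L} \cap sH^{+L'}$$
for a suitable $L'$. Thus $S^{+L}$ only meets the pieces of $T_s^+$ near $sH$.

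Next, I find a large piece in each vertex-space. By Claim~\ref{claim:NoSep}, $S^{+L}$ does not coarsely separate any vertex-space $V$ of $T_s$. Concretely, there is an $R$ (depending on $V$) such that the points of $V \backslash S^{+L}$ at distance $>R$ from $S$ lie in a single component $U_V$ of $V^{+1} \backslash S^{+L}$. For vertex-spaces not adjacent to $S$, Claim~\ref{claim:Example} says $V \cap S^{+L}$ is finite, and $V$ is one-ended, so only a finite part of $V$ is excluded.

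Then I glue the pieces. Two adjacent vertex-spaces $V, W$ in $T_s$ share an infinite edge-space $E = gZ$. For $E \neq sZ$, Claim~\ref{claim:Example} says $E \cap S^{+L}$ is finite, so the infinite set $E$ meets both $U_V$ and $U_W$. For $E = sZ$, I argue differently: $sZ$ is infinite while $S^{+L} \cap sH$ lies in $sZ^{+L}$, and one-endedness of $sH$ with the non-separation hypothesis gives the needed common far points. Since $T_s$ is connected, induction along the tree merges all the $U_V$ into one component $U$ of $G \backslash S^{+L}$.

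Finally, I check that $T_s^+ \backslash U$ is bounded near $S$. By the first step, only $sH$ can contribute unboundedly many excluded points, and the excluded points of $sH$ lie in $sZ^{+L} \cap sH$ up to a bounded neighbourhood of $S$. Hence every deep point of $T_s^+$ lies in $U$, which gives the claim.

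The main obstacle is uniformity: the constant $R$ in the second step depends on $V$, and infinitely many vertex-spaces are involved. I would handle it by observing that only $sH$ actually meets $S^{+L}$ in an infinite set. All the other vertex-spaces meet it in finite sets, so their excluded parts lie within bounded distance of $S$ automatically and need no uniform constant.
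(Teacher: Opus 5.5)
\textbf{Gap in the uniformity step.} You correctly identify uniformity as the main obstacle, but your resolution of it does not work. The gluing itself is fine: there only per-vertex constants are needed. (The case $E=sZ$ never arises, because the edge of $T$ corresponding to $sZ$ is incident to the vertex $S$ and so is not an edge of $T_s$.)

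The problem is that infinitely many vertex-spaces of $T_s$ meet $S^{+L}$, not just $sH$. Fix $h_0 \in H \backslash Z$ and $L \geq d(1,h_0)$. For every $z \in Z$:
\begin{itemize}
	\item the coset $szh_0S$ is a vertex-space of $T_s$ adjacent to $sH$ (note $zh_0 \notin S$, since $S \cap H = Z$);
	\item it contains $szh_0$, which lies in $S^{+L}$;
	\item the map $z \mapsto szh_0S$ is finite-to-one, because $Z \cap h_0Zh_0^{-1}$ is finite.
\end{itemize}
Each such $V$ has a finite excluded set, namely $V \cap S^{+L}$ together with the finite components of $V \backslash S^{+L}$. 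But a union of infinitely many finite sets need not lie in a bounded neighbourhood of $S$. So ``each $V \neq sH$ meets $S^{+L}$ in a finite set'' does not give a uniform constant, and your conclusion that every point of $T_s^+$ far from $S$ lies in $U$ does not follow.

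\textbf{The missing ingredient is equivariance.} The subgroup $sZs^{-1} \leq S$ fixes the vertices $S$ and $sH$ of $T$. Hence it preserves $T_s$, $T_s^+$ and $S^{+L}$. It also acts cofinitely on $sZ^{+L}$, which contains $S^{+L} \cap T_s^+$ by your first step. Every element of $G$ lies in exactly two vertex-spaces, so the vertex-spaces of $T_s$ meeting $S^{+L}$ fall into finitely many $sZs^{-1}$-orbits. The maximal distance to $S$ of their excluded sets is constant along each orbit. Vertex-spaces disjoint from $S^{+L}$ are entirely contained in $U$. Together these give the uniform constant you need, and with this repair your argument goes through.

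\textbf{Comparison with the paper.} The paper avoids this bookkeeping by quoting \cite[Proposition~4.6]{CoarseSepRAAG}. By that result, a coarse separation of the tree of spaces $T_s^+$ by $S$ would force $S$ to coarsely contain one of its edge-spaces. This is impossible, since those edge-spaces are cosets $gZ$ with $g \notin S$, whose coarse intersection with $S$ is finite by Claim~\ref{claim:Example}. Your repaired route is a self-contained proof of the special case needed here. It works because $S^{+L} \cap T_s^+$ is concentrated around the single root edge-space $sZ$.
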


\noindent
It follows from \cite[Proposition~4.6]{CoarseSepRAAG} that, if $S$ coarsely separates $T_s^+$, then $S$ must contain an edge-space of $T_s^+$ in a neighbourhood. But this is impossible since we know from Claim~\ref{claim:NoSep} that the coarse intersection between $S$ and any edge-space that is not attached to $S$ is bounded. This proves Claim~\ref{claim:SubtreeNoSep}.

\medskip \noindent
Of course, $S$ pairwise separates the $T_s^+$ for $s \in S$, so it follows from Claim~\ref{claim:SubtreeNoSep} that, for every $L \geq 0$, the deep connected components of $G\backslash S^{+L}$ are, up to finite Hausdorff distance, the $T_s^+$ for $s \in S$. As a consequence, $\tilde{e}(G,S)= \omega+1$ and $e(G,S)=1$ since $S$ acts transitively on the infinitely many deep components of $G \backslash S^{+L}$ for every $L \geq 0$. In particular, $S$ is a coarsely separating subgroup that is not (virtually) a codimension-one subgroup of $G$.
\end{proof}

\begin{proof}[Proof of Theorem~\ref{thm:IntroSep}.]
The first item is an immediate consequence of the definition of the number of coends and from Proposition~\ref{prop:CoarseSepCodim}. The second item is a particular case of the following observation:

\begin{claim}\label{claim:SubCodim}
If $2 \leq \tilde{e}(G,H) \leq \omega$ and $2 \leq p \leq \tilde{e}(G,H)$, then $H$ contains a finite-index subgroup $H_0$ satisfying $e(G,H_0) \geq p$.
\end{claim}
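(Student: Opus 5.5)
Since $2 \leq p \leq \tilde{e}(G,H) \leq \omega$, the number of coends is at most $\omega$. By definition, this means that for every $L \geq 0$ the set $G\backslash H^{+L}$ has only finitely many deep components, and for some $L$ it has at least $p$ of them. Fix such an $L$, and let $C_1, \ldots, C_n$ ($n \geq p$) be the deep components of $G\backslash H^{+L}$.

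The plan is to let $H$ act on the finite set $\{C_1, \ldots, C_n\}$. This action is well defined because left multiplication by $h \in H$ preserves $H^{+L}$ and sends deep components to deep components, since $d(hx,H)=d(x,H)$. Let $H_0$ be the kernel of this action. Then $H_0$ is a finite-index subgroup of $H$ that stabilises every $C_i$.

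Next I use Proposition~\ref{prop:AlmostInvCodim} with the subsets $A_i := C_i$ for $1 \leq i \leq p$, checking each condition relative to $H_0$.
\begin{itemize}
	\item Each $C_i$ is $H_0$-invariant by construction of $H_0$.
	\item The $C_i$ are pairwise disjoint, being distinct components.
	\item Each $C_i$ is $H_0$-infinite. Being $H$-finite means lying in a bounded neighbourhood of $H$, and since $[H:H_0]<\infty$, the subgroup $H$ lies in a bounded neighbourhood of $H_0$. Hence $H$-finite and $H_0$-finite coincide, and $C_i$, being deep, is $H$-infinite and therefore $H_0$-infinite.
	\item The boundary $\partial C_i$ is contained in $H^{+L}$ and is therefore $H$-finite, hence $H_0$-finite. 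By the lemma relating boundaries to the sets $A\cap A^c g$, this gives that $C_i \cap C_i^c g$ is $H_0$-finite for every $g \in G$.
\end{itemize}
Proposition~\ref{prop:AlmostInvCodim} then yields $e(G,H_0) \geq p$.

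An alternative route is through Proposition~\ref{prop:CoarseSepCodim}. One would observe that $H_0^{+L'} \supseteq H^{+L}$ for some $L'$, and then count $H_0$-orbits of deep components of $G\backslash H_0^{+L'}$. That requires controlling how the components refine when the neighbourhood grows, so the almost-invariant-set route above is cleaner.

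The main obstacle is the equivalence between $H$-finiteness and $H_0$-finiteness, and the fact that the propositions of Section~\ref{section:Ends} are stated for the subgroup appearing in them; here I apply Proposition~\ref{prop:AlmostInvCodim} with $H_0$ in the role of $H$. Both points are routine: $H$-finite means contained in $H^{+R}$ for some $R$, and $H \subset H_0^{+D}$ with $D$ the maximal length of a set of coset representatives. The second item of Theorem~\ref{thm:IntroSep} then follows by taking $p=2$.
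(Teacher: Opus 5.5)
Your proof is correct. It uses the same key construction as the paper: $H_0$ is the kernel of the action of $H$ on the finitely many deep components of $G\backslash H^{+L}$. The two arguments differ only in how they conclude.

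\emph{The paper's route.} It enlarges the neighbourhood, choosing $R$ so that every deep component of $G\backslash H_0^{+R}$ lies in a deep component of $G\backslash H^{+L}$. It then counts $H_0$-orbits of deep components of $G\backslash H_0^{+R}$ directly and finishes with Proposition~\ref{prop:CoarseSepCodim}. This count tacitly needs each $C_i$ to contain at least one deep component of $G\backslash H_0^{+R}$. That in turn requires picking a point of $C_i$ far enough from $H_0$ to miss the shallow components, which the paper does not spell out.

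\emph{Your route.} You instead feed $C_1,\dots,C_p$ into Proposition~\ref{prop:AlmostInvCodim} as $H_0$-invariant almost invariant sets. The refinement issue is then handled once and for all inside the proof of that proposition, at the step choosing $a_i$ with $d(a_i,H)>R$. The only new input you need is that $H$-finiteness and $H_0$-finiteness coincide when $[H:H_0]<\infty$. In fact, finite index is used only for the direction ``$H$-finite $\Rightarrow$ $H_0$-finite'' applied to $\partial C_i$; the $H_0$-infiniteness of $C_i$ follows from $H_0\leq H$ alone.

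The paper's argument is more hands-on and geometric. Yours is more modular and leaves no step implicit.
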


\noindent
Fix an $L \geq 0$ such that $G \backslash H^{+L}$ has $\geq p$ deep connected components. Since $\tilde{e}(G,H) \leq \omega$, we know that there are only finitely many such components. Consequently, if $H_0$ denotes the kernel of the action of $H$ on the set of deep components of $G \backslash H^{+L}$, then $H_0$ has finite index in $H$. Choosing a sufficiently large $R \geq 0$, each deep component of $G \backslash H_0^{+R}$ is contained in some deep component of $G \backslash H^{+R}$. It follows that the set of deep components of $G \backslash H_0^{+R}$ contains $\geq p$ $H_0$-orbits, hence $e(G,H_0) \geq p$, as desired. This proves Claim~\ref{claim:SubCodim}.

\medskip \noindent
The third item of our theorem can be proved similarly. We already know that, if $H$ is virtually codimension-one, then it is coarsely separating. So assume that $H$ is coarsely separating. If $H$ is codimension-one, then there is nothing to prove. So we assume that $e(G,H)=1$. Thus, fixing some $L \geq 0$ such that $G\backslash H^{+L}$ has at lest two deep components, $H$ acts transitively on the set of deep components of $G\backslash H^{+L}$. Consequently, we can find a deep component of $G\backslash H^{+L}$ and an element $h_0 \in H$ such that $h_0C \neq C$. Since $H$ is subgroup separable, there exists a finite-index subgroup $H_0 \leq H$ containing $\mathrm{stab}(C)$ but not $h_0$. Choosing a sufficiently large $R \geq 0$, each deep component of $G \backslash H_0^{+R}$ is contained in some deep component of $G \backslash H^{+R}$. By construction, a deep component contained in $C$ cannot be in the same $H_0$-orbit as a deep component contained in $h_0C$. Hence $e(G,H_0) \geq 2$, i.e.\ $H_0$ is a codimension-one subgroup of $G$.

\medskip \noindent
Now, let us verify the fourth item of our theorem. If $\tilde{e}(G,H) \leq \omega$, then we already know from the second item that $H$ contains a codimension-one subgroup of $G$. So, from now on, we assume that $\tilde{e}(G,H) = \omega+1$. Thus, there exists an $L \geq 0$ such that $G \backslash H^{+L}$ contains infinitely many deep components. Let $C$ be one of them. Consider
$$\partial C:= \{ x \in G \mid x \notin C \text{ adjacent to a point in } C \}.$$
We claim that $\mathrm{stab}_H(C)$ acts cofinitely on $\partial C$. 

\medskip \noindent
Indeed, let $P \subset \partial C$ be a finite collection of points. Our goal is to justify that, if $P$ is sufficiently big, then it contains at least two points in the same $\mathrm{stab}_H(C)$-orbit. Because $H$ acts cofinitely on $H^{+L}$, which contains $\partial C$, we know that many points of $P$ must belong to the same $H$-orbit, say $\{h_1x, \ldots, h_nx\} \subset P$ for some $x \in G$ and $h_1, \ldots, h_n \in H$ with $n$ very large. For every $1 \leq i \leq n$, $h_ix \in \partial C$ so $C \cap B(h_ix,1) \neq \emptyset$, or equivalently $h_i^{-1}C \cap B(x,1) \neq \emptyset$. Thus, $h_1^{-1}C, \ldots, h_n^{-1}C$ are deep components of $G\backslash H^{+L}$ all intersecting the ball $B(x,1)$. Since any two such component must be disjoint, the finiteness of $B(x,1)$ implies, because $n$ is very large, that there exist $1 \leq i< j \leq n$ such that $h_i^{-1}C=h_j^{-1}C$, or equivalently that $h_ih_j^{-1} \in \mathrm{stab}_H(C)$. This concludes the proof of our claim.

\medskip \noindent
Since we now know that $\mathrm{stab}_H(C)$ acts cofinitely on $\partial C$, the Hausdorff dimension between $\mathrm{stab}_H(C)$ and $\partial C$ in $G$ must be finite. Since $\partial C$ separates $C$ from the infinitely many other deep components of $G \backslash H^{+L}$, it follows that the complement of some neighbourbood $\mathrm{stab}_H(C)^{+R}$ of $\mathrm{stab}_H(C)$ has at least two deep components, one of them being $C$ (up to finite Hausdorff distance). Since $\mathrm{stab}_H(C)$ stabilises this component, it follows that $\mathrm{stab}_H(C)$ does not permute transitively the deep components of $G\backslash \mathrm{stab}_H(C)^{+R}$. We conclude from Proposition~\ref{prop:CoarseSepCodim} that $\mathrm{stab}_H(C)$ is a codimension-one subgroup of $G$.

\medskip \noindent
Finally, the fifth item of our theorem is proved by Proposition~\ref{prop:Example}.
\end{proof}

\appendix

\section{Buneman-like graphs}\label{section:Buneman}

\noindent
Let $(X,\mathscr{C})$ be a space of characters. The \emph{graph of selectors}, whose vertices are all the selectors and whose edges connect any two selectors that differ on a single character, is a disjoint union of Hamming graphs (i.e.\ products of complete graphs). It encodes $X$, thought of as the set of pointed selectors, into a graph that highlights how the characters differentiate the elements of $X$. However, the graph of selectors is just too big. The challenge of data visualisation is to find a reasonable intermediate between $X$ and the graph of selectors that still keeps track of the characters. Classical applications of such constructions include the study of phylogenetic networks (see for instance \cite{Phylogenetics}). 

\medskip \noindent
The strategy, in order to choose a smaller subgraph inside the graph of selectors, is to impose restrictions on the selectors that we allow. 

\medskip \noindent
An early illustration of this idea, when characters are bipartitions, is given by the \emph{Buneman graph} \cite{Buneman}, whose connection with median graphs is explained in \cite{MR1002234}. From the point of view of geometric group theory, the construction coincides with the \emph{cubulation} of \emph{wallspaces}, initiated in \cite{MR1347406} and further studied in \cite{Buneman,MR2197811,MR2059193}. Despite the fact that the Buneman graph was first defined for spaces with bipartitions, the definition makes sense for arbitrary spaces with characters. 

\begin{definition}
Let $(X,\mathfrak{C})$ be a space with characters. A selector $\sigma$ is \emph{Buneman-coherent} if $\sigma(\mathscr{C}_1) \cap \sigma(\mathscr{C}_2) \neq \emptyset$ for all $\mathscr{C}_1, \mathscr{C}_2 \in \mathfrak{C}$. The \emph{Buneman graph} is the graph whose vertices are the Buneman-coherent selectors and whose edges connect any two selectors that differ on a single character. 
\end{definition}

\noindent
A slightly different construction was proposed in \cite{MR1844029}.

\begin{definition}
Let $(X,\mathfrak{C})$ be a space with characters. A selector $\sigma$ is \emph{relation-coherent} if, for all $\mathscr{C}_1,\mathscr{C}_2 \in \mathfrak{C}$, either $\sigma(\mathscr{C}_1) \cap \sigma(\mathscr{C}_2) \neq \emptyset$ or $\sigma(\mathscr{C}_1) \cup \sigma(\mathscr{C}_2) = X$. The \emph{relation graph} is the graph whose vertices are the relation-coherent selectors and whose edges connect any two selectors that differ on a single character.
\end{definition}

\noindent
It should be clear from the definitions that one gets a sequence of subgraphs
$$\text{Buneman graph} \subset \text{relation graph} \subset \text{quasi-cubulation} \subset \text{graph of selectors}.$$
Investigating the relations between these graphs would be interesting. It is worth mentioning that \cite[Theorem~1]{MR1907821} characterises the so-called \emph{quasi-median hull} of the pointed selectors inside the graph of selectors, proving that a selector belongs to this quasi-median hull if and only if it is coherent in the sense of Definition~\ref{def:Coherent}, providing an alternative description of the quasi-cubulation. Thus, \cite[Corollary~1, Theorem~3]{MR1907821} characterise when the quasi-cubulation coincides with the relation graph and when it coincides with the graph of selectors. 

\medskip \noindent
From our perspective of quasi-median geometry, it is natural ask whether the Buneman graph or the relation graph can be used instead of the quasi-cubulation in order to construct quasi-median graphs. 

\medskip \noindent
However, for arbitrary spaces with characters, Buneman and relation graphs may not be even connected. Figure~\ref{BunemanDis} shows a space with characters for which the Buneman graph is connected, while the relation graph coincides with the quasi-cubulation. And Figure~\ref{RelationDis} shows a space with characterse for which the Buneman graph and the relation graph coincide but are not connected. Connectedness is not the only obstruction for being quasi-median. Figure~\ref{RelationIso} shows a space with characters for which the Buneman graph and the relation graph coincide, are connected, but are very far from being quasi-median since they do not even isometrically embed into a Hamming graph. 
\begin{figure}[h!]
\begin{center}
\includegraphics[width=0.8\linewidth]{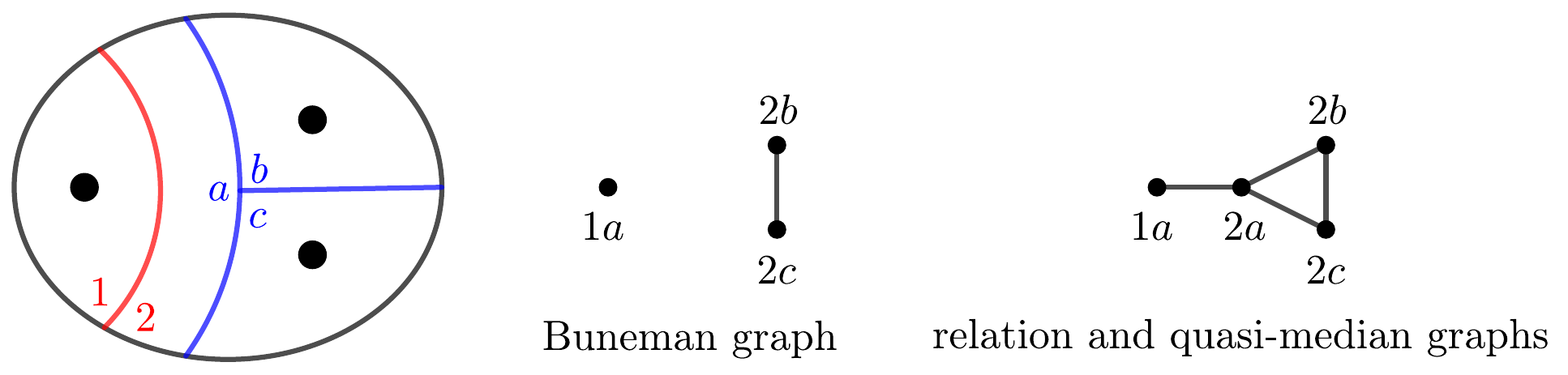}
\end{center}
\caption{A space with characters with a disconnected Buneman graph but connected relation and quasi-median graphs.}
\label{BunemanDis}
\end{figure}

\begin{figure}[h!]
\begin{center}
\includegraphics[width=0.8\linewidth]{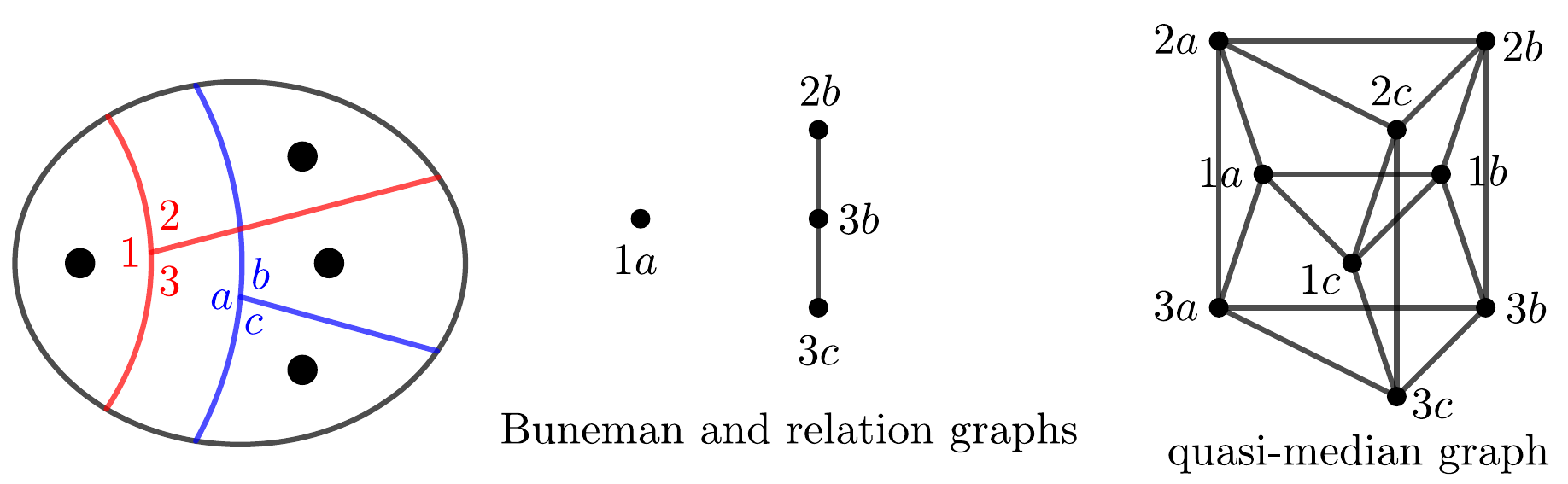}
\end{center}
\caption{A space with characters with disconnected Buneman and relation graphs but a connected quasi-median graph.}
\label{RelationDis}
\end{figure}

\begin{figure}[h!]
\begin{center}
\includegraphics[width=0.9\linewidth]{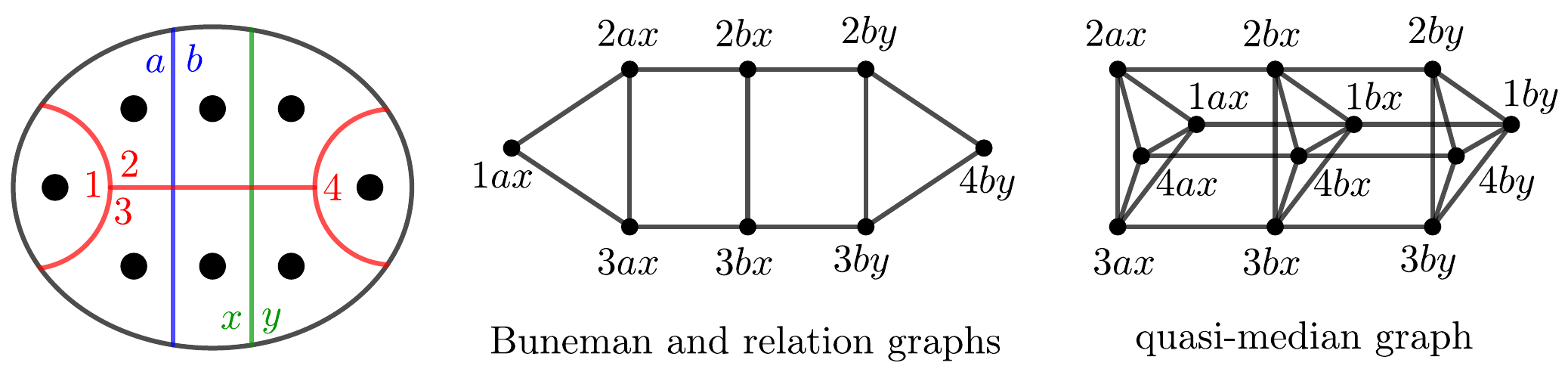}
\end{center}
\caption{A space with characters with a connected relation graph not isometrically embedded into the quasi-median graph.}
\label{RelationIso}
\end{figure}

\noindent
Nevertheless, it is worth mentioning that the Buneman graph or the relation graph may define a smaller quasi-median graph than the quasi-cubulation, as shown by Figure~\ref{SmallerQM}. This suggests that there might exist a more efficient quasi-cubulation of spaces with characters. 

\begin{question}
Is there a simple and systematic way to associate to any locally finite space with characters a quasi-median graph, which coincides with the Buneman graph as soon as the latter is already quasi-median?
\end{question}

\begin{figure}
\begin{center}
\includegraphics[width=0.8\linewidth]{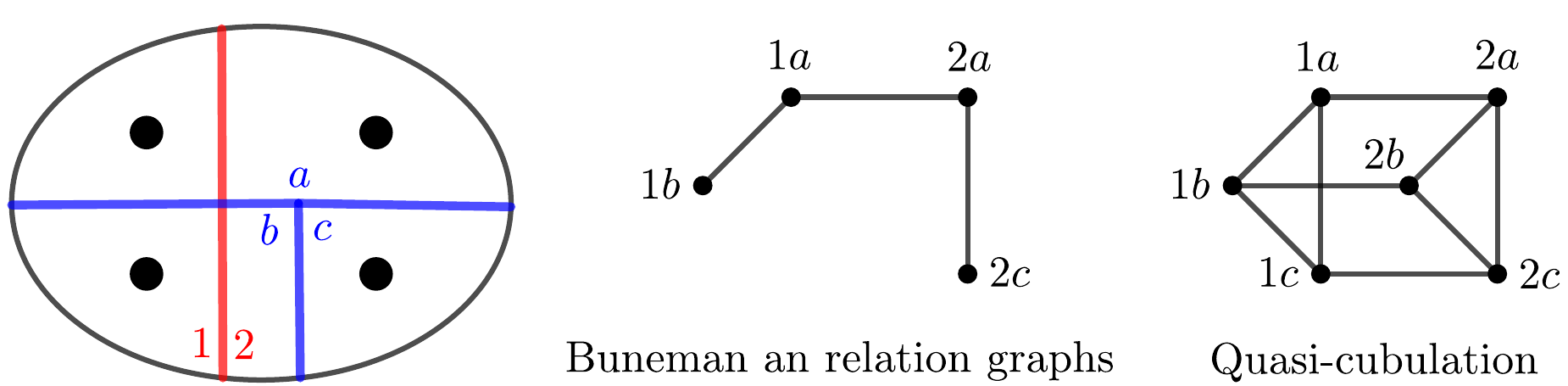}
\end{center}
\caption{A space with characters for which the Buneman graph is a smaller quasi-median graph than the quasi-cubulation.}
\label{SmallerQM}
\end{figure}

\addcontentsline{toc}{section}{References}

\bibliographystyle{alpha}
{\footnotesize\bibliography{CodimCoarseSep}}

\newcommand{\etalchar}[1]{$^{#1}$}
\begin{thebibliography}{BCC{\etalchar{+}}13}

\bibitem[Bar89]{MR1002234}
J.-P. Barth\'el\'emy.
\newblock From copair hypergraphs to median graphs with latent vertices.
\newblock {\em Discrete Math.}, 76(1):9--28, 1989.

\bibitem[BCC{\etalchar{+}}13]{MR3062742}
B.~Bre\v{s}ar, J.~Chalopin, V.~Chepoi, T.~Gologranc, and D.~Osajda.
\newblock Bucolic complexes.
\newblock {\em Adv. Math.}, 243:127--167, 2013.

\bibitem[BGT24]{CoarseSep}
O.~Bensaid, A.~Genevois, and R.~Tessera.
\newblock Coarse separation and large-scale geometry of wreath products.
\newblock {\em arxiv:2401.18025}, 2024.

\bibitem[BGT26a]{CoarseSepHyp}
O.~Bensaid, A.~Genevois, and R.~Tessera.
\newblock Coarse separation and splittings in hyperbolic groups.
\newblock {\em preprint}, 2026.

\bibitem[BGT26b]{CoarseSepRAAG}
O.~Bensaid, A.~Genevois, and R.~Tessera.
\newblock Coarse separation and splittings in right-angled {A}rtin groups.
\newblock {\em preprint}, 2026.

\bibitem[BHM02]{MR1907821}
H.-J. Bandelt, K.~Huber, and V.~Moulton.
\newblock Quasi-median graphs from sets of partitions.
\newblock {\em Discrete Appl. Math.}, 122(1-3):23--35, 2002.

\bibitem[BMW94]{MR1297190}
H.-J. Bandelt, H.~Mulder, and E.~Wilkeit.
\newblock Quasi-median graphs and algebras.
\newblock {\em J. Graph Theory}, 18(7):681--703, 1994.

\bibitem[Bow02]{MR1867372}
B.~Bowditch.
\newblock Splittings of finitely generated groups over two-ended subgroups.
\newblock {\em Trans. Amer. Math. Soc.}, 354(3):1049--1078, 2002.

\bibitem[Bun71]{Buneman}
P.~Buneman.
\newblock The recovery of trees from measures of dissimilarity.
\newblock In F.R. Hodson, D.G. Kendall, and P.~Tautu, editors, {\em Mathematics
  in the Archaeological and Historical Sciences}, pages 387--395. Edinburgh
  University Press, Edinburgh, 1971.

\bibitem[CN05]{MR2197811}
I.~Chatterji and G.~Niblo.
\newblock From wall spaces to {$\rm CAT(0)$} cube complexes.
\newblock {\em Internat. J. Algebra Comput.}, 15(5-6):875--885, 2005.

\bibitem[Gen]{Book}
A.~Genevois.
\newblock Cubulable groups.
\newblock {\em Book in preparation, draft available on the author's webpage}.

\bibitem[Gen17]{QM}
A.~Genevois.
\newblock Cubical-like geometry of quasi-median graphs and applications to
  geometric group theory.
\newblock {\em PhD thesis, Universit\'e Aix-Marseille, arxiv:1712.01618}, 2017.

\bibitem[Gen22]{MR4449680}
A.~Genevois.
\newblock Lamplighter groups, median spaces and {H}ilbertian geometry.
\newblock {\em Proc. Edinb. Math. Soc. (2)}, 65(2):500--529, 2022.

\bibitem[Gen23]{WhyMedian}
A.~Genevois.
\newblock Why {CAT}(0) cube complexes should be replaced with median graphs.
\newblock {\em arxiv:2309.02070}, 2023.

\bibitem[Geo08]{MR2365352}
R.~Geoghegan.
\newblock {\em Topological methods in group theory}, volume 243 of {\em
  Graduate Texts in Mathematics}.
\newblock Springer, New York, 2008.

\bibitem[HM02]{MR1844029}
K.~Huber and V.~Moulton.
\newblock The relation graph.
\newblock volume 244, pages 153--166. 2002.
\newblock Algebraic and topological methods in graph theory (Lake Bled, 1999).

\bibitem[Hou74]{MR357679}
C.~Houghton.
\newblock Ends of locally compact groups and their coset spaces.
\newblock {\em J. Austral. Math. Soc.}, 17:274--284, 1974.

\bibitem[HRS10]{Phylogenetics}
D.~Huson, R.~Rupp, and C.~Scornavacca.
\newblock {\em Phylogenetic Networks: Concepts, Algorithms and Applications}.
\newblock Cambridge University Press, 2010.

\bibitem[KR89]{MR1025923}
P.~Kropholler and M.~Roller.
\newblock Relative ends and duality groups.
\newblock {\em J. Pure Appl. Algebra}, 61(2):197--210, 1989.

\bibitem[LR04]{MR2093872}
J.~Lennox and D.~Robinson.
\newblock {\em The theory of infinite soluble groups}.
\newblock Oxford Mathematical Monographs. The Clarendon Press, Oxford
  University Press, Oxford, 2004.

\bibitem[Mul80]{MR605838}
H.~Mulder.
\newblock {\em The interval function of a graph}, volume 132 of {\em
  Mathematical Centre Tracts}.
\newblock Mathematisch Centrum, Amsterdam, 1980.

\bibitem[Nic04]{MR2059193}
B.~Nica.
\newblock Cubulating spaces with walls.
\newblock {\em Algebr. Geom. Topol.}, 4:297--309, 2004.

\bibitem[Nie79]{MR529794}
J.~Nieminen.
\newblock The ideal structure of simple ternary algebras.
\newblock {\em Colloq. Math.}, 40(1):23--29, 1978/79.

\bibitem[NR98]{MR1459140}
G.~Niblo and M.~Roller.
\newblock Groups acting on cubes and {K}azhdan's property ({T}).
\newblock {\em Proc. Amer. Math. Soc.}, 126(3):693--699, 1998.

\bibitem[Sag95]{MR1347406}
M.~Sageev.
\newblock Ends of group pairs and non-positively curved cube complexes.
\newblock {\em Proc. London Math. Soc. (3)}, 71(3):585--617, 1995.

\bibitem[Sco78]{MR487104}
P.~Scott.
\newblock Ends of pairs of groups.
\newblock {\em J. Pure Appl. Algebra}, 11(1-3):179--198, 1977/78.

\bibitem[vdV93]{MR1234493}
M.~van~de Vel.
\newblock {\em Theory of convex structures}, volume~50 of {\em North-Holland
  Mathematical Library}.
\newblock North-Holland Publishing Co., Amsterdam, 1993.

\bibitem[Wal03]{MR2031877}
C.~Wall.
\newblock The geometry of abstract groups and their splittings.
\newblock {\em Rev. Mat. Complut.}, 16(1):5--101, 2003.

\end{thebibliography}

\Address

%

\end{document}